\documentclass[12pt]{amsart}
\usepackage[margin=1in]{geometry}
\usepackage{bm}
\usepackage[dvipsnames,svgnames,x11names]{xcolor}
\usepackage[hyphens]{url}
\usepackage[colorlinks=true,
            linkcolor=Maroon,
            citecolor=blue,
            urlcolor=blue,
            hypertexnames=false,
            linktocpage]{hyperref}
\usepackage{bookmark}
\usepackage{amsmath,thmtools,mathtools,amssymb}
\mathtoolsset{showonlyrefs=true}
\usepackage{graphicx}
\usepackage{tikz}
\usepackage{fancyhdr}
\usepackage{esint}
\usepackage{enumerate}
\usepackage{caption}
\allowdisplaybreaks

\newcommand{\R}{\mathbb{R}}
\newcommand{\Sn}{S^{n-1}}
\newcommand{\one}{\mathbf{1}}

\newcommand{\ip}[2]{\langle #1,#2\rangle}
\newcommand{\abs}[1]{\left|#1\right|}
\newcommand{\norm}[1]{\left\|#1\right\|}
\newcommand{\Vol}{\operatorname{Vol}}
\newcommand{\tr}{\operatorname{tr}}

\newcommand{\relint}{\operatorname{relint}}

\theoremstyle{plain}
\newtheorem{theorem}{Theorem}[section]
\newtheorem*{theorem*}{Theorem}

\newtheorem{proposition}[theorem]{Proposition}
\newtheorem{lemma}[theorem]{Lemma}
\newtheorem{corollary}[theorem]{Corollary}

\theoremstyle{definition}
\newtheorem{definition}[theorem]{Definition}

\theoremstyle{remark}
\newtheorem{remark}[theorem]{Remark}

\numberwithin{equation}{section}

\newcommand{\eq}[1]{\begin{equation}\begin{alignedat}{2}#1\end{alignedat}\end{equation}}

\makeatletter
\renewcommand{\subsection}{\@startsection{subsection}{2}%
  \z@{.5\linespacing\@plus.7\linespacing}{.25\linespacing}%
  {\normalfont\bfseries}}
\makeatother

\title[Centro-affine spectral geometry of polytopes]
  {Centro-affine spectral geometry of polytopes}
\author[Y. Hu, M. N. Ivaki]{Yingxiang Hu, Mohammad N. Ivaki}
\subjclass[2020]{Primary 52B11; Secondary 05C50, 52A39, 52A40.}
\keywords{centro-affine spectral gap, weighted facet graph, irredundant domain, Hodge--Riemann--Minkowski inequality}
\begin{document}

\begin{abstract}
In this paper, we develop a polyhedral version of Milman's centro-affine spectral framework for full-dimensional origin-symmetric polytopes. A weighted graph Laplacian on the facets plays the role of the smooth centro-affine Laplacian. For $n\geq 3$, we prove that its first nonconstant even eigenvalue satisfies
\eq{
  \lambda_{1,e}(P)\geq n-1+\frac{1}{n-1}
}
whenever $P$ is a local minimizer of $\lambda_{1,e}$ among origin-symmetric polytopes with the same facet normals, but not necessarily the same normal fan.
\end{abstract}

\maketitle

\tableofcontents

\section{Introduction}
\label{sec:introduction}

Throughout the paper, we assume that $n\geq 2$. The Euclidean unit sphere in $\R^n$ is denoted by $\Sn$, and $\mathcal{H}^k$ denotes $k$-dimensional Hausdorff measure. For $\varphi\in C^2(\Sn)$, write
\eq{
  r_{\varphi}=\bar{\nabla}^2\varphi+\varphi\bar{g}.
}
For $k\geq 2$, a convex body $K\subseteq\R^n$ is called $C^k_+$ if its support function $h_K\in C^k(\Sn)$ and the radius-of-curvature tensor $r_K:=r_{h_K}$ is positive definite. For a $C^2_+$ body $K$ with $h_K>0$, the centro-affine metric and volume measure in the Gauss map parametrization are
\eq{
  g_K&=\frac{r_K}{h_K},\quad
  dV_K&=h_K\det_{\bar{g}}(r_K)\,d\omega.
}
Here $\bar{g}$ denotes the round metric on $\Sn$, $\bar{\nabla}$ its Levi--Civita connection, and $d\omega$ its Riemannian measure. For a function $\psi$, the symbol $\nabla\psi$ denotes its gradient with respect to $g_K$. When $K$ is origin symmetric, the first nonconstant even centro-affine eigenvalue admits the Rayleigh characterization
\eq{\label{eq:introduction-smooth-Rayleigh-characterization}
  \lambda_{1,e}(K)
  =\inf_{\substack{\psi\in C^2(\Sn),\ \psi\not\equiv 0\\
      \psi(-u)=\psi(u),\ \int_{\Sn}\psi\,dV_K=0}}
  \frac{\int_{\Sn}\abs{\nabla\psi}_{g_K}^2\,dV_K}
       {\int_{\Sn}\psi^2\,dV_K}.
}
See Milman's centro-affine spectral framework \cite{Mil25}. In $\mathbb{R}^2$, the sharp estimate $\lambda_{1,e}\geq 2$ follows from \cite{BLYZ12}. Kolesnikov and Milman \cite{KM22} proved that every origin-symmetric $C^2_+$ body $K$ satisfies
\eq{
\lambda_{1,e}(K)\geq n-1+\frac{c_0}{n^{3/2}}.
}
Here, $c_0>0$ is a universal constant. Chen, Huang, Li, and Liu \cite{CHLL20} and Putterman \cite{Put21} establish local-to-global passages for the corresponding $L^p$-Brunn--Minkowski inequality. Klartag \cite{Kla23} bounded the isotropic KLS constant by $O(\sqrt{\log n})$, and Letwin's recent preprint \cite{Let26} lowers the KLS bound to $O((\log n)^{1/4})$. Applying \cite[Thm. 6.4, Cor. 6.8]{KM22} to Letwin's estimate, one obtains $\lambda_{1,e}\geq n-1+c_1/(n\sqrt{\log n})$ for a universal constant $c_1>0$.

By \cite[Thm. 3.1]{IM24} and approximation, an origin-symmetric $C^2_+$ body $K$ satisfies $\lambda_{1,e}(K)\geq n$ under the pinching condition
\eq{
\alpha\mathrm{Id}\leq D^2\left(\frac{1}{2}\norm{x}_{\ell K}^{2}\right)
\leq(n+1)\alpha\mathrm{Id}\quad(x\in\R^n\setminus\{0\}),
}
for some $\ell\in\mathrm{GL}(n,\R)$ and $\alpha>0$. Here, $\norm{\cdot}_{\ell K}$ denotes the Minkowski functional of $\ell K$, and $D^2$ is the Euclidean Hessian.

A convex body in $\R^n$ or a function on $\Sn$ is called unconditional if it is invariant under reflection in every coordinate hyperplane. For an unconditional $C^2_+$ body $K$, the infimum in \eqref{eq:introduction-smooth-Rayleigh-characterization}, restricted to unconditional functions, is at least $n$. This follows from Saroglou's work \cite[Thm. 1.2]{Sar15}; see also Kolesnikov and Milman \cite[Sec. 8.1]{KM22} and \cite[Thm. 8.12]{HIWeighted26}. Approximation and second variation of the log-Minkowski inequality in \cite[Cor. 1.2]{HIUnc26} establish the stronger result $\lambda_{1,e}\geq n$. The same lower bound $n$ holds for every $C^2_+$ origin-symmetric zonoid \cite[Thm. 1.4]{vH23b}.

B\"or\"oczky \cite{Bor23} and Huang, Yang, and Zhang \cite{HYZ25} survey the log-Brunn--Minkowski conjecture, its historical development, and its connections with Minkowski problems.

We also mention that He and Liu \cite[Thm. 5]{HL25} proved that $K\mapsto\lambda_{1,e}(K)$ has no local minimizer among origin-symmetric $C^4_+$ convex bodies. The upper semicontinuity under polyhedral approximation established in \autoref{thm:sec-8-upper-semicontinuity} provides further motivation for investigating the centro-affine spectral gap problem through the lens of polytopes.

For an origin-symmetric polytope, the smooth Rayleigh quotient has a finite-dimensional counterpart on the facets. Let $P\subseteq\R^n$ be full dimensional and origin symmetric. Its facets come in antipodal pairs indexed by a finite signed set $I=I^{+}\sqcup(-I^{+})$ with $0\notin I$. The irredundant representation, containing one inequality for each facet, has the form
\eq{\label{eq:introduction-facet-representation}
  P=P(h)&=\bigcap_{i\in I}\{x\in\R^n:\ip{x}{u_i}\leq h_i\},
 \\
  \abs{u_i}&=1,
  \quad
  h_i>0,
  \quad
  u_{-i}=-u_i,
  \quad
  h_{-i}=h_i.
}
For $i\in I$, denote the corresponding facet and its mass by
\eq{
  F_i=P\cap\{x\in\R^n:\ip{x}{u_i}=h_i\},\quad
  m_i=h_i\mathcal{H}^{n-1}(F_i).
}
Adjacent facets $F_i$ and $F_j$ determine the angle and conductance
\eq{
  \theta_{ij}=\arccos\ip{u_i}{u_j},
  \quad
  c_{ij}=\frac{h_ih_j\mathcal{H}^{n-2}(F_i\cap F_j)}{\sin\theta_{ij}},
}
while $c_{ij}=0$ for nonadjacent facets. For facet functions $f=(f_i)_{i\in I}$ and $g=(g_i)_{i\in I}$, we define the weighted graph Dirichlet form by
\eq{
  \mathsf{L}_P(f,g) = \sum_{\{i,j\}\in\binom{I}{2}}c_{ij}(f_i-f_j)(g_i-g_j),
}
where each unordered pair occurs once. In comparison with the smooth setting, the masses $m_i$ replace the smooth measure $dV_K$, while $\mathsf{L}_P(f,f)$ replaces the smooth Dirichlet energy in the Rayleigh quotient.

We say a facet function $f=(f_i)_{i\in I}$ is even if $f_{-i}=f_i$ for every $i\in I$. For such $f$, we define the weighted mean
\eq{
\bar{f}=\frac{\sum_{i\in I}m_if_i}{\sum_{i\in I}m_i}.
}
The first nonconstant even eigenvalue is
\eq{\label{eq:introduction-Rayleigh-characterization}
\lambda_{1,e}(P)
=\min_{\substack{f_{-i}=f_i\\ f \text{ not constant}}}
\frac{\mathsf{L}_P(f,f)}
{\sum_{i\in I}m_i(f_i-\bar{f})^2}.
}

For a support vector $q$ near $h$, let
\eq{
P(q)=\bigcap_{i\in I}\{x\in\R^n:\ip{x}{u_i}\leq q_i\}.
}

\begin{theorem}\label{thm:main}
Let $n\geq 3$ and $P(h)$ be a full-dimensional origin-symmetric polytope with the complete irredundant facet representation \eqref{eq:introduction-facet-representation}. Suppose that $\lambda_{1,e}(P(q))\geq \lambda_{1,e}(P(h))$ for every even support vector $q$ sufficiently close to $h$ for which every labeled inequality still defines a facet. Then
\eq{
  \lambda_{1,e}(P(h)) \geq n-1+\frac{1}{n-1}.
}
\end{theorem}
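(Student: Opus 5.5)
The plan is to exploit the first-order optimality condition at the local minimizer, paired with the Hodge--Riemann--Minkowski (HRM) inequalities for mixed volumes of polytopes sharing the facet normals $u_i$. This is a polyhedral transcription of Milman's smooth local-minimizer argument.

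\emph{Step 1: the spectral data as mixed-volume data.} I first express the spectral data through mixed volumes. Fix the labeled normals and allow support vectors $q$ near $h$ for which every inequality remains a facet. Then $V(q)=\Vol(P(q))$ is a homogeneous polynomial of degree $n$ on this open cone, with $\partial_i V=\mathcal{H}^{n-1}(F_i)$. For $z=(h_if_i)$ one has
\[
\partial_{ij}V=\frac{\mathcal{H}^{n-2}(F_i\cap F_j)}{\sin\theta_{ij}},\qquad i\neq j .
\]
Writing $z_i=h_if_i$ and using Euler's relation, I expect the identity
\[
\tfrac{1}{n-1}D^2V(h)[z,z]=\sum_i m_if_i^2-\tfrac{1}{n-1}\mathsf{L}_P(f,f).
\]
This identifies $\lambda_{1,e}(P)$ with the Hessian of $V$ on the even subspace. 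The HRM inequality (Lorentzian signature of $D^2V$) then gives $\lambda_{1,e}\ge n-1$, because odd linear functions $f_i=\langle x,u_i\rangle/h_i$ account for the eigenvalue $n-1$. I will take this as the baseline already established earlier in the paper.

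\emph{Step 2: the first-order condition.} Let $f$ be a normalized even eigenfunction with $\bar f=0$, $\sum m_if_i^2=1$ and $\mathsf{L}_P(f,f)=\lambda:=\lambda_{1,e}(P(h))$. I perturb $q_t=h(1+tg)$ with $g$ even, and use $f$ (recentred for $P(q_t)$) as a test vector in the Rayleigh quotient for $P(q_t)$. Minimality gives
\[
\frac{d}{dt}\Big|_{t=0}\frac{\mathsf{L}_{P(q_t)}(f,f)}{\sum_i m_i(q_t)(f_i-\bar f_t)^2}\ge 0
\]
for both signs of $g$, so the derivative vanishes. Everything here is smooth in $t$ because the combinatorics is frozen. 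The derivatives of $c_{ij}$ and $m_i$ are third derivatives of $V$. The resulting condition should read
\[
D^3V(h)[hf,hf,hg]=\lambda'\text{-terms}\cdot\sum_i m_if_i^2g_i+\dots
\]
for all even $g$. I then choose $g=f$, giving a cubic identity relating $D^3V[hf,hf,hf]$, $\sum m_if_i^3$ and $\lambda$.

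\emph{Step 3: closing the estimate.} I combine the cubic identity with an HRM/Aleksandrov--Fenchel inequality for the mixed quadratic form $Q_w(z,z)=D^3V(h)[z,z,w]$ in the direction $w=h(1+sf)$. This form is the Hessian of the volume at the nearby polytope $P(h(1+sf))$, so it again has Lorentzian signature. I apply Step 1 at $P(h(1+sf))$, where $\lambda_{1,e}\ge\lambda$ by minimality. The $O(s)$ term of
\[
\mathsf{L}_{P_s}(f_s,f_s)-\lambda\sum m_i(s)(f_s-\bar f_s)^2\ge0
\]
has to be matched against the $O(s^2)$ term, and the $O(s^2)$ requirement is where a Cauchy--Schwarz bound on $\sum m_if_i^3$ in terms of $\sum m_if_i^2\sum m_if_i^4$ enters. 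Optimizing the resulting quadratic inequality in $s$ should yield $(\lambda-(n-1))(n-1)\ge1$.

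\emph{Main obstacle.} The delicate point is Step 3. Turning the vanishing first variation into a quantitative gain requires an HRM-type inequality strong enough to control the cubic term, and it is exactly here that $n\ge3$ should enter, since $D^3V$ is needed to be a genuine mixed form. A second difficulty is that $\lambda$ may be multiple, so the derivative is only one-sided. I would handle this by working with the minimum over the eigenspace, using the one-sided directional derivative (minimum of the derivative of the Rayleigh form over eigenvectors), and choosing $f$ to be the eigenvector attaining it.
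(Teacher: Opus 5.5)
Your outline (a fixed test vector, first-order stationarity, then a second variation along $h\odot(\one+sf)$ controlled by Hodge--Riemann theory) has the right shape. However, Step~3 as proposed cannot close, and Steps~1--2 tacitly assume that $P(h)$ is simple.

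\textbf{Step 3.} Along $h_s=h\odot(\one+sf)$, the second derivative of $D^2V(h_s)[h_s\odot f,h_s\odot f]$ contains $d_4=D^4V(h)[h\odot f,h\odot f,h\odot f,h\odot f]$. This term enters the second variation of the Rayleigh quotient with a minus sign, so a contradiction needs a \emph{lower} bound on $d_4$.

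The tools you name do not provide one. Lorentzian signature of $D^2V$, or of mixed forms $D^3V(h)[\cdot,\cdot,w]$ (degree-one Hodge--Riemann, i.e.\ Aleksandrov--Fenchel), bounds such forms \emph{above} on primitive vectors, and $h\odot f$ is not a nef direction anyway. Moreover, $D^3V(h)[z,z,w]$ is linear in $w$ and is not the Hessian of $V$ at $P(w)$. A Cauchy--Schwarz bound on $\sum_i m_if_i^3$ is also beside the point: $C(f,f,f)$ only appears in the first-order term, which vanishes.

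The missing ingredients are the following.
\begin{enumerate}
\item The full stationarity identity $C(f,f,g)=\beta\{B(f^{\odot2},g)-A(f^{\odot2}\odot g)\}$ for all even $g$, in particular $g=f^{\odot2}$ and not only $g=f$.
\item The even solution $\eta$ of $C(f,f,\cdot)=(n-2)B(\eta,\cdot)$, extended to odd $g$ by parity.
\item The primitive class $\Gamma=\mathfrak{d}(f)^2-\omega\mathfrak{d}(\eta)$.
\item The \emph{degree-two} Hodge--Riemann--Minkowski inequality of McMullen--Timorin, which gives $d_4\geq(n-2)(n-3)B(\eta,\eta)$.
\end{enumerate}
Combine this with the corrected trial vector $u_t=f-(1+\beta)tf^{\odot2}$, which kills the first variation of $\mathcal{D}_{\lambda,t}(u_t,\cdot)$ on even functions and sharpens the constant. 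Add an $\mathsf{M}$-orthonormal eigen-expansion of $f^{\odot2}$ and $\eta$. One then gets $\mathcal{R}_{\mathrm{tr}}''(0)\leq\frac{2\beta\lambda}{(n-1)S}(1+(n-1)\beta)$, which is negative when $-\frac{1}{n-1}<\beta<0$.

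This is where $n\geq 3$ actually enters: $d_4=0$ for $n=3$, and one divides by $n-2$. You also need the \emph{strict} baseline $\lambda>n-1$, not just $\lambda\geq n-1$. It makes $B$ negative definite on centered even functions, so that $\eta$ exists, and it gives $\beta<0$.

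\textbf{Nonsimple minimizers.} $V$ is not a single polynomial on the set of $q$ for which every inequality remains a facet, and the combinatorics is not frozen: the hypothesis deliberately allows the normal fan to change. If $P(h)$ is not simple, $h$ lies on the boundary of several simple type chambers. Then $V$ is in general only $C^2$ at $h$, and third and fourth support derivatives depend on the chamber.

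Your \emph{both signs} argument then breaks. The one-sided derivatives along $z$ and $-z$ are computed with different chamber polynomials, and each is $\geq 0$ by minimality. But their sum equals $C_{\tilde\sigma}(f,f,a)-C_{\sigma}(f,f,a)$, which has no a priori sign.

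The paper resolves this with a support-Hessian midpoint inequality, obtained from $D^2V(q)[\tilde h,\tilde h]\geq n(n-1)\mathcal{V}(P(q)[n-2],L[2])$ with equality at $q=h$. It forces that sum to be $\leq 0$, which yields stationarity in every incident symmetric chamber. The degree-two inequality is then applied at interior chamber points and passed to the limit. As written, your argument covers only simple $P(h)$.

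Finally, multiplicity of $\lambda$ is not a real obstacle: a fixed test vector gives $\mathcal{R}(q)\geq\lambda_{1,e}(P(q))\geq\lambda=\mathcal{R}(h)$ regardless of multiplicity.
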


For $n\geq 3$, we now briefly outline the proof of \autoref{thm:main}. We write $\odot$ for coordinatewise multiplication, use $\one$ for the constant-one facet function, and interpret powers marked by $\odot$ coordinatewise. Let $f$ be a first nonconstant even eigenfunction normalized by
\eq{
  \sum_{i\in I}m_if_i=0, \quad
  \sum_{i\in I}m_if_i^2=1.
}
For $\lambda=\lambda_{1,e}(P(h))$ and $\beta=n-1-\lambda$, consider the support and trial paths
\eq{
  h_t=h\odot(\one+tf), \quad
  u_t=f-(1+\beta)t f^{\odot2}.
}
The logarithmic velocity of $h_t$ at zero is $f$.

For $P_t=P(h_t)$, write $m_i(t)$ and $c_{ij}(t)$ for its facet masses and ridge conductances, and let
\eq{
  \bar{u}_t
  =\frac{\sum_{i\in I}m_i(t)(u_t)_i}
       {\sum_{i\in I}m_i(t)}.
}
The corresponding centered Rayleigh quotient is
\eq{
  \mathcal{R}_{\mathrm{tr}}(t)
  =\frac{
    \sum_{\{i,j\}\in\binom{I}{2}}
    c_{ij}(t)\bigl((u_t)_i-(u_t)_j\bigr)^2
  }{
    \sum_{i\in I}
    m_i(t)\bigl((u_t)_i-\bar{u}_t\bigr)^2
  }.
}

We first consider the case when a local minimizer $h$ lies in a simple type chamber, where volume is polynomial in the support numbers. Twice differentiating the quotient introduces the fourth directional derivative of the volume polynomial along $h\odot f$. This derivative vanishes when $n=3$. For $n\geq 4$, the degree-two Hodge--Riemann--Minkowski inequality of McMullen \cite[Thm. 8.2]{McM93} and Timorin \cite[Thm. 5.1.1]{Tim99} bounds the fourth directional derivative from below and yields
\eq{
  \mathcal{R}_{\mathrm{tr}}''(0)\leq
  \frac{2\beta\lambda}{(n-1)\sum_{i\in I}m_i}
  \bigl\{1+(n-1)\beta\bigr\}.
}
On the other hand, the Rayleigh principle and local minimality imply
\eq{
  \mathcal{R}_{\mathrm{tr}}(t)
\geq \lambda_{1,e}(P(h_t))
\geq \lambda
  =\mathcal{R}_{\mathrm{tr}}(0)
}
for all sufficiently small $\abs{t}$. Therefore, the second derivative test requires $\mathcal{R}_{\mathrm{tr}}''(0)\geq 0$. By \autoref{cor:sec-5-strict-even-eigenvalue-bound}, $\beta<0$. Hence $\beta\leq-\frac{1}{n-1}$.

When a minimizing support vector $h$ is nonsimple, it may lie in the closures of several simple type chambers. The corresponding volume polynomials have the same value, gradient, and Hessian at $h$, although their third and fourth derivatives may depend on the chamber. To handle this case, we prove a midpoint inequality for the support Hessian by comparing mixed volumes. Together with local minimality, this inequality forces the one-sided Rayleigh derivatives along opposite support directions to vanish. The resulting stationarity equation allows us to apply the second-variation argument in an \emph{incident} simple type chamber and rule out the possibility that
\eq{
  n-1<\lambda_{1,e}(P)<n-1+\frac{1}{n-1}.
}

The Hessian of the volume polynomial connects the spectral problem \eqref{eq:introduction-Rayleigh-characterization} to earlier work on polyhedral geometry. Izmestiev \cite{Izm10} studied the support-parameter space of polytopes with prescribed facet normals and constructed a Colin de Verdi\`ere matrix from the negative volume Hessian. Fillastre and Izmestiev \cite{FI17} described irredundancy domains and the decomposition of their interiors into type cones. Together, the two works provide a blueprint for extending smooth centro-affine geometry to the polytopal setting.

The degree-one Hodge--Riemann relation is precisely the second Minkowski inequality. In a simple type chamber, it is equivalent to concavity of $h\mapsto\Vol(P(h))^{1/n}$ under linear support interpolation. The proof of \autoref{thm:main} uses the degree-two Hodge--Riemann relation. Van Handel \cite{vH23a} formulated the higher-degree Hodge--Riemann relations as mixed-volume inequalities and used the degree-two relation to disprove Fedotov's conjectured extension of Shephard's inequalities. We also mention that Shenfeld and van Handel \cite{SvH19} proved the Alexandrov--Fenchel inequality by the Bochner method, including a finite-dimensional spectral argument for polytopes. See also their later work, where they treat the equality cases of Minkowski's quadratic inequality and the Alexandrov--Fenchel inequality \cite{SvH22,SvH23}.

\begin{remark}\label{rem:introduction-multiplicity-bound}
Under the assumptions of \autoref{thm:main}, if $\lambda_{1,e}(P(h))$ has multiplicity $m$ in the space of even facet functions, then
\eq{
  \lambda_{1,e}(P(h))\geq n-1+\frac{m}{n-1}.
}
Moreover, for any $N\geq n$, every global minimizer $P$ of $\lambda_{1,e}$ in the class of full-dimensional origin-symmetric polytopes in $\mathbb{R}^n$ with at most $N$ pairs of facets, and $m=n-1$, is a parallelotope.

Further details may appear elsewhere.
\end{remark}

\section{Polytopes and the facet network}
\label{sec:polytopes-facet-network}

In this section, we collect the basic terminology for polytopes and complete irredundant facet representations. We then attach to each such representation a labeled facet network whose vertex and edge weights are the facet masses and ridge conductances. For origin-symmetric polytopes, this network defines $\lambda_{1,e}(P)$, the first nonconstant even eigenvalue.

 Coordinate subscripts such as $h_i$ are labels and never denote differentiation. The symbol $\one$ denotes the constant-one vector on any finite label set.

A set $K\subseteq\R^n$ is \emph{convex} if $(1-t)x+ty\in K$ for every $x,y\in K$ and $t\in[0,1]$. A \emph{convex body} is a compact convex set with nonempty interior.

For a nonempty compact set $K\subseteq\R^n$, its support function is
\eq{
  h_K(u)=\max_{x\in K}\ip{x}{u}
  \quad(u\in\R^n).
}

For a nonempty set $A\subseteq\R^n$, its convex hull and affine hull are
\eq{
  \operatorname{conv}A
  &=\left\{\sum_{k=1}^m\lambda_ka_k:
  m\in\{1,2,\ldots\}, \quad a_k\in A, \quad \lambda_k\geq 0, \quad
  \sum_{k=1}^m\lambda_k=1\right\},\\
  \operatorname{aff}A
  &=\left\{\sum_{k=1}^m\lambda_ka_k:
  m\in\{1,2,\ldots\}, \quad a_k\in A, \quad \lambda_k\in\R, \quad
  \sum_{k=1}^m\lambda_k=1\right\}.
}
The dimension of $A$ is the dimension of its affine hull. The relative interior $\relint A$ is the interior of $A$ in $\operatorname{aff}A$. A subset of $\R^n$ is \emph{full dimensional} if its affine hull is $\R^n$.

A polytope is the convex hull of a finite nonempty set. A nonempty subset $F$ of a polytope $P$ is a \emph{face of $P$} if $F=P$ or, for some nonzero linear functional $\ell:\R^n\to\R$,
\eq{
  F=\left\{x\in P:\ell(x)=\max_{y\in P}\ell(y)\right\}.
}
Following Schneider's convention \cite[p. 74]{Sch14}, we regard $\varnothing$ as a face of $P$ and we set $\dim\varnothing=-1$. A face is \emph{proper} if it is neither $\varnothing$ nor $P$. A face of dimension $0$ or $1$ is called a vertex or edge, respectively. If $d=\dim P$, a face of dimension $d-2$ is a ridge and one of dimension $d-1$ is a facet. The \emph{face lattice} of a polytope is its set of faces ordered by inclusion. A $d$-dimensional polytope is \emph{simple} if every vertex belongs to exactly $d$ facets.

The set of vertices of $P$ is denoted by
\eq{
  \operatorname{vert}P=\{v\in P:\{v\}\text{ is a face of }P\}.
}
Every polytope is the convex hull of its vertices \cite[p. 52, Prop. 2.2]{Zie95}.

Points $p_1,\ldots,p_{d+1}$ are \emph{affinely independent} if $p_2-p_1,\ldots,p_{d+1}-p_1$ are linearly independent, and their convex hull is then a \emph{$d$-simplex}. The \emph{standard $d$-simplex} is 
\eq{
\operatorname{conv}\{\mathbf{e}_1,\ldots,\mathbf{e}_{d+1}\}\subseteq\R^{d+1},
}
where $\mathbf{e}_1,\ldots,\mathbf{e}_{d+1}$ are the standard coordinate vectors.

Suppose that $P$ contains the origin in its interior. A finite half-space representation of $P$ indexed by a set $J$ has the form
\eq{
   P=\bigcap_{i\in J}\{x\in\R^n:\ip{x}{u_i}\leq h_i\}, \quad \abs{u_i}=1, \quad h_i>0.
}
The inequality indexed by $i\in J$ is a \emph{facet inequality} if $P\cap\{x\in\R^n:\ip{x}{u_i}=h_i\}$ is a facet of $P$. The representation is \emph{complete and irredundant} if these inequalities are in one-to-one correspondence with the facets of $P$; equivalently, removing any one of them enlarges $P$.

\begin{definition}[Labeled facet network]\label{def:sec-2-labeled-facet-network}
For a complete irredundant representation, the facet $F_i$ and its mass $m_i$ are
\eq{
  F_i=\{x\in P:\ip{x}{u_i}=h_i\},\quad
  m_i=h_i\mathcal{H}^{n-1}(F_i).
}
The vector $h=(h_i)_{i\in J}$ is the \emph{support vector} of this labeled half-space representation.

Two facets are \emph{adjacent} when they meet in a ridge. Adjacent facets $F_i$ and $F_j$ determine
\eq{\label{eq:sec-2-polytope-conductances}
  R_{ij}=F_i\cap F_j, \quad \theta_{ij}=\arccos\ip{u_i}{u_j}, \quad c_{ij}=\frac{h_ih_j\mathcal{H}^{n-2}(R_{ij})}{\sin\theta_{ij}}.
}
The number $c_{ij}$ is the \emph{ridge conductance}, with $c_{ij}=0$ for nonadjacent facets. The \emph{labeled facet network} is the graph whose vertices are the facet labels and whose edges join adjacent facets. Its vertex and edge weights are $m_i$ and $c_{ij}$, respectively.
\end{definition}

\begin{lemma}\label{lem:sec-2-facet-graph-connected}
The facet-adjacency graph of a full-dimensional polytope is connected.
\end{lemma}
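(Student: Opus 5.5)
We prove that the facet-adjacency graph of a full-dimensional polytope $P\subseteq\R^n$ is connected by passing to the dual picture on the boundary. The key observation is that two facets $F$ and $G$ lie in the same connected component whenever there is a path on $\partial P$ from a relative interior point of $F$ to one of $G$ that avoids every face of dimension at most $n-3$. Such a path crosses from one facet into another only through the relative interior of a ridge, and a ridge is contained in exactly two facets, which are then adjacent by definition.

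First, we show that the set $S$, the union of all faces of dimension at most $n-3$, does not disconnect $\partial P$. Translate so that the origin lies in the interior of $P$. Radial projection $x\mapsto x/\abs{x}$ is then a homeomorphism from $\partial P$ onto $\Sn$, and it is bi-Lipschitz since $P$ contains a ball about the origin and is bounded. It maps $S$ onto a finite union of subsets of great subspheres of dimension at most $n-3$, that is, a set of codimension at least $2$ in $\Sn$. The complement of a finite union of subspheres of codimension $\geq 2$ in $\Sn$ is path connected. One can see this directly: given two points outside these subspheres, join them by a great-circle arc, perturbing the arc if needed. The set of arcs through a given codimension-$\geq 2$ subsphere forms a family of dimension too small to cover a neighbourhood of the arcs, so a generic piecewise-geodesic path avoids all of them. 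For $n=2$ the set $S$ is empty, and the claim reduces to connectedness of the circle, where consecutive edges meet at vertices, which are the ridges.

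Second, we fix facets $F$ and $G$, pick $x\in\relint F$ and $y\in\relint G$, and take a path $\gamma$ in $\partial P\setminus S$ joining them. We then perturb $\gamma$ slightly so that it meets the union of ridges in finitely many points. This is again a genericity statement: the ridges project to pieces of codimension-one subspheres, and a piecewise-geodesic path in general position crosses them transversally at finitely many points. Between consecutive crossings the path stays inside the relative interior of a single facet. At each crossing it passes through the relative interior of some ridge $R=F_i\cap F_j$, where a neighbourhood in $\partial P$ lies in $F_i\cup F_j$, so it moves between the adjacent facets $F_i$ and $F_j$. Reading off the successive facets visited gives a walk in the facet-adjacency graph from $F$ to $G$.

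The main obstacle is making the general-position argument rigorous without too much machinery: that $\partial P\setminus S$ is path connected, and that paths can be taken to cross ridges transversally. If the topological route becomes cumbersome, we would fall back on a purely combinatorial argument by induction on dimension. The idea is to use the dual polytope, in which facet adjacency becomes adjacency along edges in the vertex–edge graph, and then apply the standard fact that the graph of a polytope is connected, for example via the simplex-method argument with a generic linear functional (Balinski-type reasoning, \cite{Zie95}).
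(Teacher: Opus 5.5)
Your fallback argument is exactly the paper's proof. After moving the origin into $\operatorname{int}P$, face-lattice duality identifies the facet-adjacency graph of $P$ with the vertex--edge graph of the polar $P^{\ast}$, which is connected by Balinski's theorem.

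Your primary route is genuinely different, and it is correct. You show that $\partial P$ minus the union $S$ of faces of dimension at most $n-3$ is connected. You then read off facet adjacencies along a path in that set, using that a ridge lies in exactly two facets. This makes the argument self-contained and geometric: it is the strong connectivity of the boundary complex, proved by general position. The paper's proof is two lines, but it leans on polarity and on a named theorem whose full strength ($n$-connectivity) is not needed.

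There are three small points to fix when you write it up.

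\begin{itemize}
\item The transversality and finite-crossing step, which you flagged as the main obstacle, can be dropped. For each component $\mathcal{K}$ of the facet graph, consider the set $\bigl(\bigcup_{F\in\mathcal{K}}F\bigr)\setminus S$.
  \begin{itemize}
  \item These sets are nonempty, since each contains $\relint F$ for $F\in\mathcal{K}$.
  \item They are closed in $\partial P\setminus S$ and together they cover it.
  \item They are pairwise disjoint. Two distinct facets that meet at a point outside $S$ intersect in a face of dimension exactly $n-2$, so they are adjacent.
  \end{itemize}
  Connectedness of $\partial P\setminus S$ then leaves a single component. This version does not even use the exactly-two-facets property.
\item What you sketched is connectedness of the complement of the full great subspheres through the projected low-dimensional faces. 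That set is smaller than the image of $\partial P\setminus S$, since a point of $\relint F$ can lie on such a subsphere without lying in $S$. Either choose $x,y$ off these subspheres, which is possible because $\relint F$ is open in $\partial P$. Or note that every point of $\partial P\setminus S$ has a neighbourhood missing $S$, and this neighbourhood meets the dense complement.
\item The complement itself is cleanest to handle in $\R^n$. Take $x$ off a linear subspace $W$ of codimension at least two. Every $z$ with $[x,z]\cap W\neq\varnothing$ lies in the proper subspace $\operatorname{span}(W\cup\{x\})$. So a generic intermediate point gives a two-segment path avoiding all such $W$, and also the origin. Then project radially to the sphere.
\end{itemize}
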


\begin{proof}
A translation preserves the facet-adjacency graph. Hence we may assume the origin lies in the interior of $P$. The polar polytope of $P$ is
\eq{
  P^{\ast}=\{y\in\R^n:\ip{x}{y}\leq 1\text{ for every }x\in P\}.
}
Face-lattice duality sends the facets and ridges of $P$ to the vertices and edges of $P^{\ast}$, respectively; see \cite[p. 109, Eq. (2.28)]{Sch14}. Thus the facet-adjacency graph of $P$ is isomorphic to the vertex-edge graph of $P^{\ast}$, which is connected by Balinski's theorem \cite[p. 95, Thm. 3.14]{Zie95}.
\end{proof}

A \emph{facet function} on a labeled facet network with vertex set $J$ is a vector $f=(f_i)_{i\in J}$. For facet functions $f$ and $g$, the weighted mean of $f$ and the weighted graph Dirichlet form are
\eq{
  \bar{f}
  &=\frac{\sum_{i\in J}m_if_i}{\sum_{i\in J}m_i},\\
  \mathsf{L}_P(f,g)&=\sum_{\{i,j\}\in\binom{J}{2}}c_{ij}(f_i-f_j)(g_i-g_j).
}
In the formula for $\mathsf{L}_P(f,g)$, each unordered pair appears once.

Whenever the outer unit facet normals occur in antipodal pairs, we use a signed label set
\eq{
  I=I^{+}\sqcup(-I^{+}),\quad 0\notin I.
}
The set $I^{+}$ contains one label from each pair. For $i\in I^{+}$, the opposite facet has label $-i$ and outer unit normal $u_{-i}=-u_i$.
No relation between $h_i$ and $h_{-i}$ is imposed. Throughout the paper, $I$ \emph{is reserved for a signed label set of this form}. The support vector is \emph{even} when $h_{-i}=h_i$ for every $i\in I$. A facet function is \emph{even} when $f_{-i}=f_i$ for every $i\in I$. For a complete irredundant representation, evenness of $h$ is equivalent to $P=-P$.

The linear involution $\iota:\R^I\to\R^I$ is defined by
\eq{
  (\iota z)_i=z_{-i}.
}
Thus a vector $z\in\R^I$ is even precisely when $\iota z=z$; it is \emph{odd} when $\iota z=-z$.

\begin{definition}[First nonconstant even eigenvalue]
Suppose that $h$ is even, equivalently $P=-P$. The first nonconstant even eigenvalue of $P$ is
\eq{
  \lambda_{1,e}(P)
  =\min_{\substack{f\ \mathrm{even}\\f \text{ not constant}}}
  \frac{\mathsf{L}_P(f,f)}
       {\sum_{i\in I}m_i(f_i-\bar{f})^2}.
}
\end{definition}
Neither replacing $f$ by $f-\bar{f}\one$ nor multiplying it by a nonzero scalar alters the quotient. Therefore, the minimum may be taken over the normalized set
\eq{
  \left\{f\in\R^I:
  f_{-i}=f_i,\quad
  \sum_{i\in I}m_if_i=0,\quad
  \sum_{i\in I}m_if_i^2=1\right\}.
}
The boundedness of $P$ forces the vectors $(u_i)_{i\in I^{+}}$ to span $\R^n$; otherwise a nonzero vector perpendicular to their span would generate a line in $P$. Hence the even facet space has dimension $\abs{I}/2=\abs{I^{+}}\geq n$. The normalized set is nonempty and compact. Moreover, the continuous function $f\mapsto \mathsf{L}_P(f,f)$ attains a minimum on this set. By \autoref{lem:sec-2-facet-graph-connected}, this minimum is positive: $\mathsf{L}_P(f,f)$ vanishes only on constants, which the normalization excludes.

\section{Polyhedral cones, normal fans, and support coordinates}
\label{sec:support-coordinates}

In this section, we recall the basics of polyhedral cones and normal fans and introduce the support coordinates used throughout the paper. Moreover, we describe the simple type chambers on which the normal fan remains fixed and volume is represented by a homogeneous polynomial.

\subsection{Polyhedral cones and fans}

For $z_1,\ldots,z_m\in\R^n$, we define their positive hull by
\eq{
\operatorname{pos}\{z_1,\ldots,z_m\}=\left\{\sum_{k=1}^mt_kz_k:t_k\geq 0\right\},
}
with $\operatorname{pos}(\varnothing)=\{0\}$. A \emph{polyhedral cone} $\mathfrak{C}$ is the positive hull of finitely many vectors. A representation $\mathfrak{C}=\operatorname{pos}\{v_1,\ldots,v_m\}$ need not be minimal: some $v_i$ may be removed without changing $\mathfrak{C}$. The cone $\mathfrak{C}$ is \emph{simplicial} if it has such a representation with $v_1,\ldots,v_m$ linearly independent.

A \emph{face} of a polyhedral cone $\mathfrak{C}$ is either $\mathfrak{C}$ itself or a set
\eq{
  \{z\in\mathfrak{C}:\ell(z)=0\}, \quad \ell\leq 0\text{ on }\mathfrak{C},
}
where $\ell$ is a nonzero linear functional. The cone is \emph{pointed} when $\mathfrak{C}\cap(-\mathfrak{C})=\{0\}$.

For $0\neq v\in\mathfrak{C}$, the ray spanned by $v$ is $\R_{\geq 0}v$, and its nonzero generators are precisely the positive multiples of $v$. An \emph{extreme ray} of $\mathfrak{C}$ is a ray that is a face of $\mathfrak{C}$. Every pointed polyhedral cone is the positive hull of one nonzero generator from each of its extreme rays \cite[p. 17, Thm. 1.4.3]{Sch14}.

\begin{definition}[Polyhedral fan]
A \emph{polyhedral fan} in $\R^n$ is a finite collection $\mathcal{F}$ of polyhedral cones such that every face of a cone in $\mathcal{F}$ belongs to $\mathcal{F}$ and the intersection of two cones in $\mathcal{F}$ is a face of each. The fan is \emph{complete} if
\eq{
  \bigcup_{\zeta\in\mathcal{F}}\zeta=\R^n.
}
The fan is \emph{simplicial} if every cone in $\mathcal{F}$ is simplicial. A cone in $\mathcal{F}$ is \emph{maximal} if it is maximal under inclusion.

A fan $\mathcal{F}'$ \emph{refines} a fan $\mathcal{F}$ if every cone of $\mathcal{F}'$ is contained in a cone of $\mathcal{F}$. Equivalently, $\mathcal{F}$ \emph{coarsens} $\mathcal{F}'$.
\end{definition}

We write $\mathcal{F}$ for an arbitrary polyhedral fan and reserve $\Sigma$ for a complete simplicial fan.

\begin{remark}
Note that every maximal cone $\zeta$ of a complete fan in $\R^n$ is $n$-dimensional. Indeed, suppose that a maximal cone $\zeta$ had smaller dimension. Let $x\in\relint \zeta$ and $z\notin\operatorname{span}(\zeta)$. For each $k\geq 1$, completeness provides a cone $\mathfrak{C}_k$ of the fan such that $x+k^{-1}z\in\mathfrak{C}_k$. Since the fan is finite, there are a cone $\mathfrak{C}$ and integers $k_r\to\infty$ such that
\eq{
x+k_r^{-1}z\in\mathfrak{C}
\quad (r\geq 1).
}
The cone $\mathfrak{C}$ is closed. Hence $x\in\mathfrak{C}$. Since the face $\zeta\cap\mathfrak{C}$ of $\zeta$ contains $x\in\relint \zeta$, it equals $\zeta$; in particular, $\zeta\subseteq\mathfrak{C}$. On the other hand,
\eq{
  x+k_r^{-1}z\in\mathfrak{C}\setminus\operatorname{span}(\zeta)
  \quad(r\geq 1).
}
Therefore, $\zeta\subsetneq\mathfrak{C}$, contrary to the maximality of $\zeta$.
\end{remark}

\subsection{Normal fans and face duality}

\begin{definition}[Normal and polytopal fans]\label{def:sec-3-normal-polytopal-fan}
Let $P\subseteq\R^n$ be a nonempty polytope. For $v\in\R^n$, the exposed face of $P$ in direction $v$ is
\eq{
  P^v=\{x\in P:\ip{x}{v}=h_P(v)\}.
}
For a nonempty face $F$ of $P$, including the improper face $F=P$, its \emph{normal cone} is
\eq{
  N_P(F)
  =\left\{v\in\R^n:\ip{x}{v}=h_P(v)
  \text{ for every }x\in F\right\}.
}
Equivalently,
\eq{
  N_P(F)=\{v\in\R^n:F\subseteq P^v\}
  =\{v\in\R^n:P^v\cap F=F\}.
}
For a vertex $x$ of $P$, we write $N_P(x)=N_P(\{x\})$. The \emph{normal fan} of $P$, $\mathcal{F}_P$, is the collection of its normal cones \cite[p. 193, Ex. 7.3]{Zie95}. A complete fan is \emph{polytopal} if it is the normal fan of a polytope.
\end{definition}

\begin{lemma}\label{lem:sec-3-face-normal-cone-duality}
Let $P\subseteq\R^n$ be a full-dimensional polytope. The assignment $F\mapsto N_P(F)$ is a bijection from the nonempty faces of $P$ onto the cones of $\mathcal{F}_P$. For any nonempty faces $F$ and $G$:
\eq{
  F\subseteq G
  \quad\iff\quad
  N_P(F)\supseteq N_P(G).
}
Moreover, $\dim F+\dim N_P(F)=n$ for every nonempty face $F$. In addition, for every nonempty face $G$ of $P$ and every $v\in\R^n$,
\eq{\label{eq:sec-3-relative-normal-exposed-face}
  v\in\relint N_P(G)
  \quad\iff\quad P^v=G.
}
\end{lemma}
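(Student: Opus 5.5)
The plan is to derive the lemma from the definitions of exposed faces and normal cones, using standard facts about faces of polytopes. The ingredients are: every nonempty face $F$ is exposed, i.e.\ $F=P^v$ for some $v$ (with $v=0$ giving $F=P$); a face of a face is a face; and each face is the convex hull of the vertices it contains. The steps will be carried out in the order (a) the relative-interior characterization \eqref{eq:sec-3-relative-normal-exposed-face}, (b) order reversal and bijectivity, (c) the dimension formula.

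\textbf{Step (a).} First, $N_P(G)=\{v: G\subseteq P^v\}$ is a polyhedral cone. Indeed, pick vertices $x_0,\dots,x_k$ of $G$ and write $\operatorname{vert}P=\{y_1,\dots,y_N\}$. Then $v\in N_P(G)$ if and only if $\ip{x_a-x_0}{v}=0$ for all $a$ and $\ip{y_b-x_0}{v}\le 0$ for all $b$. Let
\eq{
  C_G=\{v\in N_P(G):P^v=G\}.
}
For $v\in N_P(G)$ we always have $G\subseteq P^v$, and $P^v=G$ holds exactly when $\ip{y-x_0}{v}<0$ for every vertex $y\notin G$. So $C_G$ is obtained from $N_P(G)$ by making strict all inequalities not forced to be equalities on it. Next, $C_G\neq\varnothing$ because $G$ is exposed. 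Given $v_0\in C_G$ and $v\in N_P(G)$, the point $v_0+\epsilon(v_0-v)$ stays in $N_P(G)$ for small $\epsilon$, since the strict inequalities survive a small perturbation. This shows that $C_G$ is relatively open and dense in $N_P(G)$. It follows that $C_G=\relint N_P(G)$: for a polyhedral cone, the relative interior is exactly the set where every inequality that is not an implicit equality is strict.

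\textbf{Step (b).} If $F\subseteq G$, then $G\subseteq P^v$ implies $F\subseteq P^v$, so $N_P(G)\subseteq N_P(F)$. For the converse, pick $v\in\relint N_P(F)$, so that $P^v=F$ by step (a). If $N_P(F)\supseteq N_P(G)$, take $w\in\relint N_P(G)$; then $w\in N_P(F)$, so $F\subseteq P^w=G$. The converse of order reversal immediately gives injectivity, because $N_P(F)=N_P(G)$ implies both $F\subseteq G$ and $G\subseteq F$. Surjectivity holds by the definition of $\mathcal{F}_P$.

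\textbf{Step (c).} Let $L=\operatorname{aff}F-x_0$, a linear space of dimension $\dim F$. From the description in step (a), $N_P(F)\subseteq L^\perp$, so $\dim N_P(F)\le n-\dim F$. For the reverse inequality, take $v_0\in\relint N_P(F)$, so that all vertices $y\notin F$ satisfy $\ip{y-x_0}{v_0}<0$. For any $w\in L^\perp$, the vector $v_0+\epsilon w$ still vanishes on $L$ and keeps those strict inequalities for small $\epsilon$, so it lies in $N_P(F)$. Hence $N_P(F)$ contains a neighborhood of $v_0$ in $L^\perp$, and $\dim N_P(F)=n-\dim F$. Full-dimensionality of $P$ is used only in the case $F=P$, where $L=\R^n$ and $N_P(P)=\{0\}$.

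\textbf{Main obstacle.} The hardest point is step (a), specifically identifying $C_G$ with $\relint N_P(G)$. This requires the fact that the relative interior of a cone given by finitely many linear equalities and inequalities is obtained by making strict all inequalities that are not implicit equalities. I would prove this directly from the finite description: a point satisfying all non-implicit inequalities strictly has a relative neighborhood in $\operatorname{span}N_P(G)$ inside the cone, and conversely a point where some non-implicit inequality is tight can be pushed outside the cone along a direction within the span.
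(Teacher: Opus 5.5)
Your proof is correct, and it takes a different route from the paper. The paper gives no argument for this lemma: it cites Schneider (Thm.~2.4.9 and Eqs.~(2.24)--(2.26)) for proper faces and handles $F=P$ by noting $N_P(P)=\{0\}$.

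You instead give a self-contained, vertex-based proof:
\begin{enumerate}
\item You write $N_P(G)$ as the polyhedral cone cut out by the conditions $\ip{x_a-x_0}{v}=0$ and $\ip{y_b-x_0}{v}\le 0$.
\item You identify $\{v:P^v=G\}$ as the locus where the inequalities for vertices $y_b\notin G$ are strict.
\item From this description of the relative interior you derive order reversal, injectivity and the dimension count.
\end{enumerate}
The citation buys brevity. Your route uses only $P=\operatorname{conv}\operatorname{vert}P$ and the fact that faces are exposed, which is built into the paper's definition of a face. It also treats $F=P$ uniformly. In fact it shows the full-dimensionality hypothesis is not needed at all: for general $P$ your step (c) gives $N_P(P)=(\operatorname{aff}P-x_0)^{\perp}$ and the same dimension formula. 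So your remark that full-dimensionality is ``used'' for $F=P$ overstates its role.

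Three small presentation points.
\begin{itemize}
\item In step (a), ``relatively open and dense'' does not by itself yield $C_G=\relint N_P(G)$; a relatively open dense subset can be strictly smaller than the relative interior. The decisive input is the implicit-equality characterization of the relative interior of a polyhedral cone. To apply it you need two facts: the inequalities with $y_b\in G$ are forced equalities, and none of the inequalities with $y_b\notin G$ is an implicit equality. The second fact is exactly $C_G\neq\varnothing$, so state that before claiming $C_G$ is obtained by making the non-forced inequalities strict.
\item Take $x_0,\dots,x_k$ to be all vertices of $P$ lying in $G$, so that $G$ is their convex hull. The equivalence ``$P^v=G$ iff $\ip{y-x_0}{v}<0$ for every vertex $y\notin G$'' then rests on $P^v=\operatorname{conv}(\operatorname{vert}P\cap P^v)$, which follows directly from writing points of $P^v$ as convex combinations of vertices.
\item In step (b), the choice of $v\in\relint N_P(F)$ is never used and can be deleted.
\end{itemize}
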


For proper faces, the lemma and \eqref{eq:sec-3-relative-normal-exposed-face} appear in \cite[p. 108, Thm. 2.4.9, Eqs. (2.24)--(2.26)]{Sch14}. The case $F=P$ follows from $N_P(P)=\{0\}$.

\begin{corollary}\label{cor:sec-3-normal-cones-form-fan}
The normal cones of a full-dimensional polytope form a complete polyhedral fan.
\end{corollary}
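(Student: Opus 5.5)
To prove \autoref{cor:sec-3-normal-cones-form-fan}, I will check the three defining properties of a complete polyhedral fan for $\mathcal{F}_P$: each normal cone is a polyhedral cone, faces of normal cones are normal cones, pairwise intersections are common faces, and the union is $\R^n$. Everything will be read off from \autoref{lem:sec-3-face-normal-cone-duality}, together with the vertex description of $P$.

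\emph{Polyhedrality.} Write $P=\operatorname{conv}V$ with $V=\operatorname{vert}P$ finite. For a nonempty face $F$, set $V_F=V\cap F$; since $F$ is a face, $F=\operatorname{conv}V_F$. Then $v\in N_P(F)$ if and only if $\ip{x}{v}\geq\ip{y}{v}$ for all $x\in V_F$ and $y\in V$. This describes $N_P(F)$ by finitely many homogeneous linear inequalities. By Minkowski--Weyl it is therefore a finitely generated cone, which I will cite from \cite{Sch14}.

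\emph{Completeness.} For any $v\in\R^n$, the set $P^v$ is a nonempty face of $P$, since $P$ is compact and $P^v=P$ when $v=0$. By definition $v\in N_P(P^v)$, so the union of the normal cones is all of $\R^n$.

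\emph{Closure under faces.} This is the main step. Let $\ell\le0$ on $N_P(F)$ with $\ell\neq0$, and set $\zeta=\{v\in N_P(F):\ell(v)=0\}$. I will pick $w\in\relint\zeta$ and let $G=P^w$. By \eqref{eq:sec-3-relative-normal-exposed-face} the cone $N_P(G)$ contains $w$, and $G\supseteq F$ because $w\in N_P(F)$. The claim is $\zeta=N_P(G)$.

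\begin{itemize}
\item[$\zeta\subseteq N_P(G)$.] Points of $\relint\zeta$ all have the same exposed face, because $N_P(F)$ is partitioned by the relative interiors of the normal cones of faces containing $F$. Indeed, each $v\in N_P(F)$ lies in $\relint N_P(P^v)$ by \eqref{eq:sec-3-relative-normal-exposed-face}, and these relative interiors are disjoint since the map $F\mapsto N_P(F)$ is a bijection. A face $\zeta$ of the polyhedral cone $N_P(F)$ is a union of such relatively open pieces, and its relative interior meets exactly one of them, namely $\relint N_P(G)$. Taking closures gives $\zeta\subseteq N_P(G)$.
\item[$N_P(G)\subseteq\zeta$.] The inclusion $N_P(G)\subseteq N_P(F)$ holds since $G\supseteq F$. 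Suppose some $v\in N_P(G)$ had $\ell(v)<0$. Since $w\in\relint N_P(G)$ and $N_P(G)$ is convex, points $w-\epsilon(v-w)$ lie in $N_P(G)\subseteq N_P(F)$ for small $\epsilon>0$. These points satisfy $\ell>0$, contradicting $\ell\le0$ on $N_P(F)$.
\end{itemize}

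\emph{Intersections.} For faces $F$ and $G$, I claim $N_P(F)\cap N_P(G)=N_P(H)$, where $H$ is the smallest face containing $F\cup G$. The description $N_P(F)=\{v:F\subseteq P^v\}$ shows that $v$ lies in both cones exactly when $F\cup G\subseteq P^v$. Since $P^v$ is a face, this holds exactly when $H\subseteq P^v$. Finally, I need $N_P(H)$ to be a face of $N_P(F)$ and not merely a subset. To show this, I will choose $x$ in the relative interior of $H$ and $y\in\relint F$, and set $\ell(v)=\ip{x-y}{v}$. Then $\ell\le0$ on $N_P(F)$, because $y\in F\subseteq P^v$ attains $h_P(v)$. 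The equality $\ell(v)=0$ holds exactly when $x\in P^v$, which is equivalent to $H\subseteq P^v$ since $x\in\relint H$. In the degenerate case $H=F$, the intersection is the whole cone, which is an improper face.
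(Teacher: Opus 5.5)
You follow the paper's outline: polyhedrality, completeness via $v\in N_P(P^v)$, faces of normal cones are normal cones, and intersections via the smallest face $H\supseteq F\cup G$. Your second bullet and your intersection step are correct; the latter uses exactly the paper's functional $v\mapsto\ip{x-y}{v}$ with $x,y$ in relative interiors of nested faces.

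\textbf{The gap.} It is in your first bullet, the inclusion $\zeta\subseteq N_P(G)$. You claim that $\relint\zeta$ meets exactly one of the pieces $\relint N_P(G')$. This does not follow from the facts that these relatively open sets partition $N_P(F)$ and that $\zeta$ is a union of them. For example, partition the quadrant $\{(a,b):a,b\geq 0\}$ into the origin, the two open boundary rays, the open ray through $(1,1)$, and the two open sectors on either side of it. Each closure is a union of pieces, yet the relative interior of the quadrant meets three pieces. What rules this out for normal cones is that each $N_P(G')$ with $G'\supseteq F$ is itself a \emph{face} of $N_P(F)$. At that point of your argument this has not been established.

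\textbf{The repair.} You already have the needed argument; it is only misplaced. First prove, for an arbitrary face $G'\supseteq F$, that $N_P(G')=N_P(F)\cap\{v:\ip{x-y}{v}=0\}$ with $x\in\relint G'$ and $y\in\relint F$, exactly as in your intersection step. Applied to $G=P^w$, this makes $N_P(G)$ a face of $N_P(F)$ that contains the point $w\in\relint\zeta$. Since $\zeta\subseteq N_P(F)$ is convex, the defining property of a face gives $\zeta\subseteq N_P(G)$.

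This is the order the paper uses, and it then finishes faster. The sets $\zeta$ and $N_P(G)$ are two faces of $N_P(F)$ whose relative interiors share $w$, so they coincide, and your second bullet becomes unnecessary.

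Alternatively, you can argue directly. Note that $h_P=\ip{x_0}{\cdot}$ on $N_P(F)$ for any $x_0\in F$. Given $v\in\zeta$, pick $v'\in\zeta$ and $0<\alpha<1$ with $w=\alpha v'+(1-\alpha)v$, which is possible since $w\in\relint\zeta$. Then every $y\in G$ satisfies $h_P(w)=\ip{y}{w}\leq\alpha h_P(v')+(1-\alpha)h_P(v)=h_P(w)$. This forces $\ip{y}{v}=h_P(v)$, so $G\subseteq P^v$, i.e.\ $v\in N_P(G)$.

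Your Minkowski--Weyl argument for polyhedrality, in place of the paper's citation of Schneider, is fine.
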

\begin{proof}
By \cite[p. 108, Thm. 2.4.9]{Sch14}, every normal cone is polyhedral. We first show that every face of a normal cone is again a normal cone. Let $F\subseteq G$ be nonempty faces of $P$, and choose $x\in\relint F$ and $y\in\relint G$. For $v\in N_P(F)$,
\eq{
  \ip{y-x}{v}\leq 0,
}
with equality exactly when $G\subseteq P^v$. Thus
\eq{\label{eq:sec-3-nested-normal-cone-face}
  N_P(G)=N_P(F)\cap\{v\in\R^n:\ip{y-x}{v}=0\},
}
and $N_P(G)$ is a face of $N_P(F)$.

Conversely, let $\mathfrak{C}$ be a face of $N_P(F)$, choose $v\in\relint\mathfrak{C}$, and put $G=P^v$. Since $v\in N_P(F)$, we have $F\subseteq G$. Moreover, \eqref{eq:sec-3-relative-normal-exposed-face} implies
\eq{
  v\in\relint N_P(G).
}
Both $\mathfrak{C}$ and $N_P(G)$ are faces of $N_P(F)$ whose relative interiors contain $v$. Thus $\mathfrak{C}=N_P(G)$.

If $\tilde{F}$ is the smallest face of $P$ containing two nonempty faces $F$ and $G$, then
\eq{
  v\in N_P(F)\cap N_P(G)
  \quad\iff\quad F\cup G\subseteq P^v
  \quad\iff\quad \tilde{F}\subseteq P^v
  \quad\iff\quad v\in N_P(\tilde{F}).
}
Thus $N_P(F)\cap N_P(G)=N_P(\tilde{F})$ is a face of both cones. Finally,
\eq{
  v\in N_P(P^v)\quad(v\in\R^n),
}
and the normal cones cover $\R^n$.
\end{proof}

\begin{remark}
\label{rem:sec-3-simple-polytopes-simplicial-polytopal-fans}
For a fan $\mathcal{F}$ in $\R^n$,
\eq{
  \substack{\mathcal{F}\text{ is complete, simplicial,}\\\text{and polytopal}}
  \quad\iff\quad
  \substack{\mathcal{F}=\mathcal{F}_Q\text{ for some full-dimensional}\\
    \text{simple polytope }Q\subseteq\R^n.}
}
\end{remark}

\subsection{Support coordinates and type chambers}

A finite family of vectors \emph{positively spans} $\R^n$ if its positive hull is $\R^n$. Equivalently, the origin lies in the interior of its convex hull.

\begin{definition}[Prescribed facet normals and support parameters]\label{def:sec-3-prescribed-facet-normals}
Consider a finite label set $J$ and pairwise distinct unit vectors $(u_i)_{i\in J}$ that positively span $\R^n$. For $h=(h_i)_{i\in J}\in\R^J$, the associated half-space intersection is defined by
\eq{\label{eq:sec-3-polytope-support-parameters}
  P(h)=\bigcap_{i\in J}\{x\in\R^n:\ip{x}{u_i}\leq h_i\}.
}
The vector $h$ varies the positions of the supporting hyperplanes while keeping their normals fixed. The half-space intersection $P(h)$ may be empty, and some prescribed inequalities may be redundant.

For $x\in\R^n$, the \emph{slack} of the inequality with label $i$ at $x$ is defined as $h_i-\ip{x}{u_i}$. If $x\in P(h)$, the inequality is \emph{active at $x$} when its slack vanishes and \emph{strict at $x$} when its slack is positive.
\end{definition}

Whenever antipodal indexing is used, the prescribed normals carry the signed label set $I$ from \autoref{sec:polytopes-facet-network}. In this case, the support space is $\R^I$, and for $h\in\R^I$ we define
\eq{\label{eq:sec-3-antipodal-support-parameters}
  P(h)=\bigcap_{i\in I}\{x\in\R^n:\ip{x}{u_i}\leq h_i\}.
}
Antipodal indexing concerns only the normal data. 

If $h\in\R^I$ is even and $P(h)\neq\varnothing$, then $P(h)=-P(h)$. If the representation is complete and irredundant, the converse also holds.

For two vectors $a$ and $b$ indexed by the same finite set, write
\eq{
  (a\odot b)_i=a_ib_i,
  \quad
  a^{\odot k}=\underbrace{a\odot\cdots\odot a}_{k\text{ factors}}
  \quad(k\geq 1).
}
Thus $\odot$ denotes coordinatewise multiplication.

The entries of $h$ are called the support parameters for the fixed normals $u_i$. If $P(h)$ is nonempty and every inequality is irredundant, then $h_i=h_{P(h)}(u_i)$. Hence the entries $h_i$ are precisely the support numbers of $P(h)$ in the prescribed directions. A \emph{support derivative} or the \emph{support Hessian} refers to differentiation of volume with respect to $h$. A logarithmic support direction $a$ corresponds to the ordinary support direction $h\odot a$.

\begin{definition}[Simple type chamber]
For a complete simplicial polytopal fan $\Sigma$ with the prescribed rays, its \emph{simple type chamber} is
\eq{
  \mathcal{T}_{\Sigma}
  =\{h\in\R^J:\text{every inequality defining $P(h)$ is irredundant and }
    \mathcal{F}_{P(h)}=\Sigma\}.
}
It is also called the \emph{type cone} of $\Sigma$. The polytopes represented by $\mathcal{T}_{\Sigma}$ form what Schneider calls an \emph{$a$-type} \cite[p. 109]{Sch14}.
\end{definition}

For each maximal cone $\zeta$ of $\Sigma$, let
\eq{
  J_{\zeta}=\{i_1,\ldots,i_n\}
}
label its extreme rays. The vectors $u_{i_1},\ldots,u_{i_n}$ form a basis of $\R^n$. The linear map
\eq{
  U_{\zeta}x=\bigl(\ip{x}{u_{i_1}},\ldots,\ip{x}{u_{i_n}}\bigr)^{\mathsf{T}}
}
has the standard-basis matrix
\eq{
  U_{\zeta}
  =\begin{pmatrix}
    \ip{\mathbf{e}_1}{u_{i_1}}&\cdots&\ip{\mathbf{e}_n}{u_{i_1}}\\
    \vdots&&\vdots\\
    \ip{\mathbf{e}_1}{u_{i_n}}&\cdots&\ip{\mathbf{e}_n}{u_{i_n}}
  \end{pmatrix}
  \in\R^{n\times n}
}
and is invertible. The hyperplanes $\{x\in\R^n:\ip{x}{u_i}=h_i\}$, $i\in J_{\zeta}$, meet at
\eq{\label{eq:sec-3-maximal-cone-vertex}
  x_{\zeta}(h)=U_{\zeta}^{-1}(h_{i_1},\ldots,h_{i_n})^{\mathsf{T}},
}
which depends linearly on $h$. For each maximal cone $\zeta$, the point $x_{\zeta}(h)$ lies strictly inside every prescribed half-space whose label is not in $J_{\zeta}$ precisely when
\eq{\label{eq:sec-3-type-chamber-inequalities}
  \ip{x_{\zeta}(h)}{u_j}<h_j
  \quad(\zeta\text{ maximal},\ j\notin J_{\zeta}).
}

\begin{lemma}\label{lem:sec-3-type-chamber-inequalities}
A vector $h\in\R^J$ belongs to $\mathcal{T}_{\Sigma}$ if and only if \eqref{eq:sec-3-type-chamber-inequalities} holds. Consequently, $\mathcal{T}_{\Sigma}$ is a nonempty open convex cone whose closure is a polyhedral cone.
\end{lemma}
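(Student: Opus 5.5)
We prove the two directions of the equivalence separately and then read off the structural consequences from the form of the inequalities \eqref{eq:sec-3-type-chamber-inequalities}.

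\emph{Necessity.} Let $h\in\mathcal{T}_{\Sigma}$, so every inequality is irredundant and $\mathcal{F}_{P(h)}=\Sigma$. Fix a maximal cone $\zeta$ of $\Sigma$. Since $\zeta$ is $n$-dimensional, \autoref{lem:sec-3-face-normal-cone-duality} gives $\zeta=N_{P(h)}(x)$ for a vertex $x$ of $P(h)$. The rays $u_i$, $i\in J_\zeta$, lie in $\zeta$, so $\ip{x}{u_i}=h_{P(h)}(u_i)=h_i$ by irredundancy. Hence $x=x_\zeta(h)$.

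Now take $j\notin J_\zeta$. If $\ip{x}{u_j}=h_j$, then $u_j\in N_{P(h)}(x)=\zeta$. Because $u_j$ spans a ray of $\Sigma$ (the facet normal of $F_j$) and the rays of the simplicial cone $\zeta$ are exactly those labeled by $J_\zeta$, this is impossible. The ray of $u_j$ is a cone of the fan contained in $\zeta$, hence a face of $\zeta$, hence one of its extreme rays, and the $u_i$ are distinct unit vectors. Therefore the inequality in \eqref{eq:sec-3-type-chamber-inequalities} is strict.

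\emph{Sufficiency.} Assume \eqref{eq:sec-3-type-chamber-inequalities} holds. Each $x_\zeta(h)$ lies in $P(h)$, and at $x_\zeta(h)$ the active set is exactly $J_\zeta$. Hence $x_\zeta(h)$ is a vertex of $P(h)$, and its normal cone is $\operatorname{pos}\{u_i:i\in J_\zeta\}=\zeta$, since the normal cone at a point is generated by the normals of its active constraints.

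The maximal cones of $\Sigma$ cover $\R^n$ because $\Sigma$ is complete. It follows that the normal cones at these vertices already cover $\R^n$, so $P(h)$ is bounded and nonempty and has no other vertices. (Any other vertex would have an $n$-dimensional normal cone overlapping some $\zeta$ in its interior, which is impossible for distinct vertices.) Consequently the full-dimensional cones of $\mathcal{F}_{P(h)}$ are exactly the maximal cones of $\Sigma$. Both are fans whose lower cones are the faces of the maximal ones, by \autoref{cor:sec-3-normal-cones-form-fan}, so $\mathcal{F}_{P(h)}=\Sigma$.

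For irredundancy, each $u_i$ is a ray of $\Sigma$ and therefore a one-dimensional normal cone. By the dimension formula in \autoref{lem:sec-3-face-normal-cone-duality}, $P(h)^{u_i}$ is a facet, and it lies on the hyperplane $\{\ip{x}{u_i}=h_i\}$ because it contains some $x_\zeta(h)$ with $i\in J_\zeta$. So every inequality defines a facet.

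\emph{Structure of $\mathcal{T}_{\Sigma}$.} The conditions \eqref{eq:sec-3-type-chamber-inequalities} are finitely many strict homogeneous linear inequalities in $h$, since $x_\zeta(h)$ is linear in $h$. Thus $\mathcal{T}_\Sigma$ is an open convex cone. It is nonempty because $\Sigma$ is polytopal: by \autoref{rem:sec-3-simple-polytopes-simplicial-polytopal-fans}, $\Sigma=\mathcal{F}_Q$ for some simple $Q$, and taking $h_i=h_Q(u_i)$ gives $P(h)=Q$ with all inequalities facet-defining. The closure of a nonempty set cut out by strict linear inequalities is the set cut out by the corresponding non-strict ones, so it is a polyhedral cone.

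The main obstacle is the sufficiency step. One must rule out extra vertices and unboundedness carefully, and identify the normal cone at $x_\zeta(h)$ with $\zeta$ using only the strictness of the slacks.
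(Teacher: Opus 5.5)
Your proposal is correct and follows essentially the paper's route. Necessity goes through the vertex whose normal cone is $\zeta$. Sufficiency identifies $N_{P(h)}(x_\zeta(h))=\zeta$, rules out extra vertices, matches the two fans through their maximal cones, and then reads off irredundancy and the cone structure from the linear slack inequalities.

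The differences are minor. You cite the active-constraint description of normal cones, whereas the paper computes it explicitly via the feasible-direction cone $\mathfrak{C}_\zeta$. That computation also shows $P(h)$ is full dimensional, which you use implicitly when applying face--normal-cone duality. You also obtain boundedness from the covering by the maximal cones rather than from positive spanning of the normals.
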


\begin{proof}
Assume first that $h\in\mathcal{T}_{\Sigma}$. By definition, $\mathcal{F}_{P(h)}=\Sigma$, and every prescribed inequality defines a facet of $P(h)$. By \autoref{lem:sec-3-face-normal-cone-duality}, each maximal cone $\zeta$ of $\Sigma$ is the normal cone of a unique vertex of $P(h)$. The facets containing this vertex are precisely those whose outer unit normals have labels in $J_{\zeta}$. Their supporting hyperplanes meet at $x_{\zeta}(h)$. Hence $x_{\zeta}(h)$ is the vertex whose normal cone is $\zeta$. Moreover, for every $j\notin J_{\zeta}$, the inequality $\ip{x}{u_j}\leq h_j$ in the defining representation of $P(h)$ is strict at $x_{\zeta}(h)$, namely $\ip{x_{\zeta}(h)}{u_j}<h_j$.

Conversely, assume that \eqref{eq:sec-3-type-chamber-inequalities} holds. We first show that $\mathcal{F}_{P(h)}$ and $\Sigma$ have the same maximal cones.  For every maximal cone $\zeta$ of $\Sigma$, \eqref{eq:sec-3-maximal-cone-vertex} implies $\ip{x_{\zeta}(h)}{u_i}=h_i$ for $i\in J_{\zeta}$, whereas \eqref{eq:sec-3-type-chamber-inequalities} states $\ip{x_{\zeta}(h)}{u_j}<h_j$ for $j\notin J_{\zeta}$. It follows that $x_{\zeta}(h)$ belongs to the half-space intersection $P(h)$ defined in \eqref{eq:sec-3-polytope-support-parameters}. Precisely the prescribed inequalities with labels in $J_{\zeta}$ are active at this point. Let $\mathfrak{C}_{\zeta}$ consist of the directions $z$ such that $x_{\zeta}(h)+\varepsilon z\in P(h)$ for all sufficiently small $\varepsilon>0$. Since every prescribed inequality with a label outside $J_{\zeta}$ is strict at $x_{\zeta}(h)$,
\eq{
  \mathfrak{C}_{\zeta}
  =\left\{z\in\R^n:\ip{z}{u_i}\leq 0
  \quad(i\in J_{\zeta})\right\}.
}
The image of $\mathfrak{C}_{\zeta}$ under the invertible linear map $U_{\zeta}$ is the negative orthant. Hence $\mathfrak{C}_{\zeta}$ is full dimensional and pointed. The full dimensionality of $\mathfrak{C}_{\zeta}$ implies that $P(h)$ is full dimensional, while its pointedness implies that $x_{\zeta}(h)$ is a vertex.

We next determine the normal cone at this vertex. If $v\in N_{P(h)}(x_{\zeta}(h))$ and $z\in\mathfrak{C}_{\zeta}$, then $x_{\zeta}(h)+\varepsilon z\in P(h)$ for every sufficiently small $\varepsilon>0$. By the definition of the normal cone,
\eq{
  0
  \geq\ip{v}{x_{\zeta}(h)+\varepsilon z}
    -\ip{v}{x_{\zeta}(h)}
  =\varepsilon\ip{v}{z}.
}
Hence $\ip{v}{z}\leq 0$. Conversely, suppose that $\ip{v}{z}\leq 0$ for every $z\in\mathfrak{C}_{\zeta}$. For any $y\in P(h)$, convexity implies
\eq{
  x_{\zeta}(h)+\varepsilon(y-x_{\zeta}(h))\in P(h)
  \quad(0\leq\varepsilon\leq 1).
}
Hence $y-x_{\zeta}(h)\in\mathfrak{C}_{\zeta}$ and $\ip{v}{y-x_{\zeta}(h)}\leq 0$. Since $y$ was arbitrary, $v\in N_{P(h)}(x_{\zeta}(h))$. Therefore,
\eq{
  N_{P(h)}(x_{\zeta}(h))
  =\{v\in\R^n:\ip{v}{z}\leq 0
  \quad\text{for every }z\in\mathfrak{C}_{\zeta}\}.
}
Since the vectors $(u_i)_{i\in J_{\zeta}}$ form a basis of $\R^n$, every $v\in\R^n$ has a unique representation $v=\sum_{i\in J_{\zeta}}a_iu_i$. For $z\in\mathfrak{C}_{\zeta}$,
\eq{
  \ip{v}{z}
  =\sum_{i\in J_{\zeta}}a_i\ip{u_i}{z},
  \quad
  \ip{u_i}{z}\leq 0
  \quad(i\in J_{\zeta}).
}
If every $a_i$ is nonnegative, then $\ip{v}{z}\leq 0$ for every $z\in\mathfrak{C}_{\zeta}$. Conversely, suppose that $a_k<0$ for some $k\in J_{\zeta}$. The equality $U_{\zeta}(\mathfrak{C}_{\zeta})=(-\infty,0]^n$ provides a vector $z\in\mathfrak{C}_{\zeta}$ such that
\eq{
  \ip{u_k}{z}=-1,
  \quad
  \ip{u_i}{z}=0
  \quad(i\in J_{\zeta}\setminus\{k\}).
}
Then $\ip{v}{z}=-a_k>0$. Therefore, $\ip{v}{z}\leq 0$ for every $z\in\mathfrak{C}_{\zeta}$ if and only if all the coefficients $a_i$ are nonnegative. That is,
\eq{
  N_{P(h)}(x_{\zeta}(h))
  =\operatorname{pos}\{u_i:i\in J_{\zeta}\}
  =\zeta.
}
Therefore, every maximal cone $\zeta$ of $\Sigma$ is a maximal cone of $\mathcal{F}_{P(h)}$.

For the reverse inclusion, we prove that every vertex of $P(h)$ equals $x_{\zeta}(h)$ for some maximal cone $\zeta$ of $\Sigma$. Let $y$ be a vertex of $P(h)$, and take $v\in\relint N_{P(h)}(y)$. By completeness of $\Sigma$, $v$ belongs to a maximal cone $\zeta$. Since $\zeta=N_{P(h)}(x_{\zeta}(h))$, the point $x_{\zeta}(h)$ belongs to $P(h)^v$. In view of \eqref{eq:sec-3-relative-normal-exposed-face}, $P(h)^v=\{y\}$. Hence $y=x_{\zeta}(h)$.

By \autoref{cor:sec-3-normal-cones-form-fan}, $\mathcal{F}_{P(h)}$ is a polyhedral fan, as is $\Sigma$ by assumption. Every cone in either fan is a face of one of its maximal cones. It follows that the two fans contain the same cones:
\eq{
  \mathcal{F}_{P(h)}=\Sigma.
}

Next, we verify that every prescribed inequality in \eqref{eq:sec-3-polytope-support-parameters} is irredundant; equivalently, each must determine a facet of $P(h)$. Given $i\in J$, completeness of $\Sigma$ provides a maximal cone $\zeta$ containing the ray $\R_{\geq 0}u_i$. Since $i\in J_{\zeta}$,
\eq{
  \ip{x_{\zeta}(h)}{u_i}=h_i.
}
Every $x\in P(h)$ satisfies $\ip{x}{u_i}\leq h_i$, while $x_{\zeta}(h)\in P(h)$ satisfies equality. Hence
\eq{
  h_{P(h)}(u_i)=h_i.
}
Since $\mathcal{F}_{P(h)}=\Sigma$ and $u_i\in\relint(\R_{\geq 0}u_i)$, \eqref{eq:sec-3-relative-normal-exposed-face} shows that $P(h)^{u_i}$ is the facet corresponding to this ray. Accordingly, every prescribed inequality determines a facet of $P(h)$, and the equality $\mathcal{F}_{P(h)}=\Sigma$ then implies $h\in\mathcal{T}_{\Sigma}$.

We finally describe the chamber $\mathcal{T}_{\Sigma}$. For every maximal cone $\zeta$ and every $j\notin J_{\zeta}$, consider the linear functional
\eq{
  \ell_{\zeta,j}(q)
  =q_j-\ip{x_{\zeta}(q)}{u_j}.
}
By \eqref{eq:sec-3-maximal-cone-vertex}, $x_{\zeta}(q)$ is independent of $q_j$. Hence $\partial_{q_j}\ell_{\zeta,j}=1$, and $\ell_{\zeta,j}$ is nonzero. The two implications above show that
\eq{
  \mathcal{T}_{\Sigma}
  =\left\{q\in\R^J:\ell_{\zeta,j}(q)>0
  \quad(\zeta\text{ a maximal cone of }\Sigma,\ j\notin J_{\zeta})\right\}.
}
These linear functionals form a finite family. Hence $\mathcal{T}_{\Sigma}$ is an open convex cone. It is nonempty: since $\Sigma$ is polytopal, there is a polytope $Q$ with $\mathcal{F}_Q=\Sigma$. The cone $N_Q(Q)$ is a linear subspace and belongs to the simplicial fan $\Sigma$. Thus $N_Q(Q)=\{0\}$, and $Q$ is full dimensional. The maximal normal cones are simplicial, and face--normal-cone duality implies that $Q$ is simple. Since the rays of $\Sigma$ are generated by the prescribed unit normals,
\eq{
  Q=P\bigl((h_Q(u_i))_{i\in J}\bigr),
  \quad \bigl(h_Q(u_i)\bigr)_{i\in J}\in\mathcal{T}_{\Sigma}.
}

Replacing the strict inequalities by non-strict ones, we obtain
\eq{
  \overline{\mathcal{T}_{\Sigma}}
  =\left\{q\in\R^J:\ell_{\zeta,j}(q)\geq 0
  \quad(\zeta\text{ a maximal cone of }\Sigma,\ j\notin J_{\zeta})\right\},
}
which is a polyhedral cone.
\end{proof}

For $v\in\R^n$, define the translation direction $\mathfrak{t}(v)\in\R^J$ by
\eq{\label{eq:sec-3-translation-direction}
  \mathfrak{t}(v)_i=\ip{v}{u_i}.
}
From the half-space representation \eqref{eq:sec-3-polytope-support-parameters}, it follows that
\eq{
  P(h+\mathfrak{t}(v))=P(h)+v.
}
Hence translation by $v$ preserves volume, irredundancy, and the normal fan.

\begin{lemma}\label{lem:sec-3-type-chamber-Minkowski-addition}
For $h,q\in\overline{\mathcal{T}_{\Sigma}}$ and $s,t\geq 0$ with $s+t>0$,
\eq{
  P(sh+tq)=sP(h)+tP(q).
}
\end{lemma}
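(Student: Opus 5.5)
Set $r=sh+tq$. Since $\overline{\mathcal{T}_{\Sigma}}$ is a convex cone, $r\in\overline{\mathcal{T}_{\Sigma}}$. The plan is to describe each $P(p)$, for $p\in\overline{\mathcal{T}_{\Sigma}}$, as the convex hull of the points $x_{\zeta}(p)$. Once that description is available, the identity follows from the linearity of $p\mapsto x_{\zeta}(p)$ together with the support-function characterization of Minkowski sums.

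\emph{Step 1 (vertex description on the closed chamber).} For $p\in\overline{\mathcal{T}_{\Sigma}}$ I will show
\eq{
  P(p)=\operatorname{conv}\{x_{\zeta}(p):\zeta\text{ maximal in }\Sigma\}.
}
By the closed inequalities $\ell_{\zeta,j}(p)\geq0$, each $x_{\zeta}(p)$ satisfies every prescribed inequality, so $x_{\zeta}(p)\in P(p)$. For the reverse inclusion I will compare support functions, and for this I first need $P(p)$ to be bounded. Boundedness holds because the $u_i$ positively span $\R^n$, so $P(p)$ is compact (possibly lower dimensional). Now fix $v\in\R^n$. Completeness of $\Sigma$ gives a maximal cone $\zeta$ with $v=\sum_{i\in J_{\zeta}}a_iu_i$ and all $a_i\geq 0$. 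For $y\in P(p)$,
\eq{
  \ip{y}{v}=\sum_{i\in J_\zeta}a_i\ip{y}{u_i}\leq\sum_{i\in J_\zeta} a_ip_i=\ip{x_\zeta(p)}{v}.
}
Hence $h_{P(p)}(v)=\ip{x_{\zeta}(p)}{v}$ whenever $v\in\zeta$, so the support functions of $P(p)$ and of the convex hull agree. This establishes Step 1, and it also gives the formula $h_{P(p)}(v)=\ip{x_\zeta(p)}{v}$ for $v\in\zeta$.

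\emph{Step 2 (support functions add).} For $v\in\zeta$, Step 1 and the linearity of $x_\zeta$ give
\eq{
  h_{P(r)}(v)=\ip{x_\zeta(sh+tq)}{v}=s\ip{x_\zeta(h)}{v}+t\ip{x_\zeta(q)}{v}=s\,h_{P(h)}(v)+t\,h_{P(q)}(v).
}
The right-hand side equals $h_{sP(h)+tP(q)}(v)$. Both sets are compact and convex, and their support functions agree on every $v$, since each $v$ lies in some maximal cone. Therefore $P(sh+tq)=sP(h)+tP(q)$. The condition $s+t>0$ is not actually needed for the argument, but it keeps $P(r)$ within the intended setting; when $s=t=0$ the identity would read $P(0)=\{0\}$, which also holds.

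The only delicate point is Step 1 on the boundary of the chamber. There the points $x_\zeta(p)$ may coincide and $P(p)$ may degenerate, so one cannot rely on the fan identity $\mathcal{F}_{P(p)}=\Sigma$ from \autoref{lem:sec-3-type-chamber-inequalities}. The argument above avoids that identity and uses only the closed inequalities $\ell_{\zeta,j}(p)\ge 0$ and the completeness of $\Sigma$, so I expect it to go through unchanged on $\overline{\mathcal{T}_{\Sigma}}$.
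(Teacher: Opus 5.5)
Your proof is correct and follows essentially the same route as the paper. Both arguments use the closed chamber inequalities to get $x_{\zeta}(r)\in P(r)$, derive $h_{P(r)}(v)=\ip{x_{\zeta}(r)}{v}$ for $v\in\zeta$ by expanding $v$ nonnegatively in the rays of $\zeta$, and conclude from the linearity of $r\mapsto x_{\zeta}(r)$ and the fact that the maximal cones cover $\R^n$; the convex-hull description in your Step 1 is a harmless extra that the paper does not need.
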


\begin{proof}
By \autoref{lem:sec-3-type-chamber-inequalities}, $sh+tq\in\overline{\mathcal{T}_{\Sigma}}$. Let $r\in\overline{\mathcal{T}_{\Sigma}}$, and let $\zeta$ be a maximal cone of $\Sigma$. Equation \eqref{eq:sec-3-maximal-cone-vertex} and \autoref{lem:sec-3-type-chamber-inequalities} imply
\eq{
  \ip{x_{\zeta}(r)}{u_i}&=r_i
  \quad(i\in J_{\zeta}),\\
  \ip{x_{\zeta}(r)}{u_j}&\leq r_j
  \quad(j\notin J_{\zeta}).
}
Hence $x_{\zeta}(r)\in P(r)$. Now note that every $u\in\zeta$ can be written as
\eq{
  u=\sum_{i\in J_{\zeta}}\alpha_i u_i,
  \quad
  \alpha_i\geq 0,
}
and every $x\in P(r)$ satisfies
\eq{
  \ip{x}{u}
  \leq\sum_{i\in J_{\zeta}}\alpha_i r_i
  =\ip{x_{\zeta}(r)}{u}.
}
Therefore, $ h_{P(r)}(u)=\ip{x_{\zeta}(r)}{u}$. Moreover, the map $r\mapsto x_{\zeta}(r)$ is linear. Applying the preceding formula to $h$, $q$, and $sh+tq$, we obtain, for every $u\in\zeta$,
\eq{
  h_{P(sh+tq)}(u)
  &=\ip{x_{\zeta}(sh+tq)}{u}\\
  &=s h_{P(h)}(u)+t h_{P(q)}(u)\\
  &=h_{sP(h)+tP(q)}(u).
}
Since the maximal cones of $\Sigma$ cover $\R^n$, the two support functions agree everywhere, and the polytopes are equal.
\end{proof}

\subsection{The chamber volume polynomial}

\begin{proposition}[The chamber volume polynomial]\label{prop:sec-3-volume-polynomial}
There is a unique homogeneous polynomial $V_{\Sigma}$ of degree $n$ on $\R^J$ satisfying
\eq{
  V_{\Sigma}(h)=\Vol(P(h))
  \quad(h\in\mathcal{T}_{\Sigma}).
}
\end{proposition}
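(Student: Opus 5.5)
Uniqueness is immediate: $\mathcal{T}_{\Sigma}$ is a nonempty open subset of $\R^J$ by \autoref{lem:sec-3-type-chamber-inequalities}, so two polynomials that agree on it have a difference vanishing on an open set, and that difference is therefore the zero polynomial. For existence, I would build the polynomial explicitly by triangulating $P(h)$ from a point and summing simplex volumes. Each simplex volume is a determinant of vertex coordinates, and by \eqref{eq:sec-3-maximal-cone-vertex} each vertex $x_{\zeta}(h)$ is a linear function of $h$.

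First fix $h\in\mathcal{T}_{\Sigma}$. By the proof of \autoref{lem:sec-3-type-chamber-inequalities}, $P(h)$ is a full-dimensional simple polytope whose vertices are exactly the points $x_{\zeta}(h)$, one for each maximal cone $\zeta$ of $\Sigma$. Its faces correspond to the cones of $\Sigma$ through \autoref{lem:sec-3-face-normal-cone-duality}: a face $F$ with normal cone spanned by $\{u_i:i\in S\}$ has vertex set $\{x_{\zeta}(h):\zeta\supseteq\operatorname{pos}\{u_i:i\in S\}\}$. Thus the combinatorics of $P(h)$ depend only on $\Sigma$, and the vertices depend linearly on $h$.

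Second, fix once and for all a combinatorial triangulation. Choose a flag of faces and use the ``pulling'' construction. For each maximal chain of cones
\[
\{0\}\subsetneq\zeta_1\subsetneq\cdots\subsetneq\zeta_n,
\qquad \dim\zeta_k=k,
\]
take the corresponding chain of faces $P=G_0\supsetneq G_1\supsetneq\cdots\supsetneq G_n$. Let $b_k(h)$ be the average of the vertices of $G_k$; this is linear in $h$, lies in $\relint G_k$, and the averaging set is determined by $\Sigma$ alone. The barycentric subdivision then gives
\[
\Vol(P(h))=\sum_{\text{chains}}\frac{1}{n!}\,\bigl|\det\bigl(b_1(h)-b_0(h),\ldots,b_n(h)-b_0(h)\bigr)\bigr|,
\]
where $b_n(h)=x_{\zeta_n}(h)$ is a vertex. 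The summands are the volumes of the simplices of the barycentric subdivision of $P(h)$, which tile $P(h)$.

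The main obstacle is the absolute value. Each determinant is a homogeneous degree-$n$ polynomial in $h$, since the $b_k$ are linear in $h$. It is nonzero on $\mathcal{T}_{\Sigma}$ because the barycenters of a strictly decreasing flag of faces are affinely independent: $b_k\in\relint G_k$ and $b_{k+1}\in G_{k+1}\subsetneq G_k$, so dimensions strictly drop. Since $\mathcal{T}_{\Sigma}$ is convex, hence connected, each determinant has constant sign $\epsilon_{\text{chain}}$ on it. Therefore
\[
V_{\Sigma}(h)=\sum_{\text{chains}}\frac{\epsilon_{\text{chain}}}{n!}\det\bigl(b_1(h)-b_0(h),\ldots,b_n(h)-b_0(h)\bigr)
\]
is a homogeneous degree-$n$ polynomial agreeing with $\Vol(P(h))$ on $\mathcal{T}_{\Sigma}$.

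As a cross-check, one could instead use \autoref{lem:sec-3-type-chamber-Minkowski-addition} together with Minkowski's theorem on mixed volumes. This gives that $\Vol$ is a polynomial on each ray-convex combination inside $\overline{\mathcal{T}_{\Sigma}}$, but it only yields polynomiality along finite-dimensional cones spanned by chosen points, so I would keep the triangulation argument as the main proof.
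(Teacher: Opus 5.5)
Your proof is correct. Your uniqueness step is exactly the paper's. For existence, however, the paper gives no construction of its own and cites Timorin's inductive construction \cite[Thm. 2.1.1]{Tim99} (see also \cite[Sec. 14]{McM93}). That construction proceeds by induction on dimension through the facet decomposition $\Vol(P(h))=\frac{1}{n}\sum_{i}h_i\mathcal{H}^{n-1}(F_i)$, using that within a fixed simple type the support numbers of each facet in its own hyperplane depend linearly on $h$.

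You instead unroll that recursion into a single barycentric triangulation. The face lattice is constant on $\mathcal{T}_{\Sigma}$ by \autoref{lem:sec-3-face-normal-cone-duality}, and the vertices $x_{\zeta}(h)$ are linear in $h$ by \eqref{eq:sec-3-maximal-cone-vertex}. Hence every flag simplex has signed volume given by a determinant of linear forms, and you remove the absolute values by nonvanishing plus connectedness of the convex chamber.

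\textbf{What each route buys.} Your route is self-contained: beyond \autoref{lem:sec-3-type-chamber-inequalities}, it needs only the standard fact that a barycentric subdivision triangulates a polytope. The cited route builds the signs in automatically through signed heights, so no connectedness argument is needed. It also produces $V_{\Sigma}$ already expressed through the volume polynomials of the facets, which is the form used in Timorin's induction on dimension.

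\textbf{Two small points.} First, the affine-independence step is best stated as $b_k\notin\operatorname{aff}G_{k+1}$. This holds because $\operatorname{aff}G_{k+1}\cap G_k=G_{k+1}$ lies in the relative boundary of $G_k$, while $b_k\in\relint G_k$.

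Second, your closing remark undersells the Minkowski route. Since $\mathcal{T}_{\Sigma}$ is an open convex cone, around any given point you can choose $\abs{J}$ linearly independent points of $\mathcal{T}_{\Sigma}$ whose positive hull contains a neighborhood of that point. Then \autoref{lem:sec-3-type-chamber-Minkowski-addition} and Minkowski's polynomial expansion give local polynomiality there. The identity principle together with connectedness of $\mathcal{T}_{\Sigma}$ glues these local polynomials into a single one.
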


\begin{proof}
Existence follows from the inductive construction in \cite[Thm. 2.1.1]{Tim99}; see also \cite[Sec. 14]{McM93}. Since $\mathcal{T}_{\Sigma}$ is nonempty and open, two polynomials that agree with volume on $\mathcal{T}_{\Sigma}$ agree on all of $\R^J$.
\end{proof}

For a polynomial $\varphi$ on $\R^J$, an integer $k\geq 1$, and $h,z_1,\ldots,z_k\in\R^J$, write
\eq{
  D^k\varphi(h)[z_1,\ldots,z_k]
  =\left.
    \frac{\partial^k}{\partial t_1\cdots\partial t_k}
   \right|_{t_1=\cdots=t_k=0}
  \varphi(h+t_1z_1+\cdots+t_kz_k).
}
\begin{lemma}\label{lem:sec-3-antipodal-volume-parity}
For the antipodally indexed normals, suppose that $\Sigma$ is invariant under the antipodal map. Then
\eq{\label{eq:sec-3-antipodal-volume-polynomial}
  V_{\Sigma}(\iota z)=V_{\Sigma}(z) \quad \text{for every }z\in\R^I.
}
At every even support vector, a mixed derivative in even and odd directions vanishes whenever an odd number of its directions are odd.
\end{lemma}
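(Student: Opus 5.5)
The plan is to show that the antipodal map $x\mapsto -x$ carries the chamber $\mathcal{T}_{\Sigma}$ to itself via $\iota$, with $P(\iota h)=-P(h)$. Then $V_{\Sigma}\circ\iota$ and $V_{\Sigma}$ are homogeneous polynomials of degree $n$ that agree on the nonempty open set $\mathcal{T}_{\Sigma}$. By the uniqueness part of \autoref{prop:sec-3-volume-polynomial}, they are therefore equal on all of $\R^I$. The parity statement for derivatives will then follow by differentiating this identity.

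\textbf{Step 1.} For any $q\in\R^I$,
\eq{
  -P(q)=\{-x:\ip{x}{u_i}\leq q_i\ (i\in I)\}
  =\bigcap_{i\in I}\{y:\ip{y}{u_{-i}}\leq q_i\}
  =\bigcap_{i\in I}\{y:\ip{y}{u_i}\leq q_{-i}\}
  =P(\iota q).
}
This uses $u_{-i}=-u_i$ and a relabeling $i\mapsto -i$ of the signed set $I$.

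\textbf{Step 2.} Since $x\mapsto -x$ is a linear isomorphism, it preserves irredundancy of each labeled inequality. It also sends the normal fan $\mathcal{F}_{P(q)}$ to $-\mathcal{F}_{P(q)}$, because $N_{-P}(-F)=-N_P(F)$. The fan $\Sigma$ is assumed invariant under the antipodal map, so $q\in\mathcal{T}_{\Sigma}$ implies $\iota q\in\mathcal{T}_{\Sigma}$. An equivalent route is to check the chamber inequalities \eqref{eq:sec-3-type-chamber-inequalities} directly: $-\zeta$ is a maximal cone with label set $-J_{\zeta}$, and $x_{-\zeta}(\iota q)=-x_{\zeta}(q)$. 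Step 1 then gives $V_{\Sigma}(\iota q)=\Vol(-P(q))=\Vol(P(q))=V_{\Sigma}(q)$ on $\mathcal{T}_{\Sigma}$. Since $\iota$ is linear, $V_{\Sigma}\circ\iota$ is again a homogeneous polynomial of degree $n$, and uniqueness gives \eqref{eq:sec-3-antipodal-volume-polynomial} everywhere.

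\textbf{Step 3.} Let $h$ be even, and let $z_1,\dots,z_k$ be directions with $\iota z_r=\epsilon_rz_r$, where $\epsilon_r=\pm1$. Linearity of $\iota$ together with Step 2 gives
\eq{
  V_{\Sigma}(h+\textstyle\sum_r t_rz_r)=V_{\Sigma}(\iota h+\textstyle\sum_r t_r\iota z_r)=V_{\Sigma}(h+\textstyle\sum_r\epsilon_rt_rz_r).
}
Differentiating in $t_1,\dots,t_k$ at $0$ yields
\eq{
  D^kV_{\Sigma}(h)[z_1,\dots,z_k]=\Bigl(\prod_r\epsilon_r\Bigr)D^kV_{\Sigma}(h)[z_1,\dots,z_k].
}
If an odd number of the $z_r$ are odd, the product of the signs is $-1$, so the derivative vanishes.

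The only step needing care is Step 2, namely that $\iota$ preserves $\mathcal{T}_{\Sigma}$ with the correct labeling. The chamber-inequality check handles this cleanly, so I expect no real obstacle.
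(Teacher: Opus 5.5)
Your proposal is correct and follows essentially the same route as the paper: show $P(\iota z)=-P(z)$ and $\iota\mathcal{T}_{\Sigma}=\mathcal{T}_{\Sigma}$, conclude $V_{\Sigma}\circ\iota=V_{\Sigma}$ because the two polynomials agree on a nonempty open set, and then differentiate at an even $h$ to get the sign $(-1)^r$. Your Step 2 also supplies a justification of $\iota\mathcal{T}_{\Sigma}=\mathcal{T}_{\Sigma}$, via $x_{-\zeta}(\iota q)=-x_{\zeta}(q)$ and the chamber inequalities, which the paper only asserts.
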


\begin{proof}
The antipodal invariance of $\Sigma$ implies that $\iota\mathcal{T}_{\Sigma}=\mathcal{T}_{\Sigma}$. For $z\in\mathcal{T}_{\Sigma}$ and $x\in\R^n$,
\eq{
  x\in P(\iota z)
  &\iff
  \ip{x}{u_i}\leq z_{-i}\quad(i\in I)\\
  &\iff
  \ip{-x}{u_{-i}}\leq z_{-i}\quad(i\in I)\\
  &\iff -x\in P(z).
}
Thus the defining inequalities imply
\eq{
  P(\iota z)=-P(z).
}
Taking volumes on both sides, we find
\eq{
  V_{\Sigma}(\iota z)=\Vol(P(\iota z))=\Vol(-P(z))=V_{\Sigma}(z).
}
The two polynomials agree on the open set $\mathcal{T}_{\Sigma}$. Therefore, they agree on $\R^I$.

Let $h\in\R^I$ be even, and suppose that each of $z_1,\ldots,z_k\in\R^I$ is either even or odd. Differentiating \eqref{eq:sec-3-antipodal-volume-polynomial} at $h$ in the directions $z_1,\ldots,z_k$, we obtain
\eq{
  D^kV_{\Sigma}(h)[z_1,\ldots,z_k]
  =D^kV_{\Sigma}(h)[\iota z_1,\ldots,\iota z_k].
}
If exactly $r$ directions are odd, multilinearity implies
\eq{
  D^kV_{\Sigma}(h)[z_1,\ldots,z_k]
  =(-1)^rD^kV_{\Sigma}(h)[z_1,\ldots,z_k].
}
Therefore, when $r$ is odd, the derivative vanishes.
\end{proof}

\section{Logarithmic support calculus and the facet network}
\label{sec:facet-network}

Throughout this section, $\Sigma$ denotes a complete simplicial polytopal fan whose rays are $\R_{\geq 0}u_i$, $i\in J$, for the prescribed normals from \autoref{def:sec-3-prescribed-facet-normals}. Its simple type chamber and chamber volume polynomial are denoted by $\mathcal{T}_{\Sigma}\subseteq\R^J$ and $V_{\Sigma}$. We express the first two logarithmic support derivatives of $V_{\Sigma}$ in terms of the facet masses and ridge conductances and derive the weighted graph Dirichlet form. When the normals are antipodally indexed, $\Sigma$ is antipodally invariant, and $h\in\mathcal{T}_{\Sigma}$ is even, we prove that $\lambda_{1,e}(P(h))\geq n$ if and only if the logarithmic volume Hessian is nonpositive on even facet functions.

\subsection{Logarithmic support forms}
\label{sec:logarithmic-support-forms}

For $h\in\mathcal{T}_{\Sigma}$ with $0\in\operatorname{int}P(h)$, write $P=P(h)$ and $V=V_{\Sigma}$. Then $h_i>0$ for every $i\in J$, and a facet function $a$ induces the logarithmic support direction $h\odot a$. All derivatives of $V$ below are evaluated at $h$ and regarded as symmetric multilinear forms. The four logarithmic support forms are
\eq{\label{eq:sec-4-variation-forms}
  A(a)&=DV(h)[h\odot a], \\ B(a,b)&=D^2V(h)[h\odot a,h\odot b], \\ C(a,b,c)&=D^3V(h)[h\odot a,h\odot b,h\odot c], \\ D(a,b,c,d)&=D^4V(h)[h\odot a,h\odot b,h\odot c,h\odot d].
}
The last form vanishes when $n<4$.

We also write
\eq{
   S=A(\one).
}
From the homogeneity of $V$, we obtain the Euler identities
\eq{\label{eq:sec-4-euler-relations}
  S&=nV(h), \\ B(\one,a)&=(n-1)A(a), \\ C(\one,a,b)&=(n-2)B(a,b), \\ D(\one,a,b,c)&=(n-3)C(a,b,c).
}

The mass form, its centered version, and the energy form are defined by
\eq{\label{eq:sec-4-mass-centered-energy-forms}
  \mathsf{M}(a,b)&=A(a\odot b), \\ \widetilde{\mathsf{M}}(a,b)&=\mathsf{M}(a,b)-\frac{A(a)A(b)}{S}, \\ \mathsf{L}(a,b)&=(n-1)A(a\odot b)-B(a,b).
}
The centered mass form is unchanged if a constant is added to either argument:
\eq{\label{eq:sec-4-centered-mass-constant-invariance}
  \widetilde{\mathsf{M}}(a+c\one,b+d\one)
  =\widetilde{\mathsf{M}}(a,b)
  \quad(a,b\in\R^J,\ c,d\in\R).
}

A facet function $a$ is called \emph{centered} if
\eq{
  A(a)=0.
}
For an arbitrary facet function $a$, its centered version is defined by
\eq{\label{eq:sec-4-centered-representative}
  a^{\circ}=a-\frac{A(a)}{S}\one.
}

\begin{proposition}[The logarithmic volume Hessian]\label{prop:sec-4-logarithmic-volume-Hessian}
For $z\in\R^J$ sufficiently close to the origin, let
\eq{
  \Xi_h(z)=\log V\bigl(h\odot e^z\bigr), \quad (e^z)_i=e^{z_i}.
}
Then for facet functions $a$ and $b$,
\eq{
   D^2\Xi_h(0)[a,b]=\frac{n\widetilde{\mathsf{M}}(a,b)-\mathsf{L}(a,b)}{V(h)}.
}
\end{proposition}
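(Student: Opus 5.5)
This is a direct chain-rule computation; the only real content is an algebraic regrouping. Define
\eq{
  \phi(z)=V\bigl(h\odot e^z\bigr),
}
so that $\Xi_h=\log\phi$. Then
\eq{
  D^2\Xi_h(0)[a,b]=\frac{D^2\phi(0)[a,b]}{V(h)}-\frac{D\phi(0)[a]\,D\phi(0)[b]}{V(h)^2}.
}
The work is to express the two derivatives of $\phi$ through the logarithmic support forms \eqref{eq:sec-4-variation-forms}, and then regroup.

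First, the curve $z\mapsto h\odot e^z$ has derivatives
\eq{
  \partial_a(h\odot e^z)\big|_{0}&=h\odot a,\\
  \partial_a\partial_b(h\odot e^z)\big|_{0}&=h\odot a\odot b.
}
The chain rule for the polynomial $V$ then gives
\eq{
  D\phi(0)[a]&=DV(h)[h\odot a]=A(a),\\
  D^2\phi(0)[a,b]&=D^2V(h)[h\odot a,h\odot b]+DV(h)[h\odot a\odot b]=B(a,b)+\mathsf{M}(a,b).
}
Since $h\in\mathcal{T}_{\Sigma}$ is open and $V$ is smooth near $h$, everything is valid for $z$ near $0$. Consequently
\eq{
  V(h)\,D^2\Xi_h(0)[a,b]=B(a,b)+\mathsf{M}(a,b)-\frac{A(a)A(b)}{V(h)}.
}

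Second, rewrite the right-hand side using the definitions \eqref{eq:sec-4-mass-centered-energy-forms} and the Euler identity $S=nV(h)$ from \eqref{eq:sec-4-euler-relations}. From $\mathsf{L}(a,b)=(n-1)\mathsf{M}(a,b)-B(a,b)$ we get $B=(n-1)\mathsf{M}-\mathsf{L}$. Using $V(h)=S/n$, we get
\eq{
  \frac{A(a)A(b)}{V(h)}=\frac{nA(a)A(b)}{S}.
}
Substituting,
\eq{
  B+\mathsf{M}-\frac{nA(a)A(b)}{S}
  =n\mathsf{M}-\mathsf{L}-\frac{nA(a)A(b)}{S}
  =n\widetilde{\mathsf{M}}(a,b)-\mathsf{L}(a,b),
}
and dividing by $V(h)$ gives the claimed formula.

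The only step that needs care is the second-order chain rule: the term $DV(h)[h\odot a\odot b]$ must not be dropped, because it produces the mass form and, through the Euler relation, the factor $n$ in front of $\widetilde{\mathsf{M}}$. The rest is bookkeeping. Positivity $h_i>0$ and the hypothesis $0\in\operatorname{int}P(h)$ are not needed for this identity itself; they only ensure $V(h)>0$, so that the logarithm is defined near $z=0$.
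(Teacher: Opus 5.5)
Your proposal is correct and matches the paper's proof: the paper likewise differentiates $h\odot e^{sa+tb}$ to get first derivatives $A(a)$, $A(b)$ and mixed derivative $B(a,b)+A(a\odot b)$, forms the Hessian of the logarithm, and then regroups using $S=nV(h)$ and $\mathsf{L}=(n-1)\mathsf{M}-B$. The argument is complete as written.
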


\begin{proof}
Let $Q(s,t)=h\odot e^{sa+tb}$. At $(0,0)$,
\eq{
  \partial_sQ=h\odot a, \quad \partial_tQ=h\odot b, \quad \partial_s\partial_tQ=h\odot(a\odot b).
}
Moreover, by the chain rule,
\eq{
  \left.\partial_sV(Q(s,t))\right|_{(0,0)}&=DV(h)[h\odot a]=A(a), \\
  \left.\partial_tV(Q(s,t))\right|_{(0,0)}&=DV(h)[h\odot b]=A(b), \\
  \left.\partial_s\partial_tV(Q(s,t))\right|_{(0,0)}&=D^2V(h)[h\odot a,h\odot b]+DV(h)[h\odot(a\odot b)] \\
  &=B(a,b)+A(a\odot b).
}
Therefore, the Hessian of $\Xi_h$ at the origin is
\eq{
  D^2\Xi_h(0)[a,b]=\frac{B(a,b)+A(a\odot b)}{V(h)} - \frac{A(a)A(b)}{V(h)^2}.
}
Now, using $S=nV(h)$ and $\mathsf{L}=(n-1)\mathsf{M}-B$,
\eq{
  B(a,b)+A(a\odot b)&=nA(a\odot b)-\mathsf{L}(a,b), \\
  \frac{B(a,b)+A(a\odot b)}{V(h)}-\frac{A(a)A(b)}{V(h)^2}&=\frac{n}{V(h)}\left(A(a\odot b)-\frac{A(a)A(b)}{S}\right)-\frac{\mathsf{L}(a,b)}{V(h)} \\
  &=\frac{n\widetilde{\mathsf{M}}(a,b)-\mathsf{L}(a,b)}{V(h)}.
}
\end{proof}

\subsection{Support derivatives and the weighted graph}

The next lemma relates the support derivatives of $V_{\Sigma}$ to the volumes of facets and ridges. Multiplication by the corresponding logarithmic factors $h_i$ and $h_ih_j$ then produces the masses and conductances of the labeled facet network from \autoref{sec:polytopes-facet-network}.

In the formulas below, $q=(q_i)_{i\in J}$ denotes the variable in the support space $\R^J$, and every derivative is evaluated at $q=h$. For the proof, see, for example, \cite[Eqs. (3.22)--(3.25)]{Fil92}.

\begin{lemma}\label{lem:sec-4-volume-support-derivatives}
Let $h\in\mathcal{T}_{\Sigma}$. Then
\eq{
  \frac{\partial V}{\partial q_i}(h)=\mathcal{H}^{n-1}(F_i).
}
For $i\neq j$,
\eq{
  \frac{\partial^2V}{\partial q_i\partial q_j}(h)
  =\begin{cases}
 \frac{\mathcal{H}^{n-2}(F_i\cap F_j)}{\sin\theta_{ij}}, & \dim(F_i\cap F_j)=n-2, \\[1.1em]
  0, & \text{otherwise}
  \end{cases}.
}
Here $\theta_{ij}$ is the angle between the outward unit normals $u_i$ and $u_j$.
\end{lemma}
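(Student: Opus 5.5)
We prove the two formulas by induction on the dimension: the first-order formula is established directly for polytopes in a simple type chamber, and the second-order formula is reduced to the first-order formula applied to the facet $F_i$, viewed as an $(n-1)$-dimensional polytope. Throughout, $h\in\mathcal{T}_{\Sigma}$. Since $\mathcal{T}_{\Sigma}$ is open, small perturbations of $h$ stay in the chamber, so the normal fan, and hence the combinatorics of $P(q)$, stays fixed. Moreover $V_{\Sigma}$ is a polynomial that agrees with $\Vol(P(q))$ there, by \autoref{prop:sec-3-volume-polynomial}. It therefore suffices to compute one-sided difference quotients of $\Vol(P(q))$ at $q=h$.

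\emph{First derivative.} Let $e_i$ denote the $i$-th coordinate vector. For small $t>0$, the set $P(h+te_i)\setminus P(h)$ is contained in the slab $\{h_i<\ip{x}{u_i}\leq h_i+t\}$. Every cross-section of this difference by a hyperplane $\{\ip{x}{u_i}=s\}$ with $h_i<s\leq h_i+t$ lies between $F_i(h)$ and $F_i(h+te_i)$, up to sets whose $\mathcal{H}^{n-1}$-measure is $O(t)$. Here $F_i(q)$ denotes the facet of $P(q)$ in direction $u_i$. By Fubini's theorem,
\[
  \Vol(P(h+te_i))-\Vol(P(h))=t\,\mathcal{H}^{n-1}(F_i(h))+o(t).
\]
The case $t<0$ is symmetric. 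Continuity of $q\mapsto\mathcal{H}^{n-1}(F_i(q))$ on the chamber holds because the vertices $x_{\zeta}(q)$ depend linearly on $q$ by \eqref{eq:sec-3-maximal-cone-vertex}. With this continuity, the displayed expansion gives $\partial V/\partial q_i=\mathcal{H}^{n-1}(F_i)$.

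\emph{Second derivative.} Now differentiate $q\mapsto\mathcal{H}^{n-1}(F_i(q))$ in $q_j$ for $j\neq i$. Work in the hyperplane $H_i=\{\ip{x}{u_i}=q_i\}$ and write $x=q_iu_i+y$ with $y\perp u_i$. Then the constraint $\ip{x}{u_j}\leq q_j$ becomes $\ip{y}{w_j}\leq (q_j-q_i\cos\theta_{ij})/\sin\theta_{ij}$, where $w_j=(u_j-\cos\theta_{ij}u_i)/\sin\theta_{ij}$ is a unit vector in $u_i^{\perp}$. The combinatorics of the two cases are as follows. If $F_i\cap F_j$ is not a ridge, then because the fan is fixed, inequality $j$ is strict on all of $F_i(q)$ for $q$ near $h$. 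Hence $F_i(q)$ does not depend on $q_j$, and the mixed derivative vanishes. If $F_i\cap F_j$ is a ridge, then $F_i(q)$ is an $(n-1)$-dimensional simple polytope in $u_i^{\perp}$. Its facets are exactly the ridges $F_i\cap F_k$, with outer normals $w_k$, and its normal fan is fixed for $q$ near $h$, so $F_i(q)$ lies in a simple type chamber of dimension $n-1$. The support number of $F_i(q)$ in direction $w_j$ is $(q_j-q_i\cos\theta_{ij})/\sin\theta_{ij}$, and none of the other support numbers involve $q_j$. Applying the first-order formula in dimension $n-1$ and the chain rule gives
\[
  \frac{\partial}{\partial q_j}\mathcal{H}^{n-1}(F_i(q))=\frac{\mathcal{H}^{n-2}(F_i\cap F_j)}{\sin\theta_{ij}}.
\]
The base case $n=1$ is trivial, since the "facet areas" are $1$ and the ridges are empty. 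Alternatively, the first-order argument above is already dimension-free, so the induction only needs it in dimension $n-1$.

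The main obstacle is the bookkeeping in the ridge case. One must verify that the facet $F_i(q)$ genuinely lies in a simple type chamber of the restricted normals $\{w_k\}$, so that the first-order formula applies to it. This rests on face--normal-cone duality (\autoref{lem:sec-3-face-normal-cone-duality}): the faces of $F_i$ correspond to the cones of $\Sigma$ containing $u_i$. One must also check that constraints $k$ not adjacent to $i$ remain strict near $F_i(q)$, which follows from the strict inequalities \eqref{eq:sec-3-type-chamber-inequalities}. Finally, antipodal normals $u_j=-u_i$ never produce ridges, so $\sin\theta_{ij}\neq 0$ whenever $F_i\cap F_j$ is a ridge.
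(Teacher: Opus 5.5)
Your proof is correct. The paper gives no argument of its own for this lemma; it only cites \cite[Eqs.~(3.22)--(3.25)]{Fil92}. Your route is the standard derivation: slice to get the first derivative, then apply the first-order formula to $F_i$ inside $u_i^{\perp}$, whose support number in direction $w_j$ is $(q_j-q_i\cos\theta_{ij})/\sin\theta_{ij}$.

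One tightening: the slice of $P(h+t\mathbf{e}_i)\setminus P(h)$ at level $s\in(h_i,h_i+t]$ is exactly $F_i(h+(s-h_i)\mathbf{e}_i)$. So no $O(t)$ error sets are needed, and continuity of the facet area in $s$ finishes the first step.
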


In view of the definition of $A$ and the first identity in \autoref{lem:sec-4-volume-support-derivatives},
\eq{
  A(a)=\sum_{i\in J}m_ia_i,
  \quad
  m_i=h_i\mathcal{H}^{n-1}(F_i).
}
Consequently, for all facet functions $a$ and $b$,
\eq{
  \mathsf{M}(a,b)=\sum_{i\in J}m_ia_ib_i.
}
Recall that $\mathbf{e}_i\in\R^J$ denotes the $i$th coordinate vector. For $B_{ij}=B(\mathbf{e}_i,\mathbf{e}_j)$, the mixed derivative formula yields $B_{ij}=c_{ij}$ whenever $i\neq j$. Since $\one=\sum_{j\in J}\mathbf{e}_j$, evaluating the Euler identity $B(\one,a)=(n-1)A(a)$ at $a=\mathbf{e}_i$, we obtain
\eq{
  B_{ii}+\sum_{j\neq i}c_{ij}=(n-1)m_i\implies B_{ii}=(n-1)m_i-\sum_{j\neq i}c_{ij}.
}

\begin{proposition}\label{prop:sec-4-graph-laplacian}
For all facet functions $a$ and $b$,
\eq{
   \mathsf{L}(a,b)=\sum_{\{i,j\}\in\binom{J}{2}} c_{ij}(a_i-a_j)(b_i-b_j).
}
In particular, $\mathsf{L}$ is positive semidefinite and its kernel consists of the constant facet functions.
\end{proposition}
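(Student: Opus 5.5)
The identity follows by expanding $\mathsf{L}(a,b)=(n-1)A(a\odot b)-B(a,b)$ in coordinates, using the expressions for $A$ and for the Hessian entries $B_{ij}$ derived just before the statement. Positivity and the kernel then come from the weighted-graph form together with \autoref{lem:sec-2-facet-graph-connected}.

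\emph{Step 1: expand both terms.} By bilinearity,
\eq{
  B(a,b)=\sum_{i,j\in J}B_{ij}a_ib_j .
}
Substituting $B_{ij}=c_{ij}$ for $i\neq j$ and $B_{ii}=(n-1)m_i-\sum_{j\neq i}c_{ij}$ gives
\eq{
  B(a,b)=(n-1)\sum_{i}m_ia_ib_i-\sum_i\sum_{j\neq i}c_{ij}a_ib_i+\sum_{i\neq j}c_{ij}a_ib_j .
}
Since $A(a\odot b)=\sum_i m_ia_ib_i$, the mass terms cancel in $\mathsf{L}$, leaving
\eq{
  \mathsf{L}(a,b)=\sum_{i\neq j}c_{ij}(a_ib_i-a_ib_j),
}
where the sum runs over ordered pairs.

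\emph{Step 2: symmetrize.} Because $c_{ij}=c_{ji}$, grouping the ordered pairs $(i,j)$ and $(j,i)$ into a single unordered pair gives
\eq{
  c_{ij}(a_ib_i+a_jb_j-a_ib_j-a_jb_i)=c_{ij}(a_i-a_j)(b_i-b_j).
}
This is exactly the claimed formula.

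\emph{Step 3: positivity and kernel.} All conductances $c_{ij}$ are nonnegative, since $h_i>0$ when $0\in\operatorname{int}P$ and $\sin\theta_{ij}>0$ for distinct unit normals. Hence
\eq{
  \mathsf{L}(a,a)=\sum c_{ij}(a_i-a_j)^2\geq 0 .
}
If $\mathsf{L}(a,a)=0$, then $a_i=a_j$ whenever $c_{ij}>0$, that is, whenever $F_i$ and $F_j$ share a ridge. Because $h\in\mathcal{T}_\Sigma$, each label $i$ corresponds to a genuine facet. By \autoref{lem:sec-2-facet-graph-connected} the facet-adjacency graph is connected, so $a$ is constant. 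Conversely, constants clearly lie in the kernel. For a positive semidefinite form, the null vectors of $\mathsf{L}(a,a)$ coincide with the kernel of the bilinear form (Cauchy--Schwarz), so the kernel is exactly the constants.

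\emph{Main point to check.} The only step needing care is Step 1: one must use the diagonal Hessian entries $B_{ii}$ obtained from the Euler identity $B(\one,a)=(n-1)A(a)$, rather than computing them directly. One must also check that the ridge lengths with $\dim(F_i\cap F_j)<n-2$ contribute nothing, which is consistent with $c_{ij}=0$ for nonadjacent facets.
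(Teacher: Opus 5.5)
Your proof is correct and follows essentially the same route as the paper, which reads off the off-diagonal entries $-c_{ij}$, gets the diagonal from the Euler identity (phrased as $\mathsf{L}(\one,b)=0$ rather than via $B_{ii}$), symmetrizes over unordered pairs, and then invokes \autoref{lem:sec-2-facet-graph-connected}. One small correction: $\sin\theta_{ij}>0$ does not hold for all distinct normals, since antipodal normals give $\sin\theta_{ij}=0$; it holds for adjacent facets, whose normals can be neither equal nor antipodal, and that suffices because $c_{ij}=0$ for every other pair.
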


\begin{proof}
For $i\neq j$, the $(i,j)$ entry of the mass form $\mathsf{M}$ vanishes because $\mathbf{e}_i\odot\mathbf{e}_j=0$, whereas the corresponding entry of $B$ is $c_{ij}$. Thus the relation $\mathsf{L}=(n-1)\mathsf{M}-B$ yields $\mathsf{L}_{ij}=-c_{ij}$. By Euler identities, $\mathsf{L}(\one,b)=0$. Hence $\mathsf{L}_{ii}=\sum_{j\neq i}c_{ij}$. Using $c_{ij}=c_{ji}$, the associated bilinear form becomes
\eq{
  a^{\mathsf{T}}\mathsf{L}b&=\sum_{i\in J}a_ib_i\sum_{j\neq i}c_{ij}-\sum_{i\neq j}c_{ij}a_ib_j \\
  &=\frac{1}{2}\sum_{i\neq j}c_{ij}\bigl(a_ib_i+a_jb_j-a_ib_j-a_jb_i\bigr) \\
  &=\sum_{\{i,j\}\in\binom{J}{2}}c_{ij}(a_i-a_j)(b_i-b_j).
}
In particular,
\eq{
  \mathsf{L}(a,a)&=\sum_{\{i,j\}\in\binom{J}{2}}c_{ij}(a_i-a_j)^2\geq 0,\\
  \mathsf{L}(a,a)&=0
  \quad\iff\quad
  a_i=a_j\quad\text{for every edge }\{i,j\}.
}
Applying \autoref{lem:sec-2-facet-graph-connected}, we conclude that $\ker\mathsf{L}=\R\one$.
\end{proof}

Note that $\mathsf{L}$ is precisely the weighted graph Dirichlet form $\mathsf{L}_P$ defined in \autoref{sec:polytopes-facet-network}.

\begin{corollary}\label{cor:sec-4-logarithmic-volume-Hessian-spectral-gap}
For the antipodally indexed normals, suppose that $\Sigma$ is antipodally invariant and $h\in\mathcal{T}_{\Sigma}$ is even. Then
\eq{
  \left.\frac{d^2}{dt^2}\right|_{t=0}\log V_{\Sigma}(h\odot e^{ta})\leq 0 \quad\text{for every even facet function }a
}
if and only if $\lambda_{1,e}(P(h))\geq n$.
\end{corollary}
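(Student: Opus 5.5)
The plan is to combine Proposition~\ref{prop:sec-4-logarithmic-volume-Hessian} with Proposition~\ref{prop:sec-4-graph-laplacian}. These give, for every facet function $a$,
\eq{
  V(h)\left.\frac{d^2}{dt^2}\right|_{t=0}\log V_{\Sigma}(h\odot e^{ta})
  =D^2\Xi_h(0)[a,a]\,V(h)
  =n\widetilde{\mathsf{M}}(a,a)-\mathsf{L}_P(a,a).
}
The statement is thus equivalent to the inequality $n\widetilde{\mathsf{M}}(a,a)\leq\mathsf{L}_P(a,a)$ for every even $a$. The first step is to identify $\widetilde{\mathsf{M}}$ with the centered weighted variance. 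Since $A(a)=\sum_i m_ia_i$ and $S=\sum_i m_i$ (the latter by $A(\one)=\sum_i m_i$), we get $\widetilde{\mathsf{M}}(a,a)=\sum_i m_ia_i^2-(\sum_i m_ia_i)^2/\sum_i m_i=\sum_{i}m_i(a_i-\bar a)^2$. Here $m_i$ and $c_{ij}$ are the masses and conductances of $P(h)$. This uses $h\in\mathcal{T}_\Sigma$, so that every labeled inequality defines a facet and Lemma~\ref{lem:sec-4-volume-support-derivatives} applies. It also uses that $h$ is even with $P(h)=-P(h)$, so that $0\in\operatorname{int}P(h)$ and the logarithmic coordinates make sense.

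The second step treats constant $a$. If $a=c\one$, then $\mathsf{L}_P(a,a)=0$ by Proposition~\ref{prop:sec-4-graph-laplacian}, and $\widetilde{\mathsf{M}}(a,a)=0$ by \eqref{eq:sec-4-centered-mass-constant-invariance}. Hence the second derivative vanishes and imposes no condition. This can also be seen directly: $\log V(e^{tc}h)=\log V(h)+ntc$ is linear in $t$.

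The third step treats even nonconstant $a$. By the Rayleigh characterization of $\lambda_{1,e}(P(h))$, the inequality $\mathsf{L}_P(a,a)\geq n\sum_i m_i(a_i-\bar a)^2$ holds for all such $a$ if and only if the minimum of the quotient is at least $n$, that is, if and only if $\lambda_{1,e}(P(h))\geq n$. The denominator is positive for nonconstant $a$ because all $m_i>0$. Combining this with the constant case gives both directions of the equivalence.

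I expect no real obstacle. The only care needed is in bookkeeping. First, one must check that the network quantities defined from $V_\Sigma$ coincide with those of the polytope $P(h)$, which follows from $h$ lying in the open chamber. Second, the identity $S=nV(h)=\sum_i m_i$ must be used consistently, so that the centering in $\widetilde{\mathsf{M}}$ matches the weighted mean $\bar a$ in the definition of $\lambda_{1,e}$.
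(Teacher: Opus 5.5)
Your proposal is correct and follows essentially the same route as the paper: \autoref{prop:sec-4-logarithmic-volume-Hessian} reduces the condition to $\mathsf{L}(a,a)\geq n\widetilde{\mathsf{M}}(a,a)$ for even $a$, which is then compared with the Rayleigh characterization of $\lambda_{1,e}$. The difference is cosmetic: the paper passes to the centered representative $a^{\circ}$ and writes $\lambda_{1,e}$ as a minimum of $\mathsf{L}(b,b)/A(b^{\odot2})$ over nonzero centered even $b$, whereas you identify $\widetilde{\mathsf{M}}(a,a)$ directly with the weighted variance $\sum_i m_i(a_i-\bar{a})^2$ and treat constant $a$ separately.
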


\begin{proof}
By \autoref{prop:sec-4-logarithmic-volume-Hessian}, the first condition is equivalent to
\eq{\label{eq:sec-4-even-Hessian-form-bound}
  \mathsf{L}(a,a)\geq n\widetilde{\mathsf{M}}(a,a) \quad\text{for every even }a.
}
Let $a^{\circ}$ be the centered version of $a$ from \eqref{eq:sec-4-centered-representative}. By $\mathsf{L}(\one,b)=0$ and \eqref{eq:sec-4-centered-mass-constant-invariance},
\eq{
  \mathsf{L}(a,a)=\mathsf{L}(a^{\circ},a^{\circ}),
  \quad
  \widetilde{\mathsf{M}}(a,a)=\widetilde{\mathsf{M}}(a^{\circ},a^{\circ})=A((a^{\circ})^{\odot2}).
}
Thus \eqref{eq:sec-4-even-Hessian-form-bound} is equivalent to
\eq{
  \mathsf{L}(b,b)\geq nA(b^{\odot2})
  \quad\text{for every centered even facet function }b.
}

Now \eqref{eq:introduction-Rayleigh-characterization} becomes
\eq{
  \lambda_{1,e}(P(h))
  =\min_{\substack{0\neq b\ \text{even}\\A(b)=0}}
  \frac{\mathsf{L}(b,b)}{A(b^{\odot2})}.
}
Therefore, \eqref{eq:sec-4-even-Hessian-form-bound} holds if and only if this minimum is at least $n$.
\end{proof}

\section{The support Hessian, its kernel, and the spectral baseline}
\label{sec:support-hessian}

In this section, we determine the kernel of the support Hessian and prove that it has exactly one positive eigenvalue. We begin by characterizing equality in the second Minkowski inequality for two full-dimensional polytopes with the same prescribed facet normals; see \autoref{def:sec-3-prescribed-facet-normals}. Every prescribed inequality defines a facet of both polytopes, but their normal fans need not agree. We then use the equality case of the second Minkowski inequality to determine the kernel and inertia of the support Hessian on a simple type chamber. When the normals are antipodally indexed, $\Sigma$ is antipodally invariant, and $h\in\mathcal{T}_{\Sigma}$ is even, the support Hessian has no nonzero even vector in its kernel. Consequently, $\lambda_{1,e}(P(h))>n-1$. We end the section by applying the spectral theorem to obtain an $\mathsf{M}$-orthonormal eigenbasis of the centered even facet space that diagonalizes $\mathsf{L}$ and $B$.

For convex bodies $P$ and $Q$, define the mixed volumes $\mathcal{V}_k(P,Q)$, $0\leq k\leq n$, by
\eq{\label{eq:sec-5-mixed-volume-expansion}
  \Vol(sP+tQ)=\sum_{k=0}^n\binom{n}{k}s^{n-k}t^k\mathcal{V}_k(P,Q)
  \quad(s,t\geq 0).
}
We abbreviate $\mathcal{V}_k(P,Q)$ by $\mathcal{V}_k$ and also write it as $\mathcal{V}(P[n-k],Q[k])$, where $P[r]$ denotes $r$ copies of $P$.

The second Minkowski inequality states that
\eq{\label{eq:sec-5-second-Minkowski}
  \mathcal{V}(P[n-1],Q)^2\geq \Vol(P)\mathcal{V}(P[n-2],Q[2]).
}
See, for example, \cite[p. 382, Eq. (7.19)]{Sch14}.

\begin{definition}\label{def:sec-5-polytopal-tangential-body}
Let $K\subseteq L$ be full-dimensional polytopes in $\R^n$. We say that $L$ is an \emph{$(n-2)$-tangential body of $K$} if
\eq{
  K\cap F\neq\varnothing
  \quad\text{for every face $F$ of $L$ satisfying}
  \quad n-2\leq \dim F\leq n-1.
}
That is, $K$ meets every facet and every ridge of $L$.
\end{definition}

Let us compare this with the terminology used in \cite[pp. 85--86]{Sch14}. For $0\neq u\in\R^n$, let
\eq{
  H(L,u)&=\{x\in\R^n:\ip{x}{u}=h_L(u)\},\\
  F(L,u)&=L\cap H(L,u)
  =L^u.
}
Let $T(L,u)$ denote the unique face of $N_L(F(L,u))$ whose relative interior contains $u$. Since $L$ is a polytope, \eqref{eq:sec-3-relative-normal-exposed-face} implies
\eq{
  u\in\relint N_L(F(L,u)).
}
That is, $T(L,u)=N_L(F(L,u))$. For polytopes $K\subseteq L$, Schneider's definition is equivalent to the following implication for every $u\in\R^n\setminus\{0\}$:
\eq{
  \dim N_L(F(L,u))\leq 2
  \quad\Rightarrow\quad
  H(L,u)\text{ supports }K.
}
By \autoref{lem:sec-3-face-normal-cone-duality},
\eq{
  \dim N_L(F(L,u))\leq 2
  \quad\iff\quad
  \dim F(L,u)\geq n-2.
}
Since $K\subseteq L$ and $F(L,u)=L\cap H(L,u)$,
\eq{
  H(L,u)\text{ supports }K
  \quad\iff\quad
  K\cap F(L,u)\neq\varnothing.
}
Thus, for polytopes, \autoref{def:sec-5-polytopal-tangential-body} agrees with his definition \cite[p. 86]{Sch14}.

For full-dimensional polytopes $P$ and $Q$, equality in the second Minkowski inequality \eqref{eq:sec-5-second-Minkowski} holds if and only if $P$ is homothetic to an $(n-2)$-tangential body of $Q$; see \cite[p. 432, Thm. 7.6.19]{Sch14}. By \autoref{def:sec-5-polytopal-tangential-body}, this is equivalent to the existence of $\gamma>0$ and $v\in\R^n$ such that, with $\widetilde{P}=\gamma P+v$,
\eq{\label{eq:sec-5-polytopal-equality-condition}
  Q\subseteq\widetilde{P}, \quad
  Q\cap F\neq\varnothing
  \quad\text{for every face $F$ of $\widetilde{P}$ satisfying}
  \quad n-2\leq \dim F\leq n-1.
}

The following lemma does not require the two normal fans to coincide.

\begin{lemma}[Equality in the second Minkowski inequality]\label{lem:sec-5-common-normal-Minkowski-equality}
Suppose that $h,q\in\R^J$ define full-dimensional polytopes $P=P(h)$ and $Q=P(q)$ and that every prescribed inequality defines a facet of the corresponding polytope. Equality holds in \eqref{eq:sec-5-second-Minkowski} if and only if
\eq{
  Q=\gamma P+v,
}
for some $\gamma>0$ and $v\in\R^n$.
\end{lemma}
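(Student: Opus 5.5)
The "if" direction is immediate: if $Q=\gamma P+v$, then $\mathcal{V}(P[n-1],Q)=\gamma\Vol(P)$ and $\mathcal{V}(P[n-2],Q[2])=\gamma^2\Vol(P)$, so equality holds in \eqref{eq:sec-5-second-Minkowski}. For the converse, we start from Schneider's characterization \eqref{eq:sec-5-polytopal-equality-condition}. It provides $\gamma>0$ and $v$ such that, with $\widetilde P=\gamma P+v$, we have $Q\subseteq\widetilde P$ and $Q$ meets every facet and every ridge of $\widetilde P$. Replacing $P$ by $\widetilde P$, which has the same facet normals, support vector $\gamma h+\mathfrak{t}(v)$, and the same irredundancy, we may take $\gamma=1$ and $v=0$. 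The goal is then to show $P=Q$, that is, $h=q$.

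The key observation concerns the facets. Every facet of $P$ is $F_i=P\cap\{\ip{x}{u_i}=h_i\}$ for some $i\in J$, because the representation is complete and irredundant. Since $Q$ meets $F_i$ and $Q\subseteq P$, we get $h_Q(u_i)=h_i$. On the other hand, the $i$-th inequality defines a facet of $Q$, so $h_Q(u_i)=q_i$. Hence $q_i=h_i$ for every $i\in J$, and therefore $Q=P(q)=P(h)=P$.

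Only the facet condition of the tangential-body criterion is used; the ridge condition is not needed. The normal fans of $P$ and $Q$ play no role, because irredundancy gives $h_i=h_P(u_i)$ and $q_i=h_Q(u_i)$ directly, which matches the remark that the lemma does not require the fans to coincide. The one delicate point is making sure that the facets of $\widetilde P$ are exactly the labeled hyperplanes. Since $\widetilde P$ is a dilate and translate of $P$, its facets are $\gamma F_i+v$ with normals $u_i$, so they are. With this in place, the argument is a direct application of \cite[Thm. 7.6.19]{Sch14}.
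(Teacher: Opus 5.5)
Your proof is correct and follows the paper's argument essentially verbatim. Both invoke Schneider's tangential-body criterion, use only the contact of $Q$ with each labeled facet of $\widetilde P$ to force $h_Q(u_i)=h_{\widetilde P}(u_i)$ (equivalently $q=\gamma h+\mathfrak{t}(v)$), conclude $Q=\widetilde P$ from the prescribed-normal representation, and handle the converse by homogeneity and translation invariance. One small wording point: what the argument actually uses is that each labeled $F_i$ is a facet, which is the hypothesis, rather than that every facet carries a label.
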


\begin{proof}
Suppose that equality holds. Then \eqref{eq:sec-5-polytopal-equality-condition} holds for some $\gamma>0$ and $v\in\R^n$.

For $i\in J$, the exposed face $\widetilde{F}_i=\widetilde{P}^{u_i}$ is a facet of $\widetilde{P}$ because the prescribed inequality with normal $u_i$ defines a facet of $P$. Applying \eqref{eq:sec-5-polytopal-equality-condition} to $\widetilde{F}_i$, we obtain
\eq{
  Q\cap\widetilde{F}_i\neq\varnothing.
}
For $x_i\in Q\cap\widetilde{F}_i$, the inclusion $Q\subseteq\widetilde{P}$ implies $h_Q(u_i)\leq h_{\widetilde{P}}(u_i)$, whereas $x_i\in Q\cap\widetilde{F}_i$ implies
\eq{
  h_Q(u_i)\geq \ip{x_i}{u_i}=h_{\widetilde{P}}(u_i).
}
Combining the two inequalities, we obtain
\eq{
  h_Q(u_i)=h_{\widetilde{P}}(u_i) \quad (i\in J).
}
Since both polytopes are intersections of the half-spaces with the prescribed normals $u_i$, these equalities imply $Q=\widetilde{P}=\gamma P+v$.

Conversely, suppose that $Q=\gamma P+v$. Translation invariance and homogeneity of mixed volumes imply
\eq{
  \mathcal{V}(P[n-1],Q)=\gamma\Vol(P), \quad
  \mathcal{V}(P[n-2],Q[2])=\gamma^2\Vol(P).
}
Thus both sides of \eqref{eq:sec-5-second-Minkowski} are equal to $\gamma^2\Vol(P)^2$.
\end{proof}

\begin{theorem}\label{thm:sec-5-support-hessian-kernel}
For $h\in\mathcal{T}_{\Sigma}$ with $0\in\operatorname{int}P(h)$, denote the first two logarithmic support forms at $h$ by $A$ and $B$, as in \eqref{eq:sec-4-variation-forms}. Then $D^2V_{\Sigma}(h)$ has exactly one positive eigenvalue, and
\eq{
  \ker D^2V_{\Sigma}(h)=\mathfrak{t}(\R^n).
}
Equivalently, every $a\in\R^J$ satisfies
\eq{\label{eq:sec-5-AF-support-form}
  B(a,a)\leq (n-1)\frac{A(a)^2}{nV_{\Sigma}(h)}.
}
Moreover, equality holds if and only if
\eq{\label{eq:sec-5-AF-equality}
  a_j=c_0+\frac{\ip{v}{u_j}}{h_j} \quad \text{for every }j\in J,
}
for some $c_0\in\R$ and $v\in\R^n$.
\end{theorem}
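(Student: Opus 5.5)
The plan is to reduce everything to the second Minkowski inequality on the chamber, using that $V_\Sigma$ is a mixed-volume polynomial there. Fix $h\in\mathcal{T}_\Sigma$ and write $P=P(h)$. For $q\in\mathcal{T}_\Sigma$, \autoref{lem:sec-3-type-chamber-Minkowski-addition} gives $V_\Sigma(sh+tq)=\Vol(sP+tP(q))$ for $s,t\geq 0$. Comparing with \eqref{eq:sec-5-mixed-volume-expansion}, and noting that both sides are polynomials, we get
\begin{equation}
  DV_\Sigma(h)[q]=n\,\mathcal{V}(P[n-1],P(q)),\qquad D^2V_\Sigma(h)[q,q]=n(n-1)\,\mathcal{V}(P[n-2],P(q)[2]).
\end{equation}
Write $z=h\odot a$, so that $A(a)=DV(h)[z]$ and $B(a,a)=D^2V(h)[z,z]$. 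Then the second Minkowski inequality \eqref{eq:sec-5-second-Minkowski} for the pair $P,P(q)$ reads
\begin{equation}
  V(h)\,D^2V(h)[q,q]\leq \tfrac{n-1}{n}\,DV(h)[q]^2 .
\end{equation}
This holds for all $q$ in the open cone $\mathcal{T}_\Sigma$. Since $h\in\mathcal{T}_\Sigma$ is an interior point, $q=h+\varepsilon z$ lies in $\mathcal{T}_\Sigma$ for every $z\in\R^J$ and small $\varepsilon$. Expand in $\varepsilon$ using the Euler identities $DV(h)[h]=nV$, $D^2V(h)[h,h]=n(n-1)V$, and $D^2V(h)[h,z]=(n-1)DV(h)[z]$. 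The orders $1$ and $\varepsilon$ cancel identically. The $\varepsilon^2$ coefficient gives $V\,D^2V(h)[z,z]\leq \frac{n-1}{n}DV(h)[z]^2$, which is exactly \eqref{eq:sec-5-AF-support-form}.

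\emph{Inertia.} The form $D^2V(h)$ is positive on $h$, where it equals $n(n-1)V>0$. By the inequality just proved, it is nonpositive on the hyperplane $\{z: DV(h)[z]=0\}$. Hence it has exactly one positive eigenvalue.

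\emph{Kernel.} The inclusion $\mathfrak{t}(\R^n)\subseteq\ker D^2V_\Sigma(h)$ follows because $V(q+\mathfrak{t}(v))=V(q)$ on the chamber, hence identically. Differentiating in $q$ and $v$ gives $D^2V(h)[\cdot,\mathfrak{t}(v)]=0$. For the converse, let $z\in\ker D^2V(h)$ and subtract the $h$-component to reduce to a case we can classify. Because $\mathfrak{t}(v)$ lies in the kernel and $DV(h)[\mathfrak{t}(v)]=\sum_i\mathcal{H}^{n-1}(F_i)\ip{v}{u_i}=0$ (Minkowski's relation), the equality case is the only real content.

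\emph{Equality case.} This is the main step. Suppose $z=h\odot a$ gives equality in \eqref{eq:sec-5-AF-support-form}. The quadratic form $\Phi(z)=\frac{n-1}{n}DV(h)[z]^2-V\,D^2V(h)[z,z]$ is positive semidefinite, so equality means $z\in\ker\Phi$. Moreover, $\Phi(h+\varepsilon z)/\varepsilon^2=\Phi(z)$ exactly; this needs checking, but the $\varepsilon^0$ and $\varepsilon^1$ terms vanish by Euler, as above. The remaining point is that the full Minkowski defect at $q=h+\varepsilon z$ is genuinely quadratic in $\varepsilon$: the defect is built from the polynomial $V_\Sigma$, so it equals $\varepsilon^2\Phi(z)$ up to positive factors. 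Hence equality at the infinitesimal level propagates to the actual second Minkowski inequality for $P$ and $Q=P(h+\varepsilon z)$, with $\varepsilon$ small. Both polytopes lie in the chamber, so every prescribed inequality defines a facet, and \autoref{lem:sec-5-common-normal-Minkowski-equality} gives $P(h+\varepsilon z)=\gamma P+v$. Comparing support numbers in the directions $u_j$ yields $h_j+\varepsilon z_j=\gamma h_j+\ip{v}{u_j}$. Dividing by $\varepsilon h_j$ gives \eqref{eq:sec-5-AF-equality}. Conversely, vectors of that form are $c_0h+\mathfrak{t}(v)$, and direct substitution gives equality.

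\emph{Kernel, concluded.} Apply the equality case to $z\in\ker D^2V(h)$ with $DV(h)[z]=0$; such a $z$ attains equality $0=0$. This gives $z=c_0h+\mathfrak{t}(v)$. Then $DV(h)[z]=c_0nV=0$, so $c_0=0$ and $z\in\mathfrak{t}(\R^n)$. For a general kernel vector $z$, the identity $D^2V(h)[h,z]=(n-1)DV(h)[z]$ forces $DV(h)[z]=0$, so this covers every case.
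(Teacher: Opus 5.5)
Your proof is correct. Your equality-case and kernel steps match the paper's. The paper proves the log-coordinate version on all of $\mathcal{U}_{\mathrm{irr}}$ in \autoref{thm:sec-8-irredundant-support-Hessian-inequality}. It then transfers the result to the chamber by the diagonal congruence $a\mapsto h\odot a$ and Sylvester's law.

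Where you genuinely differ is the inequality itself. The paper obtains it from concavity of $V^{1/n}$, using Brunn--Minkowski, the inclusion $(1-t)P(q_0)+tP(q_1)\subseteq P((1-t)q_0+tq_1)$, and monotonicity of volume. That argument needs only $C^2$ regularity of $V$ and no chamber structure. In the paper, Minkowski's second inequality enters only for the equality case, through the identity $(n-1)A(a)^2/S-B(a,a)=\frac{n(n-1)}{V_0\varepsilon^2}(V_1^2-V_0V_2)$ with $V_k=\mathcal{V}(P[n-k],Q[k])$.

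Your exact scaling $\Phi(h+\varepsilon z)=\varepsilon^2\Phi(z)$ is that same identity, and you use it in both directions:
\begin{itemize}
\item Minkowski's second inequality for $P$ and $P(h+\varepsilon z)$ gives $\Phi(z)\geq 0$ for every $z$. This is the right way to pass from the open chamber to all of $\R^J$, since nonnegativity of a quadratic form on an open cone alone would not suffice.
\item $\Phi(z)=0$ gives $V_1^2=V_0V_2$.
\end{itemize}
This is more economical: one inequality and one identity give both the estimate and its equality case, with no appeal to Brunn--Minkowski. It also extends verbatim to $\mathcal{U}_{\mathrm{irr}}$ whenever $h$ and $h+\varepsilon z$ share a chamber closure, which the paper has to arrange for its equality case anyway.

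Your inertia argument (positive at $h$, nonpositive on $\ker DV(h)$) is the $z$-coordinate form of the paper's $B$-orthogonal splitting $\R^J=\R\one\oplus\ker A$. Working in $z$ directly spares you the Sylvester transfer.
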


\begin{proof}
See the proof of \autoref{thm:sec-8-irredundant-support-Hessian-inequality}: under the invertible diagonal change of variables $a\mapsto h\odot a$, the matrix of $B$ is congruent to $D^2V_{\Sigma}(h)$. Sylvester's law of inertia and the relation $B(a,b)=D^2V_{\Sigma}(h)[h\odot a,h\odot b]$ transfer the positive-eigenvalue count and the kernel description.
\end{proof}

The remainder of the section concerns the antipodal setting. Assume that the normals are indexed by $I$, $\Sigma$ is antipodally invariant, and $h\in\mathcal{T}_{\Sigma}$ is even.

\begin{corollary}\label{cor:sec-5-strict-even-eigenvalue-bound}
The polytope $P(h)$ satisfies $\lambda_{1,e}(P(h))>n-1$.
\end{corollary}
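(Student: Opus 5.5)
We derive the bound from \autoref{thm:sec-5-support-hessian-kernel} applied to a centered even eigenfunction. Let $f$ be an even facet function with $A(f)=0$ and $\mathsf{M}(f,f)=1$ that attains $\lambda=\lambda_{1,e}(P(h))$. The even representation of \autoref{cor:sec-4-logarithmic-volume-Hessian-spectral-gap} gives $\mathsf{L}(f,f)=\lambda$. The definition $\mathsf{L}=(n-1)\mathsf{M}-B$ then yields
\[
B(f,f)=(n-1)\mathsf{M}(f,f)-\mathsf{L}(f,f)=(n-1)-\lambda .
\]
Because $A(f)=0$, inequality \eqref{eq:sec-5-AF-support-form} gives $B(f,f)\leq 0$, so $\lambda\geq n-1$. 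This is the weak version of the bound, and it remains to exclude equality.

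Suppose $\lambda=n-1$. Then $B(f,f)=0=(n-1)A(f)^2/(nV_{\Sigma}(h))$, so equality holds in \eqref{eq:sec-5-AF-support-form}. The equality characterization \eqref{eq:sec-5-AF-equality} then provides $c_0\in\R$ and $v\in\R^n$ with $f_j=c_0+\ip{v}{u_j}/h_j$ for all $j\in I$.

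Next we use evenness. Since $h_{-j}=h_j$ and $u_{-j}=-u_j$, we have $f_{-j}=c_0-\ip{v}{u_j}/h_j$. Combined with $f_{-j}=f_j$, this forces $\ip{v}{u_j}=0$ for every $j$. The vectors $u_j$ span $\R^n$, so $v=0$ and $f\equiv c_0$. The centering condition $A(f)=c_0S=0$ with $S=nV_{\Sigma}(h)>0$ gives $c_0=0$. This contradicts $\mathsf{M}(f,f)=1$. Hence $\lambda>n-1$.

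The one point that needs care is the reduction to the centered form. The Rayleigh quotient in the definition of $\lambda_{1,e}$ uses the masses $m_i$ and the mean $\bar f$, while the argument above uses $A$ and $\mathsf{M}$. Section 4 gives $A(a)=\sum_i m_ia_i$ and $\mathsf{M}(a,b)=\sum_i m_ia_ib_i$, so $\bar f=0$ is exactly $A(f)=0$ and the normalizations match. Also, $0\in\operatorname{int}P(h)$ holds because $P(h)$ is origin symmetric and full dimensional, so \autoref{thm:sec-5-support-hessian-kernel} applies.
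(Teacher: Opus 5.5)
Your proof is correct and follows essentially the same route as the paper. The paper defers to \autoref{cor:sec-8-irredundant-strict-even-eigenvalue-bound}, which combines the support-Hessian inequality, its equality case and evenness to get $\mathsf{L}(a,a)>(n-1)A(a^{\odot2})$ for every nonzero centered even $a$, and then uses compactness of the normalized set; you apply the same strictness at an attained minimizer, which is equivalent because attainment itself comes from that compactness.
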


\begin{proof}
See the proof of \autoref{cor:sec-8-irredundant-strict-even-eigenvalue-bound}.
\end{proof}

\begin{definition}\label{def:sec-5-even-facet-functions}
The space of even facet functions and its centered subspace are
\eq{
  \mathcal{E}^+
  &=\{a\in\R^I:\iota a=a\},\quad
  \mathcal{E}_0^{+}
  &=\{a\in\mathcal{E}^+:A(a)=0\}.
}
\end{definition}

By \autoref{prop:sec-4-graph-laplacian}, $\mathsf{L}$ is positive definite on $\mathcal{E}_0^{+}$. The mass form $\mathsf{M}(a,b)=\sum_{i\in I}m_i a_ib_i$ is an inner product on $\R^I$, and in particular on $\mathcal{E}_0^{+}$. Therefore, it determines a unique linear operator $\mathsf{T}:\mathcal{E}_0^{+}\to\mathcal{E}_0^{+}$ such that
\eq{
  \mathsf{M}(\mathsf{T}a,b)=\mathsf{L}(a,b)
  \quad\text{for every }a,b\in\mathcal{E}_0^{+}.
}
Since $\mathsf{L}$ is symmetric,
\eq{
  \mathsf{M}(\mathsf{T}a,b)
  =\mathsf{L}(a,b)
  =\mathsf{L}(b,a)
  =\mathsf{M}(a,\mathsf{T}b).
}
Thus $\mathsf{T}$ is positive definite and self-adjoint with respect to $\mathsf{M}$. The spectral theorem provides an $\mathsf{M}$-orthonormal basis $(e_k)$ of $\mathcal{E}_0^{+}$ consisting of eigenvectors of $\mathsf{T}$. Index the corresponding eigenvalues in nondecreasing order:
\eq{
  0<\lambda_1=\lambda_{1,e}(P)\leq \lambda_2\leq \cdots.
}
The eigenvectors then satisfy
\eq{
  \mathsf{M}(e_j,e_k)&=\delta_{jk},\\
  \mathsf{L}(e_k,b)&=\lambda_k\mathsf{M}(e_k,b)
  \quad\text{for every }b\in\mathcal{E}_0^+.
}
In view of the identity $B(a,b)=(n-1)\mathsf{M}(a,b)-\mathsf{L}(a,b)$, one has
\eq{
  B(e_j,e_k)=\rho_k\delta_{jk},
  \quad
  \rho_k=n-1-\lambda_k.
}
In view of \autoref{cor:sec-5-strict-even-eigenvalue-bound}, every nonzero $a\in\mathcal{E}_0^+$ satisfies
\eq{\label{eq:sec-5-B-spectral-order}
  B(a,a)
 \leq
  (n-1-\lambda_1)\mathsf{M}(a,a)
<0.
}

\section{Rayleigh variation in a simple type chamber}
\label{sec:simple-chamber-Rayleigh-variation}

In this section, we study logarithmic support variations at a local minimizer in a simple type chamber. We continue with the antipodally invariant fan $\Sigma$ and the even support vector $h\in\mathcal{T}_{\Sigma}$ from the preceding section. We assume that $h$ locally minimizes $q\mapsto\lambda_{1,e}(P(q))$ among the even vectors $q\in\mathcal{T}_{\Sigma}$.

\subsection{First variations of the support forms}

For an even facet function $g$, consider $h_t=h\odot(\one+tg)$. The openness of $\mathcal{T}_{\Sigma}$ from \autoref{lem:sec-3-type-chamber-inequalities} implies that $h_t\in\mathcal{T}_{\Sigma}$ for all sufficiently small $\abs{t}$. A subscript $t$ indicates that a quantity from \eqref{eq:sec-4-variation-forms}--\eqref{eq:sec-4-mass-centered-energy-forms} is evaluated at $h_t$; for example,
\eq{
   A_t(a)=DV(h_t)[h_t\odot a].
}

\begin{lemma}\label{lem:sec-6-derivatives-of-forms}
For facet functions $a$ and $b$ independent of $t$,
\eq{
   \left.\frac{d}{dt}\right|_{t=0}A_t(a)&=A(a\odot g)+B(a,g), \\ \left.\frac{d}{dt}\right|_{t=0}B_t(a,b)&=B(a\odot g,b)+B(a,b\odot g)+C(a,b,g), \\ \left.\frac{d}{dt}\right|_{t=0}S_t&=nA(g).
}
\end{lemma}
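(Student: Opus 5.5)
We differentiate the three quantities along the path $t\mapsto h_t=h\odot(\one+tg)$. The only inputs are the chain rule for the polynomial $V=V_{\Sigma}$, which is smooth on all of $\R^J$, and the fact that $h_t\in\mathcal{T}_{\Sigma}$ for small $\abs{t}$. Since $a$ and $b$ do not depend on $t$, the velocity of the direction $h_t\odot a$ is $\frac{d}{dt}(h_t\odot a)=h\odot g\odot a$. The velocity of the base point is $\frac{d}{dt}h_t=h\odot g$.

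\textbf{The form $A$.} Write $A_t(a)=DV(h_t)[h_t\odot a]$ and apply the product rule for a multilinear form evaluated along a curve. This gives
\eq{
\left.\frac{d}{dt}\right|_{t=0}A_t(a)=D^2V(h)[h\odot g,h\odot a]+DV(h)[h\odot(a\odot g)].
}
By the definitions in \eqref{eq:sec-4-variation-forms}, the first term is $B(g,a)=B(a,g)$ and the second is $A(a\odot g)$.

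\textbf{The form $B$.} Likewise, $B_t(a,b)=D^2V(h_t)[h_t\odot a,h_t\odot b]$. Its derivative has three terms: one from moving the base point, and one from each of the two arguments. This gives
\eq{
D^3V(h)[h\odot g,h\odot a,h\odot b]+D^2V(h)[h\odot(a\odot g),h\odot b]+D^2V(h)[h\odot a,h\odot(b\odot g)].
}
These three terms are $C(a,b,g)$, $B(a\odot g,b)$ and $B(a,b\odot g)$, using the symmetry of $D^3V$.

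\textbf{The quantity $S$.} Since $S_t=A_t(\one)$, the first formula applies directly and gives
\eq{
\left.\frac{d}{dt}\right|_{t=0}S_t=A(g)+B(\one,g).
}
The Euler identity $B(\one,g)=(n-1)A(g)$ from \eqref{eq:sec-4-euler-relations} turns this into $nA(g)$. Alternatively, $S_t=nV(h_t)$, and differentiating gives $nDV(h)[h\odot g]=nA(g)$.

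No genuine obstacle arises. The one point that needs care is that $V_{\Sigma}$ is a single polynomial valid along the whole path, so derivatives of all orders exist and the earlier definitions apply at each $h_t$. This holds because the openness of $\mathcal{T}_{\Sigma}$ (\autoref{lem:sec-3-type-chamber-inequalities}) keeps $h_t$ in the chamber for small $\abs{t}$.
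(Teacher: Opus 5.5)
Your proposal is correct and follows essentially the same route as the paper: expand $A_t$ and $B_t$ as multilinear forms of $V$ evaluated at $h+t(h\odot g)$ with $t$-dependent arguments $h\odot a+t(h\odot a\odot g)$, and differentiate $S_t=nV(h_t)$ directly. Your alternative derivation of the $S_t$ formula via $S_t=A_t(\one)$ and the Euler identity $B(\one,g)=(n-1)A(g)$ is also valid.
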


\begin{proof}
Since $h_t=h+t(h\odot g)$,
\eq{
  A_t(a)=DV\bigl(h+t(h\odot g)\bigr)
  [h\odot a+t(h\odot a\odot g)].
}
The derivative at zero is
\eq{
  \left.\frac{d}{dt}\right|_{t=0}A_t(a)
  &=D^2V(h)[h\odot g,h\odot a]+DV(h)[h\odot a\odot g] \\
  &=B(a,g)+A(a\odot g).
}

Likewise,
\eq{
  B_t(a,b)=D^2V\bigl(h+t(h\odot g)\bigr)
  [h\odot a+t(h\odot a\odot g),h\odot b+t(h\odot b\odot g)],
}
and its derivative at zero is
\eq{
  \left.\frac{d}{dt}\right|_{t=0}B_t(a,b)
  &=D^3V(h)[h\odot g,h\odot a,h\odot b] \\
  &\quad+D^2V(h)[h\odot a\odot g,h\odot b] \\
  &\quad+D^2V(h)[h\odot a,h\odot b\odot g] \\
  &=C(a,b,g)+B(a\odot g,b)+B(a,b\odot g).
}
Finally, $S_t=nV(h_t)$ and
\eq{
  \left.\frac{d}{dt}\right|_{t=0}S_t=nDV(h)[h\odot g]=nA(g).
}
\end{proof}

\subsection{Rayleigh stationarity}

Write
\eq{
  \lambda&=\lambda_{1,e}(P),\quad
  \beta&=n-1-\lambda.
}
By \autoref{cor:sec-5-strict-even-eigenvalue-bound}, $\beta<0$. Define the first nonconstant even eigenspace by
\eq{\label{eq:sec-6-first-even-eigenspace}
  \mathcal{E}_{\lambda}^{+}=\left\{u\in\mathcal{E}_0^{+}:\mathsf{L}(u,k)=\lambda\mathsf{M}(u,k)
  \text{ for every }k\in\mathcal{E}_0^{+}\right\}.
}
Note that for $u\in\mathcal{E}_{\lambda}^{+}$, the eigenvalue equation extends to every even facet function $k$. Indeed, $k$ and its centered version differ by a constant, while $\mathsf{L}(u,\one)=0=\mathsf{M}(u,\one)$. Therefore, every $u\in\mathcal{E}_{\lambda}^{+}$ and every even $k$ satisfy
\eq{\label{eq:sec-6-eigen-equations}
  A(u)&=0,\\
  \mathsf{L}(u,k)&=\lambda\mathsf{M}(u,k),\\
  B(u,k)&=\beta\mathsf{M}(u,k).
}
For the remainder of this section, we consider $f\in\mathcal{E}_{\lambda}^{+}$ normalized by
\eq{\label{eq:sec-6-eigenfunction-normalization}
  A(f)&=0,\quad
  \mathsf{M}(f,f)&=1.
}

Recall that $h$ locally minimizes $q\mapsto\lambda_{1,e}(P(q))$ among the even vectors $q\in\mathcal{T}_{\Sigma}$.

\begin{lemma}[Rayleigh stationarity]\label{lem:sec-6-rayleigh-stationarity}
For every even facet function $g$,
\eq{\label{eq:sec-6-scalar-stationarity}
  C(f,f,g)
  =\beta\bigl\{B(f^{\odot2},g)-A(f^{\odot2}\odot g)\bigr\}.
}
\end{lemma}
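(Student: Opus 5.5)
The plan is to use local minimality to show that the Rayleigh quotient of the \emph{fixed} eigenfunction $f$ is stationary along the support path $h_t=h\odot(\one+tg)$. The identity \eqref{eq:sec-6-scalar-stationarity} should then drop out of \autoref{lem:sec-6-derivatives-of-forms} and the eigen equations \eqref{eq:sec-6-eigen-equations}.

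\textbf{Step 1: reduce to a one-variable minimum.} Fix an even $g$. For small $\abs{t}$, the vector $h_t$ is even and lies in $\mathcal{T}_{\Sigma}$ by openness (\autoref{lem:sec-3-type-chamber-inequalities}). Consider
\begin{equation*}
  \mathcal{R}(t)=\frac{\mathsf{L}_t(f,f)}{\widetilde{\mathsf{M}}_t(f,f)}.
\end{equation*}
By \eqref{eq:sec-4-centered-mass-constant-invariance} and $\mathsf{L}_t(\one,\cdot)=0$, this is the Rayleigh quotient at $P(h_t)$ of the centered version of $f$ with respect to $A_t$. That function is even and nonconstant, since $f$ is nonconstant. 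Hence $\mathcal{R}(t)\geq\lambda_{1,e}(P(h_t))\geq\lambda=\mathcal{R}(0)$. Here $\mathcal{R}(0)=\lambda$ because $A(f)=0$ and $\mathsf{M}(f,f)=1$.

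The quotient is a smooth function of $t$, since all the forms are polynomial in $h_t$ and the denominator is positive near $0$. An interior minimum at $t=0$ therefore gives $\mathcal{R}'(0)=0$. Because the denominator equals $1$ at $t=0$, this is equivalent to
\begin{equation*}
  \left.\tfrac{d}{dt}\right|_{t=0}\mathsf{L}_t(f,f)=\lambda\left.\tfrac{d}{dt}\right|_{t=0}\widetilde{\mathsf{M}}_t(f,f).
\end{equation*}

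\textbf{Step 2: differentiate the forms.} Write $\mathsf{L}_t(f,f)=(n-1)A_t(f^{\odot2})-B_t(f,f)$ and $\widetilde{\mathsf{M}}_t(f,f)=A_t(f^{\odot2})-A_t(f)^2/S_t$. Applying \autoref{lem:sec-6-derivatives-of-forms} gives
\begin{align*}
  \left.\tfrac{d}{dt}\right|_{t=0}\mathsf{L}_t(f,f)&=(n-1)\bigl\{A(f^{\odot2}\odot g)+B(f^{\odot2},g)\bigr\}-2B(f\odot g,f)-C(f,f,g),\\
  \left.\tfrac{d}{dt}\right|_{t=0}\widetilde{\mathsf{M}}_t(f,f)&=A(f^{\odot2}\odot g)+B(f^{\odot2},g).
\end{align*}
The second line uses $A(f)=0$, which kills the derivative of the quotient term $A_t(f)^2/S_t$ at $t=0$.

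\textbf{Step 3: insert the eigen equation and simplify.} Since $f\odot g$ is even, \eqref{eq:sec-6-eigen-equations} gives
\begin{equation*}
  B(f,f\odot g)=\beta\mathsf{M}(f,f\odot g)=\beta A(f^{\odot2}\odot g).
\end{equation*}
Substituting this into the stationarity equation and using $n-1-\lambda=\beta$, we get
\begin{equation*}
  C(f,f,g)=\beta\bigl\{A(f^{\odot2}\odot g)+B(f^{\odot2},g)\bigr\}-2\beta A(f^{\odot2}\odot g),
\end{equation*}
which is exactly \eqref{eq:sec-6-scalar-stationarity}.

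The only delicate point is Step 1. One must check that the centered version of $f$ with respect to $h_t$ is an admissible even, nonconstant competitor, and that $t=0$ is a two-sided interior minimum. Both follow from openness of the chamber and the evenness of $g$ and $f$. The rest is bookkeeping.
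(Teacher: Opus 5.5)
Your proposal is correct and follows the paper's proof essentially step for step. Both arguments use the path $h_t=h\odot(\one+tg)$ and the sandwich $\mathcal{R}(t)\geq\lambda_{1,e}(P(h_t))\geq\lambda=\mathcal{R}(0)$ to force $\mathcal{R}'(0)=0$; both then differentiate the forms, with $A(f)=0$ killing the derivative of $A_t(f)^2/S_t$, and close with the eigen equation $B(f,f\odot g)=\beta A(f^{\odot2}\odot g)$. Your one addition, spelling out that the quotient equals the Rayleigh quotient of the $A_t$-centered version of $f$, is just a more explicit justification of the paper's appeal to the Rayleigh principle.
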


\begin{proof}
Along $h_t=h\odot(\one+tg)$, the centered Rayleigh quotient of $f$ at $h_t$ is
\eq{
  \mathcal{R}(t)
  =\frac{\mathsf{L}_t(f,f)}{\widetilde{\mathsf{M}}_t(f,f)}.
}
The denominator equals $1$ at $t=0$ and remains positive for sufficiently small $\abs{t}$. The Rayleigh principle and local minimality imply
\eq{
  \mathcal{R}(t)
  &\geq\lambda_{1,e}(P(h_t))
  \geq\lambda
  =\mathcal{R}(0)
}
for all sufficiently small $\abs{t}$. Hence $\mathcal{R}'(0)=0$.

Note that
\eq{
  \widetilde{\mathsf{M}}_t(f,f)
  &=A_t(f^{\odot2})-\frac{A_t(f)^2}{S_t},\\
  \mathsf{L}_t(f,f)
  &=(n-1)A_t(f^{\odot2})-B_t(f,f).
}
Since $A_0(f)=A(f)=0$, the derivative of $A_t(f)^2/S_t$  at zero vanishes. By \autoref{lem:sec-6-derivatives-of-forms}, the derivatives of $\widetilde{\mathsf{M}}_t(f,f)$ and $\mathsf{L}_t(f,f)$  at zero are
\eq{
  \left.\frac{d}{dt}\right|_{t=0}
  \widetilde{\mathsf{M}}_t(f,f)
  &=A(f^{\odot2}\odot g)+B(f^{\odot2},g),\\
  \left.\frac{d}{dt}\right|_{t=0}\mathsf{L}_t(f,f)
  &=(n-1)\bigl\{A(f^{\odot2}\odot g)+B(f^{\odot2},g)\bigr\}\\
  &\quad-B(f\odot g,f)-B(f,f\odot g)-C(f,f,g).
}
Since $f\odot g$ is even, the symmetry of $B$ and \eqref{eq:sec-6-eigen-equations} show that
\eq{
  B(f\odot g,f)=B(f,f\odot g)
  =\beta\mathsf{M}(f,f\odot g)
  =\beta A(f^{\odot2}\odot g).
}
Substitution yields
\eq{
  \left.\frac{d}{dt}\right|_{t=0}\mathsf{L}_t(f,f)
  &=(n-1)\bigl\{A(f^{\odot2}\odot g)+B(f^{\odot2},g)\bigr\}\\
  &\quad-2\beta A(f^{\odot2}\odot g)-C(f,f,g).
}
At $t=0$, $\widetilde{\mathsf{M}}(f,f)=1$ and $\mathsf{L}(f,f)=\lambda$. Differentiating the Rayleigh quotient, we obtain
\eq{
  0=\mathcal{R}'(0)
  &=\left.\frac{d}{dt}\right|_{t=0}\mathsf{L}_t(f,f)
  -\lambda\left.\frac{d}{dt}\right|_{t=0}\widetilde{\mathsf{M}}_t(f,f)\\
  &=(n-1-\lambda)\bigl\{A(f^{\odot2}\odot g)+B(f^{\odot2},g)\bigr\}\\
  &\quad-2\beta A(f^{\odot2}\odot g)-C(f,f,g)\\
  &=\beta\bigl\{B(f^{\odot2},g)-A(f^{\odot2}\odot g)\bigr\}-C(f,f,g).
}
\end{proof}

Define the shifted centered form by
\eq{
  \mathcal{D}_{\lambda}(a,b)=\mathsf{L}(a,b)-\lambda\widetilde{\mathsf{M}}(a,b).
}
The form $\mathcal{D}_{\lambda}$ is unchanged when a constant is added to either argument, and \eqref{eq:sec-6-eigen-equations} implies
\eq{\label{eq:sec-6-D-eigen-equations}
  \mathcal{D}_{\lambda}(f,k)=0 \quad \text{for every even }k.
}
Let $\mathcal{D}_{\lambda,t}$ denote the corresponding form at $h_t$.

\begin{lemma}\label{lem:sec-6-shifted-form-derivative}
Along the support path $h_t=h\odot(\one+tf)$,
\eq{\label{eq:sec-6-shifted-form-derivative}
  \left.\frac{d}{dt}\right|_{t=0} \mathcal{D}_{\lambda,t}(f,k)=(1+\beta)\mathcal{D}_{\lambda}(f^{\odot2},k)
}
for every even facet function $k$.
\end{lemma}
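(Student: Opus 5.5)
The plan is a direct differentiation: apply \autoref{lem:sec-6-derivatives-of-forms} with $g=f$, then simplify using the eigen-equations \eqref{eq:sec-6-eigen-equations}, the normalization \eqref{eq:sec-6-eigenfunction-normalization}, and the Rayleigh stationarity identity \autoref{lem:sec-6-rayleigh-stationarity}. Fix an even $k$ independent of $t$. I would split
\eq{
  \mathcal{D}_{\lambda,t}(f,k)
  =\Bigl\{(n-1)A_t(f\odot k)-B_t(f,k)\Bigr\}
  -\lambda\Bigl\{A_t(f\odot k)-\frac{A_t(f)A_t(k)}{S_t}\Bigr\}
}
and differentiate the energy part and the centered mass part separately.

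\emph{Energy part.} By \autoref{lem:sec-6-derivatives-of-forms} with $g=f$, the derivative at $t=0$ is
\eq{
  (n-1)\bigl\{A(f^{\odot2}\odot k)+B(f\odot k,f)\bigr\}
  -B(f^{\odot2},k)-B(f,f\odot k)-C(f,f,k).
}
Since $f\odot k$ is even, \eqref{eq:sec-6-eigen-equations} gives $B(f\odot k,f)=\beta A(f^{\odot2}\odot k)$.

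The key step is to remove the third-derivative term. Applying \autoref{lem:sec-6-rayleigh-stationarity} with $g=k$ gives
\eq{
  C(f,f,k)=\beta\bigl\{B(f^{\odot2},k)-A(f^{\odot2}\odot k)\bigr\}.
}
Substituting, the two $\beta A(f^{\odot2}\odot k)$ contributions from $-B(f,f\odot k)$ and $-C(f,f,k)$ cancel. What remains is
\eq{
  (n-1)(1+\beta)A(f^{\odot2}\odot k)-(1+\beta)B(f^{\odot2},k)=(1+\beta)\mathsf{L}(f^{\odot2},k).
}

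\emph{Centered mass part.} Since $A(f)=0$, differentiating the quotient $A_t(f)A_t(k)/S_t$ leaves only the term
\eq{
  \frac{A(k)}{S}\left.\frac{d}{dt}\right|_{t=0}A_t(f),
  \qquad
  \left.\frac{d}{dt}\right|_{t=0}A_t(f)=A(f^{\odot2})+B(f,f)=1+\beta,
}
where I used $A(f^{\odot2})=\mathsf{M}(f,f)=1$ and $B(f,f)=\beta$. The derivative of $A_t(f\odot k)$ is $A(f^{\odot2}\odot k)+B(f\odot k,f)=(1+\beta)A(f^{\odot2}\odot k)$. Hence the centered mass part has derivative
\eq{
  (1+\beta)\Bigl\{A(f^{\odot2}\odot k)-\frac{A(f^{\odot2})A(k)}{S}\Bigr\}
  =(1+\beta)\widetilde{\mathsf{M}}(f^{\odot2},k),
}
again using $A(f^{\odot2})=1$. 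Combining the two parts yields \eqref{eq:sec-6-shifted-form-derivative}.

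The main obstacle is the appearance of the third-order form $C(f,f,k)$. This term is exactly what Rayleigh stationarity controls, and it is legitimate to use here because stationarity holds for every even test direction $g$, in particular $g=k$. Everything else is bookkeeping with the eigen-equations, which hold for arbitrary even second arguments.
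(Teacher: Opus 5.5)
Your proof is correct and follows essentially the same route as the paper. You differentiate via \autoref{lem:sec-6-derivatives-of-forms} with $g=f$, simplify $B(f\odot k,f)$, $B(f,f)$ and $A(f^{\odot2})$ using the eigen-equations and the normalization, and eliminate $C(f,f,k)$ by Rayleigh stationarity with $g=k$, differing from the paper only in grouping terms into energy and centered-mass parts rather than first differentiating $\widetilde{\mathsf{M}}_t$ and $B_t$ separately.
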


\begin{proof}
Since $f\odot k$ is even, symmetry of $B$, \eqref{eq:sec-6-eigen-equations}, and \eqref{eq:sec-6-eigenfunction-normalization} imply
\eq{
  B(f,f)&=\beta,\\
  B(f\odot k,f)&=B(f,f\odot k)
  =\beta\mathsf{M}(f,f\odot k)
  =\beta A(f^{\odot2}\odot k).
}
Moreover, by \autoref{lem:sec-6-derivatives-of-forms}, \eqref{eq:sec-6-eigen-equations}, and \eqref{eq:sec-6-eigenfunction-normalization},
\eq{
  \left.\frac{d}{dt}\right|_{t=0}A_t(f)
  &=A(f^{\odot2})+B(f,f)=1+\beta,\\
  \left.\frac{d}{dt}\right|_{t=0}A_t(f\odot k)
  &=A(f^{\odot2}\odot k)+B(f\odot k,f)\\
  &=(1+\beta)A(f^{\odot2}\odot k).
}
Since $A(f)=0$, these formulas determine the derivative of the centered mass form:
\eq{\label{eq:sec-6-centered-mass-derivative}
  \left.\frac{d}{dt}\right|_{t=0}\widetilde{\mathsf{M}}_t(f,k)
  &=(1+\beta)\left(A(f^{\odot2}\odot k)-\frac{A(k)}{S}\right)\\
  &=(1+\beta)\widetilde{\mathsf{M}}(f^{\odot2},k).
}

Next, symmetry of $C$ and the Rayleigh stationarity equation imply
\eq{
  C(f,k,f)=C(f,f,k)
  =\beta\bigl\{B(f^{\odot2},k)-A(f^{\odot2}\odot k)\bigr\}.
}
Substituting this identity and $B(f,f\odot k)=\beta A(f^{\odot2}\odot k)$ into the variation formula for $B_t$,
\eq{
  \left.\frac{d}{dt}\right|_{t=0}B_t(f,k)
  &=B(f^{\odot2},k)+B(f,f\odot k)+C(f,k,f)\\
  &=(1+\beta)B(f^{\odot2},k).
}

The energy form of $h_t$ at $(f,k)$ is
\eq{
  \mathsf{L}_t(f,k)=(n-1)A_t(f\odot k)-B_t(f,k).
}
Combining the preceding derivatives yields
\eq{
  \left.\frac{d}{dt}\right|_{t=0}\mathsf{L}_t(f,k)
  =(1+\beta)\mathsf{L}(f^{\odot2},k).
}
Finally, subtracting $\lambda$ times \eqref{eq:sec-6-centered-mass-derivative} from this identity, we obtain
\eq{
  \left.\frac{d}{dt}\right|_{t=0}\mathcal{D}_{\lambda,t}(f,k)
  =(1+\beta)\mathcal{D}_{\lambda}(f^{\odot2},k).
}
\end{proof}

\subsection{The trial Rayleigh quotient and its second variation}

We define a path of trial vectors by
\eq{\label{eq:sec-6-trial-vector}
  u_t=f-(1+\beta)t f^{\odot2}.
}
The derivative of $u_t$ at zero is $u_0'=-(1+\beta)f^{\odot2}$. Therefore, \eqref{eq:sec-6-shifted-form-derivative} implies, for every even facet function $k$,
\eq{\label{eq:sec-6-trial-vector-cancellation}
  \left.\frac{d}{dt}\right|_{t=0}\mathcal{D}_{\lambda,t}(u_t,k)
  =(1+\beta)\mathcal{D}_{\lambda}(f^{\odot2},k)
  +\mathcal{D}_{\lambda}(u_0',k)=0.
}
Along $h_t=h\odot(\one+tf)$, the trial Rayleigh quotient associated with $u_t$ is defined by
\eq{
  \mathcal{R}_{\mathrm{tr}}(t)
  =\frac{\mathsf{L}_t(u_t,u_t)}{\widetilde{\mathsf{M}}_t(u_t,u_t)}.
}

We also introduce the scalar quantities
\eq{\label{eq:sec-6-scalar-data}
  \mu_4=A(f^{\odot4}),\quad
  b_2=B(f^{\odot2},f^{\odot2}),\quad
  d_4=D(f,f,f,f).
}

\begin{lemma}\label{lem:sec-6-exact-second-derivative}
The trial Rayleigh quotient satisfies
\eq{\label{eq:sec-6-exact-second-derivative}
  \mathcal{R}_{\mathrm{tr}}'(0)=0, \quad \mathcal{R}_{\mathrm{tr}}''(0)=-\beta(2\beta-1)(\beta+2)\mu_4 +3\beta^2b_2-d_4.
}
\end{lemma}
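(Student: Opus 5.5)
The plan is to work with the shifted form rather than with the quotient directly. Write $N(t)=\mathsf{L}_t(u_t,u_t)$, $\mathcal{N}(t)=\widetilde{\mathsf{M}}_t(u_t,u_t)$ and $E(t)=\mathcal{D}_{\lambda,t}(u_t,u_t)=N(t)-\lambda\mathcal{N}(t)$. Then
\begin{equation}
  \mathcal{R}_{\mathrm{tr}}(t)-\lambda=\frac{E(t)}{\mathcal{N}(t)},\qquad \mathcal{N}(0)=\widetilde{\mathsf{M}}(f,f)=1 .
\end{equation}
We have $E(0)=\mathcal{D}_\lambda(f,f)=0$ by \eqref{eq:sec-6-D-eigen-equations}. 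Hence it suffices to show $E'(0)=0$, because then $\mathcal{R}_{\mathrm{tr}}'(0)=E'(0)$ and $\mathcal{R}_{\mathrm{tr}}''(0)=E''(0)$; no derivatives of $\mathcal{N}$ are needed.

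For the first derivative, expand $E'(0)=\partial_t\mathcal{D}_{\lambda,t}(f,f)|_{0}+2\mathcal{D}_\lambda(f,u_0')$. By \autoref{lem:sec-6-shifted-form-derivative} with $k=f$, the first term equals $(1+\beta)\mathcal{D}_\lambda(f^{\odot2},f)$. The second term is $-2(1+\beta)\mathcal{D}_\lambda(f,f^{\odot2})$. Both vanish by \eqref{eq:sec-6-D-eigen-equations}, since $f^{\odot2}$ is even, so $E'(0)=0$.

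For the second derivative, since $u_t$ is affine in $t$,
\begin{equation}
  E''(0)=\partial_t^2\mathcal{D}_{\lambda,t}(f,f)\big|_{0}+4\,\partial_t\mathcal{D}_{\lambda,t}(f,u_0')\big|_{0}+2\mathcal{D}_\lambda(u_0',u_0').
\end{equation}
The middle term is handled by \autoref{lem:sec-6-shifted-form-derivative} with $k=u_0'=-(1+\beta)f^{\odot2}$, which gives $-(1+\beta)^2\mathcal{D}_\lambda(f^{\odot2},f^{\odot2})$. The last term is $2(1+\beta)^2\mathcal{D}_\lambda(f^{\odot2},f^{\odot2})$. Hence
\begin{equation}
  E''(0)=\partial_t^2\mathcal{D}_{\lambda,t}(f,f)\big|_{0}-2(1+\beta)^2\mathcal{D}_\lambda(f^{\odot2},f^{\odot2}).
\end{equation}
The correction term expands as $\mathcal{D}_\lambda(f^{\odot2},f^{\odot2})=(n-1-\lambda)\mu_4-b_2-\lambda(\mu_4-1/S)=\beta\mu_4-b_2+\lambda(\mu_4-\mu_4)+\lambda/S$, i.e. it equals $\beta\mu_4-b_2+\lambda\mu_4-\lambda\mu_4+\lambda/S$ once $\mathsf{L}=(n-1)\mathsf{M}-B$ and $A(f^{\odot2})=1$ are used; I will keep the $\lambda/S$ term explicitly and expect it to cancel against a matching term from the first part.

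The main work is the second derivative $\partial_t^2\mathcal{D}_{\lambda,t}(f,f)$ along $h_t=h+t(h\odot f)$. I would extend \autoref{lem:sec-6-derivatives-of-forms} to second order by differentiating $A_t(a)=DV(h_t)[h_t\odot a]$, $B_t(a,b)$ and $S_t$ twice. For instance, the second derivative of $A_t(a)$ is $C(a,f,f)+2B(a\odot f,f)$. The second derivative of $B_t(a,b)$ is
\begin{equation}
  D(a,b,f,f)+2C(a\odot f,b,f)+2C(a,b\odot f,f)+2B(a\odot f,b\odot f),
\end{equation}
and $S_t''=n(n-1)B(f,f)=n(n-1)\beta$. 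Substituting these into $\mathcal{D}_{\lambda,t}(f,f)=(n-1)A_t(f^{\odot2})-B_t(f,f)-\lambda\bigl(A_t(f^{\odot2})-A_t(f)^2/S_t\bigr)$ gives an expression whose only fourth-order piece is $-D(f,f,f,f)=-d_4$. The remaining terms involve $C(f^{\odot2},f,f)$, $C(f^{\odot2},f,f)$ again from the $B_t$ expansion, $B(f^{\odot3},f)$, $B(f^{\odot2},f^{\odot2})$, and $A_t'(f)^2=(1+\beta)^2$ divided by $S$.

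I would then reduce everything to $\mu_4,b_2,d_4$ and $1/S$ using three tools:
\begin{enumerate}
\item the eigen-equations \eqref{eq:sec-6-eigen-equations}, e.g. $B(f^{\odot3},f)=\beta\mu_4$ and $B(f,f)=\beta$;
\item the stationarity identity of \autoref{lem:sec-6-rayleigh-stationarity} with $g=f^{\odot2}$, which gives $C(f,f,f^{\odot2})=\beta(b_2-\mu_4)$;
\item the Euler identities \eqref{eq:sec-4-euler-relations}, wherever a $\one$ appears.
\end{enumerate}

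I expect the main obstacle to be this bookkeeping. One must check that every $1/S$ contribution cancels, namely the one from $A_t(f)^2/S_t$ and the one from $\lambda\widetilde{\mathsf{M}}(f^{\odot2},f^{\odot2})$. One must also check that the coefficients of $\mu_4$ and $b_2$ collect into $-\beta(2\beta-1)(\beta+2)$ and $3\beta^2$; throughout, $\lambda=n-1-\beta$ is used to eliminate $n$ and $\lambda$. I would first verify the cancellation of the $1/S$ terms, since a mismatch there would signal an error in the second-order expansion of the centered mass form.
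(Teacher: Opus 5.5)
Your proposal is correct and follows the paper's proof essentially step by step. It uses the same reduction $\mathcal{R}_{\mathrm{tr}}(t)-\lambda=\mathcal{D}_{\lambda,t}(u_t,u_t)/\widetilde{\mathsf{M}}_t(u_t,u_t)$, and the same use of \autoref{lem:sec-6-shifted-form-derivative} to obtain $E''(0)=\ddot{\mathcal{D}}_{\lambda,0}(f,f)-2(1+\beta)^2\mathcal{D}_\lambda(f^{\odot2},f^{\odot2})$. It also uses the same second-order expansions of $A_t(f^{\odot2})$ and $B_t(f,f)$, reduced via $B(f^{\odot3},f)=\beta\mu_4$ and $C(f^{\odot2},f,f)=\beta(b_2-\mu_4)$, after which the $\pm2\lambda(1+\beta)^2/S$ terms do cancel and the coefficients come out as claimed.

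There are two harmless slips. First, $S_t''=nB(f,f)=n\beta$, not $n(n-1)\beta$; this never enters anyway, since $A_0(f)=0$ leaves only $2\dot A_0(f)^2/S$. Second, your first expression for $\mathcal{D}_\lambda(f^{\odot2},f^{\odot2})$ should read $(n-1)\mu_4-b_2-\lambda(\mu_4-1/S)$, though your final value $\beta\mu_4-b_2+\lambda/S$ is correct.
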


\begin{proof}
Note that
\eq{
  \mathcal{R}_{\mathrm{tr}}(t)-\lambda
  =\frac{\mathcal{D}_{\lambda,t}(u_t,u_t)}
  {\widetilde{\mathsf{M}}_t(u_t,u_t)}.
}
By \eqref{eq:sec-6-D-eigen-equations}, $\mathcal{D}_{\lambda}(f,f)=\mathcal{D}_{\lambda}(f^{\odot2},f)=\mathcal{D}_{\lambda}(f,u_0')=0$. Hence, using \eqref{eq:sec-6-shifted-form-derivative} and symmetry,
\eq{
  \left.\frac{d}{dt}\right|_{t=0}\mathcal{D}_{\lambda,t}(u_t,u_t)=0.
}
Consequently, the first two derivatives at zero are
\eq{
  \mathcal{R}_{\mathrm{tr}}'(0)=0,\quad
  \mathcal{R}_{\mathrm{tr}}''(0)
  =\left.\frac{d^2}{dt^2}\right|_{t=0}
  \mathcal{D}_{\lambda,t}(u_t,u_t).
}

In the remainder of the proof, dots indicate differentiation of the form only, not of its arguments. Thus, for facet functions $a$ and $b$ independent of $t$,
\eq{
  \dot{\mathcal{D}}_{\lambda,0}(a,b)
  =\left.\frac{d}{dt}\right|_{t=0}\mathcal{D}_{\lambda,t}(a,b),\quad
  \ddot{\mathcal{D}}_{\lambda,0}(a,b)
  =\left.\frac{d^2}{dt^2}\right|_{t=0}\mathcal{D}_{\lambda,t}(a,b).
}

Applying \eqref{eq:sec-6-shifted-form-derivative} with $k=u_0'$ and using $u_0'=-(1+\beta)f^{\odot2}$, we obtain
\eq{
  \dot{\mathcal{D}}_{\lambda,0}(f,u_0')
  &=-(1+\beta)^2\mathcal{D}_{\lambda}(f^{\odot2},f^{\odot2}),\\
  \mathcal{D}_{\lambda}(u_0',u_0')
  &=(1+\beta)^2\mathcal{D}_{\lambda}(f^{\odot2},f^{\odot2}).
}
We calculate
\eq{\label{eq:sec-6-trial-second-reduction}
  \mathcal{R}_{\mathrm{tr}}''(0)
  &=\ddot{\mathcal{D}}_{\lambda,0}(f,f)
  +4\dot{\mathcal{D}}_{\lambda,0}(f,u_0')
  +2\mathcal{D}_{\lambda}(u_0',u_0')\\
  &=\ddot{\mathcal{D}}_{\lambda,0}(f,f)
  -2(1+\beta)^2\mathcal{D}_{\lambda}(f^{\odot2},f^{\odot2}).
}
Next we evaluate $\mathcal{D}_{\lambda}(f^{\odot2},f^{\odot2})$ and $\ddot{\mathcal{D}}_{\lambda,0}(f,f)$.

First, using the definitions of $\mathcal{D}_{\lambda}$, $\mathsf{L}$, and $\widetilde{\mathsf{M}}$, together with $A(f^{\odot2})=1$, we obtain
\eq{
  \mathcal{D}_{\lambda}(f^{\odot2},f^{\odot2})
  &=\mathsf{L}(f^{\odot2},f^{\odot2})
  -\lambda\widetilde{\mathsf{M}}(f^{\odot2},f^{\odot2})\\
  &=(n-1)\mathsf{M}(f^{\odot2},f^{\odot2})-B(f^{\odot2},f^{\odot2})\\
  &\quad-\lambda\left(\mathsf{M}(f^{\odot2},f^{\odot2})
  -\frac{A(f^{\odot2})^2}{S}\right)\\
  &=\beta\mu_4-b_2+\frac{\lambda}{S}.
}

Next, to calculate $\ddot{\mathcal{D}}_{\lambda,0}(f,f)$, we rewrite $\mathcal{D}_{\lambda,t}(f,f)$ as
\eq{
  \mathcal{D}_{\lambda,t}(f,f)
  &=\mathsf{L}_t(f,f)-\lambda\widetilde{\mathsf{M}}_t(f,f)\\
  &=(n-1)A_t(f^{\odot2})-B_t(f,f)
  -\lambda\left(A_t(f^{\odot2})-\frac{A_t(f)^2}{S_t}\right)\\
  &=\beta A_t(f^{\odot2})-B_t(f,f)
  +\lambda\frac{A_t(f)^2}{S_t}.
}
Recall from the proof of \autoref{lem:sec-6-shifted-form-derivative} that
\eq{
  \dot{A}_0(f)=A(f^{\odot2})+B(f,f)=1+\beta.
}
Since $A_0(f)=0$, we have
\eq{
  \left.\frac{d^2}{dt^2}\right|_{t=0}
  \frac{A_t(f)^2}{S_t}
  =\frac{2\dot{A}_0(f)^2}{S}
  =\frac{2(1+\beta)^2}{S}.
}

The formula for $\ddot{\mathcal{D}}_{\lambda,0}(f,f)$ now requires the second derivatives of $A_t(f^{\odot2})$ and $B_t(f,f)$. The eigenvalue equation and the Rayleigh stationarity equation with $g=f^{\odot2}$ imply
\eq{
  B(f^{\odot3},f)=\beta\mu_4,\quad
  C(f^{\odot2},f,f)=\beta(b_2-\mu_4).
}

Along $h_t=h+t(h\odot f)$,
\eq{
  A_t(f^{\odot2})
  &=DV\bigl(h+t(h\odot f)\bigr)
  [h\odot f^{\odot2}+t(h\odot f^{\odot3})],\\
  B_t(f,f)
  &=D^2V\bigl(h+t(h\odot f)\bigr)
  [h\odot f+t(h\odot f^{\odot2}),h\odot f+t(h\odot f^{\odot2})].
}
Differentiating twice, we obtain
\eq{
  \ddot{A}_0(f^{\odot2})
  &=2B(f^{\odot3},f)+C(f^{\odot2},f,f)\\
  &=\beta(b_2+\mu_4),\\
  \ddot{B}_0(f,f)
  &=2B(f^{\odot2},f^{\odot2})+4C(f^{\odot2},f,f)+D(f,f,f,f)\\
  &=(2+4\beta)b_2-4\beta\mu_4+d_4.
}

Substituting into the formula for $\mathcal{D}_{\lambda,t}(f,f)$, we find
\eq{
  \ddot{\mathcal{D}}_{\lambda,0}(f,f)
  &=\beta\ddot{A}_0(f^{\odot2})-\ddot{B}_0(f,f)
  +\frac{2\lambda(1+\beta)^2}{S}\\
  &=\beta^2(b_2+\mu_4)
  -\bigl\{(2+4\beta)b_2-4\beta\mu_4+d_4\bigr\}
  +\frac{2\lambda(1+\beta)^2}{S}\\
  &=(\beta^2-4\beta-2)b_2
  +\beta(\beta+4)\mu_4-d_4
  +\frac{2\lambda(1+\beta)^2}{S}.
}
Finally, substituting the formulas for $\ddot{\mathcal{D}}_{\lambda,0}(f,f)$ and $\mathcal{D}_{\lambda}(f^{\odot2},f^{\odot2})$ into \eqref{eq:sec-6-trial-second-reduction}, we arrive at
\eq{
  \mathcal{R}_{\mathrm{tr}}''(0)
  &=\bigl\{\beta^2-4\beta-2+2(1+\beta)^2\bigr\}b_2\\
  &\quad+\bigl\{\beta(\beta+4)-2\beta(1+\beta)^2\bigr\}\mu_4-d_4\\
  &=3\beta^2b_2-\beta(2\beta-1)(\beta+2)\mu_4-d_4.
}
\end{proof}

\section{The degree-two Hodge--Riemann--Minkowski inequality}
\label{sec:degree-two-HRM-estimate}

In this section, we estimate the fourth-order term $d_4$ in \eqref{eq:sec-6-exact-second-derivative} and complete the fixed-normal-fan second-variation argument. Throughout this section, $n\geq 3$, and the assumptions and notation of \autoref{sec:simple-chamber-Rayleigh-variation} remain in force. In particular, $f$ satisfies the normalization \eqref{eq:sec-6-eigenfunction-normalization}, the eigenvalue equations \eqref{eq:sec-6-eigen-equations} with $u=f$, and the Rayleigh stationarity equation \eqref{eq:sec-6-scalar-stationarity}. When $n=3$, the term $d_4$ vanishes because the volume polynomial has degree three. For $n\geq 4$, we construct a \emph{primitive correction} of $\mathfrak{d}(f)^2$ and apply the degree-two Hodge--Riemann-Minkowski inequality to obtain a lower bound on $d_4$. We recall:
\eq{
  \mu_4=A(f^{\odot4}),\quad
  b_2=B(f^{\odot2},f^{\odot2}),\quad
  d_4=D(f,f,f,f).
}

\subsection{The algebra of volume derivatives}
\label{subsec:sec-7-polytope-algebra}

The support vector $h\in \mathcal{T}_{\Sigma}$ and the polytope $P=P(h)$ with $0\in\operatorname{int}P(h)$ are fixed in the constructions below, whereas $q=(q_i)_{i\in I}$ denotes the variable in the full support space $\R^I$. For $i\in I$, differentiation in the coordinate direction $\mathbf{e}_i$ is denoted by
\eq{
  \partial_{q_i}=\frac{\partial}{\partial q_i}.
}
The algebra of constant-coefficient differential operators is
\eq{
  \mathfrak{D}_q=\R[\partial_{q_i}:i\in I].
}
For $\Theta,\Psi\in\mathfrak{D}_q$, their product is defined by $ (\Theta\Psi)\varphi=\Theta(\Psi\varphi)$ for every polynomial $\varphi$.
Since the coordinate derivatives commute, $\mathfrak{D}_q$ is a graded commutative algebra, with degree equal to the order of differentiation. $\partial_{q_i}$ has degree one, while $\partial_{q_{i_1}}\cdots\partial_{q_{i_k}}$ has degree $k$.

An \emph{ideal} of $\mathfrak{D}_q$ is a linear subspace $\mathcal{I}\subseteq\mathfrak{D}_q$ such that $\Psi\Theta\in\mathcal{I}$ whenever $\Theta\in\mathcal{I}$ and $\Psi\in\mathfrak{D}_q$. The \emph{annihilator ideal} of $V$ is
\eq{
  \operatorname{Ann}(V)
  =\{\Theta\in\mathfrak{D}_q:\Theta(V)=0\}.
}

An ideal $\mathcal{I}\subseteq\mathfrak{D}_q$ is \emph{homogeneous} if every $\Theta=\sum_{k=0}^r\Theta_k\in\mathcal{I}$, with $\Theta_k$ of degree $k$, satisfies $\Theta_k\in\mathcal{I}$ for every $k$. Since $V$ is homogeneous of degree $n$, the ideal $\operatorname{Ann}(V)$ is homogeneous: the nonzero polynomials $\Theta_k(V)$ have distinct degrees $n-k$ and cannot cancel. Hence the quotient is a finite-dimensional graded commutative algebra,
\eq{
  \mathcal{A}(V)
  =\frac{\mathfrak{D}_q}{\operatorname{Ann}(V)}
  =\bigoplus_{k=0}^n\mathcal{A}^k(V).
}
We denote the class of $\Theta$ by $[\Theta]$; then $[\Theta]=[\Psi]$ exactly when $(\Theta-\Psi)(V)=0$.

The next lemma concerns the product of complementary degrees. If $\alpha=[\Theta]\in\mathcal{A}^k(V)$ and $\gamma=[\Psi]\in\mathcal{A}^{n-k}(V)$ are represented by homogeneous operators, then $\Theta\Psi$ has degree $n$. Since $V$ has degree $n$, the polynomial $(\Theta\Psi)(V)$ is constant. The lemma proves that this constant is independent of the chosen representatives and that the resulting pairing is nondegenerate.

\begin{lemma}[{\cite[Prop. 2.5.1]{Tim99}}]\label{lem:sec-7-polytope-algebra-duality}
For every $0\leq k\leq n$, multiplication followed by evaluation on $V$ defines a nondegenerate pairing
\eq{\label{eq:sec-7-polytope-algebra-duality}
  \mathcal{A}^k(V)\times\mathcal{A}^{n-k}(V)\to\R, \quad
  (\alpha,\gamma)\mapsto(\alpha\gamma)V.
}
Here nondegenerate means that every nonzero $\alpha\in\mathcal{A}^k(V)$ pairs nontrivially with some $\gamma\in\mathcal{A}^{n-k}(V)$, and conversely with the two factors interchanged.
\end{lemma}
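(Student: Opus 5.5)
Well-definedness and nondegeneracy are two separate issues, and I would handle them in that order, using only the definition of $\operatorname{Ann}(V)$ and the homogeneity of $V$.

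\emph{Well-definedness.} Take homogeneous representatives $\Theta,\Theta'$ of $\alpha\in\mathcal{A}^k(V)$ and $\Psi,\Psi'$ of $\gamma\in\mathcal{A}^{n-k}(V)$. Commutativity of $\mathfrak{D}_q$ gives
\eq{
  \Theta\Psi(V)-\Theta'\Psi'(V)=\Psi\bigl((\Theta-\Theta')V\bigr)+\Theta'\bigl((\Psi-\Psi')V\bigr).
}
Both terms on the right vanish, since $\Theta-\Theta'$ and $\Psi-\Psi'$ lie in $\operatorname{Ann}(V)$. The product $\Theta\Psi$ has degree $n$ and $V$ has degree $n$, so $(\Theta\Psi)V$ is a constant. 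Hence the pairing is a well-defined bilinear map into $\R$. Homogeneity of $\operatorname{Ann}(V)$, noted just before the lemma, lets us always use homogeneous representatives.

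\emph{Nondegeneracy.} The key step is the following observation. If $\Theta$ is homogeneous of degree $k$ and $[\Theta]\neq0$, then $\varphi:=\Theta(V)$ is a nonzero homogeneous polynomial of degree $n-k$. A nonzero homogeneous polynomial of degree $n-k$ has some monomial $q^\beta=\prod_i q_i^{\beta_i}$, with $\abs{\beta}=n-k$, whose coefficient $c_\beta$ is nonzero. Put $\Psi=\partial_q^\beta$, which has degree $n-k$. Then
\eq{
  (\Psi\Theta)V=\Psi\varphi=\beta!\,c_\beta\neq0,
}
because $\partial^\beta$ kills every other monomial of that degree. So $[\Theta]$ pairs nontrivially with $[\Psi]\in\mathcal{A}^{n-k}(V)$. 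The argument is symmetric in the two factors: the same construction with $k$ replaced by $n-k$ handles a nonzero class in $\mathcal{A}^{n-k}(V)$.

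The only point needing care is the passage from $[\Theta]\neq0$ to $\Theta(V)\neq0$ as a polynomial. This is exactly the definition of the quotient, so there is no real obstacle. In particular, the pairing is a perfect duality $\mathcal{A}^k(V)\cong\mathcal{A}^{n-k}(V)^*$ on these finite-dimensional spaces.
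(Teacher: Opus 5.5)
Your proposal is correct and follows essentially the same route as the paper: well-definedness comes from $\operatorname{Ann}(V)$ being an ideal in the commutative algebra $\mathfrak{D}_q$. Nondegeneracy comes from choosing a monomial with nonzero coefficient in the nonzero homogeneous polynomial $\Theta(V)$ and pairing $[\Theta]$ with the corresponding class $[\partial_q^{\nu}]\in\mathcal{A}^{n-k}(V)$, which annihilates every other monomial of the same degree, and then running the same argument with the factors interchanged.
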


\begin{proof}
Since $\operatorname{Ann}(V)$ is an ideal, replacing $\Theta$ or $\Psi$ by another representative changes $\Theta\Psi$ by an operator in $\operatorname{Ann}(V)$, which vanishes on $V$.

For $0\neq\alpha=[\Theta]\in\mathcal{A}^k(V)$ represented by a homogeneous operator $\Theta$ of degree $k$, the polynomial $\Theta(V)$ is nonzero and homogeneous of degree $n-k$. For a multi-index $\nu=(\nu_i)_{i\in I}$ of nonnegative integers, put
\eq{
  \abs{\nu}=\sum_{i\in I}\nu_i,\quad
  \nu!=\prod_{i\in I}\nu_i!,\quad
  q^{\nu}=\prod_{i\in I}q_i^{\nu_i},\quad
  \partial_q^{\nu}=\prod_{i\in I}\partial_{q_i}^{\nu_i}.
}
The homogeneous expansion of $\Theta(V)$ is
\eq{
  \Theta(V)(q)=\sum_{\abs{\nu}=n-k}c_{\nu} q^{\nu}.
}
Since $\Theta(V)$ is nonzero, some multi-index $\nu^{*}$ satisfies $\abs{\nu^{*}}=n-k$ and $c_{\nu^{*}}\neq 0$. Whenever $\partial_q^{\nu^{*}}q^{\nu}\neq 0$, we have $\nu_i\geq \nu_i^{*}$ for every $i\in I$. Both multi-indices have total degree $n-k$. Therefore, the coordinatewise inequalities force $\nu=\nu^{*}$. Consequently,
\eq{
  \partial_q^{\nu^{*}}\Theta(V)
  =(\nu^{*})!c_{\nu^{*}}\neq 0.
}
Therefore, the class $[\partial_q^{\nu^{*}}]\in\mathcal{A}^{n-k}(V)$ pairs nontrivially with $\alpha$. Applying the same argument to a nonzero class in $\mathcal{A}^{n-k}(V)$ proves nondegeneracy in the other factor.
\end{proof}

\subsection{The radial class and primitive degree-two elements}

For $z\in\R^I$, denote differentiation in the direction $z$ by
\eq{
  \partial_z=\sum_{i\in I}z_i\partial_{q_i}.
}
Every homogeneous first-order operator in $\mathfrak{D}_q$ has this form. Since $h_i>0$ for every $i\in I$, the relation $z=h\odot a$ determines a unique facet function $a$, namely $a_i=z_i/h_i$.
For a facet function $a$, define
\eq{
  \mathfrak{d}(a)=[\partial_{h\odot a}]
  \in\mathcal{A}^1(V).
}
Every class in $\mathcal{A}^1(V)$ has this form. The constant facet function $\one$ corresponds to the radial support direction $h$, whose class
\eq{
  \omega=\mathfrak{d}(\one)=[\partial_h]
  \in\mathcal{A}^1(V)
}
is called the \emph{radial Lefschetz class}.

When $n\geq 4$, a class $\Gamma\in\mathcal{A}^2(V)$ is said to be \emph{primitive with respect to $\omega$} if
\eq{\label{eq:sec-7-primitive-degree-two}
  \Gamma\omega^{n-3}=0
  \quad\text{in }\mathcal{A}^{n-1}(V).
}
Since every class in $\mathcal{A}^1(V)$ has the form $\mathfrak{d}(a)$, nondegeneracy of the pairing in \autoref{lem:sec-7-polytope-algebra-duality} implies that \eqref{eq:sec-7-primitive-degree-two} is equivalent to
\eq{
  \bigl(\Gamma\omega^{n-3}\mathfrak{d}(a)\bigr)V=0
  \quad\text{for every facet function }a.
}

\subsection{The degree-two Hodge--Riemann--Minkowski inequality}

We now state the degree-two Hodge--Riemann--Minkowski inequality for $V$ in $\mathcal{A}(V)$.

\begin{theorem}\label{thm:sec-7-degree-two-HRM}
Suppose that $n\geq 4$. If $\Gamma\in\mathcal{A}^2(V)$ satisfies $\Gamma\omega^{n-3}=0$, then
\eq{
  \bigl(\Gamma^2\omega^{n-4}\bigr)V\geq 0.
}
Equality holds if and only if $\Gamma=0$.
\end{theorem}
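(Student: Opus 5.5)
The plan is to recognize $\mathcal{A}(V)$ as exactly the algebra studied by Timorin \cite{Tim99}, and $\omega$ as the class of an ample (strictly convex) support vector. The inequality is then the degree-two case of the Hodge--Riemann bilinear relations \cite[Thm. 5.1.1]{Tim99}, equivalently McMullen's \cite[Thm. 8.2]{McM93}. Concretely, Timorin attaches to a complete simplicial polytopal fan $\Sigma$ the volume polynomial $V_\Sigma$ on the space of support parameters. He forms $\mathfrak{D}_q/\operatorname{Ann}(V_\Sigma)$, which is our $\mathcal{A}(V)$ with $V=V_{\Sigma}$ from \autoref{prop:sec-3-volume-polynomial}. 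He proves that for every $\ell=[\partial_z]$ with $z\in\mathcal{T}_\Sigma$ the following hold, with $0\le k\le n/2$:
\begin{equation}
  \alpha\in\mathcal{A}^k(V),\quad \alpha\ell^{n-2k+1}=0,\quad \alpha\neq 0 \quad\Longrightarrow\quad (-1)^k(\alpha^2\ell^{n-2k})V>0 .
\end{equation}
For $k=2$ the primitivity condition reads $\alpha\ell^{n-3}=0$, which is precisely \eqref{eq:sec-7-primitive-degree-two}. The requirement $n-2k\ge 0$ is the hypothesis $n\ge 4$, and the sign $(-1)^2=+1$ gives $(\Gamma^2\omega^{n-4})V>0$ for $\Gamma\ne0$. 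This yields both the inequality and the equality case, since $\Gamma=0$ trivially gives $0$.

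The steps are as follows. First, I check that $\omega=[\partial_h]$ is an admissible Lefschetz class, which holds because $h\in\mathcal{T}_\Sigma$ lies in the open type cone by \autoref{lem:sec-3-type-chamber-inequalities}. Second, I check that the normalization of $V$ is irrelevant: Timorin's polynomial may differ from $\Vol$ by a positive constant factor, which changes neither $\operatorname{Ann}(V)$ nor the sign of the pairing. Third, I check that the translation directions $\mathfrak{t}(\R^n)$ cause no trouble, since they lie in the kernel of the pairing and are therefore already zero in $\mathcal{A}^1(V)$ by \autoref{thm:sec-5-support-hessian-kernel}. As a sanity check on the sign convention, the case $k=1$ should reproduce \eqref{eq:sec-5-AF-support-form}: a primitive $\mathfrak{d}(a)$ satisfies $(\mathfrak{d}(a)\omega^{n-1})V=(n-1)!\,A(a)=0$, and then $(\mathfrak{d}(a)^2\omega^{n-2})V=(n-2)!\,B(a,a)<0$ unless $\mathfrak{d}(a)=0$. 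This agrees with the sign $(-1)^1$ and confirms the convention.

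If a self-contained argument were wanted instead of a citation, I would follow Timorin's induction on dimension. I would use the facet restrictions $\partial_{q_i}V_\Sigma=\Vol_{n-1}$ of the facet polytopes, which are simple with fans given by the links of the rays. The ingredients are the hard Lefschetz property in degree two, namely that multiplication by $\omega^{n-4}\colon\mathcal{A}^2\to\mathcal{A}^{n-2}$ is an isomorphism, together with the degree-one Hodge--Riemann relations on each facet. I would then run a continuity argument: deform $h$ inside the connected cone $\mathcal{T}_\Sigma$ to a point where the signature is computable, and use the fact that the signature of the form $(\alpha,\beta)\mapsto(\alpha\beta\omega^{n-4})V$ on primitive classes cannot change while Lefschetz holds.

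The main obstacle is this Lefschetz/signature step, that is, showing the form is nondegenerate on $\mathcal{A}^2$ for every $h$ in the chamber. This is where McMullen's and Timorin's machinery, the mixed Hodge--Riemann relations for facets combined with \autoref{lem:sec-7-polytope-algebra-duality}, is genuinely needed. I therefore intend to invoke \cite[Thm. 5.1.1]{Tim99} for it rather than reprove it, and to verify carefully only the dictionary between the notation here and Timorin's.
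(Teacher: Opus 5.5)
Your proposal is correct and takes the same approach as the paper. The paper gives no separate proof: it identifies the statement as the degree-two case of McMullen's Hodge--Riemann--Minkowski inequalities \cite[Thm. 8.2]{McM93}, in Timorin's differential-operator form \cite[Thm. 5.1.1]{Tim99}, with $\omega=[\partial_h]$, $h\in\mathcal{T}_{\Sigma}$, as the ample Lefschetz class. Your dictionary checks (primitivity $\Gamma\omega^{n-3}=0$, sign $(-1)^2$, and the $k=1$ consistency with the support-Hessian inequality) are what is needed. One small correction: $[\partial_{\mathfrak{t}(v)}]=0$ in $\mathcal{A}^1(V)$ follows directly from the polynomial identity $V(q+\mathfrak{t}(v))=V(q)$. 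It does not follow from the kernel of $D^2V(h)$ at a single point, which without hard Lefschetz only gives $[\partial_{\mathfrak{t}(v)}]\omega^{n-2}=0$. In any case, both algebras are the same quotient of the same operator ring, so no such check is needed.
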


\autoref{thm:sec-7-degree-two-HRM} is the degree-two case of McMullen's Hodge--Riemann--Minkowski inequalities \cite[Thm. 8.2]{McM93}, expressed in Timorin's differential-operator formulation \cite[Thm. 5.1.1]{Tim99}.

\begin{remark}\label{rem:sec-7-mixed-volume-HR}
Assume that $n\geq 4$. The degree-two Hodge--Riemann inequality of McMullen and Timorin admits the following mixed-volume formulation; see van Handel \cite[Thm. 3.1]{vH23a}. In the specialization relevant here, let $K_1,\ldots,K_m$ have the same normal fan $\Sigma$ as the fixed simple polytope $P$, and let $x_1,\ldots,x_m\in\R$. If
\eq{
  \sum_{i=1}^m x_i\mathcal{V}(K_i[2],M,P[n-3])=0
}
for every polytope $M$ with normal fan $\Sigma$, then
\eq{
  \sum_{i,j=1}^m x_ix_j\mathcal{V}(K_i[2],K_j[2],P[n-4])\geq 0.
}
Write $q^{(i)}=(h_{K_i}(u_j))_{j\in I}$ for the support vector of $K_i$, and set
\eq{
  \Gamma=\sum_{i=1}^m x_i[\partial_{q^{(i)}}]^2.
}
The mixed-volume derivative identity and \autoref{lem:sec-7-polytope-algebra-duality} imply that the hypothesis is equivalent to $\Gamma\omega^{n-3}=0$, while the sum in the conclusion equals $(\Gamma^2\omega^{n-4})V/n!$. Every degree-two class admits such a representation by polarization \cite[Lem. 3.3]{vH23a}. Equality holds if and only if $\Gamma=0$; see \cite[Thm. 3.2]{vH23a}.

This formulation assumes a common simple normal fan. At nonsimple support vectors, we apply the inequality inside an incident simple type chamber and pass the fourth-derivative bound to the boundary; see \autoref{lem:sec-10-incident-fourth-derivative-lower-bound}.
\end{remark}

\begin{lemma}\label{lem:sec-7-repeated-radial-derivatives}
Let $0\leq k\leq n$ and $z_1,\ldots,z_k\in\R^I$. Then
\eq{\label{eq:sec-7-repeated-radial-derivatives}
  \bigl(
    \partial_{z_1}\cdots\partial_{z_k}
    (\partial_h)^{n-k}
  \bigr)V
  =(n-k)!D^kV(h)[z_1,\ldots,z_k].
}
\end{lemma}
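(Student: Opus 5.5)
The identity follows from Taylor's formula for the homogeneous polynomial $V$ in the radial direction. Both sides are multilinear and symmetric in $(z_1,\ldots,z_k)$, since the operators $\partial_{z_j}$ commute and are linear in $z_j$. By polarization it therefore suffices to handle the diagonal case $z_1=\cdots=z_k=z$. Alternatively, one can run the argument directly with mixed partials in $t_1,\ldots,t_k$, which I would actually do to avoid polarization constants.

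First I compute $\Phi=\partial_{z_1}\cdots\partial_{z_k}V$. This is a homogeneous polynomial of degree $n-k$, and its value at a point $q$ is exactly $D^kV(q)[z_1,\ldots,z_k]$. This is the chain rule applied to the definition of $D^k$ as the mixed derivative of $V(q+t_1z_1+\cdots+t_kz_k)$ at $t=0$; since $V$ is a polynomial, everything is an exact finite computation. Next I note that the operator $(\partial_h)^{n-k}\Phi$ is a constant, because $\Phi$ has degree $n-k$. The operators commute, so the order of application does not matter.

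The remaining step is the Euler-type identity
\eq{
  (\partial_h)^{m}\Phi=m!\,\Phi(h)
}
for any homogeneous polynomial $\Phi$ of degree $m$. To prove it, note that $(\partial_h)^m\Phi(q)=\frac{d^m}{ds^m}\Phi(q+sh)$, which is constant in $q$, so I may evaluate at $q=0$. Homogeneity gives $\Phi(sh)=s^m\Phi(h)$, and the $m$th derivative of $s^m\Phi(h)$ is $m!\,\Phi(h)$. Applying this with $m=n-k$ and $\Phi(h)=D^kV(h)[z_1,\ldots,z_k]$ gives the claimed formula.

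The edge cases are consistent. For $k=n$, both sides equal the constant $D^nV$. For $k=0$, the identity reads $(\partial_h)^nV=n!\,V(h)$. There is no real obstacle here; the only care needed is to use that constant-coefficient operators commute, and that evaluating the constant $(\partial_h)^{n-k}\Phi$ at $q=0$ rather than at $q=h$ is legitimate.
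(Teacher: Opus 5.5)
Your proposal is correct and follows the paper's proof: set $p=\partial_{z_1}\cdots\partial_{z_k}V$, commute the constant-coefficient operators, note that $(\partial_h)^{n-k}p$ is constant, and evaluate at the origin using $p(th)=t^{n-k}p(h)$. The polarization remark is unnecessary, as you yourself note, since the argument works directly for arbitrary $z_1,\ldots,z_k$.
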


\begin{proof}
Set $ p=\partial_{z_1}\cdots\partial_{z_k}V$. This is a homogeneous polynomial of degree $n-k$. Since the differential operators have constant coefficients and commute,
\eq{
  \bigl(
    \partial_{z_1}\cdots\partial_{z_k}
    (\partial_h)^{n-k}
  \bigr)V
  =(\partial_h)^{n-k}p.
}
The right-hand side is constant and may be evaluated at the origin:
\eq{
  (\partial_h)^{n-k}p
  =\left.\frac{d^{n-k}}{dt^{n-k}}\right|_{t=0}p(th)
  =(n-k)!p(h)
  =(n-k)!D^kV(h)[z_1,\ldots,z_k].
}
\end{proof}

For facet functions $a,b,c,d$, it follows from \autoref{lem:sec-7-repeated-radial-derivatives} and \eqref{eq:sec-4-variation-forms} that
\eq{
  \bigl(\mathfrak{d}(a)\mathfrak{d}(b)\omega^{n-2}\bigr)V
  &=(n-2)!B(a,b),\quad n\geq 2,\\
  \bigl(\mathfrak{d}(a)\mathfrak{d}(b)\mathfrak{d}(c)
    \omega^{n-3}\bigr)V
  &=(n-3)!C(a,b,c),\quad n\geq 3,\\
  \bigl(\mathfrak{d}(a)\mathfrak{d}(b)\mathfrak{d}(c)
    \mathfrak{d}(d)\omega^{n-4}\bigr)V
  &=(n-4)!D(a,b,c,d),\quad n\geq 4.
}
\subsection{A primitive correction}

We first find the unique even facet function $\eta$ that satisfies
\eq{\label{eq:sec-7-eta-definition}
  C(f,f,g)=(n-2)B(\eta,g)
  \quad\text{for every even facet function }g.
}
To see that this equation has a unique solution, consider the linear map
\eq{
  \mathcal{E}^{+}\to(\mathcal{E}^{+})^{*},\quad
  a\mapsto B(a,\mathord{\cdot}).
}
Let $a$ belong to its kernel. Write $a=c_0\one+a^{\circ}$, where $c_0=\frac{A(a)}{S}$ and $a^{\circ}\in\mathcal{E}_0^{+}$. Then
\eq{
0&=B(a,\one)=(n-1)A(a) \implies c_0=0.
}
Moreover, $0=B(a,a)=B(a^{\circ},a^{\circ})$. By \eqref{eq:sec-5-B-spectral-order}, $a^{\circ}=0$. Thus, the map is bijective, and \eqref{eq:sec-7-eta-definition} has a unique even solution $\eta$ when $n\geq 3$.

Since $f$ and $\eta$ are even, \autoref{lem:sec-3-antipodal-volume-parity} implies that
\eq{
C(f,f,g)=0,
\quad
B(\eta,g)=0
\quad\text{for every odd facet function }g.
}
Therefore, 
\eq{\label{eq:sec-7-eta-definition-all}
  C(f,f,g)=(n-2)B(\eta,g)
  \quad\text{for every facet function }g.
}

Assume now that $n\geq 4$. The class $\mathfrak{d}(f)^2$ is not necessarily primitive, and we introduce a correction by setting
\eq{\label{eq:sec-7-Gamma-definition}
  \Gamma=\mathfrak{d}(f)^2-\omega\mathfrak{d}(\eta).
}
For every facet function $g$, \autoref{lem:sec-7-repeated-radial-derivatives} and \eqref{eq:sec-7-eta-definition-all} yield
\eq{
  \bigl(\Gamma\omega^{n-3}\mathfrak{d}(g)\bigr)V
  &=\bigl(\mathfrak{d}(f)^2\mathfrak{d}(g)\omega^{n-3}\bigr)V
    -\bigl(\mathfrak{d}(\eta)\mathfrak{d}(g)\omega^{n-2}\bigr)V\\
  &=(n-3)!C(f,f,g)-(n-2)!B(\eta,g)\\
  &=(n-3)!\bigl(C(f,f,g)-(n-2)B(\eta,g)\bigr)\\
  &=0.
}
The classes $\mathfrak{d}(g)$ range over $\mathcal{A}^1(V)$. By the nondegeneracy of the pairing in \autoref{lem:sec-7-polytope-algebra-duality}, the preceding equality implies $\Gamma\omega^{n-3}=0$. Accordingly, $\Gamma$ is primitive, and the decomposition
\eq{
  \mathfrak{d}(f)^2
  =\Gamma+\omega\mathfrak{d}(\eta)
}
separates the primitive component from the component in $\omega\mathcal{A}^1(V)$.

\subsection{The fourth-derivative lower bound}

\begin{lemma}\label{lem:sec-7-fourth-derivative-lower-bound}
For the even facet function $\eta$ defined by \eqref{eq:sec-7-eta-definition}, the fourth derivative satisfies
\eq{\label{eq:sec-7-fourth-derivative-lower-bound}
  d_4\geq (n-2)(n-3)B(\eta,\eta).
}
For $n\geq 4$, equality holds if and only if $\Gamma=0$.
\end{lemma}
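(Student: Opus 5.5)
We split into the cases $n=3$ and $n\geq 4$. When $n=3$, the volume polynomial $V$ has degree three, so $D^4V\equiv 0$ and $d_4=0$. The right-hand side is $(n-2)(n-3)B(\eta,\eta)=0$ as well, so the inequality holds with equality. The equality clause in the statement concerns only $n\geq 4$, so nothing more is needed for $n=3$.

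For $n\geq 4$ we expand $(\Gamma^2\omega^{n-4})V$ using the decomposition $\mathfrak{d}(f)^2=\Gamma+\omega\mathfrak{d}(\eta)$. Since $\Gamma=\mathfrak{d}(f)^2-\omega\mathfrak{d}(\eta)$, we have
\begin{equation*}
  \Gamma^2\omega^{n-4}
  =\mathfrak{d}(f)^4\omega^{n-4}
  -2\mathfrak{d}(f)^2\mathfrak{d}(\eta)\omega^{n-3}
  +\mathfrak{d}(\eta)^2\omega^{n-2}.
\end{equation*}
We evaluate each term on $V$ with \autoref{lem:sec-7-repeated-radial-derivatives}, in the form of the three displayed identities following it:
\begin{align*}
  \bigl(\mathfrak{d}(f)^4\omega^{n-4}\bigr)V&=(n-4)!\,d_4,\\
  \bigl(\mathfrak{d}(f)^2\mathfrak{d}(\eta)\omega^{n-3}\bigr)V&=(n-3)!\,C(f,f,\eta),\\
  \bigl(\mathfrak{d}(\eta)^2\omega^{n-2}\bigr)V&=(n-2)!\,B(\eta,\eta).
\end{align*}
Applying the defining relation \eqref{eq:sec-7-eta-definition-all} with $g=\eta$ gives $C(f,f,\eta)=(n-2)B(\eta,\eta)$. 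Substituting,
\begin{equation*}
  \bigl(\Gamma^2\omega^{n-4}\bigr)V
  =(n-4)!\,d_4-2(n-2)!\,B(\eta,\eta)+(n-2)!\,B(\eta,\eta)
  =(n-4)!\bigl(d_4-(n-2)(n-3)B(\eta,\eta)\bigr).
\end{equation*}

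Since $\Gamma$ is primitive, $\Gamma\omega^{n-3}=0$, as shown just before the lemma. \autoref{thm:sec-7-degree-two-HRM} therefore gives $(\Gamma^2\omega^{n-4})V\geq 0$. Dividing by $(n-4)!>0$ yields \eqref{eq:sec-7-fourth-derivative-lower-bound}. The same theorem says this expression vanishes exactly when $\Gamma=0$, so equality in \eqref{eq:sec-7-fourth-derivative-lower-bound} holds if and only if $\Gamma=0$.

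The step that needs the most care is applying \eqref{eq:sec-7-eta-definition-all} at $g=\eta$ and getting the factorial bookkeeping right. The cross term carries $(n-3)!$ times the factor $(n-2)$, which combines to $(n-2)!$, and it is this combination that makes the cross and square terms collapse to a single multiple of $B(\eta,\eta)$. No new estimate is required: primitivity has already been established, and the inequality itself is taken from \autoref{thm:sec-7-degree-two-HRM}.
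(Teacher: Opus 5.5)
Your proof is correct and follows the paper's argument essentially step for step. The case $n=3$ vanishes by degree. For $n\geq 4$, you expand $(\Gamma^2\omega^{n-4})V$ term by term via \autoref{lem:sec-7-repeated-radial-derivatives}, collapse the cross term using the defining relation for $\eta$ at $g=\eta$, and conclude with \autoref{thm:sec-7-degree-two-HRM} and its equality case.
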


\begin{proof}
If $n=3$, the volume polynomial has degree three. Hence both sides of \eqref{eq:sec-7-fourth-derivative-lower-bound} vanish. 
Suppose that $n\geq 4$. By construction, the class $\Gamma$ in \eqref{eq:sec-7-Gamma-definition} is primitive. The first inequality below follows from \autoref{thm:sec-7-degree-two-HRM}. We then expand $\Gamma$, apply \autoref{lem:sec-7-repeated-radial-derivatives} term by term, and use \eqref{eq:sec-7-eta-definition} with $g=\eta$:
\eq{
  0
  &\leq \frac{1}{(n-4)!}
    \bigl(\Gamma^2\omega^{n-4}\bigr)V\\
  &=\frac{1}{(n-4)!}\bigl(
    \mathfrak{d}(f)^4\omega^{n-4}
    -2\mathfrak{d}(f)^2\mathfrak{d}(\eta)\omega^{n-3}
    +\mathfrak{d}(\eta)^2\omega^{n-2}\bigr)V\\
  &=d_4-2(n-3)C(f,f,\eta)
    +(n-3)(n-2)B(\eta,\eta)\\
  &=d_4-2(n-3)(n-2)B(\eta,\eta)
    +(n-3)(n-2)B(\eta,\eta)\\
  &=d_4-(n-3)(n-2)B(\eta,\eta).
}
\end{proof}

\subsection{The trial Rayleigh second-variation bound}

\begin{lemma}\label{lem:sec-7-trial-Rayleigh-second-variation-bound}
For the trial vector $u_t$ defined in \eqref{eq:sec-6-trial-vector}, the trial quotient $\mathcal{R}_{\mathrm{tr}}$ satisfies
\eq{\label{eq:sec-7-final-second-variation}
  \mathcal{R}_{\mathrm{tr}}'(0)=0,
  \quad
  \mathcal{R}_{\mathrm{tr}}''(0)
 \leq
  \frac{2\beta\lambda}{S(n-1)}
  \bigl(1+(n-1)\beta\bigr).
}
Equality in the estimate for $\mathcal{R}_{\mathrm{tr}}''(0)$ holds if and only if $f^{\odot2}=\frac{1}{S}\one$ and, when $n\geq 4$, the primitive class $\Gamma$ in \eqref{eq:sec-7-Gamma-definition} vanishes.
\end{lemma}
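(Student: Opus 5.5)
The plan is to start from the exact formula of \autoref{lem:sec-6-exact-second-derivative},
\[
\mathcal{R}_{\mathrm{tr}}''(0)=-\beta(2\beta-1)(\beta+2)\mu_4+3\beta^2b_2-d_4,
\]
and to reduce everything to the $\mathsf{M}$-orthonormal eigenbasis $(e_k)$ of $\mathcal{E}_0^+$ from \autoref{sec:support-hessian}. The first claim $\mathcal{R}_{\mathrm{tr}}'(0)=0$ is already part of \autoref{lem:sec-6-exact-second-derivative}. For the second, I first remove $d_4$ using \autoref{lem:sec-7-fourth-derivative-lower-bound}, which gives $-d_4\leq-(n-2)(n-3)B(\eta,\eta)$. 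When $n=3$ both sides vanish. What remains is a quadratic expression in $f^{\odot2}$ and $\eta$, and both can be computed explicitly in the eigenbasis.

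\emph{Decomposing $f^{\odot2}$.} Write $f^{\odot2}=\frac1S\one+w$ with $w\in\mathcal{E}_0^+$; this is possible because $A(f^{\odot2})=\mathsf{M}(f,f)=1$. Expand $w=\sum_k w_ke_k$. The Euler identities give $B(\one,\one)=(n-1)S$ and $B(\one,w)=(n-1)A(w)=0$, and the eigenbasis gives $B(e_j,e_k)=\rho_k\delta_{jk}$. Hence
\[
\mu_4=\frac1S+\sum_k w_k^2,\qquad b_2=\frac{n-1}{S}+\sum_k\rho_kw_k^2 .
\]

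\emph{Solving for $\eta$.} Combine the defining relation \eqref{eq:sec-7-eta-definition} with Rayleigh stationarity \eqref{eq:sec-6-scalar-stationarity}:
\[
(n-2)B(\eta,g)=\beta\bigl\{B(f^{\odot2},g)-\mathsf{M}(f^{\odot2},g)\bigr\}\quad\text{for every even }g.
\]
Testing with $g=\one$ gives $(n-2)(n-1)A(\eta)=\beta(n-2)$. So the constant part of $\eta$ is $\frac{\beta}{(n-1)S}\one$. Testing with $g=e_k$ gives the centered coefficients $\eta_k=\frac{\beta(\rho_k-1)}{(n-2)\rho_k}w_k$. Therefore
\[
B(\eta,\eta)=\frac{\beta^2}{(n-1)S}+\sum_k\frac{\beta^2(\rho_k-1)^2}{(n-2)^2\rho_k}w_k^2 .
\]

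\emph{Substituting.} Putting these into the bound splits $\mathcal{R}_{\mathrm{tr}}''(0)$ into a $w$-independent part plus a diagonal sum:
\[
\mathcal{R}_{\mathrm{tr}}''(0)\leq\frac{1}{S}\Bigl\{-\beta(2\beta-1)(\beta+2)+3\beta^2(n-1)-\frac{(n-3)\beta^2}{n-1}\Bigr\}+\sum_k\Phi(\rho_k)w_k^2,
\]
where
\[
\Phi(\rho)=-\beta(2\beta-1)(\beta+2)+3\beta^2\rho-\frac{(n-3)\beta^2(\rho-1)^2}{(n-2)\rho}.
\]
The constant bracket should simplify, after routine algebra using $\lambda=n-1-\beta$, to $\frac{2\beta\lambda}{n-1}\bigl(1+(n-1)\beta\bigr)$. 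I will verify this directly.

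\emph{Main obstacle.} The key step is to show $\Phi(\rho_k)\leq0$ for every $k$, using only $\rho_k\leq\rho_1=\beta<0$ from \eqref{eq:sec-5-B-spectral-order}. Since $\rho<0$, the last term of $\Phi$ is nonnegative, so its sign works against us, while $3\beta^2\rho$ helps. I would first try to multiply by $\rho<0$, turning $\Phi(\rho)\leq0$ into a quadratic inequality in $\rho$, and then check it on $(-\infty,\beta]$ by examining the endpoint $\rho=\beta$ and the behaviour as $\rho\to-\infty$. As $\rho\to-\infty$ the term $\bigl(3-\frac{n-3}{n-2}\bigr)\beta^2\rho$ dominates and is negative. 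If this inequality fails for some range of $\beta$, I would instead use $\beta<0$ together with $\lambda>n-1$ more carefully, and aim for strict negativity $\Phi(\rho_k)<0$.

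\emph{Equality.} Strict negativity of every $\Phi(\rho_k)$ would give the equality case directly. Equality then forces $w=0$, that is $f^{\odot2}=\frac1S\one$. It also forces equality in \autoref{lem:sec-7-fourth-derivative-lower-bound}, which for $n\geq4$ means $\Gamma=0$. Conversely, if both conditions hold, every inequality used above is an equality.
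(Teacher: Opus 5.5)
Your reduction follows the paper's route: the expansions of $f^{\odot2}$ and $\eta$ in the eigenbasis, the formulas for $\mu_4$, $b_2$ and $B(\eta,\eta)$, and your $\Phi(\rho)$ all agree with the paper's computation. However, the proposal has an algebra slip, and it stops before the one step that carries the lemma's content.

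\textbf{The slip.} The constant part of $(n-2)(n-3)B(\eta,\eta)$ is $\frac{(n-2)(n-3)\beta^2}{(n-1)S}$, not $\frac{(n-3)\beta^2}{(n-1)S}$. With your bracket the ``routine algebra'' does not close for $n\geq 4$: your bracket exceeds the target by $\frac{(n-3)^2\beta^2}{n-1}$. Once the factor $n-2$ is restored, the bracket equals $\frac{2\beta\lambda}{n-1}\bigl(1+(n-1)\beta\bigr)$ exactly.

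\textbf{The gap.} You never show $\Phi(\rho_k)\leq 0$. This sign is the whole inequality, and its strict version is the whole equality statement. The check you propose does not settle it:
\begin{itemize}
\item Knowing the convex quadratic $\rho\Phi(\rho)$ is positive at $\rho=\beta$ and grows at $-\infty$ does not by itself exclude a negative dip in between. What would rescue it is the value $-\frac{(n-3)\beta^2}{n-2}\leq 0$ at $\rho=0$, which you do not use.
\item You do not compute the endpoint value $\Phi(\beta)$.
\item You leave open that the inequality might fail for some range of $\beta$.
\end{itemize}

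The missing step is elementary. Use $\frac{(\rho-1)^2}{\rho}=\rho-2+\rho^{-1}$ and $\lambda=n-1-\beta$ to rewrite
\[
\Phi(\rho)=-\frac{(2n-3)\beta^2}{n-2}(\beta-\rho)-\frac{(n-3)\beta^2}{n-2}\bigl(\rho^{-1}-\beta^{-1}\bigr)-\frac{\beta(\beta-1)\lambda}{n-2}.
\]
For $\rho\leq\beta<0$:
\begin{itemize}
\item the first term is $\leq 0$ because $\beta-\rho\geq 0$;
\item the second term is $\leq 0$ because $\rho^{-1}\geq\beta^{-1}$;
\item the third term is strictly negative because $\beta(\beta-1)>0$ and $\lambda>0$.
\end{itemize}
Equivalently, $\Phi'(\rho)=\frac{(2n-3)\beta^2}{n-2}+\frac{(n-3)\beta^2}{(n-2)\rho^2}>0$ on $(-\infty,0)$, so
\[
\Phi(\rho)\leq\Phi(\beta)=-\frac{\beta(\beta-1)\lambda}{n-2}<0 .
\]
This is exactly the paper's decomposition of the remainder into four nonpositive terms, the fourth being the $d_4$ term handled by the Hodge--Riemann bound. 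It uses only $\rho_k\leq\beta<0$ from the spectral order, so no case distinction in $\beta$ is needed. Once each $\Phi(\rho_k)$ is strictly negative, your equality argument goes through as written.
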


\begin{proof}
$\mathcal{R}_{\mathrm{tr}}'(0)=0$ follows from \eqref{eq:sec-6-exact-second-derivative}. Recall from \autoref{sec:support-hessian} that $(e_k)$ is an $\mathsf{M}$-orthonormal basis of $\mathcal{E}_0^{+}$ satisfying $B(e_j,e_k)=\rho_k\delta_{jk}$, where $\rho_k=n-1-\lambda_k$. The spectral order \eqref{eq:sec-5-B-spectral-order} implies $\rho_k\leq \beta<0$. 

The vector $w=f^{\odot2}$ is even and $A(w)=1$. Thus $w-S^{-1}\one\in\mathcal{E}_0^{+}$, and
\eq{
  w=\frac{1}{S}\one+\sum_k\gamma_ke_k,
}
for some constants $\gamma_k$. Also note that \eqref{eq:sec-6-scalar-stationarity} and \eqref{eq:sec-7-eta-definition} imply
\eq{\label{eq:sec-7-eta-spectral-identity}
  (n-2)B(\eta,g)
  =\beta\bigl(B(w,g)-\mathsf{M}(w,g)\bigr)
  \quad\text{for every even }g.
}

We next determine the coefficients in the expansion of $\eta$:
\eq{
  \eta=c_0\one+\sum_k\xi_ke_k.
}
Taking $g=\one$ in \eqref{eq:sec-7-eta-spectral-identity} and noting that $A(\one)=S$ and $A(e_k)=B(\one,e_k)=0$, we obtain
\eq{
  (n-2)(n-1)c_0S
  &=\beta\bigl(B(w,\one)-\mathsf{M}(w,\one)\bigr)\\
  &=\beta\bigl((n-1)A(w)-A(w)\bigr)
  =\beta(n-2).
}
It follows that $c_0=\frac{\beta}{(n-1)S}$.

Taking $g=e_j$ in \eqref{eq:sec-7-eta-spectral-identity} yields
\eq{
  (n-2)\rho_j\xi_j
  =(n-2)B(\eta,e_j)
  &=\beta\bigl(B(w,e_j)-\mathsf{M}(w,e_j)\bigr)\\
  &=\beta(\rho_j-1)\gamma_j.
}
Since $\rho_j\neq 0$, we have $\xi_j=\frac{\beta}{n-2}\frac{\rho_j-1}{\rho_j}\gamma_j$. Hence
\eq{
  \eta=\frac{\beta}{(n-1)S}\one+\frac{\beta}{n-2}\sum_k\frac{\rho_k-1}{\rho_k}\gamma_ke_k.
}

Now, the expansions of $w$ and $\eta$ yield
\eq{
  \mu_4=\mathsf{M}(w,w)
  &=\frac{1}{S}+\sum_k\gamma_k^2,\\
  b_2=B(w,w)
  &=\frac{n-1}{S}+\sum_k\rho_k\gamma_k^2,\\
  B(\eta,\eta)
  &=\frac{\beta^2}{(n-1)S}
    +\frac{\beta^2}{(n-2)^2}
    \sum_k\frac{(\rho_k-1)^2}{\rho_k}\gamma_k^2.
}

Recall from \autoref{lem:sec-6-exact-second-derivative} that
\eq{
  \mathcal{R}_{\mathrm{tr}}''(0)
  =-\beta(2\beta-1)(\beta+2)\mu_4
  +3\beta^2b_2-d_4.
}
Substituting the formulas for $\mu_4$, $b_2$, and $B(\eta,\eta)$, we obtain
\eq{\label{eq:sec-7-second-variation-remainder}
  \mathcal{R}_{\mathrm{tr}}''(0)
  &-\frac{2\beta\lambda}{(n-1)S}
  \bigl(1+(n-1)\beta\bigr)\\
  &=-\bigl(d_4-(n-2)(n-3)B(\eta,\eta)\bigr)
  -\frac{\beta^2(2n-3)}{n-2}
    \sum_k(\beta-\rho_k)\gamma_k^2\\
  &\quad-\frac{\beta(\beta-1)\lambda}{n-2}
    \sum_k\gamma_k^2-\frac{\beta^2(n-3)}{n-2}
    \sum_k(\rho_k^{-1}-\beta^{-1})\gamma_k^2.
}
By \autoref{lem:sec-7-fourth-derivative-lower-bound}, the first term on the right is nonpositive. Moreover, the remaining three terms are nonpositive, and the estimate follows.

It remains to determine when equality holds. Equality forces $\gamma_k=0$ for every $k$. This is equivalent to $f^{\odot2}=S^{-1}\one$. If $n=3$, the first term on the right-hand side vanishes because $d_4=0$. If $n\geq 4$, \autoref{lem:sec-7-fourth-derivative-lower-bound} shows that $d_4=(n-2)(n-3)B(\eta,\eta)$ if and only if $\Gamma=0$.
\end{proof}

\begin{proposition}\label{prop:sec-7-fixed-fan-lower-bound}
Suppose that $n\geq 3$ and that an even $h\in\mathcal{T}_{\Sigma}$ satisfies
\eq{
  \lambda_{1,e}(P(q))\geq \lambda_{1,e}(P(h))
}
for every even $q\in\mathcal{T}_{\Sigma}$ sufficiently close to $h$. Then
\eq{\label{eq:sec-7-fixed-fan-lower-bound}
  \lambda_{1,e}(P(h))\geq n-1+\frac{1}{n-1}.
}
\end{proposition}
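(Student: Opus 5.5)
The plan is to combine the trial Rayleigh second-variation bound of \autoref{lem:sec-7-trial-Rayleigh-second-variation-bound} with a second-derivative test that comes from local minimality. Set $\lambda=\lambda_{1,e}(P(h))$ and $\beta=n-1-\lambda$. By \autoref{cor:sec-5-strict-even-eigenvalue-bound}, $\beta<0$. Choose $f\in\mathcal{E}_\lambda^+$ normalized by \eqref{eq:sec-6-eigenfunction-normalization}, and consider the support path $h_t=h\odot(\one+tf)$. Since $\mathcal{T}_\Sigma$ is open (\autoref{lem:sec-3-type-chamber-inequalities}) and $f$ is even, $h_t$ is an even vector in $\mathcal{T}_\Sigma$ for all small $\abs{t}$, so the hypothesis applies to every $q=h_t$. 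The local minimality used in \autoref{sec:simple-chamber-Rayleigh-variation} is exactly this hypothesis, so Rayleigh stationarity (\autoref{lem:sec-6-rayleigh-stationarity}) and \autoref{lem:sec-6-exact-second-derivative} are available.

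The key step is the inequality chain for the trial quotient. The vector $u_t=f-(1+\beta)tf^{\odot2}$ is even. It is nonconstant for small $\abs{t}$, because $u_0=f$ is nonconstant and centered. Hence $\widetilde{\mathsf{M}}_t(u_t,u_t)>0$, and the Rayleigh characterization at $h_t$, in the centered form used in \autoref{cor:sec-4-logarithmic-volume-Hessian-spectral-gap}, gives
\[
\mathcal{R}_{\mathrm{tr}}(t)\geq\lambda_{1,e}(P(h_t))\geq\lambda=\mathcal{R}_{\mathrm{tr}}(0).
\]
The quotient $\mathcal{R}_{\mathrm{tr}}$ is smooth near $0$: the numerator and denominator are polynomial in $t$ through $V_\Sigma$, and the denominator is positive. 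So $t=0$ is a local minimum of $\mathcal{R}_{\mathrm{tr}}$, and therefore $\mathcal{R}_{\mathrm{tr}}''(0)\geq 0$.

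Combining this with \eqref{eq:sec-7-final-second-variation} gives
\[
0\leq\frac{2\beta\lambda}{S(n-1)}\bigl(1+(n-1)\beta\bigr).
\]
Here $S>0$ and $\lambda>0$, and $\beta<0$ makes the prefactor $2\beta\lambda/(S(n-1))$ strictly negative. It follows that $1+(n-1)\beta\leq 0$, that is, $\beta\leq-\frac{1}{n-1}$, which is $\lambda\geq n-1+\frac{1}{n-1}$.

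Most of the real work was done in Sections 6 and 7, so the remaining obstacle is mainly bookkeeping. One must confirm that the hypotheses of \autoref{lem:sec-7-trial-Rayleigh-second-variation-bound} are in force, including $n\geq 3$ (for $n=3$, $d_4=0$ and \autoref{lem:sec-7-fourth-derivative-lower-bound} is trivial). One must also check $0\in\operatorname{int}P(h)$. This holds because $h$ is even with every inequality irredundant, so $P(h)=-P(h)$ is full dimensional and $h_i>0$. Finally, one must justify using the centered quotient, with $\widetilde{\mathsf{M}}_t$ in place of the mass-weighted variance, as an upper bound for $\lambda_{1,e}(P(h_t))$. This follows from the identity $\widetilde{\mathsf{M}}_t(a,a)=\sum_i m_i(t)(a_i-\bar a_t)^2$.
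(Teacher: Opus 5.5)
Your proposal is correct and is essentially the paper's proof. It uses the same path $h_t=h\odot(\one+tf)$ and trial vector $u_t=f-(1+\beta)tf^{\odot2}$. The Rayleigh principle and local minimality make $t=0$ a local minimum of $\mathcal{R}_{\mathrm{tr}}$, and the bound \eqref{eq:sec-7-final-second-variation} with $\beta<0$ then forces $1+(n-1)\beta\leq 0$. You argue directly from $\mathcal{R}_{\mathrm{tr}}''(0)\geq 0$, while the paper assumes $-1/(n-1)<\beta<0$ and derives the contradiction $\mathcal{R}_{\mathrm{tr}}''(0)<0$; this is only a difference in phrasing.
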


\begin{proof}
Let $f$ be a first nonconstant even eigenfunction normalized by \eqref{eq:sec-6-eigenfunction-normalization}, and consider the support path $h_t=h\odot(\one+tf)$ with the trial quotient $\mathcal{R}_{\mathrm{tr}}$ from \autoref{lem:sec-7-trial-Rayleigh-second-variation-bound}. By \autoref{cor:sec-5-strict-even-eigenvalue-bound}, $\lambda=\lambda_{1,e}(P(h))>n-1$. If \eqref{eq:sec-7-fixed-fan-lower-bound} failed, then $-1/(n-1)<\beta<0$, and \eqref{eq:sec-7-final-second-variation} would imply $\mathcal{R}_{\mathrm{tr}}''(0)<0$. For all sufficiently small $\abs{t}$, the vector $h_t$ is even and belongs to $\mathcal{T}_{\Sigma}$. The Rayleigh principle and local minimality yield $\mathcal{R}_{\mathrm{tr}}(t)\geq \mathcal{R}_{\mathrm{tr}}(0)$. Thus $t=0$ is a local minimum of $\mathcal{R}_{\mathrm{tr}}$, contrary to $\mathcal{R}_{\mathrm{tr}}''(0)<0$.
\end{proof}

\section{The discrete centro-affine structure}
\label{sec:discrete-centro-affine-structure}

In this section, we first introduce the irredundant support domain and establish its basic properties. We then define the intrinsic support forms and the weighted graph Laplacian on this domain and prove the support-Hessian inequality and its equality case. Moreover, we establish upper semicontinuity of the first nonconstant even eigenvalue under polyhedral approximation. We conclude with mixed-volume proofs of $C^2$ regularity and the support-Hessian midpoint inequality.

\subsection{The irredundant domain and intrinsic support forms}
\label{sec:irredundant-support-domain}

For $q\in\R^J$, let $P(q)$ be the half-space intersection in \eqref{eq:sec-3-polytope-support-parameters}; the normals remain fixed while $q$ varies, without any symmetry assumption. For the $J$-indexed normal data, we omit the superscript $J$ and write $\mathcal{U}_{\mathrm{irr}}$. A superscript $I$ will distinguish the antipodal specialization.

\begin{definition}[Irredundant support domain]\label{def:sec-8-irredundant-support-domain}
The \emph{irredundant support domain} is
\eq{
  \mathcal{U}_{\mathrm{irr}}
  =\left\{h\in\R^J:
    \begin{array}{l}
    0\in\operatorname{int}P(h),\text{ and}\\
    \text{every labeled inequality in
      \eqref{eq:sec-3-polytope-support-parameters} defines a facet of }P(h)
    \end{array}
  \right\}.
}
\end{definition}

For unit facet normals, $\mathcal{U}_{\mathrm{irr}}$ is the part of Izmestiev's support-parameter space \cite[Sec. 2.2]{Izm10} for which $0\in\operatorname{int}P(h)$. The mixed support-derivative formula in \autoref{prop:sec-8-support-derivatives} appears in \cite[proof of Thm. 2.4]{Izm10}. The support-Hessian inequality and equality case in \autoref{thm:sec-8-irredundant-support-Hessian-inequality} follow from \cite[Lem. A.9, Thm. A.10]{Izm10}. He also records the $C^2$ regularity of \autoref{thm:sec-9-C2-regularity} in \cite[Sec. A.3]{Izm10}. The image of $\mathcal{U}_{\mathrm{irr}}$ under the quotient by translations is the interior of the irredundancy domain of Fillastre and Izmestiev \cite[Def. 1.22, Lem. 1.34]{FI17}.

On $\mathcal{U}_{\mathrm{irr}}$, we write
\eq{
  V(q)=\Vol(P(q))\quad(q\in\mathcal{U}_{\mathrm{irr}}).
}

\begin{lemma}\label{lem:sec-8-irredundant-open}
The domain $\mathcal{U}_{\mathrm{irr}}$ is an open convex cone.
\end{lemma}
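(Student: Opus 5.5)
The plan is to prove the three properties in turn: that $\mathcal{U}_{\mathrm{irr}}$ is a cone, that it is convex, and that it is open, each directly from the half-space description \eqref{eq:sec-3-polytope-support-parameters}.

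\emph{Cone property.} For $s>0$ we have $P(sh)=sP(h)$. Dilation preserves both $0\in\operatorname{int}P(h)$ and the property that each labeled inequality defines a facet. Hence $sh\in\mathcal{U}_{\mathrm{irr}}$.

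\emph{Convexity.} Let $h,q\in\mathcal{U}_{\mathrm{irr}}$ and $t\in[0,1]$, and set $r=(1-t)h+tq$. First, $(1-t)P(h)+tP(q)\subseteq P(r)$, because each inequality is linear in the pair $(x,\text{support value})$. So $0\in\operatorname{int}P(r)$.

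For irredundancy, fix $i\in J$. The key point is that a labeled inequality $i$ defines a facet of $P(q)$ if and only if there is a point $x$ with $\ip{x}{u_i}=q_i$ and $\ip{x}{u_j}<q_j$ for all $j\neq i$. Indeed, such a point lies in the relative interior of the facet. Conversely, given a facet, a point in its relative interior works, since the normals are pairwise distinct and any other inequality active on an open subset of the facet would have the same normal.

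Take such points $x$ for $h$ and $y$ for $q$. Then $z=(1-t)x+ty$ satisfies $\ip{z}{u_i}=r_i$ and $\ip{z}{u_j}<r_j$ for all $j\neq i$. Thus inequality $i$ defines a facet of $P(r)$. This gives convexity without needing the normal fans of $P(h)$ and $P(q)$ to agree.

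\emph{Openness.} This uses the same witness characterization. Let $h\in\mathcal{U}_{\mathrm{irr}}$.

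For each $i$, pick a witness $x_i$ with $\ip{x_i}{u_i}=h_i$ and slack $\delta_i>0$ in every other inequality. For $q$ near $h$, set $x_i'=x_i+(q_i-h_i)u_i$. Then $\ip{x_i'}{u_i}=q_i$, and the other slacks change by at most $C\abs{q-h}_\infty$, so they stay positive once $\abs{q-h}$ is small.

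The condition $0\in\operatorname{int}P(h)$ is equivalent to $h_i>0$ for all $i$, which is open. Since $J$ is finite, a single neighborhood of $h$ works for all conditions simultaneously.

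\emph{Main obstacle.} The only delicate step is justifying the witness characterization of facet-defining inequalities. The "if" direction needs a little care. The set $\{x\in P(q):\ip{x}{u_i}=q_i\}$ contains a relatively open neighborhood of $x$ inside the hyperplane $\{\ip{x}{u_i}=q_i\}$, obtained by moving slightly in the hyperplane while keeping the strict inequalities. Hence this face has dimension $n-1$, as required. The "only if" direction is the relative-interior argument given under Convexity.
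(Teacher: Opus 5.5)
Your proof is correct and follows the paper's argument: relative-interior witness points, translated along $u_i$ for openness and combined across support vectors for convexity. The paper uses closure under sums plus positive dilation, evaluating at $y+y'$, where you use convex combinations. One small fix: in the ``only if'' direction of your witness characterization, an inequality $j\neq i$ with $H_j=H_i$ could have $u_j=-u_i$, which actually occurs in the paper's antipodal setting. That case is excluded by $0\in\operatorname{int}P(q)$, equivalently $q_i,q_j>0$, not by distinctness of the normals.
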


\begin{proof}
Let $h\in\mathcal{U}_{\mathrm{irr}}$. For $i\in J$, write
\eq{
  H_i(q)&=\{x\in\R^n:\ip{x}{u_i}=q_i\},\quad
  F_i(q)&=P(q)\cap H_i(q).
}
Choose $y^{(i)}(h)\in\relint F_i(h)$. For $q$ near $h$, set $y^{(i)}(q)=y^{(i)}(h)+(q_i-h_i)u_i$. Since $u_i$ is a unit vector,
\eq{
  \ip{y^{(i)}(q)}{u_i}&=q_i,\\
  q_j-\ip{y^{(i)}(q)}{u_j}
  &=h_j-\ip{y^{(i)}(h)}{u_j}
    +(q_j-h_j)-(q_i-h_i)\ip{u_i}{u_j}
  \quad(j\neq i).
}
The first term on the right is positive because $y^{(i)}(h)\in\relint F_i(h)$. Finiteness of $J$ provides a neighborhood of $h$ in which all these slacks remain positive. Thus $F_i(q)$ contains a relatively open neighborhood of $y^{(i)}(q)$ in $H_i(q)$ and is a facet. After shrinking the neighborhood, every coordinate of $q$ is positive. Hence $0\in\operatorname{int}P(q)$. It follows that $\mathcal{U}_{\mathrm{irr}}$ is open.

Positive dilations preserve $\mathcal{U}_{\mathrm{irr}}$. To prove closure under addition, consider $h,h'\in\mathcal{U}_{\mathrm{irr}}$. For each $i\in J$, take
\eq{
  y\in\relint F_i(h),\quad
  y'\in\relint F_i(h').
}
At $y+y'$, the $i$th inequality defining $P(h+h')$ is an equality, while all the other defining inequalities are strict. Therefore, this inequality defines a facet of $P(h+h')$. Every coordinate of $h+h'$ is positive. Hence $0\in\operatorname{int}P(h+h')$. Thus $h+h'\in\mathcal{U}_{\mathrm{irr}}$, and closure under addition and positive dilation makes $\mathcal{U}_{\mathrm{irr}}$ a convex cone.
\end{proof}

Before introducing the intrinsic support forms on $\mathcal{U}_{\mathrm{irr}}$, we prove that every irredundant support vector lies in the closure of a simple type chamber. We then construct a neighborhood covered by incident chamber closures, with a symmetric covering for even supports.

Let $\mathcal{B}$ denote the collection of $n$-element subsets $\tilde{J}\subseteq J$ for which $(u_i)_{i\in\tilde{J}}$ is a basis of $\R^n$. As in \eqref{eq:sec-3-maximal-cone-vertex}, each $\tilde{J}\in\mathcal{B}$ determines a unique point $x_{\tilde{J}}(q)$ by
\eq{
  \ip{x_{\tilde{J}}(q)}{u_i}=q_i
  \quad(i\in\tilde{J}).
}
For $k\notin\tilde{J}$, define the corresponding slack by
\eq{\label{eq:sec-8-basis-slack}
  \delta_{\tilde{J},k}(q)=q_k-\ip{x_{\tilde{J}}(q)}{u_k}.
}
This is the same slack construction as $\ell_{\zeta,j}$ in the proof of \autoref{lem:sec-3-type-chamber-inequalities}, now for an arbitrary basis subset $\tilde{J}$. Both $x_{\tilde{J}}$ and $\delta_{\tilde{J},k}$ depend linearly on $q$. With $\mathbf{e}_k$ denoting the $k$th standard coordinate vector of $\R^J$,
\eq{\label{eq:sec-8-slack-coordinate-variation}
  \delta_{\tilde{J},k}(q+s\mathbf{e}_k)=\delta_{\tilde{J},k}(q)+s
  \quad(q\in\R^J,\ s\in\R).
}
In particular, every $\delta_{\tilde{J},k}$ is a nonzero linear functional.

\begin{lemma}\label{lem:sec-8-vertex-simplicity-criterion}
Let $q\in\mathcal{U}_{\mathrm{irr}}$. For every $\tilde{J}\in\mathcal{B}$,
\eq{\label{eq:sec-8-basis-intersection-in-polytope}
  x_{\tilde{J}}(q)\in P(q)
  \quad\iff\quad
  \delta_{\tilde{J},k}(q)\geq 0
  \quad(k\notin\tilde{J}).
}
Moreover, $P(q)$ is simple if and only if, for every $\tilde{J}\in\mathcal{B}$, either
\eq{\label{eq:sec-8-nonvertex-alternative}
  \delta_{\tilde{J},k}(q)<0
  \quad\text{for some }k\notin\tilde{J},
}
or
\eq{\label{eq:sec-8-simple-vertex-alternative}
  \delta_{\tilde{J},k}(q)>0
  \quad(k\notin\tilde{J}).
}
In the latter case, $x_{\tilde{J}}(q)$ is a vertex whose incident facets are precisely those indexed by $\tilde{J}$.
\end{lemma}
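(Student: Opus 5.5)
We prove the three assertions of \autoref{lem:sec-8-vertex-simplicity-criterion} in order: first \eqref{eq:sec-8-basis-intersection-in-polytope}, then the forward direction of the simplicity criterion, and finally its converse, which yields the description of the vertex.

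\emph{The equivalence \eqref{eq:sec-8-basis-intersection-in-polytope}.} This is immediate from the definitions. By construction $\ip{x_{\tilde{J}}(q)}{u_i}=q_i$ for $i\in\tilde{J}$, so those inequalities hold with equality. For $k\notin\tilde{J}$, the $k$th inequality at $x_{\tilde{J}}(q)$ reads exactly $\delta_{\tilde{J},k}(q)\geq 0$. Membership in $P(q)$ is therefore equivalent to nonnegativity of all these slacks.

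\emph{Simplicity implies the alternative.} Suppose $P(q)$ is simple, and fix $\tilde{J}\in\mathcal{B}$ for which \eqref{eq:sec-8-nonvertex-alternative} fails. Then every $\delta_{\tilde{J},k}(q)\geq 0$, so $x:=x_{\tilde{J}}(q)\in P(q)$. Let $A(x)$ be the set of labels active at $x$; it contains $\tilde{J}$. Since $(u_i)_{i\in\tilde{J}}$ spans $\R^n$, the direction cone $\{z:\ip{z}{u_i}\leq 0,\ i\in A(x)\}$ is pointed, so $x$ is a vertex. The next step, which I expect to be the main subtlety, is to identify the facets containing $x$. Because $q\in\mathcal{U}_{\mathrm{irr}}$, each label defines a facet $F_i(q)=P(q)\cap H_i(q)$. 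The facets through $x$ are thus exactly $F_i(q)$ for $i\in A(x)$, and distinct labels give distinct facets since the normals are distinct. Simplicity says $x$ lies in exactly $n$ facets, so $\abs{A(x)}=n$. As $\tilde{J}\subseteq A(x)$ and $\abs{\tilde{J}}=n$, we get $A(x)=\tilde{J}$. Hence every $k\notin\tilde{J}$ is strict at $x$, which is \eqref{eq:sec-8-simple-vertex-alternative}. The same argument shows that in this case the incident facets are precisely those indexed by $\tilde{J}$.

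\emph{The alternative implies simplicity.} Conversely, assume the alternative holds for every $\tilde{J}\in\mathcal{B}$, and let $x$ be a vertex of $P(q)$. Since $\{x\}$ is a face of dimension $0$, the normals $u_i$ with $i\in A(x)$ span $\R^n$; otherwise a direction orthogonal to all of them would keep the active inequalities tight and, for small steps in both directions, preserve the strict ones, contradicting that $x$ is a vertex. Choose a basis subset $\tilde{J}\subseteq A(x)$, which lies in $\mathcal{B}$. Then $x=x_{\tilde{J}}(q)$, because both satisfy $\ip{\cdot}{u_i}=q_i$ for $i\in\tilde{J}$ and the $u_i$ form a basis. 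Since $x\in P(q)$, all slacks $\delta_{\tilde{J},k}(q)\geq 0$, so \eqref{eq:sec-8-nonvertex-alternative} fails. The hypothesis therefore forces \eqref{eq:sec-8-simple-vertex-alternative}, and no $k\notin\tilde{J}$ is active at $x$; that is, $A(x)=\tilde{J}$. By the facet identification above, the vertex $x$ lies in exactly $n$ facets, so $P(q)$ is simple. The final claim about the incident facets follows from the identification in the forward direction.
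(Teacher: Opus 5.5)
Your proof is correct and follows the paper's argument essentially step for step: you characterize membership of $x_{\tilde{J}}(q)$ through the slacks, identify the facets through $x_{\tilde{J}}(q)$ with the active labels using $q\in\mathcal{U}_{\mathrm{irr}}$, count $\abs{A(x)}=n$ to force $A(x)=\tilde{J}$ in the forward direction, and extract a basis from the active normals at a vertex for the converse. You also justify explicitly why $x_{\tilde{J}}(q)$ is a vertex and why the active normals at a vertex span $\R^n$, two facts the paper only asserts.
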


\begin{proof}
By definition, $\ip{x_{\tilde{J}}(q)}{u_i}=q_i$ for $i\in\tilde{J}$. For $k\notin\tilde{J}$, the inequality $\ip{x_{\tilde{J}}(q)}{u_k}\leq q_k$ is equivalent to $\delta_{\tilde{J},k}(q)\geq 0$. Hence $x_{\tilde{J}}(q)$ belongs to $P(q)$ exactly under the condition in \eqref{eq:sec-8-basis-intersection-in-polytope}.

When $x_{\tilde{J}}(q)\in P(q)$, $x_{\tilde{J}}(q)$ is a vertex: the $n$ supporting hyperplanes with labels in $\tilde{J}$ have independent normals and meet only at this point. Since $q\in\mathcal{U}_{\mathrm{irr}}$, the facets containing this vertex have label set
\eq{
  \tilde{J}\cup\{k\notin\tilde{J}:\delta_{\tilde{J},k}(q)=0\}.
}

Now we prove the second part of the lemma. Assume that $P(q)$ is simple. If $x_{\tilde{J}}(q)\notin P(q)$, then \eqref{eq:sec-8-basis-intersection-in-polytope} implies that $\delta_{\tilde{J},k}(q)<0$ for some $k\notin\tilde{J}$. If $x_{\tilde{J}}(q)\in P(q)$, all the remaining slacks are nonnegative. Simplicity and the preceding description of the incident facets imply $\delta_{\tilde{J},k}(q)>0$ for all $k\notin\tilde{J}$. Hence \eqref{eq:sec-8-simple-vertex-alternative} holds.

Conversely, suppose that the two alternatives hold for every $\tilde{J}\in\mathcal{B}$. Given a vertex $y$ of $P(q)$, the outer normals of the facets containing $y$ span $\R^n$. Select $n$ such facets whose outer normals form a basis, and let $\tilde{J}$ be their label set. Since $y$ belongs to these facets,
\eq{
  \ip{y}{u_i}=q_i
  \quad(i\in\tilde{J}).
}
These equations have the unique solution $x_{\tilde{J}}(q)$. Hence $y=x_{\tilde{J}}(q)$. Since $y\in P(q)$, \eqref{eq:sec-8-nonvertex-alternative} is impossible. Therefore, $\delta_{\tilde{J},k}(q)>0$ for every $k\notin\tilde{J}$, and no facet with a label outside $\tilde{J}$ contains $y$. Thus exactly the $n$ facets indexed by $\tilde{J}$ contain $y$, and $P(q)$ is simple.
\end{proof}

For $h\in\mathcal{U}_{\mathrm{irr}}$, set
\eq{\label{eq:sec-8-exceptional-support-directions}
  \mathcal{N}_h
  =\bigcup_{\substack{\tilde{J}\in\mathcal{B},\ k\notin\tilde{J}\\
      \delta_{\tilde{J},k}(h)=0}}
    \ker\delta_{\tilde{J},k}.
}
By \eqref{eq:sec-8-slack-coordinate-variation}, this is a finite union of proper hyperplanes.

For statements concerning even directions, denote the irredundant support domain of the antipodally indexed normals by $\mathcal{U}_{\mathrm{irr}}^I\subseteq\R^I$. By \autoref{lem:sec-8-irredundant-open}, this domain is an open convex cone. In passages concerning this domain, $\mathcal{B}$ denotes the collection of basis subsets $\tilde{J}\subseteq I$, while $x_{\tilde{J}}(q)$, $\delta_{\tilde{J},k}$, and $\mathcal{N}_h$ are defined by the preceding formulas with indices in $I$.

\begin{lemma}\label{lem:sec-8-simple-chamber-approximation}
Let $h\in\mathcal{U}_{\mathrm{irr}}$.
For every $z\in\R^J\setminus\mathcal{N}_h$, there are $t_0>0$ and a simple type chamber $\sigma=\mathcal{T}_{\Sigma}$ such that
\eq{
  h+tz\in\sigma\cap\mathcal{U}_{\mathrm{irr}}
  \quad(0<t<t_0).
}
Consequently, $P(h+tz)$ is simple for $0<t<t_0$ and $h\in\overline{\sigma}$.
The same conclusion holds for $h\in\mathcal{U}_{\mathrm{irr}}^I$, with the notation just introduced on $\R^I$.
\end{lemma}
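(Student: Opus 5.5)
The plan is to use the slacks $\delta_{\tilde J,k}$ along the ray $h+tz$, then convert the resulting sign pattern into simplicity, irredundancy, and a fixed normal fan via \autoref{lem:sec-8-vertex-simplicity-criterion} and \autoref{lem:sec-3-type-chamber-inequalities}.

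\emph{Sign pattern of the slacks.} For each $\tilde J\in\mathcal{B}$ and $k\notin\tilde J$, linearity gives
\begin{equation}
  \delta_{\tilde J,k}(h+tz)=\delta_{\tilde J,k}(h)+t\,\delta_{\tilde J,k}(z).
\end{equation}
If $\delta_{\tilde J,k}(h)\neq0$, then for small $t>0$ the slack keeps the sign of $\delta_{\tilde J,k}(h)$. If $\delta_{\tilde J,k}(h)=0$, then $z\notin\mathcal{N}_h$ forces $\delta_{\tilde J,k}(z)\neq0$, so for every $t>0$ the slack is nonzero with the sign of $\delta_{\tilde J,k}(z)$. Since the family of pairs $(\tilde J,k)$ is finite, there is $t_0>0$ such that, for $0<t<t_0$:
\begin{itemize}
\item every slack $\delta_{\tilde J,k}(h+tz)$ is nonzero with a sign independent of $t$;
\item $h+tz\in\mathcal{U}_{\mathrm{irr}}$, by openness from \autoref{lem:sec-8-irredundant-open}.
\end{itemize}

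\emph{Simplicity.} Fix $\tilde J\in\mathcal{B}$. Either some slack is negative, which is \eqref{eq:sec-8-nonvertex-alternative}, or all are positive, which is \eqref{eq:sec-8-simple-vertex-alternative}; no slack vanishes. Hence \autoref{lem:sec-8-vertex-simplicity-criterion} shows $P(h+tz)$ is simple, and its vertices are the points $x_{\tilde J}(h+tz)$ for the $\tilde J$ in the all-positive class. This class does not depend on $t\in(0,t_0)$.

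\emph{Constant normal fan.} Let $\Sigma_t$ be the normal fan of $P(h+tz)$. Its maximal cones are $\operatorname{pos}\{u_i:i\in\tilde J\}$ over the all-positive $\tilde J$, by the vertex-normal-cone computation in the proof of \autoref{lem:sec-3-type-chamber-inequalities}. Every cone of a fan is a face of a maximal cone, so $\Sigma_t=:\Sigma$ is independent of $t$. The fan $\Sigma$ is complete, simplicial and polytopal by \autoref{cor:sec-3-normal-cones-form-fan} and \autoref{rem:sec-3-simple-polytopes-simplicial-polytopal-fans}. Because every inequality defines a facet, its rays are exactly the $\R_{\geq0}u_i$.

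\emph{Conclusion.} By definition $h+tz\in\mathcal{T}_\Sigma=:\sigma$ for $0<t<t_0$. Letting $t\to0^+$ gives $h\in\overline{\sigma}$. The antipodal case $\mathcal{U}_{\mathrm{irr}}^I$ is word for word the same with index set $I$.

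The main obstacle is the step identifying the normal fan as constant along the ray. That step needs the maximal cones to be read off purely from the sign pattern of the slacks, which is exactly what the vertex description in \autoref{lem:sec-8-vertex-simplicity-criterion} combined with the normal-cone computation provides.
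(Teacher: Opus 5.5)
Your proposal is correct and follows essentially the same route as the paper: you use sign-constancy of the finitely many slacks along $h+tz$ and openness of $\mathcal{U}_{\mathrm{irr}}$, then get simplicity from \autoref{lem:sec-8-vertex-simplicity-criterion}, and read off a $t$-independent normal fan from the all-positive basis subsets with maximal cones $\operatorname{pos}\{u_i:i\in\tilde J\}$. Your explicit check that the rays of $\Sigma$ are exactly the prescribed $\R_{\geq0}u_i$ is a small point the paper leaves implicit.
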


\begin{proof}
Let $z\in\R^J\setminus\mathcal{N}_h$. For every $\tilde{J}\in\mathcal{B}$ and $k\notin\tilde{J}$, linearity of the slack functional implies
\eq{
  \delta_{\tilde{J},k}(h+tz)
  =\delta_{\tilde{J},k}(h)+t\delta_{\tilde{J},k}(z).
}
If $\delta_{\tilde{J},k}(h)=0$, the definition of $\mathcal{N}_h$ ensures that $\delta_{\tilde{J},k}(z)\neq 0$. Hence $\delta_{\tilde{J},k}(h+tz)$ has constant sign for every $t>0$. If $\delta_{\tilde{J},k}(h)\neq 0$, the sign of $\delta_{\tilde{J},k}(h+tz)$ agrees with that of $\delta_{\tilde{J},k}(h)$ for all sufficiently small $t>0$. The family of slack functionals is finite, and $\mathcal{U}_{\mathrm{irr}}$ is open. Therefore, there is $t_0>0$ such that $h+tz\in\mathcal{U}_{\mathrm{irr}}$ and every $\delta_{\tilde{J},k}(h+tz)$ is nonzero with a sign independent of $t$ whenever $0<t<t_0$. Now \autoref{lem:sec-8-vertex-simplicity-criterion} shows that $P(h+tz)$ is simple for $0<t<t_0$. The same lemma shows that $\tilde{J}$ labels the facets incident to a vertex precisely when
\eq{
  \delta_{\tilde{J},k}(h+tz)>0
  \quad(k\notin\tilde{J}).
}
In particular, the vertex--facet incidences are also independent of $t$. For every such $\tilde{J}$, the corresponding normal cone is $\operatorname{pos}\{u_i:i\in\tilde{J}\}$. Thus the maximal cones of the normal fan are independent of $t$. Since a complete fan is determined by its maximal cones, all these polytopes have the same normal fan, denoted by $\Sigma$. Then
\eq{\label{eq:sec-8-generic-ray-chamber}
  h+tz\in\mathcal{T}_{\Sigma}
  \quad(0<t<t_0)
}
and we have $h\in\overline{\mathcal{T}_{\Sigma}}$. The proof on $\mathcal{U}_{\mathrm{irr}}^I$ is identical.
\end{proof}

We say a simple type chamber $\sigma$ is \emph{incident to $h$} if $h\in\overline{\sigma}$. We now construct a neighborhood of $h$ covered by the closures of incident chambers. 

Fillastre and Izmestiev \cite[Cor. 1.42]{FI17} describe the type-cone decomposition modulo translations. For the local covering here, we use a refinement on $\mathcal{U}_{\mathrm{irr}}$ obtained by subdividing $\R^J$ along the hyperplanes
\eq{
  \{\ker\delta_{\tilde{J},k}:\tilde{J}\in\mathcal{B},\ k\notin\tilde{J}\},
}
where the slack functionals are defined in \eqref{eq:sec-8-basis-slack}. We call each connected component of the complement a \emph{support cell}, denoted by $\mathfrak{C}$. The support cells are open convex polyhedral cones. Their closures, together with their faces, form a finite polyhedral subdivision of $\R^J$.

On each $\mathfrak{C}$, every slack has a constant sign. Moreover, in view of \autoref{lem:sec-8-vertex-simplicity-criterion}, every $q\in\mathfrak{C}\cap\mathcal{U}_{\mathrm{irr}}$ represents a simple polytope, and the label sets of the facets incident to each vertex are independent of $q$. Since the prescribed normals are fixed, the normal fan is also independent of $q$. Hence every support cell meeting $\mathcal{U}_{\mathrm{irr}}$ corresponds to a unique simple type chamber $\sigma_{\mathfrak{C}}$, with
\eq{
  \mathfrak{C}\cap\mathcal{U}_{\mathrm{irr}}\subseteq\sigma_{\mathfrak{C}},\quad
  \overline{\mathfrak{C}}\cap\mathcal{U}_{\mathrm{irr}}\subseteq\overline{\sigma_{\mathfrak{C}}}.
}
Distinct support cells may correspond to the same type chamber.

For every $h\in\mathcal{U}_{\mathrm{irr}}$, we can choose an open ball $\mathcal{U}$ centered at $h$, with $\overline{\mathcal{U}}\subseteq\mathcal{U}_{\mathrm{irr}}$, that avoids every support-cell closure not containing $h$. Thus
\eq{\label{eq:sec-9-local-incident-chambers}
  \overline{\mathfrak{C}}\cap \mathcal{U}\neq\varnothing\ \Rightarrow\ h\in\overline{\mathfrak{C}},\quad
  \mathcal{U}\subseteq\bigcup_{\mathfrak{C}:\,h\in\overline{\mathfrak{C}}}\overline{\mathfrak{C}}.
}
In particular, for every $q\in \mathcal{U}$, $h$ and $q$ lie in the closure of a common simple type chamber.

For antipodally indexed normals, the same construction applies in $\R^I$. As in \autoref{def:sec-5-even-facet-functions}, let $\mathcal{E}^+\subseteq\R^I$ denote the space of even facet functions. We say a simple type chamber $\sigma$ is \emph{symmetric} if $\iota\sigma=\sigma$. 

Note that if $\sigma$ contains an even support vector, then $\iota\sigma=\sigma$: the normal fan $\Sigma$ of an even vector in $\sigma$ satisfies $-\Sigma=\Sigma$, and $\mathcal{F}_{P(\iota p)}=\mathcal{F}_{-P(p)}=-\Sigma=\Sigma$ for every $p\in\sigma$. Thus $\iota p\in\sigma$.

Suppose that $h$ is even and $\sigma$ is symmetric and incident to $h$. Choose $p\in\sigma\cap\mathcal{U}$. Since the ball $\mathcal{U}$ is invariant under $\iota$, $\iota p\in\sigma\cap\mathcal{U}$. Convexity then implies $(p+\iota p)/2\in\sigma\cap\mathcal{U}\cap\mathcal{E}^+$. Thus
\eq{\label{eq:sec-8-even-incident-neighborhood}
  \sigma\cap\mathcal{U}\cap\mathcal{E}^+
  \text{ is nonempty and relatively open in }\mathcal{E}^+.
}

For even $h$, the neighborhood $\mathcal{U}$ also satisfies
\eq{\label{eq:sec-8-symmetric-local-chamber-covering}
  \mathcal{U}\cap\mathcal{E}^+
  \subseteq\bigcup_{\substack{\sigma:\,h\in\overline{\sigma}\\
      \iota\sigma=\sigma}}\overline{\sigma}.
}
Indeed, for every $\tilde{J}\in\mathcal{B}$ and $k\notin\tilde{J}$,
\eq{
  \delta_{\tilde{J},k}(\mathbf{e}_k+\mathbf{e}_{-k})
  =\begin{cases}
    1,&-k\notin\tilde{J},\\
    2,&-k\in\tilde{J}.
  \end{cases}
}
Hence each slack restricts to a nonzero linear functional on $\mathcal{E}^+$.  Therefore, every $q\in\mathcal{U}\cap\mathcal{E}^+$ can be approximated by even vectors $q^{(\nu)}\in\mathcal{U}$ at which all slacks are nonzero. Finiteness of the support cells allows us to pass to a subsequence contained in one support cell $\mathfrak{C}$. By \eqref{eq:sec-9-local-incident-chambers}, both $h$ and $q$ lie in $\overline{\mathfrak{C}}$, and hence in $\overline{\sigma_{\mathfrak{C}}}$. Since the chamber $\sigma_{\mathfrak{C}}$ contains the even vectors $q^{(\nu)}$, it is symmetric.

We now return to the general $J$-indexed normal data. We also need continuity of facet and ridge measures for the explicit derivative formulas in \autoref{prop:sec-8-support-derivatives}. 

\begin{proposition}[Support derivatives]\label{prop:sec-8-support-derivatives}
For $h\in\mathcal{U}_{\mathrm{irr}}$ and $i\neq j$,
\eq{
  \frac{\partial V}{\partial q_i}(h)
  &=\mathcal{H}^{n-1}(F_i),\\
  \frac{\partial^2V}{\partial q_i\partial q_j}(h)
  &=\begin{cases}
    \dfrac{\mathcal{H}^{n-2}(F_i\cap F_j)}{\sin\theta_{ij}},
      & \dim(F_i\cap F_j)=n-2,\\[1.1em]
    0, & \text{otherwise}.
  \end{cases}
}
Here $F_i$ and $F_j$ are the corresponding facets of $P(h)$.
\end{proposition}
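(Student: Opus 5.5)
The statement extends the chamber formulas of \autoref{lem:sec-4-volume-support-derivatives} from simple type chambers to the whole irredundant domain $\mathcal{U}_{\mathrm{irr}}$. I would prove it directly from the geometry of $P(q)$, not through chamber polynomials, since near a nonsimple $h$ the chamber polynomials differ.

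\emph{First derivatives.} Fix $h\in\mathcal{U}_{\mathrm{irr}}$ and $i\in J$. For $s$ near $0$, set $q=h+s\mathbf{e}_i$. By \autoref{lem:sec-8-irredundant-open}, $q\in\mathcal{U}_{\mathrm{irr}}$ for small $\abs{s}$, so only the $i$th hyperplane moves. Hence $P(q)$ and $P(h)$ differ by the slab region
\eq{
  \{x\in P(h+\abs{s}\mathbf{e}_i)\cap P(h-\abs{s}\mathbf{e}_i)^c\}
}
between the hyperplanes $\ip{x}{u_i}=h_i$ and $\ip{x}{u_i}=h_i+s$. By Fubini in the direction $u_i$,
\eq{
  V(h+s\mathbf{e}_i)-V(h)=\int_0^s\mathcal{H}^{n-1}(F_i(h+r\mathbf{e}_i))\,dr,
}
with the integral taken as signed. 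It then suffices to show that $r\mapsto\mathcal{H}^{n-1}(F_i(h+r\mathbf{e}_i))$ is continuous at $r=0$. More generally, I want $q\mapsto\mathcal{H}^{n-1}(F_i(q))$ to be continuous on $\mathcal{U}_{\mathrm{irr}}$. For this, view $F_i(q)$ as the $(n-1)$-dimensional polytope in $H_i(q)$ cut out by the inequalities with $j\neq i$, and translate it back to $H_i(h)$. Its support numbers in $H_i$ then depend continuously on $q$, and the facet has nonempty relative interior. Continuity of volume for polytopes given by finitely many inequalities with continuously varying right-hand sides and nonempty interior then gives the claim, for instance via Hausdorff continuity of the intersection. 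The fundamental theorem of calculus yields the first formula.

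\emph{Second derivatives.} Apply the same slab argument inside the hyperplane. For $j\neq i$, differentiate $q\mapsto\mathcal{H}^{n-1}(F_i(q))$ in $q_j$. Within $H_i(q)$, moving the $j$th hyperplane by $s$ moves the trace halfspace $\{\ip{x}{u_j}\leq q_j\}\cap H_i$ by $s/\sin\theta_{ij}$. This is because the in-plane unit normal is
\eq{
  \frac{u_j-\ip{u_i}{u_j}u_i}{\sin\theta_{ij}},
}
which has length one after this normalization. There are two cases.
\begin{itemize}
  \item If $\dim(F_i\cap F_j)=n-2$, the one-dimensional slab argument in $H_i$ gives the derivative $\mathcal{H}^{n-2}(F_i\cap F_j)/\sin\theta_{ij}$, using continuity of ridge measures by the same argument one dimension lower.
  \item If $\dim(F_i\cap F_j)<n-2$, the swept region is lower dimensional to first order, and the derivative is $0$. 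Continuity of $\mathcal{H}^{n-2}(F_i(q)\cap F_j(q))$ at $h$ is what lets the two cases agree with the displayed piecewise formula.
\end{itemize}
Antiparallel pairs never meet in a ridge, so $\sin\theta_{ij}=0$ never arises in the first case.

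\emph{Main obstacle.} The delicate step is continuity of the lower-dimensional measures $\mathcal{H}^{n-2}(F_i(q)\cap F_j(q))$ at nonsimple $h$. There, the ridge may appear or disappear as $q$ varies: a pair that is not adjacent at $h$ can become adjacent nearby with a tiny ridge. I would first show that $F_i(q)\cap F_j(q)$ is described, inside the codimension-two affine plane $H_i(q)\cap H_j(q)$, by finitely many linear inequalities whose right-hand sides depend continuously on $q$. Volume of such polyhedra is continuous in the Hausdorff sense even when the dimension drops, with measure tending to $0$. This gives the continuity needed for the fundamental theorem of calculus step. Alternatively, one could use the covering \eqref{eq:sec-9-local-incident-chambers}: on each incident chamber closure, the chamber polynomial formulas of \autoref{lem:sec-4-volume-support-derivatives} hold by continuity, and the one-sided derivatives glue.
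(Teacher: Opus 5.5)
Your argument is correct in substance, but it takes a different route from the paper.

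\textbf{The paper's route.} The paper does no direct geometric computation at $h$. It invokes $V\in C^2(\mathcal{U}_{\mathrm{irr}})$ from \autoref{thm:sec-9-C2-regularity}, which is proved via mixed volumes and the incident-chamber covering. It then takes $h^{(\nu)}\to h$ inside one incident simple chamber and applies the simple-polytope formulas of \autoref{lem:sec-4-volume-support-derivatives} at $h^{(\nu)}$. Finally it passes to the limit using continuity of $D^2V$ and convergence of facet and ridge measures.

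\textbf{Your route, and what each buys.} You compute the iterated partials directly by Cavalieri slabs: first along $u_i$ in $\R^n$, then inside $H_i$ along $(u_j-\ip{u_i}{u_j}u_i)/\sin\theta_{ij}$. This needs only continuity of $q\mapsto\mathcal{H}^{n-1}(F_i(q))$ and $q\mapsto\mathcal{H}^{n-2}(F_i(q)\cap F_j(q))$ on $\mathcal{U}_{\mathrm{irr}}$.
\begin{itemize}
\item Your argument is self-contained and avoids the chamber machinery. It also gives $V\in C^1$ with continuous off-diagonal second partials.
\item It yields only iterated partials. Identifying them with $D^2V(h)[\mathbf{e}_i,\mathbf{e}_j]$, and recovering the diagonal entries from Euler's identity as the paper does right afterwards, still needs the twice-differentiability supplied by \autoref{thm:sec-9-C2-regularity}.
\item The ridge-measure continuity you single out as the main obstacle is exactly what the paper's limit step also uses. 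The paper states it without further argument.
\end{itemize}

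\textbf{One caveat on your continuity principle.} The claim that ``volume of polyhedra with continuously varying right-hand sides is continuous'' is false in general when some constraint has zero normal component in the ambient plane. Here this happens for $u_k\in\operatorname{span}\{u_i,u_j\}$. Then the $k$th inequality restricted to $H_i\cap H_j$ is a condition on $q$ alone, so the ridge can jump from positive measure to empty.

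You should add that such a constraint is never tight on $\mathcal{U}_{\mathrm{irr}}$. Suppose $H_i\cap H_j\subseteq H_k$ and project $P(h)$ onto $\operatorname{span}\{u_i,u_j\}$. The image lies in a wedge bounded by two of the three lines through the image point of $H_i\cap H_j$. The third line meets that wedge only at the apex, so one of $F_i,F_j,F_k$ lies in the $(n-2)$-plane $H_i\cap H_j$. This contradicts irredundancy.

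For the facet case, the only zero-normal constraint comes from $u_k=-u_i$, and it is strict because $h_i+h_k>0$. With these two facts, both cases follow:
\begin{itemize}
\item A facet, or an $(n-2)$-dimensional ridge, has a point satisfying all its constraints strictly, which gives lower semicontinuity.
\item Upper semicontinuity handles the case where the ridge is lower-dimensional.
\end{itemize}
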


\begin{proof}
\autoref{thm:sec-9-C2-regularity} establishes that $V\in C^2(\mathcal{U}_{\mathrm{irr}})$. Let $h\in\mathcal{U}_{\mathrm{irr}}$. By \autoref{lem:sec-8-simple-chamber-approximation}, some simple type chamber $\sigma=\mathcal{T}_{\Sigma}$ is incident to $h$ and contains a sequence
\eq{
  h^{(\nu)}\in\sigma\cap\mathcal{U}_{\mathrm{irr}},
  \quad
  h^{(\nu)}\to h.
}
Denote the chamber volume polynomial of $\sigma$ by $\widehat{V}_{\sigma}=V_{\Sigma}$. Since $V=\widehat{V}_{\sigma}$ on $\sigma\cap\mathcal{U}_{\mathrm{irr}}$,
\eq{
  D^rV(h^{(\nu)})=D^r\widehat{V}_{\sigma}(h^{(\nu)})
  \quad(0\leq r\leq 2).
}
The $C^2$ regularity of $V$ and the continuity of the derivatives of $\widehat{V}_{\sigma}$ at $h$ imply
\eq{\label{eq:sec-8-incident-derivative-agreement}
  D^rV(h)
  =\lim_{\nu\to\infty}D^rV(h^{(\nu)})
  =\lim_{\nu\to\infty}D^r\widehat{V}_{\sigma}(h^{(\nu)})
  =D^r\widehat{V}_{\sigma}(h)
  \quad(0\leq r\leq 2).
}

For each $\nu$, denote the corresponding facets and their intersections by
\eq{
  F_i^{(\nu)}&=P(h^{(\nu)})\cap\{x\in\R^n:\ip{x}{u_i}=h_i^{(\nu)}\},\\
  F_i&=P(h)\cap\{x\in\R^n:\ip{x}{u_i}=h_i\},\\
  R_{ij}^{(\nu)}&=F_i^{(\nu)}\cap F_j^{(\nu)}.
}
The convergence $h^{(\nu)}\to h$ implies
\eq{\label{eq:sec-8-ridge-volume-convergence}
  F_i^{(\nu)}&\to F_i
    \quad\text{in the Hausdorff metric},\\
  \mathcal{H}^{n-2}(R_{ij}^{(\nu)})
  &\to
    \begin{cases}
      \mathcal{H}^{n-2}(F_i\cap F_j),
        &\dim(F_i\cap F_j)=n-2,\\
      0, &\text{otherwise},
    \end{cases}
    \quad i\neq j.
}

For every $\nu$, \autoref{lem:sec-4-volume-support-derivatives} applies to $\widehat{V}_{\sigma}$ at $h^{(\nu)}$. Taking $\nu\to\infty$ in its formulas and using \eqref{eq:sec-8-incident-derivative-agreement}--\eqref{eq:sec-8-ridge-volume-convergence} yields the two identities in the proposition.
\end{proof}

For $h\in\mathcal{U}_{\mathrm{irr}}$, denote the facet masses and ridge conductances of $P(h)$ by $m_i$ and $c_{ij}$. The intrinsic first two logarithmic support forms, mass forms, and energy form are
\eq{\label{eq:sec-8-irredundant-support-forms}
      S&=nV(h),\\
  A(a)&=DV(h)[h\odot a],\\
  B(a,b)&=D^2V(h)[h\odot a,h\odot b],\\
  \mathsf{M}(a,b)&=A(a\odot b),\\
  \widetilde{\mathsf{M}}(a,b)
  &=\mathsf{M}(a,b)-\frac{A(a)A(b)}{S},\\
  \mathsf{L}(a,b)&=(n-1)\mathsf{M}(a,b)-B(a,b).
}
By \autoref{prop:sec-8-support-derivatives}, the calculation leading to \autoref{prop:sec-4-graph-laplacian} remains valid on $\mathcal{U}_{\mathrm{irr}}$. Thus
\eq{\label{eq:sec-8-irredundant-network-formulas}
  A(a)=\sum_{i\in J}m_i a_i,\quad
  \mathsf{L}(a,b)=\sum_{\{i,j\}\in\binom{J}{2}}c_{ij}(a_i-a_j)(b_i-b_j).
}

For a $C^2_+$ convex body $K$ with $h_K>0$, the centro-affine metric $g_K$ and volume measure $dV_K$ determine the $L^2$ and Dirichlet forms
\eq{
  \mathsf{M}_K(\varphi,\psi)=\int_{\Sn}\varphi\psi\,dV_K,\quad
  \mathsf{L}_K(\varphi,\psi)=\int_{\Sn}\ip{\nabla\varphi}{\nabla\psi}_{g_K}\,dV_K.
}

For $P=P(h)$ with $h\in\mathcal{U}_{\mathrm{irr}}$, \eqref{eq:sec-8-irredundant-network-formulas} shows that $\mathsf{M}(a,b)=\sum_{i\in J}m_ia_ib_i$ is the discrete counterpart of $\mathsf{M}_K$, while $\mathsf{L}(a,b)=\sum_{\{i,j\}\in\binom{J}{2}}c_{ij}(a_i-a_j)(b_i-b_j)$ is the discrete counterpart of $\mathsf{L}_K$. Thus $\mathsf{M}$ corresponds to the $L^2(dV_K)$ inner product. The masses discretize $dV_K$, while the conductances discretize the combination $g_K^{-1}dV_K$ that enters the Dirichlet form. Consequently, the labeled facet network from \autoref{def:sec-2-labeled-facet-network} is the discrete counterpart of the metric-measure space $(\Sn,g_K,dV_K)$. 

The nonnegative weighted graph Laplacian, formed from the vertex weights $m_i$ and edge weights $c_{ij}$, is defined by
\eq{
  (\Delta_{m,c}a)_i=\frac{1}{m_i}\sum_{j\in J\setminus\{i\}}c_{ij}(a_i-a_j).
}
This convention is standard; see \cite[p. 15, Def. 0.15]{KLW21}. For all facet functions $a$ and $b$,
\eq{\label{eq:sec-8-weighted-graph-Laplacian}
  \mathsf{M}(\Delta_{m,c}a,b)
  =\sum_{i\in J}\sum_{j\in J\setminus\{i\}}c_{ij}(a_i-a_j)b_i
  =\mathsf{L}(a,b).
}
The form $\mathsf{M}$ is positive definite. Hence, for every $a\in\R^J$ and $\lambda\in\R$,
\eq{\label{eq:sec-8-weighted-graph-eigen-equation}
  \Delta_{m,c}a=\lambda a
  \quad\iff\quad
  \mathsf{L}(a,b)=\lambda\mathsf{M}(a,b)
  \quad\text{for every }b\in\R^J.
}

For the antipodally indexed normals, the same notation $V$, $A$, $B$, $S$, $\mathsf{M}$, $\widetilde{\mathsf{M}}$, $\mathsf{L}$, and $\Delta_{m,c}$ is used on $\mathcal{U}_{\mathrm{irr}}^I$. The preceding definitions and network formulas apply with indices in $I$. By \autoref{thm:sec-9-C2-regularity},
\eq{
  V\in C^2(\mathcal{U}_{\mathrm{irr}}^I).
}
For all $a,b\in\R^I$,
\eq{\label{eq:sec-8-signed-weighted-graph-Laplacian}
  \mathsf{M}(\Delta_{m,c}a,b)=\mathsf{L}(a,b).
}
For $a\in\R^I$ and $\lambda\in\R$,
\eq{\label{eq:sec-8-signed-weighted-graph-eigen-equation}
  \Delta_{m,c}a=\lambda a
  \quad\iff\quad
  \mathsf{L}(a,b)=\lambda\mathsf{M}(a,b)
  \quad\text{for every }b\in\R^I.
}
If $h$ is even, then $F_{-i}=-F_i$. Therefore, $ m_{-i}=m_i, c_{-i,-j}=c_{ij}$. For all $a,b\in\R^I$, these coefficient identities imply $\mathsf{M}(\iota a,\iota b)=\mathsf{M}(a,b)$, $\mathsf{L}(\iota a,\iota b)=\mathsf{L}(a,b)$, and $\Delta_{m,c}(\iota a)=\iota(\Delta_{m,c}a)$. Therefore, even and odd facet functions are orthogonal with respect to both forms, and $\Delta_{m,c}$ preserves the even and odd subspaces.

The simple-chamber versions of the next theorem and corollary were stated without proof in \autoref{thm:sec-5-support-hessian-kernel} and \autoref{cor:sec-5-strict-even-eigenvalue-bound}. We now prove the corresponding statements on the entire irredundant support domain.

\begin{theorem}[The support-Hessian inequality and its equality case]\label{thm:sec-8-irredundant-support-Hessian-inequality}
For $h\in\mathcal{U}_{\mathrm{irr}}$, every facet function $a$ satisfies the \emph{support-Hessian inequality}
\eq{\label{eq:sec-8-support-Hessian-inequality}
   B(a,a)\leq (n-1)\frac{A(a)^2}{S}.
}
Equality holds if and only if
\eq{\label{eq:sec-8-irredundant-AF-equality}
  a_i=c_0+\frac{\ip{v}{u_i}}{h_i}
  \quad(i\in J),
}
for some $c_0\in\R$ and $v\in\R^n$. Consequently, the matrix of $B$ in the logarithmic support coordinates has exactly one positive eigenvalue, and
\eq{
  \ker B=\{a\in\R^J:h\odot a\in\mathfrak{t}(\R^n)\}.
}
\end{theorem}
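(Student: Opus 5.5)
The plan is to reduce the inequality to the second Minkowski inequality \eqref{eq:sec-5-second-Minkowski} along a segment in the open cone $\mathcal{U}_{\mathrm{irr}}$. Fix $a\in\R^J$ and put $z=h\odot a$.

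\emph{Reduction.} By \autoref{lem:sec-8-irredundant-open}, $q=h+\varepsilon z$ lies in $\mathcal{U}_{\mathrm{irr}}$ for small $\varepsilon>0$. Set $P=P(h)$ and $Q=P(q)$.

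\emph{Step 1: mixed volumes are derivatives of $V$.} I will show that
\eq{
  \mathcal{V}(P[n-1],Q)=\tfrac1n DV(h)[q],\quad
  \mathcal{V}(P[n-2],Q[2])=\tfrac{1}{n(n-1)}D^2V(h)[q,q].
}
The first identity follows from $\mathcal{V}(P[n-1],Q)=\frac1n\sum_i h_Q(u_i)\mathcal{H}^{n-1}(F_i)$, since $h_Q(u_i)=q_i$ by irredundancy, together with \autoref{prop:sec-8-support-derivatives}. For the second, apply the same facet formula to $\mathcal{V}(P[n-2],Q,\cdot)$. Each facet term is an $(n-1)$-dimensional mixed volume of the faces $F_i(P)$ and $F_i(Q)$, and its first variation produces the ridge terms $\mathcal{H}^{n-2}(F_i\cap F_j)/\sin\theta_{ij}$. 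Alternatively, I can argue within an incident simple type chamber from \autoref{lem:sec-8-simple-chamber-approximation}. There \autoref{lem:sec-3-type-chamber-Minkowski-addition} makes $\Vol(sP+tQ')$ polynomial, so the identities hold for nearby $h^{(\nu)}$ and $q$. The identities then pass to $h$ by the $C^2$ regularity (\autoref{thm:sec-9-C2-regularity}) and the continuity of mixed volumes. Since $DV(h)[h]=nV$ and $D^2V(h)[h,\cdot]=(n-1)DV(h)$ by homogeneity, expanding in $\varepsilon$ gives
\eq{
  \mathcal{V}_1=V+\tfrac{\varepsilon}{n}A(a),\quad
  \mathcal{V}_2=V+\tfrac{2\varepsilon}{n}A(a)+\tfrac{\varepsilon^2}{n(n-1)}B(a,a).
}

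\emph{Step 2: the inequality.} Substituting these expansions into $\mathcal{V}_1^2\ge V\mathcal{V}_2$, the $O(\varepsilon)$ terms cancel. What remains is $\frac{\varepsilon^2}{n^2}A(a)^2\ge\frac{\varepsilon^2V}{n(n-1)}B(a,a)$, which is exactly \eqref{eq:sec-8-support-Hessian-inequality} because $S=nV$.

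\emph{Step 3: the equality case.} This is the main obstacle, because equality for fixed $a$ is a statement about an infinitesimal quantity, whereas \autoref{lem:sec-5-common-normal-Minkowski-equality} applies to an actual pair of bodies. I will use the exact rather than the asymptotic identity. The difference $\mathcal{V}_1^2-V\mathcal{V}_2$ equals exactly $\varepsilon^2\bigl(\frac{A(a)^2}{n^2}-\frac{VB(a,a)}{n(n-1)}\bigr)$, since Step 1 gives exact formulas with no higher-order terms. So equality for $a$ means equality in the second Minkowski inequality for the genuine pair $P(h)$, $P(h+\varepsilon z)$. \autoref{lem:sec-5-common-normal-Minkowski-equality} then gives $h+\varepsilon z=\gamma h+\mathfrak{t}(v)$, which rearranges to \eqref{eq:sec-8-irredundant-AF-equality}. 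Conversely, such $a$ gives equality by the same lemma, using translation invariance and $\mathfrak{t}(v)$.

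\emph{Step 4: inertia and kernel.} Let $\Pi=\{a: A(a)=0\}$, a hyperplane. On $\Pi$ the form $B$ is negative semidefinite, and by the equality case it vanishes only on $\{a\in\Pi: h\odot a\in\mathfrak{t}(\R^n)\}$, because the $c_0$ term is forced to zero by $A(a)=0$ together with $A(\mathfrak t(v)/h)=DV[\mathfrak t(v)]=0$. On the other hand $B(\one,\one)=(n-1)S>0$. Hence $B$ has exactly one positive eigenvalue. If $a\in\ker B$, then $0=B(\one,a)=(n-1)A(a)$, so $a\in\Pi$ with $B(a,a)=0$, which gives $h\odot a\in\mathfrak t(\R^n)$. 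Conversely, translation invariance of $V$ shows $D^2V(h)[\mathfrak t(v),\cdot]=0$. Together these yield $\ker B=\{a\in\R^J:h\odot a\in\mathfrak{t}(\R^n)\}$.
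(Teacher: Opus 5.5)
Your architecture is sound, and for the equality case it is the paper's own route. You compare $P=P(h)$ with $Q=P(h+\varepsilon z)$, use an exact mixed-volume identity to get $(n-1)A(a)^2/S-B(a,a)=\frac{n(n-1)}{\varepsilon^2\Vol(P)}\bigl(\mathcal{V}_1^2-\Vol(P)\mathcal{V}_2\bigr)$, and invoke \autoref{lem:sec-5-common-normal-Minkowski-equality}. For the inequality alone, the paper instead uses concavity of $V^{1/n}$ on $\mathcal{U}_{\mathrm{irr}}$, which follows from Brunn--Minkowski and $(1-t)P(q_0)+tP(q_1)\subseteq P((1-t)q_0+tq_1)$ and needs no exact identity.

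The gap is in Step~1. Steps~2--4 all rest on the exact identity $\mathcal{V}(P[n-2],Q[2])=\frac{1}{n(n-1)}D^2V(h)[q,q]$, and neither of your justifications establishes it.

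Your facet-formula argument uses only $q\in\mathcal{U}_{\mathrm{irr}}$, and for general such $q$ the identity is false. \autoref{lem:sec-8-fixed-normal-comparison} gives only $D^2V(h)[q,q]\geq n(n-1)\mathcal{V}(P[n-2],Q[2])$, which points the wrong way for Step~2. Strict inequality occurs, for instance, when $n=3$ and $Q$ differs from a simple $P$ by a flip: the edge $F_1\cap F_2$ of $P$ is replaced by an edge $F_3\cap F_4$ of $Q$. Then $\operatorname{pos}\{u_1,u_2\}$ and $\operatorname{pos}\{u_3,u_4\}$ cross along a ray that is not a prescribed normal. The mixed area measure $S(P,Q,\cdot)$ charges that ray, and $P+tQ\subsetneq P(h+tq)$. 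Your derivation tacitly assumes two things: that this measure lives on $\{u_i\}$, and that $F_i(Q)$ reaches $\{\ip{x}{u_j}=q_j\}$ along every ridge $F_i\cap F_j$ of $P$. Together these amount to compatibility of the two normal fans, which is exactly what has to be proved.

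The chamber alternative does not close the gap either. Passing from $h^{(\nu)}$ to $h$ needs $q$ in the closure of the same chamber $\sigma$ that contains $h^{(\nu)}$. \autoref{lem:sec-8-simple-chamber-approximation} supplies such a $\sigma$ along the ray $h+tz$ only when $z\notin\mathcal{N}_h$. Every slack $\delta_{\tilde{J},k}$ vanishes on $\mathfrak{t}(\R^n)$, and at a nonsimple $h$ some slack vanishes at $h$. Hence $\mathcal{N}_h$ contains every equality direction $c_0h+\mathfrak{t}(v)$, and a hypothetical extra kernel vector of $B$ could also lie in a single hyperplane $\ker\delta_{\tilde{J},k}$. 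Density in $z$ rescues the inequality, but not the equality case or the kernel description, which is exactly where a nonsimple $h$ matters.

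The missing ingredient is the local covering \eqref{eq:sec-9-local-incident-chambers}. Choose $\varepsilon$ so small that $q=h+\varepsilon z$ lies in the ball $\mathcal{U}$ there; then $h$ and $q$ share a chamber closure for every $z$. \autoref{lem:sec-3-type-chamber-Minkowski-addition} then gives $P((1-t)h+tq)=(1-t)P+tQ$, so $V(h+t\varepsilon z)=\Vol((1-t)P+tQ)$ exactly for $0\leq t\leq 1$. Your Step~1 identities then follow from \autoref{thm:sec-9-C2-regularity}. With this repair, Steps~2--4 go through as you wrote them.
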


\begin{proof}
For $q_0,q_1\in\mathcal{U}_{\mathrm{irr}}$ and $0\leq t\leq 1$, we have
\eq{
  (1-t)P(q_0)+tP(q_1)
  \subseteq P((1-t)q_0+tq_1).
}
From the Brunn--Minkowski inequality and monotonicity of volume, it follows that
\eq{
  V((1-t)q_0+tq_1)^{\frac{1}{n}}
  \geq (1-t)V(q_0)^{\frac{1}{n}}+tV(q_1)^{\frac{1}{n}}.
}
That is, $V^{\frac{1}{n}}$ is concave on the open convex cone $\mathcal{U}_{\mathrm{irr}}$. For $z=h\odot a$, the definitions of $A$, $B$, and $S$ yield
\eq{
  0\geq D^2(V^{\frac{1}{n}})(h)[z,z]
  =\frac{V(h)^{\frac{1-n}{n}}}{n}
    \left(
      B(a,a)-(n-1)\frac{A(a)^2}{S}
    \right).
}

We next characterize equality. Suppose that equality holds in \eqref{eq:sec-8-support-Hessian-inequality}, and set $z=h\odot a$. Choose $\mathcal{U}$ as in \eqref{eq:sec-9-local-incident-chambers} and $\varepsilon>0$ small enough that
\eq{
  q=h+\varepsilon z\in\mathcal{U},
  \quad
  P=P(h),
  \quad
  Q=P(q).
}
Then $h$ and $q$ lie in the closure of a common simple type chamber $\sigma$. By \autoref{lem:sec-3-type-chamber-Minkowski-addition},
\eq{\label{eq:sec-8-irredundant-Minkowski-segment}
  P((1-t)h+tq)=(1-t)P+tQ
  \quad(0\leq t\leq 1).
}

Since $(1-t)h+tq=h+t\varepsilon z$, taking volumes in \eqref{eq:sec-8-irredundant-Minkowski-segment} yields
\eq{
  V(h+t\varepsilon z)=\Vol((1-t)P+tQ)
  \quad(0\leq t\leq 1).
}
The mixed-volume expansion of the right-hand side is
\eq{
  \Vol((1-t)P+tQ)
  =\sum_{k=0}^n\binom{n}{k}(1-t)^{n-k}t^k
    \mathcal{V}(P[n-k],Q[k]).
}
At $t=0$, only the terms with $k\leq 2$ contribute to its first two derivatives. With
\eq{
  V_0=\Vol(P),\quad
  V_1=\mathcal{V}(P[n-1],Q),\quad
  V_2=\mathcal{V}(P[n-2],Q[2]),
}
these derivatives are
\eq{
  \left.\frac{d}{dt}\right|_{t=0}\Vol((1-t)P+tQ)
  &=n(V_1-V_0),\\
  \left.\frac{d^2}{dt^2}\right|_{t=0}\Vol((1-t)P+tQ)
  &=n(n-1)(V_2-2V_1+V_0).
}
Since $z=h\odot a$,
\eq{
  \left.\frac{d}{dt}\right|_{t=0}V(h+t\varepsilon z)
  =\varepsilon A(a),\quad
  \left.\frac{d^2}{dt^2}\right|_{t=0}V(h+t\varepsilon z)
  =\varepsilon^2 B(a,a).
}
Combining these identities with the mixed-volume derivatives above, we obtain
\eq{
  A(a)=\frac{n(V_1-V_0)}{\varepsilon},
  \quad
  B(a,a)=\frac{n(n-1)(V_2-2V_1+V_0)}{\varepsilon^2}.
}
Since $S=nV_0$, the preceding identities imply
\eq{
  (n-1)\frac{A(a)^2}{S}-B(a,a)
  =\frac{n(n-1)}{V_0\varepsilon^2}
    (V_1^2-V_0V_2).
}
Thus $V_1^2=V_0V_2$. By \autoref{lem:sec-5-common-normal-Minkowski-equality}, $Q=\gamma P+v$, for some $\gamma>0$ and $v\in\R^n$. Comparison of the prescribed support values yields
\eq{
  h_i+\varepsilon h_i a_i
  =\gamma h_i+\ip{v}{u_i}
  \quad(i\in J).
}
Solving these equations for $a_i$, we find
\eq{
  a_i=\frac{\gamma-1}{\varepsilon}+\frac{\ip{\frac{v}{\varepsilon}}{u_i}}{h_i}.
}

Conversely, suppose that \eqref{eq:sec-8-irredundant-AF-equality} holds. Then $h\odot(a-c_0\one)=\mathfrak{t}(v)$, and translation invariance of volume implies
\eq{
  A(a-c_0\one)&=0,\\
  B(a-c_0\one,b)&=0
  \quad(b\in\R^J).
}
Together with the Euler identities, these relations imply
\eq{
  A(a)=c_0S,
  \quad
  B(a,a)=c_0^2(n-1)S.
}
Hence equality holds.

We now determine the kernel of $B$. If $h\odot a=\mathfrak{t}(v)$, translation invariance of volume implies
\eq{
  B(a,b)=D^2V(h)[\mathfrak{t}(v),h\odot b]=0
  \quad(b\in\R^J).
}
Every such facet function belongs to the kernel:
\eq{
  \{a\in\R^J:h\odot a\in\mathfrak{t}(\R^n)\}
  \subseteq\ker B.
}
For the reverse inclusion, suppose that $a\in\ker B$, meaning that $B(a,b)=0$ for every $b\in\R^J$. Taking $b=\one$ and using the Euler identity $B(a,\one)=(n-1)A(a)$ shows that $A(a)=0$, while taking $b=a$ shows that $B(a,a)=0$. Hence equality holds in \eqref{eq:sec-8-support-Hessian-inequality}, and \eqref{eq:sec-8-irredundant-AF-equality} holds for some $c_0\in\R$ and $v\in\R^n$. Here $h\odot(a-c_0\one)=\mathfrak{t}(v)$, and translation invariance implies $A(a-c_0\one)=0$. Since $A(\one)=S>0$, we have $c_0=0$. Hence $h\odot a=\mathfrak{t}(v)$.

For the centered version $a^{\circ}$ defined in \eqref{eq:sec-4-centered-representative}, the Euler identities imply
\eq{
  B(a^{\circ},a^{\circ})
  =B(a,a)-(n-1)\frac{A(a)^2}{S}.
}
Thus \eqref{eq:sec-8-support-Hessian-inequality} is equivalent to $B(a^{\circ},a^{\circ})\leq 0$. Since $A(\one)=S>0$, we have
\eq{
  \R^J=\R\one\oplus\ker A.
}
This decomposition is $B$-orthogonal because
\eq{
  B(\one,b)=(n-1)A(b)=0
  \quad(b\in\ker A).
}
Therefore, $B$ has exactly one positive eigenvalue.
\end{proof}

The support-Hessian inequality has an immediate spectral consequence.

\begin{corollary}\label{cor:sec-8-irredundant-strict-even-eigenvalue-bound}
For the antipodally indexed normals, let $h\in\mathcal{U}_{\mathrm{irr}}^I$ satisfy $h_{-i}=h_i$ for every $i\in I^+$. Then the full-dimensional origin-symmetric polytope $P(h)$ satisfies
\eq{
  \lambda_{1,e}(P(h))>n-1.
}
\end{corollary}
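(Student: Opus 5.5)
The plan is to deduce the bound from the support-Hessian inequality of \autoref{thm:sec-8-irredundant-support-Hessian-inequality} together with its equality case and evenness. Since $h$ is even, we have $m_{-i}=m_i$ and $c_{-i,-j}=c_{ij}$, and the network formulas \eqref{eq:sec-8-irredundant-network-formulas} hold on $\mathcal{U}_{\mathrm{irr}}^I$. By the Rayleigh characterization, $\lambda_{1,e}(P(h))$ equals the minimum of $\mathsf{L}(b,b)/\mathsf{M}(b,b)$ over nonzero even $b$ with $A(b)=0$. This minimum is attained, since the normalized set is compact and nonempty (it has dimension $\abs{I^+}-1\geq n-1\geq 1$). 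So it suffices to prove the strict inequality $\mathsf{L}(b,b)>(n-1)\mathsf{M}(b,b)$ for every nonzero even centered $b$.

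By the definition $\mathsf{L}=(n-1)\mathsf{M}-B$, this inequality is equivalent to $B(b,b)<0$. For centered $b$, the support-Hessian inequality \eqref{eq:sec-8-support-Hessian-inequality} gives $B(b,b)\leq (n-1)A(b)^2/S=0$. It therefore remains to rule out equality.

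Suppose equality holds. The equality case gives $b_i=c_0+\ip{v}{u_i}/h_i$ for some $c_0\in\R$ and $v\in\R^n$. We use evenness to split this into even and odd parts. Since $h_{-i}=h_i$ and $u_{-i}=-u_i$, the term $\ip{v}{u_i}/h_i$ is odd in $i$, while $b$ and $c_0\one$ are even. Averaging $b_i$ and $b_{-i}$ therefore gives $b_i=c_0$ for all $i$. So $b=c_0\one$ is constant, and centering gives $c_0S=A(b)=0$, so $c_0=0$ and $b=0$. This contradicts $b\neq 0$. Hence $B(b,b)<0$ on nonzero even centered $b$.

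With strict negativity established, compactness of the normalized set makes the minimum of the Rayleigh quotient strictly larger than $n-1$. No step is a real obstacle, since the equality characterization is already available. The only point to check is that the equality case of \autoref{thm:sec-8-irredundant-support-Hessian-inequality} applies verbatim to the antipodally indexed domain $\mathcal{U}_{\mathrm{irr}}^I$. It does, because that theorem was proved for arbitrary label sets $J$, and $I$ is such a set.
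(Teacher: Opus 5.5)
Your proof is correct and follows the paper's argument. The support-Hessian inequality gives $B(b,b)\leq 0$ on centered even $b$, its equality case excludes nonzero such $b$, and compactness of the normalized set yields the strict bound. You also make explicit, via the oddness of $\ip{v}{u_i}/h_i$ and centering, the step the paper only asserts, namely that equality cannot occur for a nonzero centered even facet function.
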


\begin{proof}
First note that equality in \eqref{eq:sec-8-support-Hessian-inequality} cannot hold for a nonzero centered even facet function. Now let $a$ be a nonzero centered even facet function. Then \eqref{eq:sec-8-support-Hessian-inequality} implies
\eq{
  \mathsf{L}(a,a)=(n-1)A(a^{\odot2})-B(a,a)
  >(n-1)A(a^{\odot2}).
}
The normalized centered even set
\eq{
  \{a:a\text{ is even},\ A(a)=0,\ A(a^{\odot2})=1\}
}
is compact. Therefore, $\lambda_{1,e}(P(h))$ is strictly larger than $n-1$.
\end{proof}

\subsection{Upper semicontinuity under polyhedral approximation}
\label{sec:upper-semicontinuity}

We now apply the intrinsic support forms to polyhedral approximation; the proof of the main theorem resumes in \autoref{sec:mixed-volume-support-regularity}.

For a convex body $M$, let $S_M$ denote its surface-area measure. If a polytope $P$ has facets $(F_i)_{i\in J}$ with outer unit normals $(u_i)_{i\in J}$, then
\eq{\label{eq:sec-8-polytope-surface-area-measure}
  S_P=\sum_{i\in J}\mathcal{H}^{n-1}(F_i)\delta_{u_i}.
}
If $K$ is a $C^2_+$ convex body, then
\eq{\label{eq:sec-8-smooth-surface-area-measure}
  dS_K=\det_{\bar{g}}(r_K)\,d\omega,\quad dV_K=h_K\,dS_K.
}
With the mixed-volume notation from \eqref{eq:sec-5-mixed-volume-expansion}, the first mixed-volume formula for convex bodies $M$ and $L$ is
\eq{\label{eq:sec-8-first-mixed-volume}
  n\mathcal{V}(M[n-1],L)=\int_{\Sn}h_L\,dS_M.
}
See \cite[p. 280, Thm. 5.1.7]{Sch14}.

\begin{lemma}\label{lem:sec-8-smooth-Dirichlet-identity}
Let $K$ and $L$ be $C^2_+$ convex bodies and $h_K>0$. Then
\eq{\label{eq:sec-8-smooth-Dirichlet-identity}
  (n-1)\int_{\Sn}\frac{h_L^2}{h_K}\,dS_K
  -n(n-1)\mathcal{V}(K[n-2],L[2])
  =\int_{\Sn}\left|\nabla\frac{h_L}{h_K}\right|_{g_K}^2\,dV_K.
}
\end{lemma}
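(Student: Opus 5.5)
The plan is to reduce both sides to integrals over $\Sn$ against the cofactor tensor of $r_K$, and then integrate by parts once. Throughout, set $\psi=h_L/h_K\in C^2(\Sn)$, write $h=h_K$, and let $T=\operatorname{cof}_{\bar{g}}(r_K)$ be the cofactor tensor of $r_K$ with respect to $\bar{g}$, viewed as a symmetric $(2,0)$-tensor on the $(n-1)$-dimensional sphere. Its defining properties are
\eq{
  T^{ij}(r_K)_{jk}=\det_{\bar{g}}(r_K)\,\delta^i_k,\quad
  T=\det_{\bar{g}}(r_K)\,r_K^{-1}.
}

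The first step is the standard representation of the mixed volume by the mixed area measure. By the first mixed-volume formula \eqref{eq:sec-8-first-mixed-volume} applied to the mixed area measure $S(K[n-2],L,\cdot)$, and by the fact that its density is the mixed discriminant of $r_K$ ($n-2$ times) and $r_L$, we get
\eq{
  n\mathcal{V}(K[n-2],L[2])
  =\frac{1}{n-1}\int_{\Sn}h_L\,T^{ij}(r_L)_{ij}\,d\omega.
}
The normalization follows from $\frac{d}{ds}\det(r_K+sr_L)|_{s=0}=\operatorname{tr}(T r_L)$. I would cite Schneider for this identity. Alternatively, it follows by differentiating $\Vol(K+sL)=\frac1n\int h_{K+sL}\det(r_K+sr_L)\,d\omega$ twice at $s=0$, using the symmetry of mixed volumes.

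The second step expands $r_L$. Writing $h_L=\psi h$ gives
\eq{
  (r_L)_{ij}=h\,\bar{\nabla}^2_{ij}\psi+\bar{\nabla}_i\psi\,\bar{\nabla}_jh+\bar{\nabla}_jh\,\bar{\nabla}_i\psi+\psi\,(r_K)_{ij}.
}
The last term contributes
\eq{
  \int\psi^2h\,T^{ij}(r_K)_{ij}\,d\omega=(n-1)\int\psi^2h\det r_K\,d\omega=(n-1)\int\frac{h_L^2}{h_K}\,dS_K,
}
using \eqref{eq:sec-8-smooth-surface-area-measure} and the trace identity $\operatorname{tr}(T r_K)=(n-1)\det r_K$. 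The remaining terms combine into
\eq{
  \int\psi\,T^{ij}\,\bar{\nabla}_i\bigl(h^2\bar{\nabla}_j\psi\bigr)\,d\omega.
}

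The third step, which I expect to be the only nontrivial point, is the integration by parts. It uses that $T$ is divergence free, $\bar{\nabla}_iT^{ij}=0$, which holds because $r_K=\bar{\nabla}^2h+h\bar{g}$ is a Codazzi tensor on the round sphere (the Cheng--Yau identity). With this, the integral above equals
\eq{
  -\int h^2\,T^{ij}\bar{\nabla}_i\psi\,\bar{\nabla}_j\psi\,d\omega.
}
Finally, $g_K^{-1}=h\,r_K^{-1}$ and $dV_K=h\det r_K\,d\omega$ give
\eq{
  \abs{\nabla\psi}^2_{g_K}\,dV_K=h^2\det(r_K)\,r_K^{-1}(d\psi,d\psi)\,d\omega=h^2\,T(d\psi,d\psi)\,d\omega.
}

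Assembling the three steps yields
\eq{
  (n-1)\,n\,\mathcal{V}(K[n-2],L[2])=(n-1)\int\frac{h_L^2}{h_K}\,dS_K-\int\abs{\nabla\psi}^2_{g_K}\,dV_K,
}
which is \eqref{eq:sec-8-smooth-Dirichlet-identity}. The $C^2_+$ hypothesis makes all of these computations legitimate. If only $h_K\in C^2$ is available, the divergence-free property should be understood weakly, or obtained by approximating $K$ with $C^3$ bodies and passing to the limit in both sides.
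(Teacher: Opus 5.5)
Your proposal is correct and takes the same route as the paper. Both write $n(n-1)\mathcal{V}(K[n-2],L[2])=\int_{\Sn}h_L\tr(r_K^{-1}r_L)\,dS_K$ and then integrate by parts. The paper gets that identity by differentiating the first mixed-volume formula along $K+tL$ rather than via mixed discriminants, and it cites Milman's integration-by-parts formula where you derive it from the divergence-free cofactor of $r_K$. As you note, your derivation needs a weak formulation or $C^3$ approximation when $h_K$ is only $C^2$.
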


\begin{proof}
The radius-of-curvature tensor of $K+tL$ is $r_K+t r_L$. By \eqref{eq:sec-8-smooth-surface-area-measure},
\eq{
  \left.\frac{d}{dt}\right|_{t=0^+}dS_{K+tL}
  =\tr\bigl(r_K^{-1}r_L\bigr)\,dS_K.
}
Differentiating \eqref{eq:sec-8-first-mixed-volume} with $M=K+tL$ at $t=0^+$, we obtain
\eq{
  n(n-1)\mathcal{V}(K[n-2],L[2])=\int_{\Sn}h_L\tr\bigl(r_K^{-1}r_L\bigr)\,dS_K=\int_{\Sn}\frac{h_L}{h_K}\tr\bigl(r_K^{-1}r_L\bigr)\,dV_K.
}
By the integration-by-parts formula following \cite[Eq. (2.6)]{Mil25},
\eq{
  \int_{\Sn}\frac{h_L}{h_K}\tr\bigl(r_K^{-1}r_L\bigr)\,dV_K
  =(n-1)\int_{\Sn}\frac{h_L^2}{h_K^2}\,dV_K
  -\int_{\Sn}\left|\nabla\frac{h_L}{h_K}\right|_{g_K}^2\,dV_K.
}
\end{proof}

\begin{lemma}\label{lem:sec-8-fixed-normal-comparison}
Let $h\in\mathcal{U}_{\mathrm{irr}}$ and $P=P(h)$. For a convex body $L$, define $z,a\in\R^J$ by
\eq{
  z_i=h_L(u_i),\quad a_i=\frac{z_i}{h_i}
  \quad(i\in J).
}
Then
\eq{\label{eq:sec-8-fixed-normal-energy-comparison}
  \mathsf{L}(a,a)\leq(n-1)\int_{\Sn}\frac{h_L^2}{h_P}\,dS_P
  -n(n-1)\mathcal{V}(P[n-2],L[2]).
}
\end{lemma}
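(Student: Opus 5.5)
The plan is to rewrite both sides of \eqref{eq:sec-8-fixed-normal-energy-comparison} in support-derivative language, so that the claim becomes a one-sided second-order comparison between $V(h+tz)$ and $\Vol(P+tL)$.

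\emph{Reduce to a Hessian bound.} By \eqref{eq:sec-8-polytope-surface-area-measure}, $S_P$ is atomic with weights $\mathcal{H}^{n-1}(F_i)$ at $u_i$. Since $h_P(u_i)=h_i$ and $z_i=h_ia_i$,
\eq{
  (n-1)\int_{\Sn}\frac{h_L^2}{h_P}\,dS_P
  =(n-1)\sum_{i\in J}\mathcal{H}^{n-1}(F_i)\frac{h_i^2a_i^2}{h_i}
  =(n-1)\sum_{i\in J}m_ia_i^2
  =(n-1)\mathsf{M}(a,a).
}
By \eqref{eq:sec-8-irredundant-support-forms}, $\mathsf{L}(a,a)=(n-1)\mathsf{M}(a,a)-B(a,a)$ and $B(a,a)=D^2V(h)[z,z]$. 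Hence the lemma is equivalent to
\eq{
  D^2V(h)[z,z]\geq n(n-1)\mathcal{V}(P[n-2],L[2]).
}

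\emph{Compare two volume curves.} For $t\geq 0$, the body $P+tL$ satisfies $\ip{x}{u_i}\leq h_i+tz_i$ for every $x\in P+tL$ and every $i\in J$. Therefore $P+tL\subseteq P(h+tz)$. Set
\eq{
  g(t)=V(h+tz)-\Vol(P+tL).
}
Because $\mathcal{U}_{\mathrm{irr}}$ is open by \autoref{lem:sec-8-irredundant-open}, we have $h+tz\in\mathcal{U}_{\mathrm{irr}}$ for all small $t\geq 0$. On this range $g(t)\geq 0$, and $g(0)=0$.

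\emph{First derivatives cancel.} By \autoref{prop:sec-8-support-derivatives},
\eq{
  \frac{d}{dt}\Big|_{t=0}V(h+tz)=\sum_{i\in J}\mathcal{H}^{n-1}(F_i)z_i=\int_{\Sn}h_L\,dS_P.
}
By \eqref{eq:sec-5-mixed-volume-expansion} and \eqref{eq:sec-8-first-mixed-volume}, the same quantity $n\mathcal{V}(P[n-1],L)$ is the right derivative of $\Vol(P+tL)$ at $0$. Hence $g'(0^+)=0$.

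\emph{Second-order test.} The function $V$ is $C^2$ on $\mathcal{U}_{\mathrm{irr}}$ by \autoref{thm:sec-9-C2-regularity}. The function $t\mapsto\Vol(P+tL)$ is a polynomial in $t$ with second derivative $n(n-1)\mathcal{V}(P[n-2],L[2])$ at $0$. So $g$ is $C^2$ on $[0,t_0)$, and Taylor's theorem gives $g(t)=\tfrac12 g''(0)t^2+o(t^2)$. Nonnegativity of $g$ for small $t>0$ then forces $g''(0)\geq 0$, that is,
\eq{
  D^2V(h)[z,z]\geq n(n-1)\mathcal{V}(P[n-2],L[2]),
}
which is the reduced claim.

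\emph{Main obstacle.} The only delicate point is the regularity at a possibly nonsimple $h$. We need $V$ to be $C^2$ with derivatives given by the facet and ridge formulas, which is exactly what \autoref{thm:sec-9-C2-regularity} and \autoref{prop:sec-8-support-derivatives} provide. Everything else is bookkeeping with the first mixed-volume formula.
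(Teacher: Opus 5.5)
Your proposal is correct and follows essentially the same route as the paper. Both arguments use the inclusion $P+tL\subseteq P(h+tz)$, cancel the first derivatives via the first mixed-volume formula, and apply a one-sided second-order Taylor test with $V\in C^2(\mathcal{U}_{\mathrm{irr}})$ to get $B(a,a)\geq n(n-1)\mathcal{V}(P[n-2],L[2])$, then conclude using $\mathsf{M}(a,a)=\int_{\Sn}h_L^2/h_P\,dS_P$; the only difference is that you reduce to the Hessian bound first, while the paper proves it first and substitutes afterwards.
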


\begin{proof}
For $t\geq 0$,
\eq{
  h_{P+tL}(u_i)=h_i+t z_i
  \quad(i\in J).
}
By \autoref{lem:sec-8-irredundant-open}, $h+tz\in\mathcal{U}_{\mathrm{irr}}$ whenever $\abs{t}$ is sufficiently small. Therefore,
\eq{\label{eq:sec-8-fixed-normal-inclusion}
  P+tL\subseteq P(h+tz).
}
The $C^2$ regularity of $V$ established in \autoref{thm:sec-9-C2-regularity} ensures that $t\mapsto\Vol(P(h+tz))$ is $C^2$ near zero. Note that the first derivative formula in \autoref{prop:sec-8-support-derivatives} also follows from the Wulff-shape variational formula \cite[p. 412, Lem. 7.5.3]{Sch14}. Together with \eqref{eq:sec-8-first-mixed-volume} and the mixed-volume expansion \eqref{eq:sec-5-mixed-volume-expansion}, this yields
\eq{
  DV(h)[z]=\sum_{i\in J}h_L(u_i)\mathcal{H}^{n-1}(F_i)
  =n\mathcal{V}(P[n-1],L)
  =\left.\frac{d}{dt}\right|_{t=0^+}\Vol(P+tL).
}
Since $z=h\odot a$, Taylor's formula yields
\eq{
  \Vol(P(h+tz))
  =\Vol(P)+tDV(h)[z]+\frac{t^2}{2}B(a,a)+o(t^2).
}
Moreover, \eqref{eq:sec-5-mixed-volume-expansion} implies
\eq{
  \Vol(P+tL)
  &=\Vol(P)+nt\mathcal{V}(P[n-1],L)\\
  &\quad+\frac{n(n-1)t^2}{2}\mathcal{V}(P[n-2],L[2])+O(t^3).
}
In view of \eqref{eq:sec-8-fixed-normal-inclusion},
\eq{
  0&\leq\Vol(P(h+tz))-\Vol(P+tL)\\
  &=\frac{t^2}{2}\bigl(B(a,a)-n(n-1)\mathcal{V}(P[n-2],L[2])\bigr)+o(t^2)
  \quad(t\to 0^+).
}
Consequently,
\eq{\label{eq:sec-8-fixed-normal-Hessian-comparison}
  B(a,a)\geq n(n-1)\mathcal{V}(P[n-2],L[2]).
}
Combining \eqref{eq:sec-8-irredundant-support-forms}, \eqref{eq:sec-8-irredundant-network-formulas}, and \eqref{eq:sec-8-polytope-surface-area-measure}, we obtain
\eq{
  \mathsf{M}(a,a)=A(a^{\odot2})
  =\int_{\Sn}\frac{h_L^2}{h_P}\,dS_P.
}
Now using $\mathsf{L}=(n-1)\mathsf{M}-B$, we conclude that
\eq{
  \mathsf{L}(a,a)\leq(n-1)\int_{\Sn}\frac{h_L^2}{h_P}\,dS_P
  -n(n-1)\mathcal{V}(P[n-2],L[2]).
}
\end{proof}

\begin{remark}\label{rem:sec-8-Milman-upper-semicontinuity}
We compare $\lambda_{1,e}(P)$ with the mixed-volume spectral quantity $\lambda^C_{1,e}(P)$ introduced by Milman \cite[Sec. 1.4, Eq. (1.7)]{Mil24}. Milman's definition uses Putterman's mixed-volume formulation of the local $L^p$-Brunn--Minkowski inequality \cite[App. A]{Put21}.

Let $P=P(h)$ be a full-dimensional origin-symmetric polytope. For a full-dimensional origin-symmetric convex body $L$, set $a_i=h_L(u_i)/h_i$. By \eqref{eq:sec-8-polytope-surface-area-measure} and \eqref{eq:sec-8-first-mixed-volume},
\eq{
  \widetilde{\mathsf{M}}(a,a)
  &=\mathsf{M}(a,a)-\frac{A(a)^2}{S}\\
  &=\int_{\Sn}\frac{h_L^2}{h_P}\,dS_P
  -\frac{n\mathcal{V}(P[n-1],L)^2}{\Vol(P)}.
}
The admissibility condition in Milman's definition is precisely that $a$ is nonconstant. For every admissible $L$, \eqref{eq:sec-8-fixed-normal-energy-comparison} implies
\eq{
  \lambda_{1,e}(P)
  \leq\frac{\mathsf{L}(a,a)}{\widetilde{\mathsf{M}}(a,a)}\leq\frac{(n-1)\displaystyle\int_{\Sn}\frac{h_L^2}{h_P}\,dS_P
  -n(n-1)\mathcal{V}(P[n-2],L[2])}
  {\int_{\Sn}\frac{h_L^2}{h_P}\,dS_P
  -\frac{n\mathcal{V}(P[n-1],L)^2}{\Vol(P)}}.
}
Taking the infimum of the last quotient over admissible $L$, we obtain
\eq{
  \lambda_{1,e}(P)\leq\lambda^C_{1,e}(P).
}
\end{remark}

The upper semicontinuity of $\lambda^C_{1,e}$ and its agreement with $\lambda_{1,e}$ on $C^2_+$ bodies \cite[Sec. 1.4]{Mil24}, together with the comparison above, establish upper semicontinuity of $\lambda_{1,e}$ under the assumptions below. For completeness, we provide a direct proof of the following theorem.

\begin{theorem}[Upper semicontinuity under polyhedral approximation]\label{thm:sec-8-upper-semicontinuity}
Suppose that the full-dimensional origin-symmetric polytopes $P_{\nu}$ converge in the Hausdorff metric to an origin-symmetric $C^2_+$ convex body $K$. Then
\eq{\label{eq:sec-8-upper-semicontinuity}
  \limsup_{\nu\to\infty}\lambda_{1,e}(P_{\nu})\leq\lambda_{1,e}(K).
}
\end{theorem}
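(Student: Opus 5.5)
The plan is to test $\lambda_{1,e}(P_\nu)$ against a smooth eigenfunction of $K$, using \autoref{lem:sec-8-fixed-normal-comparison} as a one-sided bridge and then passing to the limit with weak convergence of surface area measures. Fix $\varepsilon>0$. By density, choose an even $\psi\in C^\infty(\Sn)$, nonconstant, whose Rayleigh quotient in \eqref{eq:introduction-smooth-Rayleigh-characterization} (centered) is at most $\lambda_{1,e}(K)+\varepsilon$. Set $h_L=h_K(1+s\psi)$ for a small fixed $s>0$. Since $K$ is $C^2_+$, $r_{h_L}=r_K+s\,r_{h_K\psi}$ is positive definite for $s$ small. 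Hence $L$ is an origin-symmetric $C^2_+$ body with $h_L/h_K=1+s\psi$.

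By \autoref{lem:sec-8-smooth-Dirichlet-identity}, $s^2\int|\nabla\psi|^2dV_K$ equals
\[
(n-1)\int h_L^2/h_K\,dS_K-n(n-1)\mathcal{V}(K[n-2],L[2]).
\]
The centered smooth mass $s^2\bigl(\int\psi^2dV_K-(\int\psi\, dV_K)^2/\int dV_K\bigr)$ equals
\[
\int h_L^2/h_K\,dS_K-n\mathcal{V}(K[n-1],L)^2/\Vol(K),
\]
by \eqref{eq:sec-8-first-mixed-volume}, since $\int dV_K=n\Vol(K)$. So the smooth quotient of $\psi$ is exactly the mixed-volume quotient appearing in \autoref{rem:sec-8-Milman-upper-semicontinuity}, evaluated at $K$.

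For each $\nu$, write $P_\nu=P(h^{(\nu)})$ with $h^{(\nu)}\in\mathcal{U}^I_{\mathrm{irr}}$ even, and define $a^{(\nu)}_i=h_L(u_i)/h^{(\nu)}_i$, which is even. \autoref{lem:sec-8-fixed-normal-comparison}, exactly as in \autoref{rem:sec-8-Milman-upper-semicontinuity}, gives
\[
\lambda_{1,e}(P_\nu)\le\frac{(n-1)\int h_L^2/h_{P_\nu}\,dS_{P_\nu}-n(n-1)\mathcal{V}(P_\nu[n-2],L[2])}{\int h_L^2/h_{P_\nu}\,dS_{P_\nu}-n\mathcal{V}(P_\nu[n-1],L)^2/\Vol(P_\nu)},
\]
provided the denominator $\widetilde{\mathsf{M}}(a^{(\nu)},a^{(\nu)})$ is positive. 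The Rayleigh characterization only requires this denominator to be positive, which holds whenever $a^{(\nu)}$ is nonconstant.

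Now pass to the limit. Hausdorff convergence gives $\Vol(P_\nu)\to\Vol(K)$ and continuity of mixed volumes, so $\mathcal{V}(P_\nu[n-2],L[2])\to\mathcal{V}(K[n-2],L[2])$ and $\mathcal{V}(P_\nu[n-1],L)\to\mathcal{V}(K[n-1],L)$. It also gives weak convergence $S_{P_\nu}\to S_K$. Since $K$ contains a ball about the origin, $h_{P_\nu}\to h_K$ uniformly on $\Sn$ with a uniform positive lower bound. Hence $h_L^2/h_{P_\nu}\to h_L^2/h_K$ uniformly, and $\int h_L^2/h_{P_\nu}\,dS_{P_\nu}\to\int h_L^2/h_K\,dS_K$. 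The limiting denominator equals $s^2$ times the smooth centered mass of $\psi$, which is positive because $\psi$ is nonconstant. So for large $\nu$ the denominators are positive, which also forces $a^{(\nu)}$ nonconstant. Taking $\limsup$ yields $\limsup_\nu\lambda_{1,e}(P_\nu)\le\lambda_{1,e}(K)+\varepsilon$, and then letting $\varepsilon\to 0$ gives \eqref{eq:sec-8-upper-semicontinuity}.

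The main point requiring care is the convergence of $\int h_L^2/h_{P_\nu}\,dS_{P_\nu}$. This needs uniform convergence of the integrand together with weak convergence and bounded total mass of $S_{P_\nu}$, and it depends on $h_K>0$ being bounded below. The other delicate step is ensuring that $L$ is a genuine convex body, which relies on $K$ being $C^2_+$ and $s$ being small.
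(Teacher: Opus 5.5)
Your proposal is correct and follows essentially the same route as the paper. Like the paper, you build the $C^2_+$ test body $L$ with $h_L/h_K$ an affine function of $\psi$, and bound $\lambda_{1,e}(P_\nu)$ by testing with the even facet function $h_L(u_i)/h_{P_\nu}(u_i)$ through the fixed-normal comparison lemma. You then pass to the limit using uniform convergence of support functions, weak convergence of $S_{P_\nu}$, continuity of mixed volumes, and the smooth Dirichlet identity.

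The differences are cosmetic. You scale as $h_K(1+s\psi)$ rather than $h_K(c_0+\psi)$, and you use an $\varepsilon$-almost minimizer with the centered quotient instead of taking the infimum over mean-zero $\psi$ at the end.
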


\begin{proof}
Let $\psi\in C^2(\Sn)$ be a nonzero even function satisfying
\eq{
  \int_{\Sn}\psi\,dV_K=0.
}
Choose $c_0>0$ sufficiently large that
\eq{
  h_K(c_0+\psi)> 0,\quad
  r_{h_K(c_0+\psi)}=c_0 r_K+r_{h_K\psi}
  \quad\text{is positive definite}.
}
Therefore, $h_K(c_0+\psi)$ is the support function of a $C^2_+$ origin-symmetric convex body $L$.

For each $\nu$, we index the facets of $P_{\nu}$ by a signed set $I_{\nu}$, with outer unit normals satisfying $u_{-i}^{(\nu)}=-u_i^{(\nu)}$. Define
\eq{
  a_i^{(\nu)}=\frac{h_L(u_i^{(\nu)})}{h_{P_{\nu}}(u_i^{(\nu)})}
  \quad(i\in I_{\nu}).
}
Since $h_L$ and $h_{P_{\nu}}$ are even, $a^{(\nu)}$ is an even facet function. Let $\widetilde{\mathsf{M}}_{\nu}$ and $\mathsf{L}_{\nu}$ be the centered mass and energy forms of $P_{\nu}$. By \eqref{eq:sec-8-irredundant-support-forms}, \eqref{eq:sec-8-irredundant-network-formulas}, \eqref{eq:sec-8-first-mixed-volume}, and \eqref{eq:sec-8-fixed-normal-energy-comparison}, the centered mass and energy satisfy
\eq{\label{eq:sec-8-approximating-mass-energy}
  \widetilde{\mathsf{M}}_{\nu}(a^{(\nu)},a^{(\nu)})&=\int_{\Sn}\frac{h_L^2}{h_{P_{\nu}}}\,dS_{P_{\nu}}
  -\frac{n\mathcal{V}(P_{\nu}[n-1],L)^2}{\Vol(P_{\nu})},\\
  \mathsf{L}_{\nu}(a^{(\nu)},a^{(\nu)})&\leq(n-1)\int_{\Sn}\frac{h_L^2}{h_{P_{\nu}}}\,dS_{P_{\nu}}
  -n(n-1)\mathcal{V}(P_{\nu}[n-2],L[2]).
}
Under Hausdorff convergence, $h_{P_{\nu}}\to h_K>0$ uniformly and $S_{P_{\nu}}\rightharpoonup S_K$. Consequently,
\eq{
  \frac{h_L^2}{h_{P_{\nu}}}&\to\frac{h_L^2}{h_K}
  \quad\text{uniformly on }\Sn,\\
  \int_{\Sn}\frac{h_L^2}{h_{P_{\nu}}}\,dS_{P_{\nu}}
  &\to\int_{\Sn}\frac{h_L^2}{h_K}\,dS_K.
}
Since mixed volumes are continuous under Hausdorff convergence, passing to the limit in the first relation of \eqref{eq:sec-8-approximating-mass-energy} and using $dV_K=h_K\,dS_K$ and $h_L/h_K=c_0+\psi$, we obtain
\eq{\label{eq:sec-8-mass-limit}
  \lim_{\nu\to\infty}\widetilde{\mathsf{M}}_{\nu}(a^{(\nu)},a^{(\nu)})=\int_{\Sn}(c_0+\psi)^2\,dV_K
  -\frac{\left(\int_{\Sn}(c_0+\psi)\,dV_K\right)^2}
        {\int_{\Sn}1\,dV_K}=\int_{\Sn}\psi^2\,dV_K.
}
Similarly, passing to the $\limsup$ in the second relation of \eqref{eq:sec-8-approximating-mass-energy} and using \autoref{lem:sec-8-smooth-Dirichlet-identity},
\eq{\label{eq:sec-8-energy-limit}
  \limsup_{\nu\to\infty}\mathsf{L}_{\nu}(a^{(\nu)},a^{(\nu)})\leq\int_{\Sn}\abs{\nabla\psi}_{g_K}^2\,dV_K.
}
By \eqref{eq:sec-8-mass-limit}, the centered mass of $a^{(\nu)}$ is positive for all sufficiently large $\nu$. The Rayleigh characterization \eqref{eq:introduction-Rayleigh-characterization}, together with \eqref{eq:sec-8-mass-limit} and \eqref{eq:sec-8-energy-limit}, implies
\eq{
  \limsup_{\nu\to\infty}\lambda_{1,e}(P_{\nu})
  \leq\limsup_{\nu\to\infty}
  \frac{\mathsf{L}_{\nu}(a^{(\nu)},a^{(\nu)})}
       {\widetilde{\mathsf{M}}_{\nu}(a^{(\nu)},a^{(\nu)})}
       \leq\frac{\displaystyle\int_{\Sn}\abs{\nabla\psi}_{g_K}^2\,dV_K}
       {\displaystyle\int_{\Sn}\psi^2\,dV_K}.
}
Taking the infimum over $\psi$ in the smooth Rayleigh characterization \eqref{eq:introduction-smooth-Rayleigh-characterization} proves the claim.
\end{proof}

\subsection{Regularity of the volume}
\label{sec:mixed-volume-support-regularity}

\begin{lemma}\label{lem:sec-9-linear-mixed-volume}
Suppose that $h,r\in\mathcal{U}_{\mathrm{irr}}$ belong to the closure of a common simple type chamber. For $0\leq k\leq n-1$, the function
\eq{
  q\mapsto\mathcal{V}(P(q),P(h)[n-1-k],P(r)[k])
  \quad(q\in\mathcal{U}_{\mathrm{irr}})
}
is the restriction of a linear functional on $\R^J$.
\end{lemma}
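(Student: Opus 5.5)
The plan is to write the mixed volume as a sum over facets, using the standard formula for a polytope in the first slot. For convex bodies $K_2,\ldots,K_n$ and a polytope $M$ with facet normals among $(u_i)_{i\in J}$,
\eq{
  n\,\mathcal{V}(M,K_2,\ldots,K_n)=\sum_{i\in J}h_M(u_i)\,\mathcal{S}(K_2,\ldots,K_n;\{u_i\}),
}
where $\mathcal{S}(K_2,\ldots,K_n;\cdot)$ denotes the mixed area measure. We take $M=P(q)$ and $K_2,\ldots,K_n$ to be $P(h)[n-1-k],P(r)[k]$. The first task is to check that the right-hand side is linear in $q$.

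\textbf{Linearity in $q$.} For $q\in\mathcal{U}_{\mathrm{irr}}$ every labeled inequality defines a facet, so $h_{P(q)}(u_i)=q_i$. The mixed area measure does not depend on $q$. We must, however, make sure its atoms lie only at the prescribed normals $u_i$.

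\textbf{Support of the mixed area measure.} The mixed area measure of the polytopes $P(h)$ and $P(r)$ is supported on the normals of facets of Minkowski combinations $sP(h)+tP(r)$ with $s,t\geq 0$. This follows by polarizing $S_{sP(h)+tP(r)}$. Since $h$ and $r$ lie in the closure of a common simple type chamber $\overline{\mathcal{T}_{\Sigma}}$, \autoref{lem:sec-3-type-chamber-Minkowski-addition} gives $sP(h)+tP(r)=P(sh+tr)$. This polytope is an intersection of the prescribed half-spaces, so every facet normal belongs to $\{u_i\}$. Hence
\eq{
  \mathcal{S}\bigl(P(h)[n-1-k],P(r)[k];\cdot\bigr)=\sum_{i\in J}w_i\,\delta_{u_i}
}
with weights $w_i\geq 0$ independent of $q$. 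Therefore the mixed volume equals $\frac1n\sum_i w_iq_i$ on $\mathcal{U}_{\mathrm{irr}}$, which is the restriction of a linear functional.

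\textbf{Main obstacle: justifying the facet formula for $P(q)$.} The formula with a polytope in the first slot is needed even though the normal fan of $P(q)$ differs from $\Sigma$. It is the standard identity $n\mathcal{V}(L,K_2,\ldots,K_n)=\int h_L\,d\mathcal{S}(K_2,\ldots,K_n;\cdot)$, valid for an arbitrary convex body $L$ (\cite[Sch14, Eq. (5.19)]{Sch14}). Only the other bodies determine the measure. The one point to verify is that this integral reduces to the finite sum above. That holds once the support claim is established, because $h_{P(q)}(u_i)=q_i$.

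If citing the polarization argument is unsatisfactory, an alternative is to expand $\Vol(sP(h)+tP(r)+\varepsilon P(q))$ to first order in $\varepsilon$ through the first mixed-volume formula with the surface area measure $S_{P(sh+tr)}$, whose atoms lie at the $u_i$. Comparing the polynomial coefficients in $s$ and $t$ then yields the same conclusion directly.
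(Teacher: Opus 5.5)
Your proof is correct and follows essentially the paper's route. Both arguments use \autoref{lem:sec-3-type-chamber-Minkowski-addition} to see that $sP(h)+tP(r)=P(sh+tr)$ has all its facet normals among the $u_i$. Both then apply the first mixed-volume formula with $h_{P(q)}(u_i)=q_i$ and extract coefficients in the Minkowski parameter. The only difference is where the polarization happens: you polarize the mixed area measures, while the paper polarizes the scalar identity $\mathcal{V}(P(q),P(h+sr)[n-1])=\frac1n\sum_{i\in J}q_i\mathcal{H}^{n-1}(P(h+sr)^{u_i})$ in $s$ by interpolation, which is exactly your fallback argument.
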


\begin{proof}
By \autoref{lem:sec-3-type-chamber-Minkowski-addition},
\eq{
  P(h)+sP(r)=P(h+sr)\quad(s\geq 0).
}
Every facet normal of this polytope is among the prescribed normals. From the first mixed-volume formula \eqref{eq:sec-8-first-mixed-volume} and Minkowski multilinearity, it follows that
\eq{
  &\sum_{k=0}^{n-1}\binom{n-1}{k}s^k
    \mathcal{V}(P(q),P(h)[n-1-k],P(r)[k])\\
  &\quad=\mathcal{V}(P(q),(P(h)+sP(r))[n-1])\\
  &\quad=\frac1n\sum_{i\in J}q_i\mathcal{H}^{n-1}(P(h+sr)^{u_i})
  \quad(s\geq 0).
}
For every fixed $s$, the last expression is linear in $q$. The coefficients of a polynomial of degree at most $n-1$ are linear combinations of its values at $n$ distinct nonnegative numbers. Hence every coefficient on the first line is linear in $q$.
\end{proof}

\begin{theorem}\label{thm:sec-9-C2-regularity}
The volume function belongs to $C^2(\mathcal{U}_{\mathrm{irr}})$.
\end{theorem}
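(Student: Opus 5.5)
We prove that $V$ is $C^2$ on $\mathcal{U}_{\mathrm{irr}}$ by a piecewise-polynomial argument that uses the local chamber covering \eqref{eq:sec-9-local-incident-chambers}, together with mixed volumes to show that the chamber polynomials agree to second order. Fix $h\in\mathcal{U}_{\mathrm{irr}}$ and choose the ball $\mathcal{U}$ from \eqref{eq:sec-9-local-incident-chambers}. Then $\mathcal{U}$ is covered by finitely many closed support cells $\overline{\mathfrak{C}}$ containing $h$. On each $\overline{\mathfrak{C}}\cap\mathcal{U}$, the volume $V$ coincides with the chamber polynomial $\widehat{V}_{\sigma_{\mathfrak{C}}}$. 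Coincidence on the open cell comes from \autoref{prop:sec-3-volume-polynomial}; on the closure it follows from continuity of volume in $q$, since $P(q)$ varies continuously in the Hausdorff metric on $\mathcal{U}_{\mathrm{irr}}$. So $V$ is continuous and piecewise polynomial near $h$. A piecewise $C^\infty$ function on a finite polyhedral subdivision is $C^2$ provided that, at every point $p\in\mathcal{U}$, all polynomial pieces whose cells contain $p$ share the same value, gradient and Hessian at $p$. Standard gluing then gives $C^2$: on each cell the derivatives extend continuously to the closure, and they match on common boundaries.

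The key step is to show that the derivatives agree at such a point. Let $p\in\mathcal{U}$ lie in $\overline{\sigma}$ for a chamber $\sigma$ with polynomial $\widehat{V}_\sigma$. We express its derivatives at $p$ intrinsically, that is, through quantities depending only on $P(p)$ and not on $\sigma$. For $z\in\R^J$, write $z=q_1-q_2$ with $q_1,q_2\in\sigma$; this is possible because $\sigma$ is an open cone, so $\sigma-\sigma=\R^J$. By \autoref{lem:sec-3-type-chamber-Minkowski-addition}, for $r\in\overline\sigma$ and $s\ge0$ we have $P(r+s q_1)=P(r)+sP(q_1)$, and polarization makes $\widehat{V}_\sigma$ multilinear in the Minkowski sense. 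This gives
\[
  D\widehat{V}_\sigma(p)[q]=n\,\mathcal{V}(P(q),P(p)[n-1]),\qquad
  D^2\widehat{V}_\sigma(p)[q,q']=n(n-1)\,\mathcal{V}(P(q),P(q'),P(p)[n-2])
\]
for $q,q'\in\sigma$, and by linearity these formulas extend to all of $\R^J$.

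The first-derivative formula is intrinsic, since it equals $\sum_i q_i\mathcal{H}^{n-1}(P(p)^{u_i})$ by \eqref{eq:sec-8-first-mixed-volume}. This is exactly where \autoref{lem:sec-9-linear-mixed-volume} enters. Take $r\in\sigma$. The lemma with $k=0$ shows that $q\mapsto \mathcal{V}(P(q),P(p)[n-1])$ is linear on all of $\mathcal{U}_{\mathrm{irr}}$, independently of $\sigma$. For the Hessian, the lemma shows that $q\mapsto \mathcal{V}(P(q),P(p)[n-2],P(r))$ is linear in $q$. It remains to show that $q'\mapsto\mathcal{V}(P(q),P(q'),P(p)[n-2])$ has a form independent of $\sigma$.

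I would do this with the identity
\[
  \mathcal{V}(P(q),P(q'),P(p)[n-2])=\frac{1}{n-1}\sum_i q_i\, \mathcal{V}^{(n-1)}\bigl(P(q')^{u_i},P(p)^{u_i}[n-2]\bigr)
  \quad\text{(schematically)},
\]
and then apply the intrinsic second-derivative formula of \autoref{prop:sec-8-support-derivatives} to the facet $P(p)^{u_i}$. That expresses $D^2$ through the ridge volumes $\mathcal{H}^{n-2}(F_i\cap F_j)/\sin\theta_{ij}$ of $P(p)$, together with the Euler relation for the diagonal entries, and these depend only on $P(p)$. There is a circularity here: \autoref{prop:sec-8-support-derivatives} presupposes $C^2$ regularity. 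To avoid it, I would instead use \autoref{lem:sec-4-volume-support-derivatives} inside each chamber, and pass to the limit $p^{(\nu)}\to p$ with $p^{(\nu)}\in\sigma$ using the ridge-volume convergence \eqref{eq:sec-8-ridge-volume-convergence}. The limit $\mathcal{H}^{n-2}(F_i\cap F_j)$, or $0$ when the intersection is not a ridge, is intrinsic to $P(p)$. Hence every chamber incident to $p$ yields the same Hessian.

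The main obstacle is justifying \eqref{eq:sec-8-ridge-volume-convergence}: facets and ridges must converge continuously as $q\to p$ within $\overline\sigma$, including the degenerate case where $F_i\cap F_j$ has dimension below $n-2$ at $p$ but is a ridge nearby. I would handle this using the linearity of vertices $x_{\tilde J}(q)$ in $q$ on $\overline\sigma$. Each face of $P(q)$ for $q\in\overline\sigma$ is the convex hull of finitely many linearly moving points $x_\zeta(q)$, so its $(n-2)$-volume is a polynomial in $q$ on $\overline\sigma$ and is continuous up to the boundary. At $p$ that volume is the Hausdorff measure of the limit set, and it vanishes exactly when the dimension drops. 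This continuity, together with the agreement of value, gradient and Hessian across cells, completes the $C^2$ gluing.
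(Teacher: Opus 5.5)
Your argument is correct. The gluing step (finitely many polynomial branches on a ball around $p$ whose cell closures all contain $p$, then a Taylor expansion) matches the paper, and so does the identification of the gradient via $n\mathcal{V}(P(q),P(p)[n-1])=\sum_i q_i\mathcal{H}^{n-1}(P(p)^{u_i})$.

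You diverge at the decisive step, the chamber-independence of $D^2\widehat{V}_\sigma(p)$. The paper stays entirely with mixed volumes. It sets $\mathcal{Q}_p(q,r)=n(n-1)\mathcal{V}(P(q),P(r),P(p)[n-2])$ for $q,r$ in the ball $\mathcal{U}$ and notes that every $r\in\mathcal{U}$ shares a chamber closure with $p$. It then applies \autoref{lem:sec-9-linear-mixed-volume} with $k=1$ in each variable separately, so $\mathcal{Q}_p$ is one symmetric bilinear form on $\R^J$. Expanding $\widehat{V}_\sigma(p+sq)=\Vol(P(p)+sP(q))$ for $q\in\sigma\cap\mathcal{U}$ gives $D^2\widehat{V}_\sigma(p)=\mathcal{Q}_p$ for every incident $\sigma$.

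You already had this lemma in hand. Together with the symmetry of mixed volumes, it finishes your ``it remains to show'' step, so the schematic facet decomposition is not needed. (Its constant should be $1/n$, and its sum runs over the facet normals of $P(q')+P(p)$, which are prescribed only when $q'$ and $p$ share a chamber closure.)

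Your actual route computes $D^2\widehat{V}_\sigma(p)$ as a limit from inside $\sigma$ of the ridge formulas of \autoref{lem:sec-4-volume-support-derivatives}, with Euler's identity and $p_i>0$ giving the diagonal. This correctly avoids the circularity with \autoref{prop:sec-8-support-derivatives}. It buys the explicit facet--ridge description of $D^2V$ on all of $\mathcal{U}_{\mathrm{irr}}$ together with the regularity. The cost is that the work moves into continuity of ridge volumes up to $\partial\sigma$, which the paper's proof never needs. The paper's route instead yields the mixed-volume representation $D^2V(p)=\mathcal{Q}_p$, which is reused in \autoref{thm:sec-9-support-Hessian-midpoint}.

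In your continuity step, vertex linearity shows that $\mathcal{H}^{n-2}(\operatorname{conv}\{x_\zeta(q):\zeta\supseteq\operatorname{pos}\{u_i,u_j\}\})$ extends continuously to $\overline{\sigma}$ and that the limit set lies in $F_i(p)\cap F_j(p)$. That handles collapsing ridges. When $F_i(p)\cap F_j(p)$ is $(n-2)$-dimensional, you still need the reverse inclusion, i.e., lower semicontinuity of the ridge. It follows from the strict-slack argument in the proof of \autoref{lem:sec-8-irredundant-open}, applied at a relative interior point of that ridge. The same argument shows that $\{i,j\}$ is adjacent in every incident chamber, so no branch assigns that pair a zero entry.
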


\begin{proof}
For $h\in\mathcal{U}_{\mathrm{irr}}$, we choose $\mathcal{U}$ as in \eqref{eq:sec-9-local-incident-chambers} and define
\eq{
  \mathcal{Q}_h(q,r)=n(n-1)\mathcal{V}(P(q),P(r),P(h)[n-2])
  \quad(q,r\in \mathcal{U}).
}
For each fixed $r\in\mathcal{U}$, the vectors $h$ and $r$ lie in the closure of a common simple type chamber. Thus \autoref{lem:sec-9-linear-mixed-volume} shows that $q\mapsto\mathcal{Q}_h(q,r)$ is the restriction of a linear functional on $\R^J$. By symmetry, $r\mapsto\mathcal{Q}_h(q,r)$ has the same property for each fixed $q\in\mathcal{U}$. Since $\mathcal{U}$ contains a basis of $\R^J$, the map $\mathcal{Q}_h:\mathcal{U}\times\mathcal{U}\to\R$ extends uniquely to a symmetric bilinear form on $\R^J\times\R^J$, still denoted by $\mathcal{Q}_h$.

We first compare the derivatives of the incident chamber volume polynomials at $h$. If $\sigma$ is incident to $h$ and $q\in\sigma\cap \mathcal{U}$, then $h+sq\in\sigma$ for $s>0$. By \autoref{lem:sec-3-type-chamber-Minkowski-addition},
\eq{
  \widehat{V}_{\sigma}(h+sq)=\Vol(P(h)+sP(q))
  =\sum_{k=0}^n\binom nk s^k\mathcal{V}(P(q)[k],P(h)[n-k]).
}
Comparing the constant, linear, and quadratic coefficients, we obtain
\eq{
  \widehat{V}_{\sigma}(h)&=V(h),\\
  D\widehat{V}_{\sigma}(h)[q]&=\sum_{i\in J}q_i\mathcal{H}^{n-1}(P(h)^{u_i})
    =:\mathcal{L}_h[q],\\
  D^2\widehat{V}_{\sigma}(h)[q,q]&=\mathcal{Q}_h(q,q).
}
The set $\sigma\cap\mathcal{U}$ is nonempty and open. The polynomial identity principle and polarization show that
\eq{\label{eq:sec-9-common-chamber-derivatives}
  \widehat{V}_{\sigma}(h)=V(h),\quad
  D\widehat{V}_{\sigma}(h)=\mathcal{L}_h,\quad
  D^2\widehat{V}_{\sigma}(h)=\mathcal{Q}_h
}
for every chamber incident to $h$.

We now verify differentiability of $V$. For sufficiently small $z$, some chamber $\sigma_z$ is incident to both $h$ and $h+z$. Applying Taylor's formula and using \eqref{eq:sec-9-common-chamber-derivatives}, we obtain
\eq{
  V(h+z)=\widehat{V}_{\sigma_z}(h+z)
  =V(h)+\mathcal{L}_h[z]+\frac12\mathcal{Q}_h(z,z)+O(\lvert z\rvert^3).
}
The remainder is uniform because only finitely many chamber polynomials occur. Thus $DV(h)=\mathcal{L}_h$. Since $h$ was arbitrary, $V$ is differentiable on $\mathcal{U}_{\mathrm{irr}}$, and its derivative agrees with the derivative of every incident branch.

Similarly, Taylor's formula for the first derivatives reads
\eq{
  DV(h+z)=D\widehat{V}_{\sigma_z}(h+z)
  =\mathcal{L}_h+\mathcal{Q}_h(z,\mathord{\cdot})+O(\lvert z\rvert^2).
}
The remainder is measured in the norm on $(\R^J)^*$, with a constant independent of $\sigma_z$. Since $DV(h)=\mathcal{L}_h$, there is a constant $C>0$ such that
\eq{
  \frac{\norm{DV(h+z)-DV(h)-\mathcal{Q}_h(z,\mathord{\cdot})}}{\lvert z\rvert}\leq C\lvert z\rvert\to 0\quad(z\to 0,\ z\neq 0).
}
Hence $z\mapsto\mathcal{Q}_h(z,\mathord{\cdot})$ is the Fr\'echet derivative of $DV$ at $h$. In particular, $D^2V(h)=\mathcal{Q}_h$. Since $h$ was arbitrary, $V$ is twice differentiable on $\mathcal{U}_{\mathrm{irr}}$. By \eqref{eq:sec-9-common-chamber-derivatives},
\eq{\label{eq:sec-9-incident-Hessian-agreement}
  D^2V(q)=\mathcal{Q}_q=D^2\widehat{V}_{\sigma}(q)\quad(q\in\mathcal{U}_{\mathrm{irr}}\cap\overline{\sigma}).
}

Finally, for $h+z\in\mathcal{U}$, we choose a simple type chamber $\sigma_z$ incident to both $h$ and $h+z$, as in \eqref{eq:sec-9-local-incident-chambers}. By \eqref{eq:sec-9-incident-Hessian-agreement},
\eq{
  \norm{D^2V(h+z)-D^2V(h)}
  &=\norm{D^2\widehat{V}_{\sigma_z}(h+z)-D^2\widehat{V}_{\sigma_z}(h)}\\
  &\leq\max_{\sigma:\,h\in\overline{\sigma}}
    \norm{D^2\widehat{V}_{\sigma}(h+z)-D^2\widehat{V}_{\sigma}(h)}\to 0\quad(z\to 0).
}
Therefore, $V\in C^2(\mathcal{U}_{\mathrm{irr}})$.
\end{proof}

\subsection{A midpoint inequality for the support Hessian}
\label{sec:support-Hessian-midpoint}

The next theorem compares the support Hessian in opposite directions. This inequality will be used for incident-chamber stationarity.

\begin{theorem}[Support-Hessian midpoint inequality]\label{thm:sec-9-support-Hessian-midpoint}
For every $h\in\mathcal{U}_{\mathrm{irr}}$ and $w,z\in\R^J$,
\eq{\label{eq:sec-9-support-Hessian-midpoint}
  \liminf_{t\to 0^+}
  \frac{D^2V(h+tz)[w,w]+D^2V(h-tz)[w,w]-2D^2V(h)[w,w]}{t}\geq 0.
}
\end{theorem}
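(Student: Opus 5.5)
The plan is to reduce \eqref{eq:sec-9-support-Hessian-midpoint} to a comparison of one-sided third derivatives of incident chamber polynomials, and then to prove that comparison with mixed volumes and the inclusion $\tfrac12\bigl(P(h+tz)+P(h-tz)\bigr)\subseteq P(h)$.

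\textbf{Step 1: reduction to one-sided third derivatives.} Fix $h$ and choose the ball $\mathcal{U}$ from \eqref{eq:sec-9-local-incident-chambers}. Only finitely many support cells meet the segment $\{h+sz:\abs{s}<s_0\}$, and each slack $\delta_{\tilde{J},k}(h+sz)$ is affine in $s$. Hence there are $t_0>0$ and two simple type chambers $\sigma_+,\sigma_-$, both incident to $h$, with
\eq{
  h+tz\in\overline{\sigma_+},\quad h-tz\in\overline{\sigma_-}\quad(0<t<t_0).
}
By \eqref{eq:sec-9-incident-Hessian-agreement}, $D^2V(h\pm tz)=D^2\widehat V_{\sigma_\pm}(h\pm tz)$. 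By \eqref{eq:sec-9-common-chamber-derivatives}, both chamber polynomials have Hessian $\mathcal{Q}_h$ at $h$. A Taylor expansion of the polynomials $\widehat V_{\sigma_\pm}$ then shows that the quotient in \eqref{eq:sec-9-support-Hessian-midpoint} converges to
\eq{
  \Delta(w):=D^3\widehat V_{\sigma_+}(h)[z,w,w]-D^3\widehat V_{\sigma_-}(h)[z,w,w].
}
So the liminf is an actual limit, and the theorem reduces to showing $\Delta(w)\geq 0$ for every $w$.

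\textbf{Step 2: mixed-volume form of the kink.} For $r\in\sigma_+\cap\mathcal{U}$ near $h$, \autoref{lem:sec-3-type-chamber-Minkowski-addition} gives $P(h+sr)=P(h)+sP(r)$ for $s\geq 0$. Since $\widehat V_{\sigma_+}$ is a polynomial, this identifies
\eq{
  D^3\widehat V_{\sigma_+}(h)[r,r,r]=n(n-1)(n-2)\,\mathcal{V}\bigl(P(r)[3],P(h)[n-3]\bigr),
}
and likewise for $\sigma_-$. I will then express $\Delta(w)$ as a limit of differences of the bilinear forms $\mathcal{Q}_{h+tz}$ and $\mathcal{Q}_{h-tz}$:
\eq{
  t\,\Delta(w)=\mathcal{Q}_{h+tz}(w,w)+\mathcal{Q}_{h-tz}(w,w)-2\mathcal{Q}_h(w,w)+O(t^2).
}
Here $\mathcal{Q}_q(w,w)=n(n-1)\mathcal{V}(P(w)[2],P(q)[n-2])$ whenever $w$ is represented by a polytope, extended bilinearly otherwise. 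The basic geometric input is
\eq{
  \tfrac12 P(h+tz)+\tfrac12 P(h-tz)\subseteq P(h),
}
with equality exactly when $h\pm tz$ share a chamber with $h$, in which case $\Delta=0$. Multilinearity in the last $n-2$ slots and monotonicity of mixed volumes under inclusion then suggest that $\mathcal{V}(K[2],P(h)[n-2])$ dominates the average of the two perturbed terms up to first order. This is exactly the inequality we need, provided $w$ corresponds to a genuine convex body $K$.

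\textbf{Step 3: the main obstacle, arbitrary $w$.} The difficulty is that $w$ is arbitrary, and monotonicity of mixed volumes only applies to bodies, not to formal differences. I would first prove $\Delta(w)\geq 0$ for $w\in\mathcal{U}$ near $h$, by choosing $w=h+\varepsilon y$ with $w$ in a chamber common with both $h\pm tz$ via \eqref{eq:sec-9-local-incident-chambers}, and applying the inclusion argument above. For general $w$, write $w=(h+\varepsilon y)/\varepsilon-h/\varepsilon$ and expand the quadratic form. The pure $h$-terms and the cross terms involve $D^3\widehat V_{\sigma_\pm}(h)[z,h,\cdot]=(n-2)D^2\widehat V_{\sigma_\pm}(h)[z,\cdot]$ by Euler's identity. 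These second derivatives coincide for $\sigma_+$ and $\sigma_-$ by \eqref{eq:sec-9-common-chamber-derivatives}, so they cancel in $\Delta$. Hence
\eq{
  \Delta(w)=\varepsilon^{-2}\Delta(h+\varepsilon y),
}
which is nonnegative by the body case. Checking this Euler-identity cancellation carefully is the step I expect to carry the weight of the argument.
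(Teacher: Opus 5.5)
Your Step 1 is sound; it even shows that the lower limit is a limit, namely the jump $\Delta(w)$ of third derivatives between the two incident chamber polynomials. The Euler cancellation in Step 3, which reduces an arbitrary $w$ to $\tilde h=h+\varepsilon w$ near $h$, is also correct, and the paper finishes the same way. The gap is the core inequality $\Delta(\tilde h)\geq 0$, which Step 2 does not establish, for three reasons.

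First, the formula $\mathcal{Q}_q(w,w)=n(n-1)\mathcal{V}(P(w)[2],P(q)[n-2])$ holds only when $P(q+sw)=P(q)+sP(w)$, e.g.\ when $q$ and $w$ share a chamber closure. For fixed $w=\tilde h$ and $q=h\pm tz$ with $t\to 0^+$ this fails in general. Indeed, if it held at $h$ and at $h\pm tz$, then \autoref{lem:sec-9-linear-mixed-volume} would make the second difference $O(t^2)$. That forces $\Delta(\tilde h)=0$ and, by your Step 3, $\Delta\equiv 0$, so all incident chamber polynomials would have equal third derivatives. This fails in general at nonsimple $h$, and that kink is exactly what the theorem is about.

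Second, even granting that formula, monotonicity applied to $\tfrac12P(h+tz)+\tfrac12P(h-tz)\subseteq P(h)$ bounds the central term from below by an average of perturbed terms. That is the direction $\Delta\leq 0$, not $\Delta\geq 0$. Moreover, $\tfrac12P(h+tz)+\tfrac12P(h-tz)$ has the same support numbers $h_i$ as $P(h)$, so the resulting inequality is vacuous to first order in $t$.

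Third, you cannot choose $h+\varepsilon y$ to lie in a chamber common to both $h\pm tz$: $y$ is given, and \eqref{eq:sec-9-local-incident-chambers} only pairs each nearby point with $h$.

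What is missing is a one-sided comparison in the $w$-slot rather than an inclusion in the $q$-slot. Set $L=P(\tilde h)$ and $\Psi(q)=n(n-1)\mathcal{V}(P(q)[n-2],L[2])$. For every $q\in\mathcal{U}_{\mathrm{irr}}$ we have $P(q)+sL\subseteq P(q+s\tilde h)$, because $h_L(u_i)=\tilde h_i$. Exactly as in \autoref{lem:sec-8-fixed-normal-comparison}, this yields
\[
  D^2V(q)[\tilde h,\tilde h]\geq\Psi(q),
\]
with equality at $q=h$ because $h$ and $\tilde h$ share a chamber closure. So $D^2V(\cdot)[\tilde h,\tilde h]$ lies above $\Psi$ and touches it at $h$. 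Fix $\tau>0$ with $h\pm\tau z\in\mathcal{U}$. Then $P(h\pm tz)=(1-t/\tau)P(h)+(t/\tau)P(h\pm\tau z)$, and $q\mapsto\mathcal{V}(P(q),P(h)[n-3],L[2])$ is linear, so $\Psi(h\pm tz)=\Psi(h)\pm t\ell(z)+O(t^2)$ for a linear functional $\ell$. Hence the second difference of $D^2V(\cdot)[\tilde h,\tilde h]$ along $h\pm tz$ is at least $\Psi(h+tz)+\Psi(h-tz)-2\Psi(h)=O(t^2)$. In your notation this reads $t\,\Delta(\tilde h)+O(t^2)\geq -Ct^2$, so $\Delta(\tilde h)\geq 0$, and your Steps 1 and 3 then complete the proof. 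This is precisely the paper's argument.
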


\begin{proof}
For $t>0$ sufficiently small, write
\eq{
  \Delta_t(u,v)
  =D^2V(h+tz)[u,v]+D^2V(h-tz)[u,v]-2D^2V(h)[u,v].
}
We first use mixed volumes to bound $\Delta_t(h+\varepsilon w,h+\varepsilon w)$ from below for a fixed sufficiently small $\varepsilon>0$. We then apply bilinearity to obtain the required bound for $\Delta_t(w,w)$.

Let $\mathcal{U}$ be as in \eqref{eq:sec-9-local-incident-chambers}, and fix $\varepsilon>0$ such that $\tilde{h}=h+\varepsilon w\in \mathcal{U}$. Thus $h$ and $\tilde{h}$ belong to the closure of a common simple type chamber. We write $P=P(h)$, $L=P(\tilde{h})$, and
\eq{
  \Psi(q)=n(n-1)\mathcal{V}(P(q)[n-2],L[2])
  \quad(q\in \mathcal{U}).
}
Since $h_L(u_i)=\tilde{h}_i$, \eqref{eq:sec-8-fixed-normal-Hessian-comparison} implies
\eq{\label{eq:sec-9-Hessian-lower-comparison}
  D^2V(q)[\tilde{h},\tilde{h}]\geq\Psi(q).
}
At $q=h$, equality follows by differentiating $V(h+s\tilde{h})=\Vol(P+sL)$ twice at $s=0$, using \autoref{lem:sec-3-type-chamber-Minkowski-addition}:
\eq{\label{eq:sec-9-Hessian-comparison-contact}
  D^2V(h)[\tilde{h},\tilde{h}]=\Psi(h).
}

We next show that the first-order terms of $\Psi$ along the two rays $h\pm tz$ cancel. When $n=2$, this holds because $\Psi$ is constant. For $n\geq 3$, by \autoref{lem:sec-9-linear-mixed-volume} with $r=\tilde{h}$ and $k=2$, there is a linear functional $\ell$ on $\R^J$ such that
\eq{
  \ell(q)=n(n-1)(n-2)\mathcal{V}(P(q),P[n-3],L[2])
  \quad(q\in \mathcal{U}).
}
Fix $\tau>0$ small enough that $q_{\pm}=h\pm\tau z\in \mathcal{U}$. Each of $q_+$ and $q_-$ shares a simple chamber closure with $h$. Hence \autoref{lem:sec-3-type-chamber-Minkowski-addition} implies
\eq{
  P(h\pm tz)=\left(1-\frac{t}{\tau}\right)P+\frac{t}{\tau}P(q_{\pm})
  \quad(0\leq t\leq\tau).
}
By multilinearity,
\eq{
  \Psi(h\pm tz)
  &=\Psi(h)+\frac{t}{\tau}\bigl(\ell(q_{\pm})-\ell(h)\bigr)+O(t^2)\\
  &=\Psi(h)\pm t\ell(z)+O(t^2).
}
Thus, in every dimension $n\geq 2$, the sum $\Psi(h+tz)+\Psi(h-tz)-2\Psi(h)$ is $O(t^2)$. The lower bound \eqref{eq:sec-9-Hessian-lower-comparison} and equality \eqref{eq:sec-9-Hessian-comparison-contact} imply, for some $C\geq 0$,
\eq{\label{eq:sec-9-comparison-midpoint}
  \Delta_t(\tilde{h},\tilde{h})
  \geq\Psi(h+tz)+\Psi(h-tz)-2\Psi(h)\geq-Ct^2.
}

For every fixed $v\in\R^J$, Euler's identity and the $C^2$ regularity from \autoref{thm:sec-9-C2-regularity} imply
\eq{
  D^2V(h+sz)[h,v]
  &=(n-1)DV(h+sz)[v]-sD^2V(h+sz)[z,v]\\
  &=D^2V(h)[h,v]+(n-2)sD^2V(h)[z,v]+o(\lvert s\rvert).
}
Adding the expansions for $s=t$ and $s=-t$, we obtain
\eq{\label{eq:sec-9-radial-Hessian-midpoint}
  \Delta_t(h,v)=o(t)
  \quad(t\to 0^+).
}
Since $\tilde{h}=h+\varepsilon w$, we expand by bilinearity:
\eq{
  \varepsilon^2\Delta_t(w,w)
  =\Delta_t(\tilde{h},\tilde{h})-\Delta_t(h,h)-2\varepsilon\Delta_t(h,w)
  \geq-Ct^2+o(t).
}
Dividing by $\varepsilon^2t$ and taking the lower limit proves the claim.
\end{proof}

\section{Incident-chamber stationarity and second variation}
\label{sec:incident-chamber-stationarity-and-second-variation}

In this section, we extend the fixed-chamber variation argument from \autoref{sec:simple-chamber-Rayleigh-variation} to a nonsimple local minimizer in the origin-symmetric class. For a local minimizer $h$ and a prescribed even direction $z$, we first select a simple type chamber $\sigma$ such that $h+tz\in\overline{\sigma}$ for all sufficiently small $t\geq 0$. Local minimality and the support-Hessian midpoint inequality force the one-sided Rayleigh derivatives along $z$ and $-z$ to vanish. We then derive the incident-chamber second variation and complete the proof of \autoref{thm:main}.

We use the notation $\mathcal{U}_{\mathrm{irr}}^I$ and $\mathcal{E}^+$ from \autoref{sec:irredundant-support-domain}. Recall that a simple type chamber $\sigma$ is incident to $h$ if $h\in\overline{\sigma}$, and symmetric if $\iota\sigma=\sigma$. For $\sigma=\mathcal{T}_{\Sigma}$, write $\widehat{V}_{\sigma}=V_{\Sigma}$ for its chamber volume polynomial.

\begin{corollary}\label{cor:sec-10-symmetric-chamber-selection}
Let $h\in\mathcal{U}_{\mathrm{irr}}^I$ be even. For every even facet function $f$, there is a symmetric simple type chamber $\sigma$ incident to $h$ such that $ h+t(h\odot f)\in\overline{\sigma}$ for all sufficiently small $t\geq 0$.
\end{corollary}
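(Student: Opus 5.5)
The plan is to perturb the even direction $z=h\odot f$ by a small even vector that avoids the exceptional hyperplanes $\mathcal{N}_h$, then use the finiteness of the support cells to pass to the limit. The setting is $\R^I$ with $h\in\mathcal{U}_{\mathrm{irr}}^I$ even.

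\textbf{Step 1 (chamber along generic even directions).} Recall from the discussion preceding \eqref{eq:sec-8-symmetric-local-chamber-covering} that each slack $\delta_{\tilde{J},k}$ restricts to a nonzero linear functional on $\mathcal{E}^+$. Hence $\mathcal{N}_h\cap\mathcal{E}^+$ is a finite union of proper hyperplanes of $\mathcal{E}^+$. Choose even vectors $z^{(\nu)}\to z$ with $z^{(\nu)}\notin\mathcal{N}_h$. For each $\nu$, \autoref{lem:sec-8-simple-chamber-approximation} (in its $\R^I$ version) provides a simple type chamber $\sigma_\nu$ and $t_\nu>0$ with $h+tz^{(\nu)}\in\sigma_\nu$ for $0<t<t_\nu$. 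Since $h+tz^{(\nu)}$ is even, $\sigma_\nu$ contains an even vector. By the remark preceding \eqref{eq:sec-8-even-incident-neighborhood}, $\sigma_\nu$ is therefore symmetric, and it is incident to $h$.

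\textbf{Step 2 (finiteness and passage to the limit).} Only finitely many simple type chambers exist, so after passing to a subsequence we may assume $\sigma_\nu=\sigma$ for all $\nu$. The closure $\overline{\sigma}$ is a polyhedral cone by \autoref{lem:sec-3-type-chamber-inequalities}; write it as $\{\ell_{\zeta,j}\ge0\}$. The condition $h+tz^{(\nu)}\in\overline\sigma$ for small $t>0$, together with $\ell_{\zeta,j}(h)\ge0$, says the following: for every functional with $\ell_{\zeta,j}(h)=0$ we have $\ell_{\zeta,j}(z^{(\nu)})\ge0$. Letting $\nu\to\infty$ gives $\ell_{\zeta,j}(z)\ge0$ for those active functionals. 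For the inactive ones, $\ell_{\zeta,j}(h)>0$, so continuity keeps them positive for small $t$. Consequently $h+tz\in\overline\sigma$ for all sufficiently small $t\ge0$, which is the claim.

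\textbf{Main obstacle.} The only delicate point is making sure the tangent-cone argument in Step 2 uses only the finitely many defining inequalities of $\overline\sigma$, so that the uniform-in-$t$ limit is legitimate; this is exactly why the polyhedrality of $\overline\sigma$ is used. An alternative route is to take the support cell $\mathfrak{C}$ containing $h+tz$ for small $t>0$, which exists by finiteness of the subdivision, and perturb inside it. The chamber-closure argument above is cleaner, however, and it keeps the symmetry conclusion automatic.
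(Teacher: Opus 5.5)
Your proof is correct, but it is organized differently from the paper's.

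\textbf{The paper's route.} It picks the endpoint $q=h+\tau(h\odot f)$ in the ball $\mathcal{U}$ of \eqref{eq:sec-9-local-incident-chambers}. The symmetric covering \eqref{eq:sec-8-symmetric-local-chamber-covering} then supplies a symmetric chamber $\sigma$ with $h,q\in\overline{\sigma}$, and convexity of $\overline{\sigma}$ gives the whole segment from $h$ to $q$. The genericity-and-finiteness argument sits inside the proof of that covering. There the \emph{endpoint} $q$ is approximated by even vectors at which all slacks are nonzero, and a single support cell is extracted. The limit is controlled by the defining property of $\mathcal{U}$: it meets only support-cell closures that contain $h$.

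\textbf{Your route.} You perturb the \emph{direction} instead, apply \autoref{lem:sec-8-simple-chamber-approximation} directly, and pigeonhole on chambers. You then pass to the limit in the tangent cone $\{z:\ell_{\zeta,j}(z)\geq 0\text{ whenever }\ell_{\zeta,j}(h)=0\}$ of the polyhedral cone $\overline{\sigma}$ at $h$. Because this cone is closed, the possible degeneration $t_\nu\to 0$ does no harm, as you correctly point out.

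\textbf{Comparison.} Your version avoids the support-cell subdivision and the special neighbourhood $\mathcal{U}$; it needs only the polyhedral description of $\overline{\sigma}$ from \autoref{lem:sec-3-type-chamber-inequalities}. The paper's version is a two-line consequence of machinery it has already built, and it yields the explicit interval $0\leq t\leq\tau$.

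One fact you use without comment is that there are only finitely many simple type chambers. This holds because each chamber is determined by its fan, and the fan's maximal cones are of the form $\operatorname{pos}\{u_i:i\in\tilde{J}\}$ with $\tilde{J}\in\mathcal{B}$. Alternatively, you could pigeonhole on the finitely many support cells, as the paper does.
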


\begin{proof}
Let $\mathcal{U}$ be as in \eqref{eq:sec-9-local-incident-chambers} and choose $\tau>0$ such that $q=h+\tau(h\odot f)\in\mathcal{U}$. By \eqref{eq:sec-8-symmetric-local-chamber-covering}, there is a symmetric simple type chamber $\sigma$ with $h,q\in\overline{\sigma}$. Convexity of $\overline{\sigma}$ implies $h+t(h\odot f)\in\overline{\sigma}$ for $0\leq t\leq\tau$.
\end{proof}

\begin{corollary}\label{cor:sec-10-signed-support-Hessian-regularity}
The volume function satisfies
\eq{
  V\in C^2(\mathcal{U}_{\mathrm{irr}}^I).
}
If $h\in\mathcal{U}_{\mathrm{irr}}^I$ and $\sigma$ is a simple type chamber incident to $h$, then
\eq{\label{eq:sec-10-signed-incident-derivative-agreement}
  D^r\widehat{V}_{\sigma}(h)=D^rV(h)
  \quad(0\leq r\leq 2).
}
Moreover, for every $h\in\mathcal{U}_{\mathrm{irr}}^I$ and $w,z\in\R^I$,
\eq{\label{eq:sec-10-signed-support-Hessian-midpoint}
  \liminf_{t\to 0^+}
  \frac{
    D^2V(h+tz)[w,w]+D^2V(h-tz)[w,w]-2D^2V(h)[w,w]
  }{t}
  \geq 0.
}
\end{corollary}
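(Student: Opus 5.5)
This corollary is the specialization of the $J$-indexed results of \autoref{sec:discrete-centro-affine-structure} to the antipodally indexed label set $I$. The plan is to observe that nothing in those proofs used a symmetry assumption. I would then re-read each argument with $J$ replaced by $I$. The signed label set is just a finite index set $I$ with pairwise distinct unit normals $(u_i)_{i\in I}$ that positively span $\R^n$, because $u_{-i}=-u_i$ and the $u_i$ span. So \autoref{def:sec-3-prescribed-facet-normals} applies verbatim, and $\mathcal{U}_{\mathrm{irr}}^I$ is exactly the domain $\mathcal{U}_{\mathrm{irr}}$ of \autoref{def:sec-8-irredundant-support-domain} for this normal data.

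The first step is the regularity claim. \autoref{thm:sec-9-C2-regularity} applied to the $I$-indexed data gives $V\in C^2(\mathcal{U}_{\mathrm{irr}}^I)$. The ingredients are \autoref{lem:sec-8-irredundant-open}, \autoref{lem:sec-8-simple-chamber-approximation}, the local covering \eqref{eq:sec-9-local-incident-chambers}, \autoref{lem:sec-3-type-chamber-Minkowski-addition}, and \autoref{lem:sec-9-linear-mixed-volume}. All of these were proved for arbitrary finite normal data, and the text already notes that the support-cell construction applies in $\R^I$.

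The second step is the derivative agreement \eqref{eq:sec-10-signed-incident-derivative-agreement}. Let $\sigma$ be any simple type chamber incident to $h$. The proof of \autoref{thm:sec-9-C2-regularity} shows \eqref{eq:sec-9-common-chamber-derivatives} for every such chamber, namely $\widehat V_\sigma(h)=V(h)$, $D\widehat V_\sigma(h)=\mathcal L_h$ and $D^2\widehat V_\sigma(h)=\mathcal Q_h$. It also identifies $DV(h)=\mathcal L_h$ and $D^2V(h)=\mathcal Q_h$. One point needs care: the comparison in that proof requires $\sigma\cap\mathcal U\neq\varnothing$. This holds because $h\in\overline\sigma$ and $\sigma$ is open, so $\sigma$ meets every ball around $h$. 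The polynomial identity principle then applies on the nonempty open set $\sigma\cap\mathcal U$.

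The third step is the midpoint inequality \eqref{eq:sec-10-signed-support-Hessian-midpoint}. This is \autoref{thm:sec-9-support-Hessian-midpoint} with $J=I$. Its proof uses the following:
\begin{itemize}
\item the comparison \eqref{eq:sec-8-fixed-normal-Hessian-comparison} from \autoref{lem:sec-8-fixed-normal-comparison};
\item \autoref{lem:sec-9-linear-mixed-volume};
\item Minkowski additivity on incident chamber closures;
\item the $C^2$ regularity just established.
\end{itemize}
None of these requires evenness.

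The only step needing attention is confirming that no hidden assumption obstructs the transfer. Distinctness of the normals holds since $u_i\neq u_{-i}$. Positive spanning holds as noted above. The local covering is valid for arbitrary, not necessarily even, $h\in\mathcal{U}_{\mathrm{irr}}^I$, which is all that is needed here. Once these are checked, the corollary follows by citation, and the proof should be a short paragraph.
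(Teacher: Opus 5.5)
Your proposal is correct and takes the same approach as the paper. The paper's proof simply states that the corollary is the antipodal version of \autoref{thm:sec-9-C2-regularity}, \eqref{eq:sec-9-common-chamber-derivatives}, \eqref{eq:sec-9-incident-Hessian-agreement}, and \autoref{thm:sec-9-support-Hessian-midpoint}. Your additional checks make explicit the hypotheses that let this transfer go through: the normals are distinct and positively span, and $\sigma\cap\mathcal{U}\neq\varnothing$ for every chamber $\sigma$ incident to $h$.
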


\begin{proof}
This is the antipodal version of \autoref{thm:sec-9-C2-regularity}, \eqref{eq:sec-9-common-chamber-derivatives}, \eqref{eq:sec-9-incident-Hessian-agreement}, and \autoref{thm:sec-9-support-Hessian-midpoint}.
\end{proof}

For $q\in\mathcal{U}_{\mathrm{irr}}^I$, define
\eq{
      S_q&=nV(q),\\
  A_q(a)&=DV(q)[q\odot a],\\
  B_q(a,b)&=D^2V(q)[q\odot a,q\odot b],\\
  \mathsf{M}_q(a,b)&=A_q(a\odot b),\\
  \widetilde{\mathsf{M}}_q(a,b)
  &=\mathsf{M}_q(a,b)-\frac{A_q(a)A_q(b)}{S_q},\\
  \mathsf{L}_q(a,b)&=(n-1)\mathsf{M}_q(a,b)-B_q(a,b).
}
At a fixed support vector $q=h$, we omit the subscript $h$. Recall from \autoref{def:sec-5-even-facet-functions} that
\eq{
  \mathcal{E}_0^+=\{a\in\mathcal{E}^+:A(a)=0\}.
}
If $\sigma$ is incident to $h$, then $\widehat{V}_{\sigma}$ and $V$ agree at $h$ through order two by \eqref{eq:sec-10-signed-incident-derivative-agreement}. At a nonsimple $h$, however, the third and fourth logarithmic support forms may depend on $\sigma$.

\subsection{Rayleigh stationarity in an incident chamber}
\label{sec:incident-chamber-Rayleigh-stationarity}

Let $h\in\mathcal{U}_{\mathrm{irr}}^I$ be even and suppose that $h$ is a local minimizer of $\lambda_{1,e}$ under even support variations; that is, $ \lambda_{1,e}(P(q))\geq \lambda_{1,e}(P(h))$ for every even $q\in\mathcal{U}_{\mathrm{irr}}^I$ sufficiently close to $h$. 

Choose a first nonconstant even eigenfunction $f$ normalized by
\eq{
  A(f)=0, \quad
  A(f^{\odot2})=1.
}
Write
\eq{
  \lambda=\lambda_{1,e}(P(h)), \quad
  \beta=n-1-\lambda.
}

Let $\mathcal{U}$ be an open ball centered at $h$, with $\overline{\mathcal{U}}\subseteq\mathcal{U}_{\mathrm{irr}}^I$, satisfying \eqref{eq:sec-9-local-incident-chambers} in $\R^I$. After shrinking $\mathcal{U}$, we may assume that $\lambda_{1,e}(P(q))\geq\lambda$ for every $q\in\mathcal{U}\cap\mathcal{E}^+$ and that $\widetilde{\mathsf{M}}_q(f,f)>0$ for every $q\in\mathcal{U}$.

We now derive the chamberwise Euler--Lagrange identity needed for the second variation. For a simple type chamber $\sigma$ incident to $h$, define the branchwise third and fourth logarithmic support forms by
\eq{\label{eq:sec-10-incident-higher-forms}
  C_{\sigma}(a,b,c)
  &=D^3\widehat{V}_{\sigma}(h)
    [h\odot a,h\odot b,h\odot c],\\
  D_{\sigma}(a,b,c,d)
  &=D^4\widehat{V}_{\sigma}(h)
    [h\odot a,h\odot b,h\odot c,h\odot d].
}
By Euler's identities and \eqref{eq:sec-10-signed-incident-derivative-agreement},
\eq{
  C_{\sigma}(\one,a,b)&=(n-2)B(a,b),\\
  D_{\sigma}(\one,a,b,c)&=(n-3)C_{\sigma}(a,b,c).
}

\begin{proposition}[Rayleigh stationarity in an incident chamber]\label{prop:sec-10-incident-chamber-Rayleigh-stationarity}
For every symmetric simple type chamber $\sigma$ incident to $h$ and every $g\in\mathcal{E}^+$,
\eq{\label{eq:sec-10-chamber-stationarity}
  C_{\sigma}(f,f,g)
  =\beta\bigl\{B(f^{\odot2},g)
    -A(f^{\odot2}\odot g)\bigr\}.
}
For every $g\in\mathcal{E}^+$, define $h_t=h\odot(\one+tg)$. Then
\eq{
  \left.\frac{d}{dt}\right|_{0^+}
  \frac{\mathsf{L}_{h_t}(f,f)}
  {\widetilde{\mathsf{M}}_{h_t}(f,f)}=0.
}
\end{proposition}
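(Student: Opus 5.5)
The plan is to compute the two one-sided Rayleigh derivatives along $g$ and $-g$ chamberwise, to get opposite inequalities from local minimality, and to close the gap with the support-Hessian midpoint inequality \eqref{eq:sec-10-signed-support-Hessian-midpoint}. Linearity in $g$ then spreads the identity to every symmetric incident chamber.

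\emph{Step 1: chamberwise one-sided derivative.} Fix $g\in\mathcal{E}^+$. By \autoref{cor:sec-10-symmetric-chamber-selection} there is a symmetric chamber $\sigma_+$ incident to $h$ with $h_t=h\odot(\one+tg)\in\overline{\sigma_+}$ for small $t\geq 0$. Every $h_t$ lies in $\mathcal{U}\cap\overline{\sigma_+}$, so by \eqref{eq:sec-10-signed-incident-derivative-agreement} applied at $h_t$, the forms $A_{h_t},B_{h_t},S_{h_t}$ coincide with those computed from the polynomial $\widehat{V}_{\sigma_+}$. Hence for $t\geq 0$ the quotient $\mathcal{R}_g(t)=\mathsf{L}_{h_t}(f,f)/\widetilde{\mathsf{M}}_{h_t}(f,f)$ equals a quotient that is smooth in $t$. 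The computation in the proof of \autoref{lem:sec-6-rayleigh-stationarity} uses only \autoref{lem:sec-6-derivatives-of-forms}, the eigen-equations \eqref{eq:sec-6-eigen-equations} at $h$, and $A(f)=0$. Repeating it verbatim with $C$ replaced by $C_{\sigma_+}$ gives
\[
\mathcal{R}_g'(0^+)=\beta\bigl\{B(f^{\odot2},g)-A(f^{\odot2}\odot g)\bigr\}-C_{\sigma_+}(f,f,g).
\]
Local minimality and the Rayleigh principle give $\mathcal{R}_g(t)\geq\lambda_{1,e}(P(h_t))\geq\lambda=\mathcal{R}_g(0)$, so $\mathcal{R}_g'(0^+)\geq0$, that is, $C_{\sigma_+}(f,f,g)\leq\beta\{\cdots\}$. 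Running the same argument with $-g$ and a chamber $\sigma_-$ containing the ray $h\odot(\one-tg)$ yields $C_{\sigma_-}(f,f,g)\geq\beta\{\cdots\}$, since $C_{\sigma_-}(f,f,-g)=-C_{\sigma_-}(f,f,g)$.

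\emph{Step 2: midpoint comparison.} Put $w=h\odot f$ and $z=h\odot g$. By \eqref{eq:sec-10-signed-incident-derivative-agreement} along each ray, $D^2V(h\pm tz)[w,w]=D^2\widehat{V}_{\sigma_\pm}(h\pm tz)[w,w]$ for small $t\geq0$. Taylor expansion then gives
\[
D^2V(h+tz)[w,w]+D^2V(h-tz)[w,w]-2D^2V(h)[w,w]=t\bigl(C_{\sigma_+}(f,f,g)-C_{\sigma_-}(f,f,g)\bigr)+O(t^2).
\]
The midpoint inequality \eqref{eq:sec-10-signed-support-Hessian-midpoint} forces $C_{\sigma_+}(f,f,g)\geq C_{\sigma_-}(f,f,g)$. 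Combined with Step 1, this gives the chain $\beta\{\cdots\}\geq C_{\sigma_+}\geq C_{\sigma_-}\geq\beta\{\cdots\}$. Hence all these quantities are equal, and $\mathcal{R}_g'(0^+)=0$, which is the second assertion of the proposition.

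\emph{Step 3: every symmetric incident chamber.} Let $\sigma$ be any symmetric chamber incident to $h$. By \eqref{eq:sec-8-even-incident-neighborhood}, the set $\sigma\cap\mathcal{U}\cap\mathcal{E}^+$ is nonempty and relatively open in $\mathcal{E}^+$. For each $p$ in this set, the even direction $g=(p-h)/h$ (coordinatewise division) satisfies $h\odot(\one+tg)\in\sigma$ for $0<t\leq1$, by convexity of $\overline{\sigma}$ together with $h\in\overline{\sigma}$, $p\in\sigma$. These $g$ therefore fill a nonempty open subset $G\subseteq\mathcal{E}^+$. For $g\in G$ we may take $\sigma_+=\sigma$ in Steps 1--2, so \eqref{eq:sec-10-chamber-stationarity} holds for $\sigma$ and every $g\in G$. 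Both sides of \eqref{eq:sec-10-chamber-stationarity} are linear in $g$, so they agree on all of $\mathcal{E}^+$.

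The main obstacle is Step 2. One must check that the one-sided third derivative of the $C^2$ function $t\mapsto D^2V(h+tz)[w,w]$ is really captured by the branch polynomial $\widehat{V}_{\sigma_\pm}$ along the whole ray. This requires \eqref{eq:sec-10-signed-incident-derivative-agreement} at every point $h\pm tz$ of $\overline{\sigma_\pm}\cap\mathcal{U}$, not only at $h$, and care with the sign conventions for the $-g$ direction.
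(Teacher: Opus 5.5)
Your proposal is correct and follows essentially the paper's argument. You compute one-sided Rayleigh derivatives along $\pm$ directions from incident chamber polynomials, get nonnegativity of each from local minimality, use the midpoint inequality \eqref{eq:sec-10-signed-support-Hessian-midpoint} to force both to vanish, and extend to every symmetric incident chamber by linearity over a relatively open set of even directions. The only difference is organizational: you run the sandwich for an arbitrary $g$, applying \autoref{cor:sec-10-symmetric-chamber-selection} to both $g$ and $-g$, which gives the second assertion directly and avoids the reflected-slack genericity condition \eqref{eq:sec-10-reflected-support-slacks}; the paper instead proves the chamber identity first and deduces the vanishing derivative from it.
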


\begin{proof}
For $q\in\mathcal{U}\cap\mathcal{E}^+$, define
\eq{
  f_q=f-\frac{A_q(f)}{S_q}\one.
}
Then $A_q(f_q)=0$, and the centered Rayleigh quotient of $f_q$ at $q$ is
\eq{
  \mathcal{R}(q)
  =\frac{\mathsf{L}_q(f,f)}{\widetilde{\mathsf{M}}_q(f,f)}.
}
The Rayleigh principle and local minimality imply
\eq{\label{eq:sec-10-Rayleigh-sandwich}
  \mathcal{R}(q)
  &\geq \lambda_{1,e}(P(q))
  \geq \lambda
  =\mathcal{R}(h).
}

Let $\sigma$ be a symmetric simple type chamber incident to $h$. In view of \eqref{eq:sec-8-even-incident-neighborhood}, the set $\mathcal{U}\cap\sigma\cap\mathcal{E}^+$ is nonempty and relatively open in $\mathcal{E}^+$. Consider the set of points $q$ in this intersection satisfying
\eq{\label{eq:sec-10-reflected-support-slacks}
\delta_{\tilde{J},k}(2h-q)\neq 0
\quad(\tilde{J}\in\mathcal{B},\ k\notin\tilde{J}).
}
Since each slack restricts to a nonzero linear functional on $\mathcal{E}^+$, this set is nonempty and relatively open. For $q$ in this set, define
\eq{
  z=q-h=h\odot a,
  \quad
  a_i=\frac{q_i-h_i}{h_i}\quad(i\in I).
}
Since $h\in\overline{\sigma}$ and $\sigma$ is open and convex, $h+tz=(1-t)h+tq\in\sigma\cap\mathcal{U}$ for $0<t\leq 1$.

Also note that the reflected support vector $q_-=2h-q$ belongs to $\mathcal{U}$ and, by \eqref{eq:sec-10-reflected-support-slacks}, to a support cell $\mathfrak{C}$. Let $\tilde{\sigma}=\sigma_{\mathfrak{C}}$ denote the chamber that contains $q_-$. By \eqref{eq:sec-9-local-incident-chambers}, $h\in\overline{\mathfrak{C}}\cap\mathcal{U}$, and hence $h\in\overline{\tilde{\sigma}}$. Convexity implies $h-tz=(1-t)h+tq_-\in\tilde{\sigma}\cap\mathcal{U}$ for $0<t\leq 1$.

As in \eqref{eq:sec-6-eigen-equations}, the eigenvalue equation extends to every $k\in\mathcal{E}^+$:
\eq{
  B(f,k)=\beta A(f\odot k)
  \quad(k\in\mathcal{E}^+).
}
Using \eqref{eq:sec-10-signed-incident-derivative-agreement}, we apply the calculation in the proof of \autoref{lem:sec-6-rayleigh-stationarity} to $\widehat{V}_{\sigma}$ along $h+tz$ and to $\widehat{V}_{\tilde{\sigma}}$ along $h-tz$:
\eq{
  \left.\frac{d}{dt}\right|_{0^+}\mathcal{R}(h+tz)
  &=\beta\bigl\{B(f^{\odot2},a)-A(f^{\odot2}\odot a)\bigr\}
    -C_{\sigma}(f,f,a),\\
  \left.\frac{d}{dt}\right|_{0^+}\mathcal{R}(h-tz)
  &=-\beta\bigl\{B(f^{\odot2},a)-A(f^{\odot2}\odot a)\bigr\}
    +C_{\tilde{\sigma}}(f,f,a).
}

For $\xi=h\odot f$, Taylor expansion of the two chamber polynomials at $h$ leads to
\eq{
  D^2V(h+tz)[\xi,\xi]
  &=D^2V(h)[\xi,\xi]+tC_{\sigma}(f,f,a)+O(t^2),\\
  D^2V(h-tz)[\xi,\xi]
  &=D^2V(h)[\xi,\xi]-tC_{\tilde{\sigma}}(f,f,a)+O(t^2).
}
The midpoint inequality \eqref{eq:sec-10-signed-support-Hessian-midpoint} now implies $C_{\sigma}(f,f,a)\geq C_{\tilde{\sigma}}(f,f,a)$. Hence
\eq{
  \left.\frac{d}{dt}\right|_{0^+}\mathcal{R}(h+tz)
  +\left.\frac{d}{dt}\right|_{0^+}\mathcal{R}(h-tz)
  &=C_{\tilde{\sigma}}(f,f,a)-C_{\sigma}(f,f,a)
  \leq 0.
}
Both derivatives are nonnegative. Consequently, both vanish.

Thus, for every $q\in\mathcal{U}\cap\sigma\cap\mathcal{E}^+$ satisfying \eqref{eq:sec-10-reflected-support-slacks} and $a_i=(q_i-h_i)/h_i$,
\eq{
  \beta\bigl\{B(f^{\odot2},a)-A(f^{\odot2}\odot a)\bigr\}
  -C_{\sigma}(f,f,a)=0.
}
The map $q\mapsto((q_i-h_i)/h_i)_{i\in I}$ is an affine isomorphism of $\mathcal{E}^+$ which maps the restricted set of support vectors onto a nonempty relatively open subset of $\mathcal{E}^+$. Since the left-hand side is linear in $a$ and vanishes on this image,
\eq{
  C_{\sigma}(f,f,g)=\beta\bigl\{B(f^{\odot2},g)-A(f^{\odot2}\odot g)\bigr\}\quad (g\in\mathcal{E}^+).
}

Finally, let $g\in\mathcal{E}^+$. By \autoref{cor:sec-10-symmetric-chamber-selection}, there is a symmetric simple type chamber $\sigma_g$ incident to $h$ such that $h_t\in\overline{\sigma_g}\cap\mathcal{U}$ for all sufficiently small $t\geq 0$. Applying \eqref{eq:sec-10-signed-incident-derivative-agreement} at $h_t$, the calculation in the proof of \autoref{lem:sec-6-rayleigh-stationarity}, and \eqref{eq:sec-10-chamber-stationarity} for $\sigma_g$, the claim follows.
\end{proof}

\subsection{Second variation in an incident chamber}
\label{sec:incident-chamber-second-variation}

In this subsection, we work in a simple type chamber incident to a possibly nonsimple local minimizer. We approach the minimizing support vector through positive even vectors in the chamber and pass the degree-two Hodge--Riemann inequality to the limit. The limiting fourth-derivative bound, together with chamberwise stationarity, yields the required estimate for the second derivative of the trial Rayleigh quotient.

\begin{lemma}\label{lem:sec-10-incident-fourth-derivative-lower-bound}
Assume that $n\geq 3$. There is a unique $\eta\in\mathcal{E}^+$ such that, for every symmetric simple type chamber $\sigma$ incident to $h$,
\eq{\label{eq:sec-10-incident-eta}
  C_{\sigma}(f,f,g)=(n-2)B(\eta,g)
  \quad(g\in\mathcal{E}^+).
}
For every such $\sigma$,
\eq{\label{eq:sec-10-incident-fourth-bound}
  D_{\sigma}(f,f,f,f)
  \geq (n-2)(n-3)B(\eta,\eta).
}
\end{lemma}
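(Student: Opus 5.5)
The plan has two parts. First, obtain $\eta$ directly at $h$ from the chamber-independent right-hand side of the incident-chamber stationarity identity. Second, prove the fourth-derivative bound at interior even points of $\sigma$, where \autoref{thm:sec-7-degree-two-HRM} applies verbatim, and pass to the limit at $h$.

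\emph{Existence and uniqueness of $\eta$.} By \autoref{prop:sec-10-incident-chamber-Rayleigh-stationarity}, for every symmetric simple type chamber $\sigma$ incident to $h$ and every $g\in\mathcal{E}^+$,
\eq{
  C_{\sigma}(f,f,g)=\beta\bigl\{B(f^{\odot2},g)-A(f^{\odot2}\odot g)\bigr\}.
}
The right-hand side does not depend on $\sigma$. So it suffices to solve $(n-2)B(\eta,g)=\beta\{B(f^{\odot2},g)-A(f^{\odot2}\odot g)\}$ for all $g\in\mathcal{E}^+$ with $\eta\in\mathcal{E}^+$. I will show that the map $\mathcal{E}^+\to(\mathcal{E}^+)^*$, $a\mapsto B(a,\cdot)$, is bijective, repeating the argument of \autoref{sec:degree-two-HRM-estimate}. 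The Euler identity $B(a,\one)=(n-1)A(a)$ kills the constant part of a kernel element. Then \autoref{thm:sec-8-irredundant-support-Hessian-inequality} and \autoref{cor:sec-8-irredundant-strict-even-eigenvalue-bound} give $B(a^\circ,a^\circ)<0$ for nonzero $a^\circ\in\mathcal{E}^+_0$. These results hold on all of $\mathcal{U}^I_{\mathrm{irr}}$, so simplicity of $h$ is not needed. Since $n\geq3$, $n-2\neq0$, and we get a unique $\eta$. If $n=3$, both sides of \eqref{eq:sec-10-incident-fourth-bound} vanish, because $\widehat V_\sigma$ has degree $3$; so from now on assume $n\geq4$.

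\emph{Approximation from inside $\sigma$.} By \eqref{eq:sec-8-even-incident-neighborhood}, I choose even $q_\nu\in\sigma\cap\mathcal{U}$ with $q_\nu\to h$. I fix the ordinary directions $\xi=h\odot f$ and $g\mapsto h\odot g$, rather than $q_\nu\odot\cdot$. At $q_\nu$, I work in the algebra $\mathcal{A}(\widehat V_\sigma)$ with radial class $\omega_\nu=[\partial_{q_\nu}]$. Here $P(q_\nu)$ is simple, so \autoref{thm:sec-7-degree-two-HRM} applies. Let $B_\nu,C_\nu,D_\nu$ be the derivatives of $\widehat V_\sigma$ at $q_\nu$ evaluated on these fixed directions.

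I define $\eta_\nu\in\mathcal{E}^+$ by $C_\nu(f,f,g)=(n-2)B_\nu(\eta_\nu,g)$ for $g\in\mathcal{E}^+$. This is solvable for large $\nu$ because $B_\nu\to B$ and $B$ is invertible on $\mathcal{E}^+$. The identity extends to odd $g$ by the parity part of \autoref{lem:sec-3-antipodal-volume-parity}, which applies since $q_\nu$ is even and $\sigma$ is symmetric. Hence $\Gamma_\nu=[\partial_\xi]^2-\omega_\nu[\partial_{h\odot\eta_\nu}]$ satisfies $\Gamma_\nu\omega_\nu^{n-3}=0$, by \autoref{lem:sec-7-repeated-radial-derivatives} applied at $q_\nu$ and \autoref{lem:sec-7-polytope-algebra-duality}. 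Expanding $(\Gamma_\nu^2\omega_\nu^{n-4})\widehat V_\sigma\geq0$ exactly as in \autoref{lem:sec-7-fourth-derivative-lower-bound} gives
\eq{
  D_\nu(f,f,f,f)\geq(n-2)(n-3)B_\nu(\eta_\nu,\eta_\nu).
}

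\emph{Passage to the limit.} $\widehat V_\sigma$ is a polynomial, so $B_\nu\to B$ (using \eqref{eq:sec-10-signed-incident-derivative-agreement}), $C_\nu\to C_\sigma$ and $D_\nu\to D_\sigma$. The functionals $C_\nu(f,f,\cdot)\to C_\sigma(f,f,\cdot)=(n-2)B(\eta,\cdot)$ on $\mathcal{E}^+$. Inverting the converging invertible operators $B_\nu|_{\mathcal{E}^+}\to B|_{\mathcal{E}^+}$ gives $\eta_\nu\to\eta$, and the limit of the inequality is \eqref{eq:sec-10-incident-fourth-bound}.

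The main obstacle is this bookkeeping at $q_\nu$. One must check that the HRM expansion in the fixed directions matches the logarithmic forms at $h$ in the limit, including the factorial identities of \autoref{lem:sec-7-repeated-radial-derivatives} with $q_\nu$ in place of $h$. One must also verify that $\eta_\nu$ stays bounded, which rests on the uniform invertibility of $B_\nu$ near $B$.
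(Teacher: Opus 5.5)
Your proposal is correct and follows the paper's proof. You obtain $\eta$ from the $\sigma$-independent right-hand side of the incident-chamber stationarity identity together with the invertibility of $B$ on $\mathcal{E}^+$. You then run the Section~7 primitive-correction and degree-two Hodge--Riemann argument for $\widehat V_\sigma$ at even points of $\sigma$ converging to $h$, and pass to the limit.

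The only difference from the paper is cosmetic. You evaluate derivatives at $q_\nu$ in the fixed directions $h\odot\cdot$, while the paper uses the logarithmic directions $q_s\odot\cdot$ along $q_s=(1-s)h+sq$; your choice makes the limits $B_\nu\to B$, $C_\nu\to C_\sigma$, $D_\nu\to D_\sigma$ slightly more immediate. The existence of such $q_\nu$ follows from that segment construction.
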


\begin{proof}
The proof of \autoref{cor:sec-8-irredundant-strict-even-eigenvalue-bound} shows that $B$ is negative definite on $\mathcal{E}_0^+$. The argument following \eqref{eq:sec-7-eta-definition} then shows that $B:\mathcal{E}^+\to(\mathcal{E}^+)^*$ is an isomorphism. By \eqref{eq:sec-10-chamber-stationarity}, the functional $C_{\sigma}(f,f,\mathord{\cdot})$ on $\mathcal{E}^+$ is independent of $\sigma$. Hence a unique $\eta\in\mathcal{E}^+$ satisfies \eqref{eq:sec-10-incident-eta} for every symmetric simple type chamber $\sigma$ incident to $h$.

When $n=3$, both sides of \eqref{eq:sec-10-incident-fourth-bound} are zero. Assume now that $n\geq 4$.

Fix such a chamber $\sigma$ and choose $q\in\mathcal{U}\cap\sigma\cap\mathcal{E}^+$, which is possible by \eqref{eq:sec-8-even-incident-neighborhood}. Since $h\in\overline{\sigma}$ and $\sigma$ is convex,
\eq{
  q_s=(1-s)h+sq
  \in\sigma\cap\mathcal{E}^+\cap(0,\infty)^I
  \quad(0<s\leq 1)
}
and $q_s\to h$ as $s\to 0^+$.

Let $B_{\sigma}^s$, $C_{\sigma}^s$, and $D_{\sigma}^s$ denote the logarithmic support forms of $\widehat{V}_{\sigma}$ at $q_s$. Continuity of the derivatives of $\widehat{V}_{\sigma}$ and \eqref{eq:sec-10-signed-incident-derivative-agreement} show that, as $s\to 0^+$,
\eq{
  B_{\sigma}^s&\to B,\\
  C_{\sigma}^s(f,f,\mathord{\cdot})&\to C_{\sigma}(f,f,\mathord{\cdot}),\\
  D_{\sigma}^s(f,f,f,f)&\to D_{\sigma}(f,f,f,f).
}
For all sufficiently small $s>0$, the map $B_{\sigma}^s:\mathcal{E}^+\to(\mathcal{E}^+)^*$ is also an isomorphism. Let $\eta^s\in\mathcal{E}^+$ be the unique vector satisfying
\eq{\label{eq:sec-10-incident-eta-approximation}
  C_{\sigma}^s(f,f,g)=(n-2)B_{\sigma}^s(\eta^s,g)
  \quad(g\in\mathcal{E}^+).
}
Viewing $B$ and $B_{\sigma}^s$ as maps from $\mathcal{E}^+$ to $(\mathcal{E}^+)^*$, we have
\eq{
  \eta
  =\frac{1}{n-2}B^{-1}\bigl(C_{\sigma}(f,f,\mathord{\cdot})\bigr),
  \quad
  \eta^s
  =\frac{1}{n-2}(B_{\sigma}^s)^{-1}
  \bigl(C_{\sigma}^s(f,f,\mathord{\cdot})\bigr).
}
Therefore $\eta^s\to\eta$.

By \autoref{lem:sec-3-antipodal-volume-parity}, both sides of \eqref{eq:sec-10-incident-eta-approximation} vanish for odd $g$. Thus this equation holds for every facet function $g$. The construction in \eqref{eq:sec-7-Gamma-definition} and the argument following it apply to $\widehat{V}_{\sigma}$ at $q_s$, and the corresponding class is primitive. The argument of \autoref{lem:sec-7-fourth-derivative-lower-bound} then establishes
\eq{
  D_{\sigma}^s(f,f,f,f)
  \geq (n-2)(n-3)B_{\sigma}^s(\eta^s,\eta^s).
}
Letting $s\to 0^+$ proves the second claim.
\end{proof}

\begin{corollary}\label{cor:sec-10-incident-trial-Rayleigh-second-variation}
Assume that $n\geq 3$. For $t\geq 0$ sufficiently small, set
\eq{
  h_t=h\odot(\one+tf),\quad
  u_t=f-(1+\beta)t f^{\odot2},
}
and define
\eq{
  \mathcal{R}_{\mathrm{tr}}(t)
  =\frac{\mathsf{L}_{h_t}(u_t,u_t)}
  {\widetilde{\mathsf{M}}_{h_t}(u_t,u_t)}.
}
Then $\mathcal{R}_{\mathrm{tr}}(0)=\lambda$, and its right derivatives satisfy
\eq{\label{eq:sec-10-incident-branch-second-variation}
  \mathcal{R}_{\mathrm{tr}}'(0^+)=0,\quad
  \mathcal{R}_{\mathrm{tr}}''(0^+)
  \leq \frac{2\beta\lambda}{S(n-1)}
  \bigl(1+(n-1)\beta\bigr).
}
Moreover, if $-1/(n-1)<\beta<0$, then
\eq{
  \mathcal{R}_{\mathrm{tr}}(t)<\lambda
  \quad\text{for all sufficiently small }t>0.
}
\end{corollary}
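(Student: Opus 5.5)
By \autoref{cor:sec-10-symmetric-chamber-selection} applied to the even direction $f$, I fix a symmetric simple type chamber $\sigma$ incident to $h$ with $h_t=h+t(h\odot f)\in\overline{\sigma}\cap\mathcal{U}$ for $0\leq t\leq\tau$. On this segment $V$ coincides with $\widehat{V}_{\sigma}$ together with its first two derivatives. For points in $\sigma$ this is immediate. For boundary points it follows from \eqref{eq:sec-10-signed-incident-derivative-agreement} applied at $h_t$, since $h_t$ is incident to $\sigma$. Every ingredient of $\mathcal{R}_{\mathrm{tr}}(t)$ uses only $V$, $DV$ and $D^2V$ at $h_t$, so $\mathcal{R}_{\mathrm{tr}}$ agrees for $t\in[0,\tau]$ with the trial quotient built from the polynomial $\widehat{V}_{\sigma}$. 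That function is real-analytic in $t$ near $0$. Hence the one-sided derivatives $\mathcal{R}_{\mathrm{tr}}'(0^+)$ and $\mathcal{R}_{\mathrm{tr}}''(0^+)$ equal the ordinary derivatives at $0$ of the $\widehat{V}_{\sigma}$-quotient. The value at $0$ is $\mathsf{L}(f,f)/\widetilde{\mathsf{M}}(f,f)=\lambda$.

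Next I rerun the computations of \autoref{sec:simple-chamber-Rayleigh-variation} for $\widehat{V}_{\sigma}$ at $h$. The forms $A,B,S$ are the intrinsic ones. In their place, $C$ and $D$ become $C_{\sigma}$ and $D_{\sigma}$. The inputs needed are the following.
\begin{itemize}
\item The eigen-equations \eqref{eq:sec-6-eigen-equations} involve only $A$ and $B$, so they hold unchanged.
\item The Euler identities for $C_{\sigma}$ and $D_{\sigma}$ were recorded before \autoref{prop:sec-10-incident-chamber-Rayleigh-stationarity}.
\item The chamberwise stationarity \eqref{eq:sec-10-chamber-stationarity} replaces \autoref{lem:sec-6-rayleigh-stationarity}. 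It is needed only for even $g$, namely $g=f\odot k$ and $g=f^{\odot2}$.
\end{itemize}
With these inputs, \autoref{lem:sec-6-derivatives-of-forms}, \autoref{lem:sec-6-shifted-form-derivative} and \autoref{lem:sec-6-exact-second-derivative} go through verbatim. This gives $\mathcal{R}_{\mathrm{tr}}'(0^+)=0$ and
\[
\mathcal{R}_{\mathrm{tr}}''(0^+)=-\beta(2\beta-1)(\beta+2)\mu_4+3\beta^2b_2-D_{\sigma}(f,f,f,f).
\]

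The proof of \autoref{lem:sec-7-trial-Rayleigh-second-variation-bound} is purely algebraic given three facts:
\begin{itemize}
\item the spectral expansion in $\mathcal{E}_0^+$, which uses only $\mathsf{M}$ and $B$ at $h$ and negative definiteness of $B$ on $\mathcal{E}_0^+$, from \autoref{cor:sec-8-irredundant-strict-even-eigenvalue-bound};
\item the identity \eqref{eq:sec-7-eta-spectral-identity}, which now follows from \eqref{eq:sec-10-chamber-stationarity} and \eqref{eq:sec-10-incident-eta};
\item the fourth-derivative bound \eqref{eq:sec-10-incident-fourth-bound}.
\end{itemize}
Substituting these into the remainder identity \eqref{eq:sec-7-second-variation-remainder} gives the stated upper bound for $\mathcal{R}_{\mathrm{tr}}''(0^+)$.

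The main point requiring care is that the derivatives of the trial quotient along a boundary segment of $\overline{\sigma}$ are genuinely governed by $\widehat{V}_{\sigma}$. This is where the $C^2$ agreement at every incident point $h_t$ is used: pointwise equality of $V$ and $\widehat{V}_{\sigma}$ alone would not control the Hessian terms. For the final claim, suppose $-1/(n-1)<\beta<0$. Then $\beta<0$, $\lambda>0$ and $1+(n-1)\beta>0$, so $\mathcal{R}_{\mathrm{tr}}''(0^+)<0$. Since $\mathcal{R}_{\mathrm{tr}}$ is analytic on $[0,\tau]$ with $\mathcal{R}_{\mathrm{tr}}'(0^+)=0$, Taylor's formula gives $\mathcal{R}_{\mathrm{tr}}(t)=\lambda+\tfrac12\mathcal{R}_{\mathrm{tr}}''(0^+)t^2+O(t^3)<\lambda$ for all small $t>0$.
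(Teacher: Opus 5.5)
Your proposal is correct and follows essentially the same route as the paper. You select a symmetric incident chamber via \autoref{cor:sec-10-symmetric-chamber-selection} and use the $C^2$ agreement \eqref{eq:sec-10-signed-incident-derivative-agreement} at each $h_t$ to identify $\mathcal{R}_{\mathrm{tr}}$ with the analytic $\widehat{V}_{\sigma}$-quotient; you then rerun the fixed-chamber computations with $C_{\sigma}$, $D_{\sigma}$, the chamberwise stationarity \eqref{eq:sec-10-chamber-stationarity} and the bound \eqref{eq:sec-10-incident-fourth-bound}, and finish with Taylor's formula.

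One harmless slip: stationarity is used with $g=k$ (in particular $k=f$ and $k=f^{\odot2}$) and, through $\eta$, with all even $g$, whereas $f\odot k$ enters only through the eigen-equation for $B$. Since \eqref{eq:sec-10-chamber-stationarity} holds for every $g\in\mathcal{E}^+$, nothing is lost.
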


\begin{proof}
In view of \autoref{cor:sec-10-symmetric-chamber-selection}, we may choose a symmetric simple type chamber $\sigma$ incident to $h$ such that $h_t\in\overline{\sigma}\cap\mathcal{U}$ for all sufficiently small $t\geq 0$. By \eqref{eq:sec-10-signed-incident-derivative-agreement} at $h_t$, the quotient $\mathcal{R}_{\mathrm{tr}}(t)$ agrees on this interval with the quotient determined by $\widehat{V}_{\sigma}$. Since $\widehat{V}_{\sigma}$ is a polynomial and $\widetilde{\mathsf{M}}_h(f,f)=1$, the chamber quotient provides a real-analytic extension of $\mathcal{R}_{\mathrm{tr}}$ across zero.

By \eqref{eq:sec-10-chamber-stationarity}, the calculation leading to \eqref{eq:sec-6-exact-second-derivative} applies here with $d_4=D_{\sigma}(f,f,f,f)$. Hence $\mathcal{R}_{\mathrm{tr}}'(0^+)=0$ and
\eq{\label{eq:sec-10-exact-incident-trial-second-derivative}
  \mathcal{R}_{\mathrm{tr}}''(0^+)
  =-\beta(2\beta-1)(\beta+2)A(f^{\odot4})
  +3\beta^2B(f^{\odot2},f^{\odot2})
  -D_{\sigma}(f,f,f,f).
}
Using \eqref{eq:sec-10-chamber-stationarity} and \autoref{lem:sec-10-incident-fourth-derivative-lower-bound}, the calculation in the proof of \autoref{lem:sec-7-trial-Rayleigh-second-variation-bound} yields
\eq{
  \mathcal{R}_{\mathrm{tr}}''(0^+)
  \leq\frac{2\beta\lambda}{S(n-1)}
  \bigl(1+(n-1)\beta\bigr).
}

If $-1/(n-1)<\beta<0$, then \eqref{eq:sec-10-incident-branch-second-variation} implies $\mathcal{R}_{\mathrm{tr}}''(0^+)<0$. Since $\mathcal{R}_{\mathrm{tr}}(0)=\lambda$ and $\mathcal{R}_{\mathrm{tr}}'(0^+)=0$, Taylor's formula now shows that
\eq{
  \mathcal{R}_{\mathrm{tr}}(t)-\lambda
  =\frac{1}{2}\mathcal{R}_{\mathrm{tr}}''(0^+)t^2+o(t^2)<0
}
for all sufficiently small $t>0$.
\end{proof}

\begin{proof}[Proof of \autoref{thm:main}]
By \autoref{cor:sec-8-irredundant-strict-even-eigenvalue-bound}, $\beta<0$. If the estimate failed, then $-\frac{1}{n-1}<\beta<0$. For all sufficiently small $t>0$, \autoref{cor:sec-10-incident-trial-Rayleigh-second-variation}, the Rayleigh principle, and local minimality imply $\lambda\leq\lambda_{1,e}(P(h_t))\leq\mathcal{R}_{\mathrm{tr}}(t)<\lambda$, a contradiction.
\end{proof}

\section*{Acknowledgment}
Hu was supported by the National Key Research and Development Program of China (Grant No. 2021YFA1001800). Ivaki was supported by the Austrian Science Fund (FWF) under Project P36545.

\section*{AI disclosure}
The authors used AI tools during the development of this paper.

\vspace{5mm}

\noindent
\begin{minipage}[t]{0.40\textwidth}
\textsc{School of Mathematical\\ Sciences, Beihang University,\\ Beijing 100191, China}\\
\email{\href{mailto:huyingxiang@buaa.edu.cn}{huyingxiang@buaa.edu.cn}}
\end{minipage}
\hfill
\begin{minipage}[t]{0.40\textwidth}
\textsc{Institut f\"{u}r Diskrete\\ Mathematik und Geometrie,\\ Technische Universit\"{a}t Wien, Wiedner Hauptstra{\ss}e 8--10,\\ 1040 Wien, Austria}\\
\email{\href{mailto:mohammad.ivaki@tuwien.ac.at}{mohammad.ivaki@tuwien.ac.at}}
\end{minipage}

\end{document}